\documentclass[a4paper,11pt]{article}
\usepackage[
    a4paper,
    top=1.05in,
    bottom=1.05in,
    left=1.00in,
    right=1.00in
]{geometry}
\usepackage[T1]{fontenc}
\usepackage{amsmath,amsthm,mathtools}
\usepackage{amssymb}
\usepackage{newtxtext,newtxmath}
\usepackage{microtype}
\usepackage{iftex}
\ifXeTeX
  \microtypesetup{protrusion=true,expansion=false}
\else
  \microtypesetup{protrusion=true,expansion=true}
\fi
\usepackage[new]{old-arrows}
\usepackage{mathrsfs}
\allowdisplaybreaks[4]
\usepackage{enumitem}
\usepackage{needspace}
\setlist{itemsep=0.25em, topsep=0.45em, parsep=0pt}

\usepackage{titlesec}
\titleformat{\section}{\normalfont\Large\bfseries}{\thesection}{0.75em}{}
\titleformat{\subsection}{\normalfont\large\bfseries}{\thesubsection}{0.75em}{}
\titlespacing*{\section}{0pt}{2.6ex plus 0.8ex minus 0.2ex}{1.2ex plus 0.2ex}
\titlespacing*{\subsection}{0pt}{2.0ex plus 0.6ex minus 0.2ex}{0.9ex plus 0.2ex}

\usepackage{fancyhdr}
\fancypagestyle{plain}{%
  \fancyhf{}%
  \fancyfoot[C]{\thepage}%
}

\usepackage[dvipsnames]{xcolor}
\definecolor{linkblue}{RGB}{20,67,128}
\definecolor{citegreen}{RGB}{25,105,80}
\definecolor{urlpurple}{RGB}{100,55,125}
\usepackage[colorlinks=true,
            linkcolor=linkblue,
            citecolor=citegreen,
            urlcolor=urlpurple,
            plainpages=false]{hyperref}
\usepackage{bookmark}

\numberwithin{equation}{section}
\theoremstyle{plain}
\newtheorem{theorem}{Theorem}[section]
\newtheorem{proposition}[theorem]{Proposition}
\newtheorem{lemma}[theorem]{Lemma}
\newtheorem{corollary}[theorem]{Corollary}
\theoremstyle{remark}
\newtheorem{remark}[theorem]{Remark}
\theoremstyle{plain}
\renewcommand{\d}{\mathrm{d}}
\newcommand{\dc}{\mathrm{d}^{c}}
\newcommand{\ddc}{\mathrm{d}\mathrm{d}^{c}}
\newcommand{\heat}{\square}
\newcommand{\muent}{\boldsymbol{\mu}}
\newcommand{\scal}{\mathrm{R}}
\newcommand{\RR}{\mathcal{R}}
\newcommand{\NN}{\mathcal{N}}
\newcommand{\WW}{\mathcal{W}}
\newcommand{\cG}{\mathcal{G}}
\DeclareMathOperator{\Ric}{Ric}
\DeclareMathOperator{\Id}{Id}
\DeclareMathOperator{\Vol}{Vol}

\hypersetup{pdftitle={Scalar curvature bounds and tangent cones for Kahler--Ricci shrinkers},
  pdfauthor={Yu Li and Junsheng Zhang}}
\title{Scalar curvature bounds and tangent cones\\for K\"ahler--Ricci shrinkers}
\author{Yu Li \quad and \quad Junsheng Zhang}
\date{}

\begin{document}
\maketitle
\begin{abstract}
We prove a uniform scalar curvature bound, depending only on the 
dimension, for K\"ahler--Ricci shrinkers.
We also prove that every noncompact K\"ahler--Ricci shrinker has a unique
tangent cone at infinity, naturally homeomorphic to the base of its
polarized Fano fibration.
For shrinkers with maximal volume growth, this cone is
isometric to the metric completion of the regular part equipped with the intrinsic distance induced by
the limiting K\"ahler metric.
\end{abstract}

\begingroup
\small
\tableofcontents
\endgroup
\clearpage

\section{Introduction}\label{sec:introduction}
\addtocontents{toc}{\protect\setcounter{tocdepth}{1}}

A K\"ahler--Ricci shrinker $(X^n,J,g,\omega,f)$ is a complete shrinking
gradient K\"ahler--Ricci soliton. Throughout this paper, we use the complex
normalization
\begin{equation}\label{eq:shrinker}
\begin{gathered}
    \Ric(\omega)+\sqrt{-1}\partial\bar\partial f=\omega,
    \qquad \Delta_\omega f+f-|\nabla^{1,0}f|_\omega^2=0,\\
    \rho^2:=f+n=\scal_\omega+|\nabla^{1,0}f|_\omega^2,
\end{gathered}
\end{equation}
where $n$ is the complex dimension.
The scalar curvature is nonnegative by \cite[Corollary~2.5]{Chen2009Strong},
and the soliton potential $f$ has quadratic growth by
\cite[Theorem~1.1]{CaoZhou} and \cite[Lemma~2.1]{HaslhoferMuller}.

\subsection{Scalar curvature bounds}
The soliton identities give the general bound
$0\leq\scal_\omega\leq\rho^2$, so the scalar curvature grows at most
quadratically in the distance from a fixed point. Improving this to a
uniform bound is important for understanding the geometry and
classification of shrinkers. In complex dimension two, Li--Wang
\cite[Theorem~1.2]{LW26} proved that every K\"ahler--Ricci shrinker has
bounded scalar curvature. Together with the four-dimensional curvature
estimate of Munteanu--Wang \cite[Theorem~1.1]{MunteanuWang2015}, this
implies bounded sectional curvature. Combined with the construction and
classification results of Conlon--Deruelle--Sun \cite{ConlonDeruelleSun},
Cifarelli--Conlon--Deruelle \cite{CifarelliConlonDeruelle}, and
Bamler--Cifarelli--Conlon--Deruelle \cite{BCCD}, it completes the
classification of noncompact K\"ahler--Ricci shrinker surfaces.

For compact K\"ahler--Ricci shrinkers in arbitrary dimension,
Guo--Phong--Song--Sturm \cite{GPSS} proved upper bounds for the
scalar curvature and diameter depending only on the dimension.
For noncompact shrinkers, Esparza and the second-named author
\cite[Theorem~3.1]{EsparzaZhang} obtained a uniform subquadratic bound
for the scalar curvature. Our first main result is a
uniform bound on the scalar curvature depending only on the dimension.

\begin{theorem}\label{thm:main}
There is a constant $C_n<\infty$ such that every K\"ahler--Ricci shrinker of complex dimension $n$
satisfies
\begin{equation}\label{eq:main-bound}
    0\le\scal_\omega\le C_n.
\end{equation}
\end{theorem}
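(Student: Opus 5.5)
The plan is to argue by contradiction and compactness, combining Perelman-type local estimates with the Kähler structure. Suppose there is a sequence of Kähler--Ricci shrinkers $(X_i^n,\omega_i,f_i)$ and points $x_i$ with $\scal_{\omega_i}(x_i)\to\infty$. Since $\scal\le\rho^2$, this forces $\rho_i(x_i)\to\infty$. By the subquadratic bound of \cite[Theorem~3.1]{EsparzaZhang}, $\scal_{\omega_i}(x_i)=o(\rho_i(x_i)^2)$ uniformly. The first step is therefore to pass to the associated Ricci flow $g_i(t)=(1-t)\phi_t^*g_i$ and to use a point-picking argument on a parabolic region around $x_i$ of scale comparable to $\rho_i(x_i)$. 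This selects points where $Q_i:=\scal(x_i)$ roughly controls $\scal$ nearby, in the sense of Perelman's pseudolocality/$\kappa$-noncollapsing arguments.

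The shrinker entropy $\muent$ is bounded below (by $\log$ of the volume ratio, or via Bamler's theory without a noncollapsing assumption). So the rescaled flows $Q_i(g_i(t/Q_i))$ around $x_i$ subconverge in Bamler's $\mathbb F$-sense to a metric flow. Because $\scal\ll\rho^2$, the soliton vector field $\nabla f$ has size $\sim\rho$ at $x_i$, which in rescaled units is $\rho/\sqrt{Q}\to\infty$. Hence the limit should split off a line along $\nabla f$, and by the Kähler structure a complex line $\mathbb C$ spanned by $\nabla f$ and $J\nabla f$. The limit is then a Kähler flow of the form $\mathbb C\times Y$, ancient, with $\scal=1$ at the basepoint, and $Y$ of complex dimension $n-1$.

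The next step is to identify $Y$. The rescaled flows are asymptotic to the level-set geometry of the polarized Fano fibration of the shrinker. After rescaling, $Y$ should be a static or steady piece that is itself a Kähler--Ricci shrinker (a tangent-flow type limit) of lower dimension. This suggests an induction on $n$. The base case $n=1$ is trivial, since a Ricci shrinker curve is $\mathbb C$ or $\mathbb{CP}^1$ with explicit bounded curvature. For $n=2$ the result is \cite[Theorem~1.2]{LW26}. The induction hypothesis would give $\scal_Y\le C_{n-1}$ at the correct scale. The normalization $\scal=1$ at the basepoint after rescaling by $Q_i\to\infty$, with $Q_i$ much smaller than $\rho_i^2$, should then contradict the fact that the limiting factor lives at scale $\rho_i^{-2}Q_i\to 0$ relative to its shrinker scale. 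In other words, the curvature of $Y$ would have to be of order $\rho^2/Q\to\infty$ in shrinker units, contradicting the inductive bound.

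The main obstacle is the third step: showing that the blow-up limit really is (a product with) a lower-dimensional Kähler shrinker rather than an arbitrary ancient flow, possibly singular in the $\mathbb F$-limit. To handle this, I would first try to use the holomorphic vector field $\nabla^{1,0}f$. I would use the fact that $J\nabla f$ generates a torus action whose moment-map structure passes to the limit, together with Bamler's regularity of $\mathbb F$-limits (codimension-four singular set, so the Kähler structure and the soliton identities extend across it). The remaining task is to exhibit a self-similar structure on the limit by controlling Nash entropy at two scales. If that fails, the fallback is a direct maximum-principle argument for $\scal$ weighted by a function of $|\nabla f|^2/\rho^2$, using the evolution $\Delta_f\scal=\scal-|\Ric|^2$ in the complex normalization \eqref{eq:shrinker}.
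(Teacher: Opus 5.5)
There is a genuine gap, and it sits exactly at the step you flag as the main obstacle. That step is not a technicality; it is essentially the whole theorem.

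In a point-picking blow-up the scale is dictated by $Q_i=\scal(x_i)$. You therefore have no freedom to place it where the Nash entropy is pinched, and nothing makes the limit self-similar. The canonical flow is smooth on $X\times(-\infty,0)$, and you rescale by $Q_i\to\infty$ around time $-1$, so the rescaled flows extend a rescaled time $\sim Q_i$ beyond the base time. Any limit is therefore an \emph{eternal} flow. Granting the splitting, it has the form $\mathbb C\times Y$, with $Y$ an eternal K\"ahler flow and $\scal=1$ at the basepoint. A steady-soliton-like factor $Y$ is exactly the scenario the theorem must exclude. Neither the K\"ahler condition nor induction on $n$ addresses it, because the inductive hypothesis applies only to complete K\"ahler--Ricci shrinkers.

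Even if $Y$ were a shrinker, the bound $\scal_Y\le C_{n-1}$ holds in $Y$'s own normalization and is consistent with $\scal=1$ at the basepoint. The ratio $\rho_i^{-2}Q_i$ measures against the scale of the original shrinker, which plays no role in $Y$'s normalization. The natural contradiction would be that a self-similar eternal flow is flat, and that again needs the missing self-similarity.

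There are two further problems. First, point-picking controls only $\scal$, and a scalar curvature bound does not control $|\mathrm{Rm}|$ locally; Ricci-flat ALE bubbles such as Eguchi--Hanson have $\scal=0$. In an $\mathbb F$-limit the basepoint may therefore converge to a singular point, and the normalization $\scal=1$ need not survive. Second, compactness for a sequence of different shrinkers requires the uniform bound $\muent\ge-A_n$ of \eqref{eq:automatic-entropy}. That is a nontrivial theorem of Zhang; it does not follow from volume ratios, and Bamler's theory assumes it rather than proves it. The maximum-principle fallback is not an argument as stated.

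The paper avoids all of this by never blowing up at the curvature scale and never tracking pointwise curvature through a limit.
\begin{itemize}
\item Proposition~\ref{prop:scalar-reduction} bounds $\scal(x_0,-1)$ by the integral energy $\mathcal D_{x_0^*}(1)$. This is the scaled weighted $L^2$ distance of $\dc F$ from the closure of gradients. The reduction uses $\heat\scal=\tfrac12|\Ric|^2$ and $\nabla^2F=g-|t|\Ric$.
\item Hodge heat flow gives $\mathcal D(\tau)\le\frac18\mathcal D(8\tau)$ at every scale.
\item Compactness is invoked only at backward scales $8^j$ where the entropy drop is at most $\eta$. There the limit is a metric soliton (Proposition~\ref{prop:shrinking-limit}). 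A closed, $\delta_b$-coclosed one-form in weighted $L^2$ then vanishes by the weighted Bochner formula with $\Ric+\nabla^2b=g$. This yields the improved factor $\epsilon$ (Propositions~\ref{prop:vanishing} and~\ref{prop:improved-contraction}).
\item The entropy bound \eqref{eq:automatic-entropy} limits the number of bad scales to $B(n)$, and \eqref{eq:coarse-growth} kills the terminal term of the iteration.
\end{itemize}
So self-similarity comes from choosing scales by entropy rather than by curvature, and only integral quantities pass to the limit. No point-picking, splitting, or dimension induction is needed.
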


A further motivation comes from the recent work of Jian--Song
\cite{JS26}, who proved a Type~I scalar curvature bound for every
finite-time singularity of the K\"ahler--Ricci flow on a compact
K\"ahler manifold. This extends Perelman's scalar curvature estimate
for the K\"ahler--Ricci flow starting from the first Chern class of a Fano manifold;
see \v{S}e\v{s}um--Tian \cite{SesumTian}. Our earlier work
\cite{LZ26} developed an analogy between the geometry of compact Ricci
flows near singular times and the geometry of noncompact shrinkers
at infinity.
For the canonical ancient flow $\omega_t=(-t)\psi_t^*\omega$ induced by a
K\"ahler--Ricci shrinker, Theorem~\ref{thm:main} gives the Type~I
scalar curvature bound
\[
    0\leq\scal_{\omega_t}=(-t)^{-1}\psi_t^*\scal_\omega
       \leq\frac{C_n}{|t|},\qquad t<0.
\]

The proof adapts the one-form method and iteration of the energy estimate
over scales in
\cite[Sections~3, 6, and~7]{JS26} to the canonical ancient flow.
The shrinker provides a natural global potential $F$, whose exact
evolution and Hessian identities are recorded in
Lemma~\ref{lem:canonical}. We introduce the energy $\mathcal D$ to measure
how far $\dc F$ is from the weighted $L^2$ closure of gradients;
see \eqref{eq:D-definition}.
Since $\nabla^2F$ is $J$-invariant, $\ddc F$ determines its full
Hessian. The soliton identities then relate this energy to the
scalar curvature; see Proposition~\ref{prop:scalar-reduction}.

The noncompact setting requires heat kernel estimates and compactness
without an a priori curvature bound. We use Li--Wang's heat kernel,
compactness, and regularity results for induced shrinker flows
\cite{liwang2020heat1,liwang2024heat}, which extend Bamler's
metric-flow theory \cite{bamler2023compactness,bamler3}.
The scalar curvature bound then allows us to apply the spacetime
convergence and regularity theory of Fang--Li \cite{FangLiStructure},
in the complete-shrinker setting of \cite[Section~5]{LZ26}.
At scales with small pointed Nash entropy drop, compactness and a
weighted Bochner vanishing argument give a stronger contraction
estimate for $\mathcal D$ between scales; see
Proposition~\ref{prop:improved-contraction}.
We iterate over larger backward scales. The ancient flow and the
quadratic energy bound \eqref{eq:coarse-growth} allow the terminal
term to tend to zero, yielding a scalar curvature estimate depending
only on the dimension and a lower bound for Perelman's entropy.

With the normalization \eqref{eq:shrinker}, Perelman's entropy
\cite[Section~3.1]{Perelman2002} is given by
\cite{CarrilloNi2009,liwang2020heat1}
\begin{equation}\label{eq:entropy}
    \muent(g)=\mu(g,\tfrac12)
      =\log\left((2\pi)^{-n}\int_X e^{-(f+n)}\frac{\omega^n}{n!}\right).
\end{equation}
The second-named author \cite[Theorem~0.1]{ZhangUniform} proved that every
K\"ahler--Ricci shrinker satisfies
\begin{equation}\label{eq:automatic-entropy}
    \muent(g)\geq-A_n,
\end{equation}
where $A_n$ depends only on $n$. This controls the number of scales
at which the improved contraction may fail and makes the scalar
curvature bound depend only on the dimension.

\subsection{Tangent cones at infinity}

One of the main applications of Theorem~\ref{thm:main} is to the
geometry at infinity of K\"ahler--Ricci shrinkers, continuing the
program in \cite{LZ26}. For a shrinker, self-similarity relates the tangent space at infinity
to the singular-time limit of its canonical flow.
In \cite[Theorem~1.6]{LZ26}, we proved existence and uniqueness of
this tangent space under two assumptions: bounded scalar curvature
and local compactness of the canonical slice $(Z_0,d_0^Z)$.
Theorem~\ref{thm:main} makes the first assumption automatic in the
K\"ahler setting. We establish the second in
Section~\ref{sec:time-zero-fibration} and identify the limit with the
base of the polarized Fano fibration.

Let $\pi:X\to Y$ be the polarized Fano fibration associated with a
noncompact K\"ahler--Ricci shrinker
\cite[Theorem~3.1]{SunZhang}, and let $0\in Y$ be its vertex.
Our second main result also establishes the metric cone structure
of the limit, proving part of the asymptotic conicity conjecture
\cite[Conjecture~6.2]{SunZhang}.

\begin{theorem}\label{thm:canonical-cone}
Let $(X^n,\omega,f)$ be a noncompact K\"ahler--Ricci shrinker.
There is a proper geodesic metric $d_Y$ on $Y$, inducing its
analytic topology, such that for every $p\in X$,
\begin{equation}\label{eq:intro-cone-limit}
    (X,r^{-2}g,p)\longrightarrow(Y,d_Y,0)\cong C(\Sigma)
    \quad\text{as }r\to\infty
\end{equation}
in the pointed Gromov--Hausdorff sense, where $(\Sigma,d_\Sigma)$ is a
compact geodesic space. In particular, the tangent cone at infinity is unique.
\end{theorem}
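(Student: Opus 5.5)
The plan is to reduce Theorem~\ref{thm:canonical-cone} to \cite[Theorem~1.6]{LZ26}. That result gives existence and uniqueness of the tangent space at infinity for a shrinker with bounded scalar curvature whose canonical time-zero slice $(Z_0,d_0^Z)$ is locally compact. The tangent space is then realized as $(Z_0,d_0^Z,z_0)$, which is a metric cone because the canonical ancient flow $\omega_t=(-t)\psi_t^*\omega$ is self-similar. Theorem~\ref{thm:main} supplies the first hypothesis. What remains is to prove local compactness of $Z_0$ and to identify $Z_0$ with the base $Y$ of the polarized Fano fibration $\pi:X\to Y$ from \cite[Theorem~3.1]{SunZhang}.

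First I would record the cone structure of the time-zero slice. Rescaling $r^{-2}g$ corresponds to evaluating the ancient flow at $t=-r^{-2}$. As $t\to0^-$, the flows converge in Fang--Li's sense \cite{FangLiStructure} to a metric flow, and the scalar bound from Theorem~\ref{thm:main} gives distance distortion estimates $|d_t-d_s|\le C\sqrt{|t-s|}$ on the time slices. This yields Gromov--Hausdorff convergence of $(X,d_t,p)$ to the time-zero slice, uniformly on bounded sets. The dilation invariance of the flow descends to a homothety of $Z_0$ fixing $z_0$, and together with the Gromov--Hausdorff limit this gives a cone $C(\Sigma)$. Compactness of $\Sigma$ follows once $Z_0$ is proper, since the unit ball about $z_0$ is compact.

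Second, the identification with $Y$, which I expect to be the main obstacle. I would build a map $\Phi:Z_0\to Y$ using holomorphic functions. By \cite{SunZhang}, $Y$ is an affine cone embedded in $\mathbb C^N$ by finitely many $f$-weighted polynomial-growth holomorphic functions $u_1,\dots,u_N$ pulled back to $X$, and these are homogeneous under the flow of $\nabla f$ with positive weights $\lambda_i$. The rescaled functions $|t|^{\lambda_i}\psi_t^*u_i$ are then invariant. Along the flow they are Lipschitz with uniform constants: I would get this from gradient estimates via the heat kernel bounds of Li--Wang \cite{liwang2020heat1,liwang2024heat} together with the scalar bound. They therefore pass to Lipschitz limits on $Z_0$, and I define $\Phi=(u_1,\dots,u_N)$ there.

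It remains to show that $\Phi$ is a bijective proper continuous map, hence a homeomorphism that is equivariant for the dilations. Properness comes from $|u|\sim\rho^{c}$ on $X$, which passes to the limit. Injectivity is the hard part: distinct points of $Z_0$ must be separated by the $u_i$. Here I would use H\"ormander $L^2$ estimates on the rescaled flows near time zero, with weight $F$ and $\Ric+\ddc F=\omega$. These produce peak holomorphic sections localized at points of $Z_0$, in the style of Donaldson--Sun, and then use the algebraic fact that homogeneous holomorphic functions of polynomial growth are generated by the $u_i$ on $Y$. Local compactness of $Z_0$ can be obtained along the way, through a volume non-collapsing statement from the entropy bound \eqref{eq:automatic-entropy} and a covering argument at a fixed scale, which is what \cite[Theorem~1.6]{LZ26} needs. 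Finally I would set $d_Y:=\Phi_*d_0^Z$. It is proper and geodesic because $Z_0$ is a length space by Fang--Li, and it induces the analytic topology because $\Phi$ is a homeomorphism. The limit \eqref{eq:intro-cone-limit} holds for every $p$ because a change of basepoint moves by $d_{r^{-2}g}(p,p')\to0$.
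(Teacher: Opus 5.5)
\textbf{Gap 1: the metric cone.} The central gap is your claim that $(Z_0,d_0^Z)$ is a metric cone \emph{because} the canonical flow is self-similar. Self-similarity only gives the homothety relations $d_Y(D_ry,D_ry')=r\,d_Y(y,y')$ and $F_0(D_ry)=r^2F_0(y)$, together with the radial identity \eqref{eq:tangent-radial-distance}. This is a topological cone with homotheties, which is what \cite[Section~5]{LZ26} already provides. It does not give the cosine law $d_Y(D_ry,D_{r'}y')^2=r^2+(r')^2-2rr'\cos\theta(y,y')$.

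For a counterexample, take $(\mathbb R^2,\|\cdot\|_\infty)$. It is proper and geodesic, has the homotheties $v\mapsto rv$, and has radial geodesic rays with $d(0,v)^2=2F_0(v)$ for $F_0=\|v\|_\infty^2/2$. Yet for $y=(1,0)$ and $y'=(0,1)$ one has $d(ry,r'y')=\max\{r,r'\}$. Comparing $r=r'$ with $r'=2r$ forces $\cos\theta=1/2$ and $\cos\theta=1/4$ respectively, a contradiction.

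Cheeger--Colding is also unavailable, since the Ricci curvature is not bounded below. The paper's genuinely new argument enters exactly here:
\begin{itemize}
\item the weighted Ricci identity makes $(X,d_{-\tau},m_\tau)$ an $\mathrm{RCD}(\tau^{-1},\infty)$ space;
\item the EVI for the scaled relative entropy, combined with concentration of the weighted heat flow at $D_{e^{-s}}x$ (Lemma~\ref{lem:tangent-diffusion}), gives the radial inequality of Proposition~\ref{prop:tangent-radial-evi};
\item applying that inequality to $(y,y')$ and to $(y',y)$ yields an equality, which integrates to \eqref{eq:tangent-cone-distance};
\item Laschos--Mielke then produces the cone.
\end{itemize}
You also do not explain why $\Sigma$ is geodesic. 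That requires connectedness of $\Sigma$, which the paper obtains from connectedness of $X\setminus\pi^{-1}(0)$; for real shrinkers the link can be disconnected.

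\textbf{Gap 2: local compactness.} Your argument does not deliver local compactness, the second hypothesis of \cite[Theorem~1.6]{LZ26}. Under $\scal_{\omega_t}\le C_n/|t|$, the entropy bound gives volume noncollapsing only for $d_t$-balls of radius $\lesssim\sqrt{|t|}$, not at a fixed scale $\epsilon$ as $t\uparrow0$. There is also no Bishop--Gromov comparison. When the volume growth is not maximal, unit balls of $r^{-2}g$ genuinely collapse, so a covering argument at a fixed scale cannot work.

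Your ``properness'' of $Z_0\to Y$ from $|u|\sim\rho^c$ has a related problem. It only shows that preimages of bounded sets are bounded. That is not compactness unless you already know $Z_0$ is proper, so the argument is circular.

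In the paper, local compactness comes from continuity of the worldline map $\Phi:X\to(Z_0,d_0^Z)$:
\begin{itemize}
\item Lemma~\ref{lem:polynomial-metric-comparison} together with $0\le F(\cdot,-\tau)-F_0\le C_n\tau$ gives uniformly H\"older local K\"ahler potentials for $\omega_{-\tau}$;
\item the Yang Li/GGZ integration by parts plus Morrey's inequality then gives $d_{-\tau}(x,x')\le C|x-x'|^\alpha$ uniformly in $\tau$ (Lemma~\ref{lem:tangent-distance-modulus});
\item since $d_0^Z(\bar p,\Phi(x))=\sqrt{2F_0(x)}$ with $F_0$ proper, closed balls in $Z_0$ are continuous images of compact sets.
\end{itemize}

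\textbf{Gap 3: injectivity.} Your injectivity step via Donaldson--Sun peak sections has no foundation in this possibly collapsed setting without a Ricci lower bound. The paper instead shows that $\Phi$ is constant on fibers. It uses the vanishing energy $\int_{\mathbb P^1}|\nabla u_\tau|^2\le2\tau\int_{\mathbb P^1}u^*\omega$ along rational curves, equicontinuity from the H\"older estimate, and rational chain connectedness of the fibers. This makes the comparison map $\Phi_0$ of \cite[Lemma~6.1]{LZ26} a homeomorphism.
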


In
Section~\ref{sec:time-zero-fibration}, we first derive polynomial
control of the evolving metrics from \cite[Lemma~5.4]{ZhangUniform}
and \cite[Proposition~4.4]{LZ26}. The potential identities then give
uniform local H\"older bounds for K\"ahler potentials. The distance
estimates of Guedj--Guenancia--Zeriahi
\cite[Proposition~1.4]{GGZ}, following the method of
Yang Li \cite[Theorem~4.1]{YangLi}, then give local H\"older bounds
for distances. The worldline map $\Phi:X\to Z_0$ is therefore continuous
for $d_0^Z$, and the proper limiting potential makes the slice
proper. This proves the required local compactness.
Next, the Dirichlet energy of distance functions along rational
curves in each Fano fiber tends to zero. Rational chain
connectedness of the fibers, due to Hacon--McKernan
\cite[Corollary~1.4]{HaconMcKernan}, shows that $\Phi$ is constant on every
fiber, and the comparison map of \cite{LZ26} identifies $Z_0$ with
$Y$. Under this identification, $\Phi$ realizes the polarized Fano
fibration, proving \cite[Conjecture~6.7]{LZ26}. We also obtain a
dimensional bound for the ambient diameters of all fibers; see
Corollary~\ref{cor:uniform-fiber-diameter}.

The remaining step is to determine the metric cone structure.
The results of \cite[Section~5]{LZ26}, based on the spacetime
structure theory of \cite{FangLiStructure}, already give a
topological cone with a natural family of homotheties.
For manifolds with nonnegative Ricci curvature and Euclidean volume
growth, Cheeger--Colding \cite{CheegerColding} proved that tangent
spaces at infinity are metric cones. Here the Ricci curvature need
not be nonnegative, even on the tangent cone at infinity
\cite{DancerWang}.
The volume growth need not be Euclidean either.
We instead use the weighted Ricci identity
\[
    \Ric(g_{-\tau})+\nabla^2\bigl(F(\cdot,-\tau)/\tau\bigr)
       =\tau^{-1}g_{-\tau}.
\]
The entropy gradient-flow theory of Ambrosio--Gigli--Savar\'e
\cite{AGS} then yields an evolution variational inequality for the
scaled relative entropy. As $\tau\to0$, the weighted heat flow from $x$
concentrates at $D_{e^{-s}}x$; see Lemma~\ref{lem:tangent-diffusion}.
Passing to the limit gives the radial evolution inequality in
Proposition~\ref{prop:tangent-radial-evi}. Interchanging the two points
and using homothety gives the reverse inequality. The resulting
equality integrates to the cone distance formula.
This argument also applies to real Ricci shrinkers with
bounded scalar curvature and locally compact time-zero slice;
see Theorem~\ref{thm:real-shrinker-cone}.

\subsection{Metric completion of the regular part}\label{subsec:intro-metric-completion}
Theorem~\ref{thm:canonical-cone} identifies the tangent cone at infinity
as a metric space. We now compare this metric with the smooth limit
of the canonical flow. Suppose that $(X^n,\omega,f)$ has maximal volume
growth. Then its polarized Fano fibration $\pi:X\to Y$ is birational
by \cite[Proposition~7.1]{LZ26}. Let $Y^\circ\subset Y$ be the largest
open set over which $\pi$ is an isomorphism, and put
$X^\circ=\pi^{-1}(Y^\circ)$. The canonical flow converges locally
smoothly on $X^\circ$ and, through $\pi$, defines a K\"ahler metric
$g_0$ on $Y^\circ$; see \cite[Corollary~4.5 and Theorem~6.2]{LZ26}.

The ambient distance $d_Y$ and the intrinsic Riemannian distance
$d_{g_0}$ on $Y^\circ$ satisfy $d_Y\leq d_{g_0}$ by smooth
convergence. 
Our third main result establishes the reverse equality and determines the
whole cone by metric completion. Together with Theorem~\ref{thm:canonical-cone}, this proves
\cite[Conjecture~6.8]{LZ26}.

\begin{theorem}[Metric completion]\label{thm:completion}
Let $(X^n,\omega,f)$ be a K\"ahler--Ricci shrinker with maximal volume
growth, and let $d_Y$ be the metric of
Theorem~\ref{thm:canonical-cone}. The inclusion
$Y^\circ\hookrightarrow Y$ extends to an isometry
\[
    \overline{(Y^\circ,d_{g_0})}\longrightarrow(Y,d_Y),
\]
where $d_{g_0}$ is the intrinsic Riemannian distance on $Y^\circ$.
\end{theorem}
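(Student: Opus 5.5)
We argue in three steps: show the two metrics agree on the regular part, show the regular part is dense, and then extend to the completion by uniform continuity and properness of $d_Y$.

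\textbf{Step 1: $d_{g_0}=d_Y$ on $Y^\circ$.} Smooth convergence on $X^\circ$ already gives $d_Y\le d_{g_0}$. For the reverse inequality, fix $y_1,y_2\in Y^\circ$ and let $\gamma$ be a $d_Y$-geodesic joining them; it exists because $d_Y$ is proper and geodesic by Theorem~\ref{thm:canonical-cone}. Since $\gamma$ may pass through the singular set $S=Y\setminus Y^\circ$, we do not use it directly. Instead we use the structure of the limit. By the spacetime structure theory of \cite{FangLiStructure} in the setting of \cite[Section~5]{LZ26}, the time-zero slice $Z_0$, identified with $Y$, has its regular part equal to $Y^\circ$, where the flow converges smoothly. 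Its singular part has codimension at least $4$ in the parabolic sense, and $\mathcal H^{2n-2}$-small in the Minkowski sense. For noncollapsed Ricci-flow limits, Bamler's theory \cite{bamler3} shows the regular part is geodesically convex up to small errors. Concretely, for almost every pair of nearby points, some minimizing geodesic of the limit avoids $S$. We transfer this statement to the time-zero slice by letting $\tau\to0$ in the slices $(X,g_{-\tau})$, which are smooth manifolds. Ricci flow does not give Ricci lower bounds, so we cannot rely on Cheeger--Colding segment inequalities. Instead we apply the codimension-$4$ estimate of \cite{FangLiStructure} to the $H_n$-centers and obtain almost-geodesics between $y_1',y_2'$ (close to $y_1,y_2$) that stay in $\{r_{\mathrm{Rm}}\ge\delta\}$. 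On this set the convergence to $g_0$ is smooth, so
\[
d_{g_0}(y_1',y_2')\le d_Y(y_1',y_2')+\varepsilon.
\]
Because $d_{g_0}$ is locally comparable to $d_Y$ near regular points, letting $y_i'\to y_i$ gives $d_{g_0}\le d_Y$. I expect this step to be the main obstacle, namely producing curves avoiding $S$ with controlled length without a Ricci lower bound. A fallback would be the Kähler structure: a potential-theoretic or $L^\infty$ estimate as in \cite{GGZ}, together with the fact that $S$ is contained in an analytic subvariety of $Y$ of complex codimension $\ge1$ (indeed $\ge2$ once $K_Y$ is $\mathbb Q$-Cartier and the Fano fibration is birational). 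Length estimates for the $g_0$-length of curve families near an analytic set, in the style of Song's distance comparison for singular Kähler–Einstein currents with bounded potentials, would then show that paths can be perturbed off $S$ with arbitrarily small increase in length.

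\textbf{Step 2: $Y^\circ$ is dense in $(Y,d_Y)$.} The complement of $Y^\circ$ is a proper analytic subset of the normal variety $Y$, hence nowhere dense in the analytic topology. By Theorem~\ref{thm:canonical-cone}, $d_Y$ induces the analytic topology, so $Y^\circ$ is $d_Y$-dense.

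\textbf{Step 3: extension to the completion.} By Step 1 the inclusion $(Y^\circ,d_{g_0})\to(Y,d_Y)$ is an isometric embedding. It therefore extends uniquely to an isometric embedding of the completion $\overline{(Y^\circ,d_{g_0})}$ into $Y$, using that $(Y,d_Y)$ is complete because it is proper. The image is closed, since the completion is complete, and it contains $Y^\circ$, which is dense by Step 2. Hence the extension is surjective, and so it is an isometry.
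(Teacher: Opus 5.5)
Your Steps~2 and~3 are fine. Once $d_{g_0}=d_Y$ on $Y^\circ$ is known, density of $Y^\circ$ in the analytic topology (which $d_Y$ induces) and completeness of $(Y,d_Y)$ give the isometry exactly as you say.

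The gap is Step~1, the inequality $d_{g_0}\le d_Y$. This is the entire content of the theorem, and the results you invoke do not exist in the form you need. Bamler's and Fang--Li's identifications of slice distance with the intrinsic distance of the regular part concern \emph{negative-time} slices of limit flows. In general they hold outside an exceptional set of times, and for all $t<0$ on Ricci shrinker spaces (e.g.\ \cite[Corollary~7.10]{FangLiStructure}). They say nothing about the terminal singular-time slice $Z_0$. Here the slices $(X,g_{-\tau})$, $\tau>0$, are smooth, so nothing needs proving at negative times. The only problematic slice is $Z_0$, and establishing ``convexity of the regular part'' there is precisely Theorem~\ref{thm:completion}.

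Your transfer by $\tau\to0$ would need, for each small $\tau$, a near-minimizing curve at time $-\tau$ with length error $o(1)$ that stays where $g_{-\tau}\to g_0$ smoothly, i.e.\ away from the collapsing exceptional fibers. Spacetime codimension-four bounds give no such curve-perturbation statement without a segment inequality, and that requires a Ricci lower bound, as you yourself suspect. The K\"ahler fallback is not an argument either. Nothing in it relates the $d_Y$-length of a piece of geodesic running through $Y\setminus Y^\circ$ to the $g_0$-length of nearby regular curves. Song-type comparisons for singular K\"ahler--Einstein currents also rely on Ricci lower bounds, which are absent here.

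The paper's proof supplies two ingredients you are missing.

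First, it proves an a priori bi-Lipschitz bound $d_{g_0}\le C\,d_Z\le C\,d_Y$ on all of $Y$ (Lemmas~\ref{lem:intrinsic-completion}--\ref{lem:comparison}). $d_{g_0}$ extends continuously to $Y$ using the H\"older continuity of $F_0$. The trace estimate $\int_0^\tau\!\int\operatorname{tr}_{\omega_{-s}}\omega_0\,\d\nu\,\d s\le n\tau$, kernel comparison, and the heat-kernel Poincar\'e inequality give $\int d_{g_0}(p,\pi(\cdot))^2\,\d\nu_{p;-\tau}\le C\tau$. Overlapping kernels then turn this into the comparison. As a result, every $d_Y$-geodesic $\gamma$ is $d_{g_0}$-Lipschitz, even through the singular set.

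Second, it proves the sharp bound $|\gamma'|_{d_{g_0}}\le1$ almost everywhere. At a.e.\ parameter, metric differentiation yields a line in the time-zero slice of a tangent flow. This line forces a flat $\mathbb C$-factor on the regular negative-time part (Lemma~\ref{curve:orbit}). Holomorphic strips in that factor are transplanted to the rescaled flows. Their $\omega_0$-area is controlled through $\omega_0=\omega_{-\tau}-\tau\Ric(\omega_{-\tau})+\sqrt{-1}\partial\bar\partial(F_0-F(\cdot,-\tau))$ with $0\le F(\cdot,-\tau)-F_0\le C_n\tau$; this is where Theorem~\ref{thm:main} enters. Averaging over horizontal segments that avoid $\pi^{-1}(Y\setminus Y^\circ)$ produces curves in $Y^\circ$ of $g_0$-length at most $(1+\epsilon)r_i$, with endpoints $\epsilon r_i$-close to $\gamma(a)$ and $\gamma(a+r_i)$. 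Integrating the metric derivative then gives $d_{g_0}\le d_Y$.

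Without a substitute for both ingredients, Step~1 does not go through.
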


The key is to approximate a $d_Y$-geodesic segment
joining regular points by curves in $X^\circ\simeq Y^\circ$, with
arbitrarily small length error. To construct these curves, we blow up
at almost every interior point of the geodesic  and use the tangent-flow framework in 
\cite{FangLiStructure}, applied to the canonical
shrinker flow as in \cite[Section~5]{LZ26}. A similar idea of using
tangent geometry to construct curves in the regular part appears in
Chen--Wang \cite[Theorem~3.18 and Proposition~3.20]{CWIIb}.

We first show that the rescaled geodesic
converges to a geodesic line in the time-zero slice of a tangent flow.
Then in Lemma~\ref{curve:orbit}, we prove that this line induces a flat complex splitting
of the regular backward tangent flow. We transfer holomorphic strips in this factor to the approximating
flows. The scalar curvature bound in
Theorem~\ref{thm:main} gives the uniform estimate
$|F(\cdot,-\tau)-F_0|\leq C_n\tau$ for the canonical potential.
Through the soliton equation, this controls the area of the
strip. Averaging over their horizontal segments then produces the desired
short curves in $Y^\circ$.

\subsection*{Organization}
Section~\ref{sec:heat} collects the heat kernel estimates, introduces the
weighted one-form energy, and reduces the scalar curvature bound to an
estimate for this energy. Section~\ref{sec:scalar-bound} proves
Theorem~\ref{thm:main}: Subsection~\ref{subsec:almost-selfsimilar}
regularizes heat-kernel potentials near almost self-similar points,
Subsection~\ref{subsec:vanishing} proves the vanishing lemma for one-forms,
and Subsection~\ref{subsec:energy-iteration} improves the energy estimate
and completes the proof by iteration.
Section~\ref{sec:time-zero-fibration} identifies the time-zero slice
with the base of the Fano fibration. Section~\ref{sec:metric-cone} proves its metric
cone structure Theorem~\ref{thm:canonical-cone}.
Section~\ref{sec:metric-completion} proves the metric-completion
Theorem~\ref{thm:completion}. In Appendix~\ref{app:finite-time}, we prove the metric completion result
for finite-time singularities of compact volume-noncollapsed
K\"ahler--Ricci flows.

\subsection*{Notations and conventions}
We denote the underlying manifold of a K\"ahler--Ricci shrinker by $X$,
its complex dimension by $n$, its complex structure by $J$,
and the Riemannian metric associated with $\omega$ by $g$; thus
$\d V_g=\omega^n/n!$. We use
$\Delta=\Delta_\omega=\tfrac12\Delta_g$ and
$\scal=\scal_\omega=\tfrac12\scal_g$ unless a metric is specified.
In tensor equations, $\Ric$ denotes the real Ricci tensor $\Ric_g$,
whereas $\Ric(\omega)$ denotes the Ricci form.
Gradients, Hessians, and norms of real tensors are taken with respect to $g$.

We use $\dc u=-\d u\circ J$ and the nonnegative Hodge Laplacian
$\Delta_\d=\d\delta+\delta\d$, where $\delta$ is the adjoint of $\d$
with respect to $\d V_g$. For a weight $b$,
$\delta_b=\delta+\iota_{\nabla b}$ is the adjoint of $\d$ in
$L^2(e^{-b}\d V_g)$.

Our flow convention is $\partial_t\omega_t=-\Ric(\omega_t)$,
or equivalently $\partial_tg=-\Ric_g$, and
$\heat=\partial_t-\Delta$. Following
\cite[Definition~2.2]{liwang2024heat}, a flow induced by a
shrinker is a time translation and parabolic rescaling
of its canonical Ricci flow.
The results of \cite{liwang2020heat1,liwang2024heat} apply to $2g(t)$,
which satisfies $\partial_t(2g)=-2\Ric_{2g}$; the entropy and conjugate heat kernel measures are unchanged by this change of convention.

For a base point $x_0^*=(x_0,t_0)$, we write
$\tau=t_0-s$ for backward time and $\nu_s$ for the conjugate heat
kernel measure based at $x_0^*$.
Unless otherwise indicated,
spatial norms $\|\cdot\|_p$ are taken in $L^p(\nu_s)$ with respect to the
metric at the time under consideration. We write $d_{W_1}^{g(t)}$ for
the Wasserstein $1$-distance with ground distance induced by $g(t)$;
on a metric slice $Z_t$ with canonical distance $d_t^Z$, we write $d_{W_1}^{Z_t}$.

\subsection*{Acknowledgements}
Yu Li is supported by the National Key R\&D Program of China
(Grant No.~2025YFA1018200), NSFC-12522105, and research funds from the
University of Science and Technology of China and the Chinese Academy of
Sciences.

\subsection*{AI declaration}
ChatGPT (GPT-6 Astra) was used to assist with drafting, mathematical
exposition, revision, and reference checking. The authors take full
responsibility for the mathematical content and correctness.

\addtocontents{toc}{\protect\setcounter{tocdepth}{\number\value{tocdepth}}}
\section{Heat kernel and weighted one-form estimates}\label{sec:heat}

\subsection{Conjugate heat kernel measures and the soliton potential}

Fix a flow $g(t)$, $t<T$, obtained from the canonical flow of a K\"ahler--Ricci shrinker
by time translation and parabolic rescaling, and a spacetime point
$x_0^*=(x_0,t_0)$ with $t_0<T$. For $s<t_0$, write
\begin{equation}\label{eq:heat-measure}
    \d\nu_s=K(x_0,t_0;\cdot,s)\,\d V_{g(s)}
      =(2\pi\tau)^{-n}e^{-b(s)}\,\d V_{g(s)},
      \qquad \tau=t_0-s.
\end{equation}
Here $\nu_s=\nu_{x_0^*;s}$ is the conjugate heat kernel measure and
$b=b_{x_0^*}$ is its potential at $x_0^*$, in the terminology of
\cite[Definition~2.6]{FangLiStructure}. We suppress the basepoint when it
is fixed. Each $\nu_s$ is a probability measure. For $a<a'$, the operator $P_{a,a'}$
evolves a function from time $a$ to time $a'$ by the scalar heat equation:
\begin{equation}\label{eq:propagator-definition}
    (P_{a,a'}u_a)(z)=\int_X K(z,a';y,a)u_a(y)\,\d V_{g(a)}(y).
\end{equation}
Thus $u(\cdot,s)=P_{a,s}u_a$ solves
$\partial_su=\Delta_{\omega_s}u$ with initial value $u(\cdot,a)=u_a$.
We first define it for compactly supported smooth $u_a$ and then extend it
to the weighted spaces below. If $a<a'<t_0$, the semigroup property gives
\begin{equation}\label{eq:duality}
    \int_X P_{a,a'}u\,\d\nu_{a'}=\int_X u\,\d\nu_a,
    \qquad a<a'<t_0.
\end{equation}
For a smooth function $u$ satisfying the integrability conditions of
Lemma~\ref{lem:cutoff-duality}, the corresponding differentiated identity holds
for almost every $s$:
\begin{equation}\label{eq:heat-duality}
    \frac{\d}{\d s}\int_X u\,\d\nu_s
       =\int_X\heat u\,\d\nu_s.
\end{equation}
To construct cutoffs, fix a compact time interval $I\Subset(-\infty,T)$
and set
\[
    \tilde F(x,s)=\lambda F\bigl(x,(s-T)/\lambda\bigr),\qquad\lambda>0,
\]
where $\lambda$ is the parabolic scaling factor of the induced flow
and $F$ is the canonical potential in \eqref{eq:canonical-potential}.
In our convention, the
potential identities in \cite[p.~1644, (2.5)--(2.9)]{liwang2024heat}
take the form
\[
    (\partial_s-\Delta_{\omega_s})\tilde F=-n,
    \qquad
    \tfrac12|\d\tilde F|_{g(s)}^2
       =\tilde F-(T-s)^2\scal_{\omega_s}.
\]
Choose a smooth function
$0\le\eta\le1$, equal to one on $[0,1]$ and zero on $[2,\infty)$, and
set
\begin{equation}\label{eq:chiL}
    \chi_L=\eta(\tilde F/L),\qquad L\ge1.
\end{equation}
These are compactly supported in space, exhaust the manifold uniformly
on the fixed time interval, and satisfy
\begin{equation}\label{eq:cutoff-bounds}
    |\d\chi_L|\le C L^{-1/2},\qquad
    |\partial_s\chi_L|+|\Delta\chi_L|\le C_I.
\end{equation}
The derivatives are supported where $L\le\tilde F\le2L$.
These estimates follow from the canonical-potential identities and
$0\le\scal\le\tilde F/(T-s)^2$; they are also the cutoffs of
\cite[(2.11)--(2.15)]{liwang2024heat}. The constants may depend on the fixed flow and time interval.

\begin{lemma}\label{lem:cutoff-duality}
Let $\d\nu_s=K_s\d V_{g(s)}$ be a conjugate heat kernel measure and
$K_s=(2\pi(t_0-s))^{-n}e^{-b(s)}$. Fix
$[a,a']\Subset(-\infty,t_0)$ and a smooth scalar function $B$ such that
$B(a)\in L^1(\nu_a)$ and $B,\,B|\d b|\in L^1(\d\nu_s\,\d s)$
on $X\times[a,a']$. If $\heat B=G$ with
$G\in L^1(\d\nu_s\,\d s)$, then $B(a')\in L^1(\nu_{a'})$ and
\begin{equation}\label{eq:integrated-heat-duality}
    \int_X B(a')\,\d\nu_{a'}-\int_X B(a)\,\d\nu_a
       =\int_a^{a'}\!\int_X G\,\d\nu_s\,\d s.
\end{equation}
We also have
\begin{equation}\label{eq:L1-Kato}
    \|B(a')\|_{L^1(\nu_{a'})}\le\|B(a)\|_{L^1(\nu_a)}+
                       \int_a^{a'}\!\int_X|G|\,\d\nu_s\,\d s.
\end{equation}
If instead $\heat B\le-Q$ with $B,Q\ge0$, then
\[
    \int_X B(a')\,\d\nu_{a'}
       +\int_a^{a'}\!\int_X Q\,\d\nu_s\,\d s
       \le\int_X B(a)\,\d\nu_a.
\]
\end{lemma}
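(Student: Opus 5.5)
Multiply $B$ by the cutoffs $\chi_L$ of \eqref{eq:chiL}, prove the identity for the compactly supported functions $\chi_L B$, and let $L\to\infty$. For each fixed $L$, $\chi_L B$ is smooth and compactly supported on $X\times[a,a']$. So the differentiated duality \eqref{eq:heat-duality} holds by differentiating under the integral, using that $K$ solves the conjugate heat equation $(-\partial_s-\Delta+\scal)K=0$ (in the flow normalization) and integrating by parts with no boundary terms. Integrating in $s$ gives
\[
\int_X \chi_LB(a')\,\d\nu_{a'}-\int_X\chi_LB(a)\,\d\nu_a=\int_a^{a'}\!\int_X \heat(\chi_LB)\,\d\nu_s\,\d s .
\]
Next expand $\heat(\chi_LB)=\chi_L G+B\,\heat\chi_L-2\langle\d\chi_L,\d B\rangle$ (with the paper's factor convention for $\Delta$).

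The gradient cross term contains $\d B$, which is not assumed integrable. I would remove it by integrating by parts against $\d\nu_s=(2\pi\tau)^{-n}e^{-b}\d V$:
\[
-\int\langle\d\chi_L,\d B\rangle\,\d\nu_s=\int B\,\Delta\chi_L\,\d\nu_s-\int B\langle\d\chi_L,\d b\rangle\,\d\nu_s ,
\]
up to the normalizing constants. The boundary terms vanish because $\d\chi_L$ has compact support. By \eqref{eq:cutoff-bounds}, the resulting error terms are bounded by
\[
C_I\int_a^{a'}\!\!\int_{\{L\le\tilde F\le2L\}}\bigl(|B|+L^{-1/2}|B||\d b|\bigr)\,\d\nu_s\,\d s .
\]
This tends to $0$ as $L\to\infty$ by dominated convergence, since $B,\,B|\d b|\in L^1(\d\nu_s\,\d s)$ and the sets $\{L\le\tilde F\le 2L\}$ escape to infinity uniformly on $[a,a']$. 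The term $\int\!\!\int\chi_LG$ converges to $\int\!\!\int G$ by dominated convergence, and $\int\chi_LB(a)\,\d\nu_a\to\int B(a)\,\d\nu_a$ because $B(a)\in L^1(\nu_a)$.

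The main obstacle is the left-hand term at time $a'$: we do not yet know $B(a')\in L^1(\nu_{a'})$. To handle it, first run the argument with $B$ replaced by a smooth convex approximation of $|B|$, e.g.\ $\phi_\epsilon(B)=\sqrt{B^2+\epsilon^2}$. By Kato, $\heat\phi_\epsilon(B)\le\phi_\epsilon'(B)G\le|G|$. Since the cutoff argument gives an inequality, it yields $\int\chi_L\phi_\epsilon(B(a'))\,\d\nu_{a'}$ bounded uniformly in $L$, with error terms controlled by $|B|+\epsilon$; note $\epsilon$ is integrable because $\nu_s$ is a probability measure. Monotone convergence in $L$ and then letting $\epsilon\to0$ gives \eqref{eq:L1-Kato} and hence $B(a')\in L^1(\nu_{a'})$. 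With this, dominated convergence applies at time $a'$ as well, giving \eqref{eq:integrated-heat-duality}.

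For the final inequality, run the cutoff computation with $\heat B\le-Q$ and $\chi_L\ge0$:
\[
\int_X\chi_LB(a')\,\d\nu_{a'}+\int_a^{a'}\!\!\int_X\chi_LQ\,\d\nu_s\,\d s\le\int_X\chi_LB(a)\,\d\nu_a+o(1).
\]
The $o(1)$ error is handled exactly as before. Since $B,Q\ge0$, monotone convergence as $L\to\infty$ on the left gives the claimed inequality, with no integrability assumption on $Q$.
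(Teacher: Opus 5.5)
Your proposal is correct and follows essentially the same route as the paper: multiply by $\chi_L$, move the derivative off $B$ so that the error is $\int B(\partial_s\chi_L+\Delta\chi_L-\langle\d b,\d\chi_L\rangle)\,\d\nu_s$, obtain terminal integrability from a Kato-type approximation of $|B|$, and finish with Fatou's lemma and dominated convergence. The differences are cosmetic. The paper uses $(B^2+\delta^2)^{1/2}-\delta\le|B|$, which avoids your extra error term $\epsilon|\d b||\d\chi_L|$; that term is harmless anyway, since $|\d b|\in L^2(\nu_s)$ by \eqref{eq:scalar-average}. The paper also uses Fatou's lemma where you invoke monotone convergence, which would require $\eta$ to be nonincreasing.
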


\begin{proof}
This is the cutoff argument in \cite[Proposition~3.3(ii) and the proof of
Lemma~A.1, (A.3)--(A.4)]{liwang2024heat}, with an inhomogeneous term.
Differentiate using $\partial_sK=-\Delta K+\scal K$ and
$\partial_s\d V=-\scal\d V$, and integrate the Laplacian twice on the
compact support of $\chi_L$. This gives
\[
\frac{\d}{\d s}\int_X\chi_LB\,\d\nu_s
       =\int_X\chi_L\heat B\,\d\nu_s
       +\int_XB\bigl(\partial_s\chi_L+\Delta\chi_L
          -\langle\d b(s),\d\chi_L\rangle\bigr)\,\d\nu_s.
\]
The cutoff bounds and the integrability
of $B$ and $B|\d b|$ make the error tend to zero after time integration.

Apply the same calculation to $(B^2+\delta^2)^{1/2}-\delta$, whose heat operator
is at most $|G|$. Taking the initial boundary value before exhaustion,
then letting $L\to\infty$ and $\delta\to0$, proves
\eqref{eq:L1-Kato} by Fatou's lemma. This also gives terminal
integrability; \eqref{eq:integrated-heat-duality} then follows by dominated convergence.
For $\heat B\le-Q$ with $B,Q\ge0$, apply Fatou's lemma directly to the
nonnegative terminal and spacetime terms.
\end{proof}

Define
\begin{equation}\label{eq:N-W}
\begin{aligned}
    \NN_{x_0^*}(\tau)&=\int_X b(t_0-\tau)\,\d\nu_{t_0-\tau}-n,\\
    \WW_{x_0^*}(\tau)&=\int_X
       \bigl[\tau(\scal+\tfrac12|\d b|^2)+b-2n\bigr]
       \,\d\nu_{t_0-\tau}.
\end{aligned}
\end{equation}
Using \eqref{eq:automatic-entropy}, the estimates of
\cite[Proposition~3.19 and Corollary~3.22]{liwang2024heat} give
\begin{equation}\label{eq:entropy-bounds}
    -A_n\le\muent(g)\le\NN_{x_0^*}(\tau)\le0,
    \qquad
    \WW_{x_0^*}(\tau)=\frac{\d}{\d\tau}\bigl(\tau\NN_{x_0^*}(\tau)\bigr).
\end{equation}
Both $\WW_{x_0^*}$ and $\NN_{x_0^*}$ are nonincreasing in $\tau$.
For these entropy identities and monotonicity on compact Ricci flows,
see also \cite[Proposition~5.2]{BamlerHeat}. The differential
Harnack inequality \cite[Theorem~21]{liwang2020heat1} is
\begin{equation}\label{eq:Perelman-w}
    w(b):=\tau(\scal+2\Delta b-\tfrac12|\d b|^2)+b-2n\le0,
    \qquad \int_X w(b)\,\d\nu_{t_0-\tau}=\WW_{x_0^*}(\tau).
\end{equation}
We will also use \cite[Corollary~3.20, (3.54)]{liwang2024heat}:
\begin{equation}\label{eq:scalar-average}
    \int_X\bigl(\scal+\tfrac12|\d b|^2\bigr)\,\d\nu_{t_0-\tau}
       \le\frac n\tau.
\end{equation}

\begin{lemma}\label{lem:weighted-heat}
Let $a<a'<t_0$. Scalar heat evolution extends to a contraction
$L^p(\nu_a)\to L^p(\nu_{a'})$ for $1\le p<\infty$. For weighted Sobolev
initial data, it satisfies
\begin{equation}\label{eq:gradient-contraction}
    \|\d P_{a,a'}u\|_{L^p(\nu_{a'})}
       \le\|\d u\|_{L^p(\nu_a)},\qquad 2\le p<\infty,
\end{equation}
and
\begin{equation}\label{eq:hessian-energy}
    \|\d P_{a,a'}u\|_2^2
       +\int_a^{a'}\!\int_X|\nabla^2P_{a,s}u|^2\,\d\nu_s\,\d s
       \le\|\d u\|_2^2.
\end{equation}
Moreover,
\begin{equation}\label{eq:Poincare}
    \int_X\left|u-\int_X u\,\d\nu_a\right|^2\d\nu_a
       \le(t_0-a)\int_X|\d u|^2\,\d\nu_a.
\end{equation}
If $z\ge0$ is a heat subsolution of polynomial spatial growth, then
\begin{equation}\label{eq:hypercontractivity}
    \|z(a')\|_{L^r(\nu_{a'})}\le\|z(a)\|_{L^p(\nu_a)}
    \quad\text{provided}\quad
    1<p\le r<\infty,\quad
    \frac{t_0-a}{t_0-a'}\ge\frac{r-1}{p-1}.
\end{equation}
\end{lemma}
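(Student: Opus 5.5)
The plan is to prove all four statements first for compactly supported smooth initial data $u$, where every integration by parts is justified by the cutoffs $\chi_L$ and Lemma~\ref{lem:cutoff-duality}. The general case then follows by density in $L^p(\nu_a)$, and in the weighted Sobolev spaces for \eqref{eq:gradient-contraction} and \eqref{eq:hessian-energy}. The integrability hypotheses of Lemma~\ref{lem:cutoff-duality}, namely $B,\,B|\d b|\in L^1(\d\nu_s\,\d s)$, will be verified using the Li--Wang heat kernel Gaussian bounds for $K$ and $|\d b|$ on induced shrinker flows \cite{liwang2024heat}. For $u$ compactly supported, $P_{a,s}u$ and its derivatives decay like the heat kernel, so these bounds suffice.

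\emph{$L^p$ contraction.} Put $v=P_{a,s}u$. For convex $\phi(x)=(x^2+\delta^2)^{p/2}$ we have $\heat\phi(v)=-\phi''(v)|\d v|^2\le0$. Lemma~\ref{lem:cutoff-duality} then gives $\int\phi(v(a'))\,\d\nu_{a'}\le\int\phi(u)\,\d\nu_a$, and letting $\delta\to0$ yields the contraction.

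\emph{Gradient and Hessian estimates.} On a Ricci flow with $\partial_tg=-\Ric_g$, the Bochner formula gives
\[
\heat|\d v|^2=-2|\nabla^2v|^2+(\text{Ricci terms}),
\]
and the Ricci terms cancel under the normalization used here (up to the factor conventions of $\Delta=\tfrac12\Delta_g$). This is the standard identity $\heat|\nabla v|^2=-2|\nabla^2v|^2$ for the flow $2g(t)$. Applying Lemma~\ref{lem:cutoff-duality} with $B=|\d v|^2$ and $Q=2|\nabla^2 v|^2$ gives \eqref{eq:hessian-energy}, after tracking the constant to match the stated normalization. For $p\ge2$, set $B=|\d v|^p$, regularized as $(|\d v|^2+\delta)^{p/2}$. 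Then
\[
\heat B=\tfrac p2(\cdot)^{p/2-1}\heat|\d v|^2-\tfrac p2(\tfrac p2-1)(\cdot)^{p/2-2}\bigl|\d|\d v|^2\bigr|^2\le0,
\]
where the last inequality uses the Kato inequality $|\d|\d v|^2|\le2|\d v||\nabla^2v|$ and $p\ge2$. This gives \eqref{eq:gradient-contraction}.

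\emph{Poincaré inequality.} I would use the semigroup argument. Let $v(s)=P_{a,s}u$ for $s\in[a,t_0)$. Then
\[
\frac{\d}{\d s}\int v^2\,\d\nu_s=-2\int|\d v|^2\,\d\nu_s\ge-2\|\d u\|_{L^2(\nu_a)}^2
\]
by \eqref{eq:gradient-contraction}. Integrating from $a$ to $t_0$, with $\int v(s)^2\,\d\nu_s\to(\int u\,\d\nu_a)^2$ as $s\to t_0$ because $\nu_s\to\delta_{x_0}$ and $v$ is continuous, gives $\mathrm{Var}_{\nu_a}(u)\le2(t_0-a)\|\d u\|^2$. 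The constant $1$ in \eqref{eq:Poincare} must then come from the normalization $\Delta=\tfrac12\Delta_g$ and $|\d u|$ measured in $g$. In the complex convention, $\partial_sv=\tfrac12\Delta_gv$ and $\heat v^2=-|\d v|_g^2$, which removes the factor $2$. I will recheck the constants in the Bochner step the same way; alternatively, I will cite \cite[Theorem~3.14 / Proposition~3.3]{liwang2024heat} for the $2g$ flow and rescale.

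\emph{Hypercontractivity.} For \eqref{eq:hypercontractivity} I would follow Bamler's proof, which is also in \cite{liwang2024heat}. Let $q(s)$ satisfy $q(a)=p$ and $(q(s)-1)/(t_0-s)$ be constant, so that $q$ increases to $r$ at some $s\le a'$. Differentiating $\log\|z(s)\|_{q(s)}$ produces an entropy term $\int z^q\log z^q$ against a gradient term. The log-Sobolev inequality for $\nu_s$ with constant $\sim(t_0-s)$, which is equivalent to the Poincaré/Bakry--Émery-type property of conjugate heat kernels, makes the derivative nonpositive. Monotonicity of $L^r$ norms in $s$ then handles the remaining interval. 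The polynomial growth of $z$ together with the Gaussian heat kernel bounds justifies applying Lemma~\ref{lem:cutoff-duality}.

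I expect the main obstacle to be this last step: the log-Sobolev inequality on a noncompact flow without curvature bounds. I would first try to import it directly from \cite{liwang2024heat}, and only derive it from \eqref{eq:hessian-energy}-type entropy dissipation if it is not available there.
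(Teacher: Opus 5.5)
Your proposal is correct in outline. It follows the paper for the Hessian energy and for hypercontractivity, which the paper also simply imports from \cite[Theorem~4.20]{liwang2024heat}, but it differs in two places.

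For the $L^p$ and gradient contractions, the paper argues pointwise. Jensen's inequality gives $|P_{a,a'}u|^p\le P_{a,a'}(|u|^p)$. The Li--Wang maximum principle applies because their gradient estimate makes $|\d P_{a,s}u|$ bounded, and it gives $|\d P_{a,a'}u|^p\le P_{a,a'}(|\d u|^p)$. Both bounds are then integrated using \eqref{eq:duality}. You instead integrate the supersolution inequalities directly through Lemma~\ref{lem:cutoff-duality}, which works equally well. However, the hypothesis $B|\d b|\in L^1$ should be justified by that same boundedness of $|\d P_{a,s}u|$. Your assertion that derivatives of $P_{a,s}u$ decay like the heat kernel is not established without curvature bounds, and you do not need it.

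For \eqref{eq:Poincare}, the paper cites the log-Sobolev inequality. Your semigroup argument instead integrates $\heat v^2=-|\d v|^2$ up to $t_0$, bounds the dissipation by the $p=2$ gradient contraction, and uses $\int v(s)^2\,\d\nu_s\to v(x_0,t_0)^2=(\int u\,\d\nu_a)^2$. That limit is valid because $v$ is bounded and continuous at $(x_0,t_0)$ and $\nu_s\rightharpoonup\delta_{x_0}$. Your argument is self-contained and recovers the sharp constant $t_0-a$ from the flow itself. The paper's route is more economical, since the same log-Sobolev inequality also drives the hypercontractivity.

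Some constants in your sketch need correcting:
\begin{itemize}
\item In this normalization $\heat|\d v|^2=-|\nabla^2v|^2$ exactly, not $-2|\nabla^2 v|^2$. This is what yields \eqref{eq:hessian-energy} with constant one.
\item For $p\ge2$ no Kato inequality is needed, since both terms of $\heat(|\d v|^2+\delta)^{p/2}$ are already nonpositive.
\item In your fallback Gross argument the exponent must satisfy $(q(s)-1)(t_0-s)=\text{const}$. The condition $(q(s)-1)/(t_0-s)=\text{const}$ would make $q$ decrease.
\item The log-Sobolev inequality is strictly stronger than the Poincar\'e inequality, not equivalent to it. Deriving it yourself would require the $p=1$ pointwise bound $|\d P_{a,a'}u|\le P_{a,a'}|\d u|$, which is available here since $\heat|\d U|\le0$ by Kato. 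Neither \eqref{eq:hessian-energy} nor the $L^2$ contraction is enough.
\end{itemize}
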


\begin{proof}
First take $u\in C_c^\infty(X)$ and put $U(s)=P_{a,s}u$.
Since the heat kernel measures are probability measures, Jensen's
inequality and \eqref{eq:duality} give, for $1\le p<\infty$,
\[
    \int_X|P_{a,a'}u|^p\,\d\nu_{a'}
       \le\int_X P_{a,a'}(|u|^p)\,\d\nu_{a'}
       =\int_X|u|^p\,\d\nu_a.
\]
By density, $P_{a,a'}$ extends to a contraction from $L^p(\nu_a)$
to $L^p(\nu_{a'})$. In our normalization, the chain rule and
Bochner formula give
\[
    \heat|U|^p=-\tfrac{p(p-1)}2|U|^{p-2}|\d U|^2\le0,
    \qquad \heat|\d U|^p\le0\quad(p\ge2).
\]
At zeros, use $(U^2+\delta^2)^{p/2}$ and
$(|\d U|^2+\delta^2)^{p/2}$ and let $\delta\to0$.
The gradient bound of \cite[Lemma~3.2]{liwang2024heat} makes $|\d U|$
bounded. The maximum principle \cite[Theorem~6]{liwang2020heat1},
recalled in \cite[Theorem~2.5]{liwang2024heat}, therefore applies and gives
\[
    |\d P_{a,a'}u|^p\le P_{a,a'}(|\d u|^p).
\]
Integrating against $\nu_{a'}$ proves \eqref{eq:gradient-contraction}.
For \eqref{eq:hessian-energy}, retain the full identity
\[
    \heat|\d U|^2=-|\nabla^2U|^2.
\]
Integrate with the cutoffs of Lemma~\ref{lem:cutoff-duality} and use
Fatou's lemma for the nonnegative Hessian term. The cutoff errors vanish
because $|\d U|$ is bounded and $|\d b|$ is integrable on each fixed
spacetime interval. Lemma~\ref{lem:approximation} below extends both
estimates from compactly supported smooth data to weighted Sobolev data.

The Poincar\'e inequality \eqref{eq:Poincare} follows from the logarithmic
Sobolev inequality \cite[Theorem~13]{liwang2020heat1}; it is stated in
\cite[Theorem~3.17]{liwang2024heat} with $p=2$.
Its coefficient is $t_0-a$, since
$2|\d u|_{2g}^2=|\d u|_g^2$.
The hypercontractivity estimate \eqref{eq:hypercontractivity} is
\cite[Theorem~4.20]{liwang2024heat}; see also
\cite[Theorem~12.1]{BamlerHeat} for compact Ricci flows.
\end{proof}

For a fixed conjugate heat kernel measure $\nu_a$ and $1\le p<\infty$, the weighted
Sobolev norm is
\[
    \|u\|_{W^{1,p}(\nu_a)}
       =\|u\|_{L^p(\nu_a)}+\|\d u\|_{L^p(\nu_a)}.
\]
We first define the space by completion of $C_c^\infty(X)$. It agrees
with the space of functions with weak derivative and finite displayed
norm. The following lemma constructs heat solutions with initial data
in $W^{1,p}(\nu_a)$ by approximation.

\begin{lemma}\label{lem:approximation}
Let $a<a'<t_0$ and $2\le p<\infty$.
Every $u\in W^{1,p}(\nu_a)$ admits compactly supported smooth
approximations $u_k$ in that norm. Scalar heat evolution of $u_k$
converges in $L^p(\nu_s)$ and its gradients converge in $L^p(\nu_s)$,
uniformly for $s\in[a,a']$. For $p=2$, the Hessians also converge in
spacetime $L^2$ on $[a,a']$. The limit is the weighted heat evolution of
$u$, is smooth locally for $s>a$, and belongs to
$W^{1,p}(\nu_s)$ at each later time.
\end{lemma}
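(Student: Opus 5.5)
The argument has three parts: density of $C_c^\infty(X)$ in $W^{1,p}(\nu_a)$, uniform-in-time convergence of the heat evolutions $P_{a,s}u_k$ from the contraction estimates of Lemma~\ref{lem:weighted-heat}, and identification and regularity of the limit.

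\textbf{Density.} Given $u\in W^{1,p}(\nu_a)$, we first truncate in value, $u^{(M)}=\max(-M,\min(u,M))$. This converges to $u$ in $W^{1,p}(\nu_a)$ by dominated convergence, since $\d u^{(M)}=\d u\,1_{\{|u|<M\}}$ almost everywhere. We then multiply by the cutoff $\chi_L$ from \eqref{eq:chiL} at time $a$. Since $|\d\chi_L|\le CL^{-1/2}$ and $\chi_L\to1$ pointwise, we get
\[
\|\d(\chi_Lu^{(M)})-\d u^{(M)}\|_p\le\|(1-\chi_L)\d u^{(M)}\|_p+CL^{-1/2}M\to0 .
\]
Finally, we mollify in local charts on the compact support of $\chi_Lu^{(M)}$. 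There the density $K_a$ is bounded above and below, so weighted and unweighted local $W^{1,p}$ norms are comparable. This step is standard. The only point to verify is that the weighted $W^{1,p}$ space, which is a priori defined by finiteness of the norm, is reached this way.

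\textbf{Convergence.} Let $u_k\to u$ in $W^{1,p}(\nu_a)$ and set $U_k(s)=P_{a,s}u_k$. By linearity and the $L^p$ contraction of Lemma~\ref{lem:weighted-heat}, for $s\in[a,a']$ we have
\[
\|U_k(s)-U_l(s)\|_{L^p(\nu_s)}\le\|u_k-u_l\|_{L^p(\nu_a)},
\]
and \eqref{eq:gradient-contraction} gives the same bound for gradients. These hold for compactly supported data, and both bounds are uniform in $s$. For $p=2$, applying \eqref{eq:hessian-energy} to $u_k-u_l$ gives
\[
\int_a^{a'}\!\!\int|\nabla^2(U_k-U_l)|^2\,\d\nu_s\,\d s\le\|\d(u_k-u_l)\|_2^2 .
\]
Hence $(U_k)$ is Cauchy in all the stated norms. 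We define the limit $U$, and it is independent of the approximating sequence by the same contraction estimate.

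\textbf{Identification and regularity.} The $\nu_s$ have densities locally bounded below on compact spacetime sets. So $U_k\to U$ in $L^p_{loc}$ with respect to $\d V_{g(s)}\,\d s$ on compact subsets of $X\times(a,a']$. Each $U_k$ solves $\partial_sU_k=\Delta U_k$. Local parabolic estimates (interior Schauder, or $L^p\to C^\infty$ bounds on compact sets where the metric is smooth and controlled) give $C^\infty_{loc}$ convergence for $s>a$. Thus $U$ is smooth for $s>a$ and solves the heat equation. Since the compactly supported data $u_k$ converge in $L^1(\nu_a)$ and $P_{a,s}$ is given by the kernel \eqref{eq:propagator-definition}, $U$ agrees with $\int K(\cdot,s;y,a)u(y)\,\d V_{g(a)}$; this is the weighted heat evolution of $u$. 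Membership in $W^{1,p}(\nu_s)$ follows from the Cauchy property, because the gradients converge in $L^p(\nu_s)$ to $\d U$. Passing to the limit also extends \eqref{eq:gradient-contraction} and \eqref{eq:hessian-energy} to $u$.

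The main obstacle is the logical circularity with Lemma~\ref{lem:weighted-heat}. Its estimates are proved first for $C_c^\infty$ data, via the maximum principle and the gradient bound of \cite[Lemma~3.2]{liwang2024heat}, and only those cases may be used here. The limiting step requires no further a priori control at infinity.
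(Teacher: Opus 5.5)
Your proposal is correct and follows essentially the same route as the paper. Density comes from the cutoff $\chi_L$ plus local mollification. The uniform Cauchy property (and the spacetime Hessian bound for $p=2$) comes from applying the $C_c^\infty$-data estimates of Lemma~\ref{lem:weighted-heat} to differences. Local smoothness follows from interior parabolic estimates, since the heat density is bounded below on compact cylinders.

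One step is superfluous: the value truncation $u^{(M)}$ is harmless but not needed, because $|\d\chi_L|\le CL^{-1/2}$ holds pointwise and already gives $\|u\,\d\chi_L\|_p\le CL^{-1/2}\|u\|_p$, as in the paper.
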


\begin{proof}
For weakly differentiable $u$ with finite norm, first replace it by
$\chi_Lu$. The tail terms tend to zero, and
\[
    \|u\,\d\chi_L\|_{L^p(\nu_a)}
       \le CL^{-1/2}\|u\|_{L^p(\nu_a)}\longrightarrow0.
\]
On each compact set, the density is smooth and positive, so ordinary
Sobolev smoothing gives compactly supported smooth approximations.

For $z_{k\ell}(s)=P_{a,s}(u_k-u_\ell)$, the estimates of
Lemma~\ref{lem:weighted-heat} for compactly supported smooth data give
\[
    \sup_{s\in[a,a']}\bigl(\|z_{k\ell}(s)\|_p
                   +\|\d z_{k\ell}(s)\|_p\bigr)
       \le\|u_k-u_\ell\|_{W^{1,p}(\nu_a)}.
\]
For $p=2$, \eqref{eq:hessian-energy} also controls
$\nabla^2z_{k\ell}$ in spacetime $L^2$. These estimates produce the
weighted limits. On relatively compact spacetime cylinders the heat density has a
positive lower bound, so the weighted bounds imply ordinary local
parabolic bounds. The limits are smooth there and solve the scalar heat
equation. At any fixed later time they have finite weighted Sobolev norm;
the cutoff approximation just proved shows that they lie in the same
Sobolev completion at that time.
\end{proof}

\begin{lemma}\label{lem:moments}
Consider flows induced by K\"ahler--Ricci shrinkers of complex
dimension $n$, based at time zero. For each compact interval
$I\Subset(-\infty,0)$ and each integer $k\ge1$,
\begin{equation}\label{eq:b-moments}
    \sup_{s\in I}\int_X|b(s)|^k\,\d\nu_s\le C(n,I,k).
\end{equation}
Furthermore,
\begin{equation}\label{eq:integral-input}
    \int_I\!\int_X
    \bigl(|\Ric|^2+|\nabla^2b|^2+|\d b|^4\bigr)
       \,\d\nu_s\,\d s\le C(n,I).
\end{equation}
For each fixed induced flow and compact time interval, $b$ has at most
quadratic spatial growth.
\end{lemma}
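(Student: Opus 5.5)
We will derive all three assertions from the entropy bounds \eqref{eq:entropy-bounds}, the Harnack inequality \eqref{eq:Perelman-w}, the scalar average bound \eqref{eq:scalar-average}, and the heat kernel bounds of Li--Wang for induced shrinker flows. The constants will depend only on $n$, $I$, $k$ through the entropy lower bound $-A_n$ of \eqref{eq:automatic-entropy}.

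\emph{Moments of $b$.} Li--Wang's two-sided Gaussian bounds \cite{liwang2024heat} for the conjugate heat kernel are uniform once $\muent\ge-A_n$. They give
\[
    b(y,s)\ge \frac{d_s(y,z)^2}{C\tau}-C
\]
and an upper bound $b\le C+C d_s^2/\tau$ measured from an $H_n$-center $z$ of $x_0^*$. These bound $|b|$ by $C(1+d_s(z,\cdot)^2/\tau)$. The required moment bound then reduces to Gaussian concentration of $\nu_s$ about $z$:
\[
    \nu_s\bigl(\{d_s(z,\cdot)\ge r\sqrt\tau\}\bigr)\le Ce^{-r^2/C},
\]
which is the variance and concentration estimate for conjugate heat kernels in \cite{liwang2024heat}. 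Integrating the tail bound gives \eqref{eq:b-moments}. If a pointwise upper bound on $b$ is awkward, we will instead use the log-Sobolev (hypercontractivity) inequality \eqref{eq:hypercontractivity} applied to $e^{\epsilon b}$, together with $\NN\ge-A_n$, to bound $\int e^{\epsilon|b|}\,\d\nu_s$.

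\emph{Integral estimates.} Set $v=\tau(2\Delta b-|\d b|^2+\scal)+b-n$, the Perelman-type quantity. Its evolution under the conjugate heat equation has the form
\[
    \heat^*(v\,K)=-2\tau\,\bigl|\Ric+\nabla^2b-\tfrac1{2\tau}g\bigr|^2K,
\]
with the appropriate constants in our normalization. Integrating over $I$ with Lemma~\ref{lem:cutoff-duality} gives
\[
    \int_I\!\int_X 2\tau\,\bigl|\Ric+\nabla^2b-\tfrac1{2\tau}g\bigr|^2\,\d\nu_s\,\d s
    =\WW(\tau_1)-\WW(\tau_2)\le C(n,I).
\]
This uses $\WW\ge\NN(\cdot)\ge-A_n$ by monotonicity and $\WW\le0$.

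This gives control of $\Ric+\nabla^2b$, not of each term separately, and splitting it is the main obstacle. For $\Ric$ we use the shrinker structure: by self-similarity, $|\Ric|^2$ at time $s$ is a rescaled pullback of $|\Ric_g|^2$, and $\int|\Ric|^2\,\d\nu_s$ can be bounded via Bochner, $\Delta\scal=\ldots$. More robustly, we will use $\heat\scal=|\Ric|^2$ (in our convention, up to a factor) integrated against $\nu_s$ via Lemma~\ref{lem:cutoff-duality}. This gives
\[
    \int_I\!\int_X|\Ric|^2\,\d\nu_s\,\d s\le\int\scal\,\d\nu_{\sup I}\le \frac{n}{\tau},
\]
by \eqref{eq:scalar-average}, with integrability checked via the polynomial growth of $\scal$ and cutoffs. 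Then $\nabla^2b$ is controlled by the triangle inequality.

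For $|\d b|^4$ we integrate by parts against $e^{-b}$:
\[
    \int|\d b|^4\,\d\nu=\int|\d b|^2\langle\d b,\d b\rangle\,\d\nu
    =\int\bigl(2\Delta b\,|\d b|^2+\ldots\bigr)\,\d\nu .
\]
This is bounded by $C\int|\d b|^2|\nabla^2b|\,\d\nu$, and absorbing gives
\[
    \int|\d b|^4\,\d\nu\le C\int|\nabla^2b|^2\,\d\nu .
\]
The cutoffs of \eqref{eq:chiL} justify the integrations by parts, since $|\d b|$ has polynomial growth on a fixed flow.

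\emph{Quadratic growth.} On a fixed flow and compact interval, the Gaussian upper and lower heat kernel bounds, together with volume bounds for balls on the fixed smooth complete metric, give $|b(y,s)|\le C(1+d_s(x_0,y)^2)$.
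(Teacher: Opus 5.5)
\textbf{Gap in the uniform moment bound.} The gap is in \eqref{eq:b-moments}, specifically in controlling $b^+$. Your main route bounds $b$ pointwise above by $C(1+d_s(z,\cdot)^2/\tau)$. That is a Gaussian \emph{lower} bound for the conjugate heat kernel, with constants depending only on $n$ and the entropy. The entropy bound does not give this: lower kernel bounds depend on the geometry near the basepoint, which is not controlled uniformly before Theorem~\ref{thm:main}. This is why the paper uses the two-sided bounds of \cite{liwang2024heat} only for the last, non-uniform assertion, whose constant depends on the fixed flow and basepoint.

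Uniformity is essential here. \eqref{eq:b-moments} enters \eqref{eq:e-L4} in the contradiction argument of Proposition~\ref{prop:improved-contraction}, where both the shrinkers and the basepoints vary. Your fallback does not close the gap. \eqref{eq:hypercontractivity} applies only to nonnegative heat subsolutions and compares norms at two different times. Since $e^{\epsilon b}$ is not a heat subsolution, it gives no single-time exponential moment. The uniform Gaussian \emph{upper} bound gives only $b\ge d_s(z,\cdot)^2/(C\tau)-C$, i.e.\ control of $b^-$.

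The paper supplies the missing input with the exponential moment estimate \cite[Lemma~A.2]{liwang2024heat}, combined with the upper kernel bound. One self-contained route: for $0\le\theta<\frac12$, weighted integration by parts gives $\int|\d b|^2e^{\theta b}\,\d\nu_s=\frac1{1-\theta}\int2\Delta b\,e^{\theta b}\,\d\nu_s$. Multiplying by $\tau$ and inserting \eqref{eq:Perelman-w} and $\scal\ge0$ yields
\[
    \tfrac{1-2\theta}{2}\,\tau\int_X|\d b|^2e^{\theta b}\,\d\nu_s
       \le\int_X(2n-b)\,e^{\theta b}\,\d\nu_s .
\]
Hence $\frac{\d}{\d\theta}\log\int_Xe^{\theta b}\,\d\nu_s\le2n$, so $\int_Xe^{\theta b}\,\d\nu_s\le e^{2n\theta}$. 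Together with $b\ge-C$, this bounds every moment. Cutoffs justify the integrations by parts, using only non-uniform finiteness on each fixed flow.

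\textbf{Integral estimates.} These are a sound unpacking of \cite[Lemmas~A.1 and~A.3]{liwang2024heat}, which the paper simply cites. You control $\Ric$ through $\heat\scal=\frac12|\Ric|^2$ and \eqref{eq:scalar-average}, as in Proposition~\ref{prop:scalar-reduction}. You control $\nabla^2b$ through the $\WW$-defect and $|\d b|^4$ by weighted integration by parts. In the last step you need no growth bound on $|\d b|$: testing with $\chi_L^2$ and absorbing by Young's inequality, the cutoff error is $C\int|\d\chi_L|^2|\d b|^2\,\d\nu_s\le CL^{-1}\tau^{-1}$.

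One justification is reversed. $\WW=\frac{\d}{\d\tau}(\tau\NN)$ with $\NN$ nonincreasing gives $\WW\le\NN$, not $\WW\ge\NN$. The lower bound you need follows instead from monotonicity of $\WW$:
\[
    \WW(\tau)\ge\frac1\tau\int_\tau^{2\tau}\WW=2\NN(2\tau)-\NN(\tau)\ge-2A_n .
\]
Your quadratic-growth argument on a fixed flow agrees with the paper's.
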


\begin{proof}
The exponential moment estimate in
\cite[Lemma~A.2]{liwang2024heat}, together with the heat-kernel upper
bound \cite[Theorem~4.4]{liwang2024heat}, gives \eqref{eq:b-moments}.
The estimates in \eqref{eq:integral-input} are
\cite[Lemmas~A.1 and~A.3]{liwang2024heat}, with the
backward-time factor bounded above and below on $I$.
The two-sided heat-kernel bounds
\cite[Theorems~4.9, 4.16 and Remark~4.17]{liwang2024heat} give the last
assertion. Its growth constant may depend on the fixed flow and base point.
\end{proof}

Let $(X,\omega,f)$ be a K\"ahler--Ricci shrinker satisfying
\eqref{eq:shrinker}. Let $\psi_t$, $t<0$, be the diffeomorphisms satisfying
\[
    \partial_t\psi_t=\frac{\nabla_g f\circ\psi_t}{2(-t)},
    \qquad \psi_{-1}=\Id.
\]
The canonical flow $\omega_t=(-t)\psi_t^*\omega$ satisfies
$\partial_t\omega_t=-\Ric(\omega_t)$. Set
\begin{equation}\label{eq:canonical-potential}
    F(x,t)=(-t)\psi_t^*\rho^2(x).
\end{equation}

We record the standard identities for $F$ in our normalization; see
\cite[Section~2]{liwang2020heat1} and
\cite[(2.6)--(2.9)]{liwang2024heat}. The integral identity
\eqref{eq:F-moment} follows by heat duality \eqref{eq:heat-duality}.

\begin{lemma}\label{lem:canonical}
On $X\times(-\infty,0)$,
\begin{equation}\label{eq:F-identities}
\begin{aligned}
    \partial_tF&=-(-t)\scal,&
    \Delta F&=n-(-t)\scal,\\
    \tfrac12|\d F|^2&=F-(-t)^2\scal,&
    \nabla^2F&=g(t)-(-t)\Ric.
\end{aligned}
\end{equation}
Consequently, $\heat F=-n$, $|\d F|^2\le2F$, and
\begin{equation}\label{eq:F-moment}
    \int_XF(\cdot,t_0-\tau)\,\d\nu_{t_0-\tau}
       =F(x_0,t_0)+n\tau.
\end{equation}
\end{lemma}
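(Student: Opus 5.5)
The plan is to derive all four identities from the static soliton equations \eqref{eq:shrinker} by pulling back along $\psi_t$ and scaling. The remaining claims then follow by elementary manipulation and heat duality.

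Write $\rho^2=f+n$. In real terms the shrinker equation reads $\Ric+\nabla^2 f=g$, with $\nabla^2$ the real Hessian. The complex normalization ($\Delta=\tfrac12\Delta_g$, $\scal=\tfrac12\scal_g$, $|\nabla^{1,0}f|^2_\omega=\tfrac12|\d f|_g^2$) turns the identities in \eqref{eq:shrinker} into
\[
\Delta_\omega f=n-\scal_\omega,\qquad \tfrac12|\d f|_g^2=\rho^2-\scal_\omega.
\]
The second follows from $\rho^2=\scal+|\nabla^{1,0}f|^2$. The first follows by combining $\Delta_\omega f+f-|\nabla^{1,0}f|^2=0$ with the second, or directly by tracing the soliton equation.

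Next I set $\tau=-t$, $g(t)=\tau\psi_t^*g$ and $F=\tau\psi_t^*\rho^2$. Under this scaling:
\begin{itemize}
\item Hessians of functions are diffeomorphism invariant and do not change under constant rescaling of the metric. Ricci is also scale invariant. Hence $\nabla^2_{g(t)}F=\tau\psi_t^*\nabla^2_g f=\tau\psi_t^*(g-\Ric_g)=g(t)-\tau\Ric_{g(t)}$.
\item Tracing this with the halved conventions gives $\Delta F=n-\tau\scal_{\omega_t}$, using $\scal_{\omega_t}=\tau^{-1}\psi_t^*\scal_\omega$.
\item For the gradient, $|\d F|^2_{g(t)}=\tau^2\tau^{-1}\psi_t^*|\d f|_g^2=\tau\psi_t^*(2\rho^2-2\scal)$. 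Halving gives $\tfrac12|\d F|^2=F-\tau^2\scal_{\omega_t}$.
\end{itemize}

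For the time derivative, differentiate $F=\tau\psi_t^*\rho^2$:
\[
\partial_tF=-\psi_t^*\rho^2+\tau\,\psi_t^*\bigl(\d\rho^2(\nabla_gf)\bigr)/(2\tau)=\psi_t^*\bigl(-\rho^2+\tfrac12|\d f|_g^2\bigr)=-\psi_t^*\scal_\omega=-\tau\scal_{\omega_t}.
\]
The sign and the factor $2(-t)$ in the definition of $\psi_t$ are exactly what make the $\tfrac12$ appear. This bookkeeping of the factors of $2$ between the complex and real conventions is the main place errors can enter, so I would check it against the known flow equation $\partial_t\omega_t=-\Ric(\omega_t)$ as a consistency test.

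Subtracting gives $\heat F=\partial_tF-\Delta F=-n$. Since $\scal\ge0$ by the cited result of Chen, the gradient identity yields $|\d F|^2\le2F$.

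For \eqref{eq:F-moment}, apply Lemma~\ref{lem:cutoff-duality} with $B=F$ and $G=-n$ on $[t_0-\tau,t']$, and let $t'\uparrow t_0$. The integrability hypotheses hold because $F$ grows quadratically and $|\d F|\le\sqrt{2F}$, while $b$ has finite moments and $|\d b|\in L^2$ by Lemma~\ref{lem:moments} (the heat kernel has Gaussian decay). This gives
\[
\int F(t')\,\d\nu_{t'}-\int F(t_0-\tau)\,\d\nu_{t_0-\tau}=-n\bigl(t'-(t_0-\tau)\bigr).
\]
As $t'\to t_0$, $\nu_{t'}\to\delta_{x_0}$. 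Continuity of $F$ together with the uniform quadratic-growth and moment bounds give $\int F(t')\,\d\nu_{t'}\to F(x_0,t_0)$, which proves \eqref{eq:F-moment}. Justifying this limit is the only analytic point, and it is routine given the Gaussian concentration of $\nu_{t'}$.
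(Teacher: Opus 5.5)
Your proposal is correct and is essentially the paper's argument. The paper records these four identities as the standard consequences of $\Ric+\nabla^2f=g$ under $\omega_t=(-t)\psi_t^*\omega$ (citing Li--Wang), and obtains \eqref{eq:F-moment} by heat duality, just as you do via Lemma~\ref{lem:cutoff-duality} and the limit $t'\uparrow t_0$; your factor-of-$2$ bookkeeping ($|\nabla^{1,0}f|_\omega^2=\tfrac12|\d f|_g^2$ and the $2(-t)$ in $\partial_t\psi_t$) is consistent with $\partial_tg=-\Ric_g$.
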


For a function $u$ whose Hessian $H=\nabla^2u$ is $J$-invariant
and tangent vectors $\xi,\eta$,
\begin{equation}\label{eq:Kahler-Hess}
    \ddc u(\xi,\eta)=-H(\xi,J\eta)+H(\eta,J\xi)=2H(J\xi,\eta).
\end{equation}
With the usual real two-form norm, this gives
$|\ddc u|^2=2|H|^2$. Since $\nabla^2F$ is $J$-invariant,
\eqref{eq:Kahler-Hess} applies to $F$.

\subsection{Weighted projection and the scalar curvature estimate}

Fix $x_0^*=(x_0,t_0)$ and $\tau>0$, and set $s=t_0-\tau$. Write
\begin{equation}\label{eq:G-definition}
    \cG_{x_0^*,\tau}
       =\overline{\{\d u:u\in C_c^\infty(X)\}}
                    ^{\,L^2(T^*X,g(s);\nu_s)}.
\end{equation}
Every element of this space is closed in the sense of distributions.
The Poincar\'e inequality \eqref{eq:Poincare} shows that each such element
has a weighted $W^{1,2}$ primitive after fixing its mean.

\begin{lemma}\label{lem:projection}
The orthogonal projection of $\dc F(s)$ onto $\cG_{x_0^*,\tau}$ is
$\d u_s$, where $u_s$ is a mean-zero weighted $W^{1,2}$ function. The
residual
\begin{equation}\label{eq:residual}
    \beta(s)=\dc F(s)-\d u_s
\end{equation}
is smooth and satisfies
\begin{equation}\label{eq:residual-coclosed}
    \delta_{b(s)}\beta(s)=0.
\end{equation}
For $s<t<t_0$, set
\begin{equation}\label{eq:residual-evolution}
    \beta(t)=\dc F(t)-\d P_{s,t}u_s.
\end{equation}
Then
\begin{equation}\label{eq:Hodge-evolution}
    \partial_t\beta=-\tfrac12\Delta_{\d,g(t)}\beta,
    \qquad \d\beta(t)=\ddc F(t),
\end{equation}
and, for every $s<t<t_0$,
\begin{equation}\label{eq:Hodge-energy}
    \|\beta(t)\|_2^2
       +\int_s^t\!\int_X|\nabla\beta|^2\,\d\nu_r\,\d r
       \le\|\beta(s)\|_2^2.
\end{equation}
The evolved gradient $\d P_{s,t}u_s$ belongs to $\cG_{x_0^*,t_0-t}$.
\end{lemma}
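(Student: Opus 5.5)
The plan is to build the projection abstractly in the weighted Hilbert space, then prove smoothness from the elliptic system $(\d,\delta_b)$, and finally get the evolution and the energy inequality from commutation identities on a K\"ahler manifold plus a Bochner computation along the flow. The energy inequality is the delicate part, because $u_s$ is only known to be weighted $W^{1,2}$.

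\emph{Step 1: the projection exists.} First check that $\dc F(s)\in L^2(\nu_s)$. We have $|\dc F|=|\d F|$ and $|\d F|^2\le 2F$, and \eqref{eq:F-moment} makes $\int F\,\d\nu_s$ finite. So the projection onto the closed subspace $\cG_{x_0^*,\tau}$ exists. Call it $\gamma$ and take $u_k\in C_c^\infty$ with $\d u_k\to\gamma$. Normalize $u_k$ to mean zero. The Poincar\'e inequality \eqref{eq:Poincare} then makes $(u_k)$ Cauchy in $W^{1,2}(\nu_s)$, so it has a limit $u_s$ with $\d u_s=\gamma$.

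\emph{Step 2: smoothness and \eqref{eq:residual-coclosed}.} Orthogonality says $\int\langle\beta,\d\varphi\rangle e^{-b}\,\d V=0$ for all $\varphi\in C_c^\infty$, which is $\delta_b\beta=0$ weakly. Since $\d u_s$ is closed distributionally, $\d\beta=\ddc F(s)$, which is smooth. The operator $\d+\delta_b$ is first-order elliptic with smooth coefficients, because $b$ is smooth. Local elliptic regularity therefore makes $\beta$ smooth.

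\emph{Step 3: the evolution \eqref{eq:Hodge-evolution}.} Here $\Delta_\omega=\tfrac12\Delta_g$ and $\Delta_{\d}=-\Delta_g$ on functions, so $\heat F=-n$ reads $\partial_tF=-\tfrac12\Delta_{\d,g(t)}F-n$. The operator $\d$ commutes with $\partial_t$ and with $\Delta_{\d}$, so $\partial_t\d F=-\tfrac12\Delta_{\d}\d F$. The complex structure $J$ is time-independent and parallel, and on a K\"ahler manifold $\Delta_{\d}$ commutes with $\alpha\mapsto-\alpha\circ J$. Hence the same equation holds for $\dc F$. The identical argument applies to $\d P_{s,t}u_s$, using the local smoothness from Lemma~\ref{lem:approximation}. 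This gives $\partial_t\beta=-\tfrac12\Delta_{\d}\beta$, and $\d\beta=\ddc F$ because $\d\,\d P u=0$.

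\emph{Step 4: the energy inequality \eqref{eq:Hodge-energy}.} Use the Weitzenb\"ock formula $\Delta_{\d}\beta=\nabla^*\nabla\beta+\Ric(\beta)$ together with $\partial_tg^{ij}=\Ric^{ij}$. The Ricci terms cancel, and the computation gives
\[
\heat|\beta|^2=-|\nabla\beta|^2 .
\]
The plan is to apply the last part of Lemma~\ref{lem:cutoff-duality} with $B=|\beta|^2$ and $Q=|\nabla\beta|^2$. That lemma needs $|\beta|^2$ and $|\beta|^2|\d b|$ in $L^1$, which I expect to be the main obstacle, since $u_s$ is only weighted $W^{1,2}$.

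To get around this, replace $u_s$ by the approximations $u_k\in C_c^\infty$ and set $\beta_k=\dc F-\d P_{s,t}u_k$. For these, $|\d P u_k|$ is bounded by \cite[Lemma~3.2]{liwang2024heat}, and $|\dc F|^2\le 2F$ has quadratic growth. Lemma~\ref{lem:moments} bounds the moments of $b$ and of $|\d b|^2$ (from $|\d b|^4$), so Cauchy--Schwarz supplies the needed integrability. This gives \eqref{eq:Hodge-energy} for each $\beta_k$. Then let $k\to\infty$. Lemma~\ref{lem:approximation} gives convergence of the gradients in $L^2(\nu_r)$ uniformly in $r$, and of the Hessians in spacetime $L^2$, so $\nabla\beta_k\to\nabla\beta$ in spacetime $L^2$. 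This passes the inequality to $\beta$.

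\emph{Step 5: the last assertion.} By Lemma~\ref{lem:approximation}, $P_{s,t}u_s\in W^{1,2}(\nu_t)$. That lemma's cutoff density shows it is approximated in $W^{1,2}(\nu_t)$ by $C_c^\infty$ functions. Their differentials converge to $\d P_{s,t}u_s$ in $L^2(\nu_t)$, so $\d P_{s,t}u_s\in\cG_{x_0^*,t_0-t}$.
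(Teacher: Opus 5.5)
Your proposal is correct and follows the paper's proof essentially step for step. You build $u_s$ by projection plus the Poincar\'e inequality, get the evolution by commuting $\Delta_{\d}$ with $\dc$ and $\d$, use the Bochner identity $\heat|\beta|^2=-|\nabla\beta|^2$, and justify the cutoff integration with approximations $u_k\in C_c^\infty(X)$ before passing to the limit via \eqref{eq:gradient-contraction}--\eqref{eq:hessian-energy}. The only variation is that you obtain smoothness from the first-order elliptic system $(\d,\delta_b)$ acting on $\beta$, rather than from the scalar equation $-2\Delta u_s+\langle\d b,\d u_s\rangle=\delta_{b}\dc F(s)$; this works equally well.

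One small correction: the integrability of $|\beta_k|^2|\d b|$ needs $\int_X F^2\,\d\nu_r<\infty$. That comes from the Gaussian upper heat-kernel bound, as the paper uses, not from the moment bounds on $b$ alone.
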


\begin{proof}
The bound $|\dc F|^2=|\d F|^2\le2F$ and \eqref{eq:F-moment} give
$\dc F(s)\in L^2(\nu_s)$. Let $\zeta$ be its orthogonal projection onto
the closed subspace $\cG_{x_0^*,\tau}$. By definition, there are
$\varphi_j\in C_c^\infty(X)$ such that $\d\varphi_j\to\zeta$ in
$L^2(\nu_s)$. Subtract their means and set
\[
    u_j=\varphi_j-\int_X\varphi_j\,\d\nu_s\in W^{1,2}(\nu_s).
\]
 The Poincar\'e inequality gives
\[
    \|u_j-u_k\|_2^2
       \le\tau\|\d\varphi_j-\d\varphi_k\|_2^2.
\]
Thus $u_j$ converges in $W^{1,2}(\nu_s)$ to a mean-zero function $u_s$
with $\d u_s=\zeta$. This primitive is unique by the same inequality.
Moreover,
\[
    \|\d u_s\|_2\le\|\dc F(s)\|_2,
    \qquad \|u_s\|_2\le\sqrt{\tau}\,\|\dc F(s)\|_2.
\]
Orthogonality gives the Euler equation
\[
    \int_X\langle\dc F(s)-\d u_s,\d\varphi\rangle\,\d\nu_s=0
       \qquad(\varphi\in C_c^\infty(X)).
\]
Equivalently, in the weak sense,
\[
    -2\Delta u_s+\langle\d b(s),\d u_s\rangle
       =\delta_{b(s)}\dc F(s).
\]
The coefficients and right side are smooth, and the operator is
uniformly elliptic on relatively compact subsets. Local elliptic
regularity makes $u_s$ and $\beta(s)$ smooth and proves
\eqref{eq:residual-coclosed}.

On functions, $\Delta_\d=-2\Delta$, and the K\"ahler identities give
$\Delta_\d\dc=\dc\Delta_\d$. Since $J$ is fixed along the flow and
$\heat F=-n$,
\[
    \partial_t(\dc F)=\dc(\Delta F-n)
       =-\tfrac12\Delta_\d(\dc F).
\]
Also, $\Delta_\d\d=\d\Delta_\d$, so $\d P_{s,t}u_s$ solves the same
Hodge heat equation for $t>s$. This proves the evolution in
\eqref{eq:Hodge-evolution}; its exterior derivative identity follows
from $\d^2=0$ in \eqref{eq:residual-evolution}.

For the norm identity, the Weitzenb\"ock formula gives
$\Delta_\d\beta=-\operatorname{tr}_{g(t)}\nabla^2\beta+\Ric^\sharp\beta$.
As $\partial_tg^{ij}=\Ric^{ij}$, we obtain
\[
\begin{aligned}
    \partial_t|\beta|^2
       &=\Ric(\beta^\sharp,\beta^\sharp)
          +2\langle\partial_t\beta,\beta\rangle
         =\langle\operatorname{tr}_{g(t)}\nabla^2\beta,\beta\rangle,\\
    \Delta|\beta|^2
       &=\langle\operatorname{tr}_{g(t)}\nabla^2\beta,\beta\rangle+|\nabla\beta|^2.
\end{aligned}
\]
Subtracting gives
\begin{equation}\label{eq:Hodge-Bochner}
    \heat|\beta|^2=-|\nabla\beta|^2.
\end{equation}

To integrate this identity, choose $u_k\in C_c^\infty(X)$ converging to
$u_s$ in $W^{1,2}(\nu_s)$, and for $r\in[s,t]$ set
\[
    \beta_k(r)=\dc F(r)-\d P_{s,r}u_k.
\]
Each $\beta_k$ solves the Hodge heat equation and has at most linear
spatial growth on this interval: $|\d F|$ has linear growth, and
$|\d P_{s,r}u_k|$ is bounded by the maximum principle.
The Gaussian heat-kernel bounds and the weighted $L^2$ bound for
$|\d b|$ therefore give the integrability required in
Lemma~\ref{lem:cutoff-duality} for $B=|\beta_k|^2$.
Integrating \eqref{eq:Hodge-Bochner} with compact cutoffs and applying
Fatou's lemma gives
\[
    \|\beta_k(t)\|_2^2
       +\int_s^t\!\int_X|\nabla\beta_k|^2\,\d\nu_r\,\d r
       \le\|\beta_k(s)\|_2^2.
\]
Since $\beta_k-\beta_\ell=-\d P_{s,r}(u_k-u_\ell)$,
\eqref{eq:gradient-contraction} and \eqref{eq:hessian-energy} imply
\[
    \sup_{r\in[s,t]}\|\beta_k(r)-\beta_\ell(r)\|_2^2
       +\int_s^t\!\int_X|\nabla(\beta_k-\beta_\ell)|^2\,\d\nu_r\,\d r
       \le2\|\d(u_k-u_\ell)\|_2^2.
\]
Consequently, $\beta_k\to\beta$ in $L^\infty([s,t];L^2(\nu_r))$
and $\nabla\beta_k\to\nabla\beta$ in spacetime $L^2$; the identification
of the limits follows from Lemma~\ref{lem:approximation} and local
parabolic regularity. The initial norms converge as well, so passage
to the limit proves \eqref{eq:Hodge-energy}.
Finally, $P_{s,t}u_s\in W^{1,2}(\nu_t)$ by
Lemma~\ref{lem:approximation}. Approximation at time $t$ by
$C_c^\infty(X)$ functions shows that its gradient belongs to
$\cG_{x_0^*,t_0-t}$.
\end{proof}

We now use the energy
\begin{equation}\label{eq:D-definition}
    \mathcal D_{x_0^*}(\tau)=\tau\inf_{\zeta\in\cG_{x_0^*,\tau}}
       \int_X|\dc F(\cdot,t_0-\tau)-\zeta|_{g(t_0-\tau)}^2
                    \,\d\nu_{t_0-\tau}.
\end{equation}
The factor $\tau$ in \eqref{eq:D-definition} can be absorbed into the
one-form norm by using the metric $g(t_0-\tau)/\tau$.
This metric and the conjugate heat kernel measure are invariant under
parabolic rescaling. Hence $\mathcal D$ is unchanged when $F$ is only
reparametrized in time, as in \eqref{eq:rescaled-flow}.

\begin{proposition}\label{prop:coarse}
For every $x_0^*=(x_0,t_0)$ and $\tau>0$,
\begin{equation}\label{eq:coarse-growth}
    \mathcal D_{x_0^*}(\tau)\le2\tau F(x_0,t_0)+2n\tau^2,
\end{equation}
and
\begin{equation}\label{eq:coarse-contraction}
    \mathcal D_{x_0^*}(\tau)\le\frac18\mathcal D_{x_0^*}(8\tau).
\end{equation}
\end{proposition}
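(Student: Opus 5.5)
The plan has two parts, matching the two inequalities.

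For \eqref{eq:coarse-growth}, take $\zeta=0$ in the infimum \eqref{eq:D-definition}; this element always lies in $\cG_{x_0^*,\tau}$. With $s=t_0-\tau$, we have $|\dc F|^2=|\d F|^2\le2F$ pointwise by Lemma~\ref{lem:canonical}, and \eqref{eq:F-moment} then gives
\[
\mathcal D_{x_0^*}(\tau)\le\tau\int_X|\d F(s)|^2\,\d\nu_s\le2\tau\int_XF(s)\,\d\nu_s=2\tau\bigl(F(x_0,t_0)+n\tau\bigr).
\]
This is the claimed bound.

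For \eqref{eq:coarse-contraction}, set $s=t_0-8\tau$ and $t=t_0-\tau$, and let $\beta$ be the residual of Lemma~\ref{lem:projection} started at $s$, so that $\|\beta(s)\|_2^2=\mathcal D(8\tau)/(8\tau)$. By that lemma, $\d P_{s,t}u_s\in\cG_{x_0^*,\tau}$. It is therefore an admissible competitor at time $t$, and
\[
\mathcal D(\tau)\le\tau\|\beta(t)\|_2^2 .
\]
Combining this with the energy inequality \eqref{eq:Hodge-energy} gives $\mathcal D(\tau)\le\tau\|\beta(s)\|_2^2=\mathcal D(8\tau)/8$. So the monotonicity of $\|\beta\|_2$ alone already produces the factor $1/8$; no decay beyond monotonicity is needed.

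The only delicate point is the step $\|\beta(t)\|_2\le\|\beta(s)\|_2$. This requires the cutoff and approximation argument used to prove \eqref{eq:Hodge-energy}, which Lemma~\ref{lem:projection} already provides. The remaining check is that $\d P_{s,t}u_s$ genuinely lies in $\cG_{x_0^*,\tau}$, which is the last assertion of that lemma. I would also record that the infimum in \eqref{eq:D-definition} is attained by the orthogonal projection, but the argument only needs an upper bound at time $t$. So once Lemma~\ref{lem:projection} is in hand, there is no real obstacle.
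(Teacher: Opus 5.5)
Your proposal is correct and follows the paper's proof essentially verbatim: the competitor $\zeta=0$ with $|\dc F|^2\le 2F$ and \eqref{eq:F-moment} gives \eqref{eq:coarse-growth}, and evolving the minimizing gradient from $t_0-8\tau$ to $t_0-\tau$ gives \eqref{eq:coarse-contraction}. That second step uses the admissibility of $\d P_{s,t}u_s$ and the monotonicity \eqref{eq:Hodge-energy} from Lemma~\ref{lem:projection}, and the equality $\|\beta(s)\|_2^2=\mathcal D_{x_0^*}(8\tau)/(8\tau)$ rests on the infimum being attained by the orthogonal projection at the initial time, as you set up.
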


\begin{proof}
The choice $\zeta=0$ is admissible. Thus \eqref{eq:coarse-growth} follows from
$|\dc F|^2=|\d F|^2\le2F$ and \eqref{eq:F-moment}.
For \eqref{eq:coarse-contraction}, evolve the minimizing one-form at
$t_0-8\tau$ to $t_0-\tau$. The evolved gradient is admissible and its
residual has nonincreasing weighted norm. Hence
\[
    \mathcal D_{x_0^*}(\tau)\le\tau\|\beta(t_0-\tau)\|_2^2
       \le\tau\|\beta(t_0-8\tau)\|_2^2
       =\frac18\mathcal D_{x_0^*}(8\tau).\qedhere
\]
\end{proof}

\begin{proposition}\label{prop:scalar-reduction}
There is a dimensional constant $C_n$ such that
\begin{equation}\label{eq:scalar-reduction}
    |t_0|\scal(x_0,t_0)
       \le C_n\left(1+\frac{\mathcal D_{x_0^*}(|t_0|)}{|t_0|^2}\right).
\end{equation}
\end{proposition}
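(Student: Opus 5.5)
The plan is to normalize by parabolic rescaling to $t_0=-1$ and $\tau=|t_0|=1$, so that $s=t_0-\tau=-2$. Both sides of \eqref{eq:scalar-reduction} are scale invariant: $\mathcal D$ is by the remark after \eqref{eq:D-definition}, and $|t_0|\scal$ is by inspection. Lemma~\ref{lem:canonical} gives the pointwise identity
\[
    |t_0|\scal(x_0,t_0)=n-\Delta F(x_0,t_0),
\]
so it suffices to bound $-\Delta F(x_0,t_0)$ from above by $C_n(1+\mathcal D_{x_0^*}(1))$. By \eqref{eq:Kahler-Hess}, $\Delta F$ is a trace of $\ddc F$. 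Lemma~\ref{lem:projection} gives $\ddc F(t)=\d\beta(t)$, where $\beta$ is the residual started at the minimizer at time $s=-2$. Hence the task is to bound $\d\beta$ at the single spacetime point $(x_0,t_0)$ by the weighted $L^2$ norm of $\beta(-2)$, which equals $\mathcal D_{x_0^*}(1)$.

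\textbf{Step 1: integral control.} By \eqref{eq:Hodge-energy}, for $t\in[-2,-1)$,
\[
    \sup_t\|\beta(t)\|_2^2+\int_{-2}^{-1}\!\int_X|\nabla\beta|^2\,\d\nu_r\,\d r\le\mathcal D_{x_0^*}(1).
\]
Since $|\d\beta|\le2|\nabla\beta|$, and $\ddc F=\d\beta$ determines $\nabla^2F$ through \eqref{eq:Kahler-Hess}, this yields
\[
    \int_{-2}^{-1}\!\int_X|\nabla^2F|^2\,\d\nu\,\d r\le C\,\mathcal D_{x_0^*}(1).
\]
Using $\nabla^2F=g-(-t)\Ric$ and taking traces, this becomes an integrated bound
\[
    \int_{-2}^{-1}\!\int_X\bigl((-r)\scal-n\bigr)^2\,\d\nu_r\,\d r\le C_n\bigl(1+\mathcal D_{x_0^*}(1)\bigr).
\]

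\textbf{Step 2: integral to point.} I would pass from this integral bound to the value at $x_0$ through the potential identities rather than through a curvature maximum principle. The function $F$ satisfies $\heat F=-n$ exactly, and by \eqref{eq:F-moment} the heat-kernel average of $F$ reproduces $F(x_0,t_0)$. The gradient identity $\tfrac12|\d F|^2=F-t^2\scal$ then shows that a large value of $\scal(x_0,t_0)$ forces $|\d F|^2(x_0,t_0)$ to be much smaller than $2F(x_0,t_0)$. I would compare this with the weighted average of $|\d F|^2=|\dc F|^2$ at nearby backward times. That average splits as $\|\d u_r\|_2^2+\|\beta(r)\|_2^2$, and the $\beta$ part is at most $\mathcal D_{x_0^*}(1)$. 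The $\d u_r$ part is controlled by the gradient contraction \eqref{eq:gradient-contraction} and the Poincar\'e inequality \eqref{eq:Poincare}. A Bochner argument for $|\d F|^2$, namely $\heat|\d F|^2=-|\nabla^2F|^2$ after accounting for the Kähler structure, evaluated at $x_0$ via heat duality, then links $|\d F|^2(x_0,t_0)$ to the spacetime Hessian integral from Step~1. The entropy bound \eqref{eq:entropy-bounds}, the Harnack inequality \eqref{eq:Perelman-w}, and \eqref{eq:scalar-average} supply the dimensional constants for the $b$-terms, for example $\int\tfrac12|\d b|^2\,\d\nu\le n/\tau$. The moment bounds of Lemma~\ref{lem:moments}, together with Lemma~\ref{lem:cutoff-duality}, justify the integrations by parts.

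\textbf{Main obstacle.} The expected difficulty is the pointwise step. We have $\heat\scal\ge0$, so heat duality bounds $\scal(x_0,t_0)$ from below by averages, which is the wrong direction. The curvature term in the Weitzenb\"ock formula for the two-form $\d\beta$ is not a priori bounded, so naive $L^\infty$ smoothing of $\d\beta$ is unavailable. My first attempt would be to work only with the scalar quantities $F$ and $|\d F|^2$. For these the evolution is exact, with no curvature error: $|\d F|^2=2F-2t^2\scal$ and $\heat(2F)=-2n$. This gives
\[
    \heat\bigl(t^2\scal\bigr)=-n+\tfrac12|\nabla^2F|^2,
\]
an exact heat identity for $t^2\scal$ whose source term is the Hessian energy controlled in Step~1. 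Integrating it against $\nu$ from time $-2$ to $-1$ by \eqref{eq:heat-duality} should give $t_0^2\scal(x_0,t_0)$. This works once the pointwise value is recovered as the $\tau\to0$ limit of the average, by continuity of $\scal$ and the concentration of $\nu_{t_0-\tau}$ at $x_0$. It also needs the initial average $\int 4\scal\,\d\nu_{-2}$, which \eqref{eq:scalar-average} bounds by $n$. The resulting estimate is $|t_0|\scal(x_0,t_0)\le C_n(1+\mathcal D_{x_0^*}(|t_0|)/|t_0|^2)$.
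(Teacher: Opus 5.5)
Your final argument is correct and is essentially the paper's proof. The paper integrates $\heat\scal=\tfrac12|\Ric|^2$ against $\nu_s$ from $2t_0$ to $t_0$ and bounds $|\Ric|^2\le C_n|t_0|^{-2}(1+|\nabla^2F|^2)$ via $\nabla^2F=g-(-t)\Ric$, together with \eqref{eq:scalar-average} and the Hessian energy from \eqref{eq:Hodge-energy}; this is your exact identity $\heat(t^2\scal)=\tfrac12|\nabla^2F|^2-n$ in a slightly less sharp form. Your stated ``obstacle'' is not real: heat duality \eqref{eq:integrated-heat-duality} is an identity, so an upper bound on the source already bounds $\scal(x_0,t_0)$ from above, and the $|\d F|^2$-comparison detour at the start of Step~2 can be dropped (also, the initial term is $\int4\scal\,\d\nu_{-2}\le4n$, not $n$).
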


\begin{proof}
Evolve the minimizing one-form from time $2t_0$.
Using $\d\beta=\ddc F$, \eqref{eq:Kahler-Hess}, and
$|\d\beta|^2\le2|\nabla\beta|^2$, the energy estimate
\eqref{eq:Hodge-energy} in Lemma~\ref{lem:projection} gives
\begin{equation}\label{eq:F-Hess-energy}
    \int_{2t_0}^{t_0}\!\int_X|\nabla^2F|^2\,\d\nu_s\,\d s
       \le C\|\beta(2t_0)\|_2^2
       =C|t_0|^{-1}\mathcal D_{x_0^*}(|t_0|).
\end{equation}

For $2t_0\le s\le t_0$, the Hessian identity in \eqref{eq:F-identities} therefore yields
\begin{equation}\label{eq:Ric-Hess}
    |\Ric_{g(s)}|^2
       \le C_n|t_0|^{-2}\bigl(1+|\nabla^2F|^2\bigr).
\end{equation}
Set $S(s)=\int_X\scal(\cdot,s)\,\d\nu_s$. By
\eqref{eq:scalar-average}, $S(2t_0)\le n/|t_0|$. The scalar curvature
evolution equation
\[
    (\partial_t-\Delta)\scal=\tfrac12|\Ric|^2
\]
and \eqref{eq:integrated-heat-duality} give
\[
    S(a')-S(2t_0)=\tfrac12\int_{2t_0}^{a'}\!\int_X|\Ric|^2\,\d\nu_s\,\d s
       \qquad(2t_0<a'<t_0).
\]
Combining \eqref{eq:scalar-average},
\eqref{eq:F-Hess-energy} and \eqref{eq:Ric-Hess}, then letting
$a'\uparrow t_0$, gives
\[
    \scal(x_0,t_0)\le C_n|t_0|^{-1}
                         +C_n|t_0|^{-3}\mathcal D_{x_0^*}(|t_0|).
\]
The bound $0\le\scal(\cdot,s)\le F(\cdot,s)/(-s)^2$,
the polynomial growth of $F$, and Gaussian tails give uniform
integrability on a fixed final time interval. 
This proves \eqref{eq:scalar-reduction}.
\end{proof}

\section{The scalar curvature bound}\label{sec:scalar-bound}

The proof adapts the strategy of Jian--Song \cite{JS26}.
By Proposition~\ref{prop:scalar-reduction}, it suffices to bound
$\mathcal D_{x_0^*}(1)$ by a dimensional constant for every $x_0^*=(x_0,-1)$.
We obtain this bound by combining analysis near almost self-similar
points, a vanishing lemma for one-forms, and iteration over scales.
The basepoints are arbitrary smooth spacetime points.

\subsection{Analysis near almost self-similar points}\label{subsec:almost-selfsimilar}

We now consider sequences whose pointed Nash entropy drop tends to zero.
We first describe the limiting metric soliton and then regularize the
heat-kernel potentials on the approximating flows.
We use the notion of metric soliton introduced by Bamler
\cite{bamler2023compactness}; see also
\cite[Definition~6.14]{liwang2024heat}.

Let $(X_i,g_i(s))_{s\in(-\infty,0]}$ be flows induced by
K\"ahler--Ricci shrinkers of complex dimension $n$, with smooth
basepoints $x_i^*=(x_i,0)$.
For $s<0$, let $\nu_{i,s}$ and $b_i(s)$ be the conjugate heat kernel
measures and heat-kernel potentials based at $x_i^*$, and write
$\NN_i(\tau)=\NN_{x_i^*}(\tau)$ and $\WW_i(\tau)=\WW_{x_i^*}(\tau)$
for $\tau>0$. We denote the scalar heat propagator of $g_i(s)$ by
$P^{(i)}_{a,a'}$.

By \eqref{eq:automatic-entropy} and
\cite[Theorems~6.10 and~6.12(1), (4)]{liwang2024heat}, a subsequence
has a limiting metric flow on compact time intervals, with regular
part $\RR$ and distinguished conjugate heat kernel measures $\nu_s$.
For every $s<0$, we have $\nu_s(\RR_s)=1$. Through time-preserving
embeddings, the metrics, time vector fields, and heat-kernel potentials
converge smoothly on compact subsets of $\RR$. Since the complex structures are parallel, they also
converge smoothly after passage to a further subsequence.

\begin{proposition}[Metric soliton limit]\label{prop:shrinking-limit}
For the sequence above, suppose that $\epsilon_i\downarrow0$ and
\begin{equation}\label{eq:entropy-flat}
    \NN_i(\epsilon_i)-\NN_i(\epsilon_i^{-1})\longrightarrow0.
\end{equation}
Then the limit is a metric soliton. On its regular part,
\begin{equation}\label{eq:limit-soliton}
    \Ric+\nabla^2b=\frac{g}{|s|},\qquad s\in(-\infty,0).
\end{equation}
Moreover, $\RR\cong\RR_{-1}\times(-\infty,0)$, with
$g(s)=|s|g(-1)$ and $\nu_s=\nu_{-1}$ under this identification.
\end{proposition}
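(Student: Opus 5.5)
The plan is to use the standard mechanism from Bamler's theory, in the version of \cite[Theorem~6.12]{liwang2024heat} for induced shrinker flows: vanishing of the pointed Nash entropy drop over all scales forces the $\WW$-functional to be constant, and equality in Perelman's differential Harnack inequality yields the soliton equation on the regular part.

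\emph{Step 1: the limiting $\WW$ and $\NN$ are constant.} By \eqref{eq:entropy-bounds}, $\WW_i=\frac{\d}{\d\tau}(\tau\NN_i)$ and both functions are nonincreasing, so $\WW_i\le\NN_i$. Hence for $\epsilon_i\le\tau_1<\tau_2\le\epsilon_i^{-1}$,
\[
  \tau_2\NN_i(\tau_2)-\tau_1\NN_i(\tau_1)=\int_{\tau_1}^{\tau_2}\WW_i\le(\tau_2-\tau_1)\WW_i(\tau_1).
\]
The left side is at least $(\tau_2-\tau_1)\NN_i(\tau_2)-\tau_1\delta_i$, where $\delta_i=\NN_i(\epsilon_i)-\NN_i(\epsilon_i^{-1})\to0$. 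This gives
\[
  \NN_i(\tau)-\WW_i(\tau)\le C\delta_i\to0
\]
uniformly on compact $\tau$-intervals; the ratio $\tau_1/(\tau_2-\tau_1)$ is controlled by choosing $\tau_2=2\tau_1$. After passing to a subsequence, $\NN_i$ converges locally uniformly to a constant $\NN_\infty\in[-A_n,0]$, and $\WW_i$ converges to the same constant.

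\emph{Step 2: the soliton identity on the approximating flows and in the limit.} Let $w_i=w(b_i)\le0$ be the Harnack quantity of \eqref{eq:Perelman-w}. Perelman's monotonicity formula, integrated with the cutoffs of Lemma~\ref{lem:cutoff-duality} and justified by the moment bounds of Lemma~\ref{lem:moments}, reads
\[
  \WW_i(\tau_1)-\WW_i(\tau_2)=2\int_{\tau_1}^{\tau_2}\!\!\int_X\tau\,\Bigl|\Ric+\nabla^2b_i-\frac{g}{2\tau}\Bigr|^2\d\nu\,\d\tau
\]
(with constants adapted to our normalization). By Step~1 the right side tends to zero on every compact interval. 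Smooth convergence on compact subsets of $\RR$ then gives $\Ric+\nabla^2b=g/|s|$ on $\RR$, after matching the normalization constant to \eqref{eq:shrinker}. This is \eqref{eq:limit-soliton}. By the same convergence, the limit of $w_i$ satisfies $\int w\,\d\nu_s=\WW_\infty$, which is independent of $s$.

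\emph{Step 3: self-similar structure.} On $\RR$, the soliton equation implies that the time vector field differs from $\tau\nabla b$ by a generator of a diffeomorphism flow; more precisely, the field $\partial_s-\tau\nabla b$ has vanishing Lie derivative of $g/\tau$. Integrating this flow identifies $\RR\cong\RR_{-1}\times(-\infty,0)$ with $g(s)=|s|g(-1)$, and under this identification $b$ is transported, so $\nu_s=\nu_{-1}$. That the limit is a metric soliton in Bamler's sense then follows from \cite[Theorem~6.12(4)]{liwang2024heat}, or the corresponding result in \cite{bamler2023compactness}, which characterizes limits with constant pointed Nash entropy exactly as metric solitons modeled on $(\RR_{-1},\nu_{-1})$.

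\emph{Main obstacle.} The delicate point is Step~3. It requires completeness of the flow lines of $\partial_s-\tau\nabla b$ inside $\RR$, that is, that the flow does not leave the regular part. I would not prove this by hand but invoke the Li--Wang/Bamler structure theorem, which asserts precisely this identification once $\NN$ is constant. The remaining work is checking that our entropy convention ($\partial_tg=-\Ric$, the factor~$2$, and the normalization of \eqref{eq:N-W}) matches theirs.
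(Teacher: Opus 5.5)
Your outline is sound, and for the product decomposition it relies on the same input as the paper: Bamler's rigidity theorem for limits with constant Nash entropy, which the paper applies as \cite[Theorem~2.18]{bamler3} in the $2g$ convention. Where you differ is the route to \eqref{eq:limit-soliton}. The paper notes that monotonicity gives $\NN_i(\tau)\to c$ for every fixed $\tau$. It transfers this to the limit by entropy continuity \cite[Theorems~2.10 and~2.14]{bamler3}, in the setting of \cite[Section~6]{liwang2024heat}. It then reads off both the soliton equation and the splitting $\RR\cong\RR_{-1}\times(-\infty,0)$ from the rigidity theorem. You instead derive the equation by hand. From $\WW_i\le\NN_i$ and monotonicity you get $\WW_i(\tau_1)-\WW_i(\tau_2)\to0$. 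Perelman's formula turns this into vanishing weighted spacetime $L^2$ norm of the soliton defect. Local smooth convergence, with densities bounded below on compact subsets of $\RR$, then gives the equation there. This is correct and uses only quantities on the approximating flows. However, it is redundant once Step~3 invokes the structure theorem, which already delivers \eqref{eq:limit-soliton}. It also does not address the genuinely hard part, completeness of the self-similar flow inside $\RR$, which you rightly defer.

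Two points need repair. First, the structure theorem is applied to the limit, so you must know that the limiting pointed Nash entropy is constant. Your Step~1 only concerns the approximants $\NN_i$. The phrase ``by the same convergence'' in Step~2 does not justify $\int w\,\d\nu_s=\WW_\infty$, because smooth convergence controls integrals only over compact subsets of $\RR$. You need entropy continuity. Concretely, this means uniform integrability of $w_i$ and $b_i$ from Lemma~\ref{lem:moments}, together with $\nu_s(\RR_s)=1$ and the resulting tightness of $\nu_{i,s}$ on images of compact regular sets. This is exactly the step the paper supplies. Second, in this normalization the self-similar field is $\partial_s-\tfrac12\nabla b$, not $\partial_s-\tau\nabla b$. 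From $\partial_sg=-\Ric$ and \eqref{eq:limit-soliton} one gets $\partial_s(g/|s|)=\nabla^2b/|s|$, which is cancelled by $\mathcal L_{-\frac12\nabla b}(g/|s|)$. Your field instead leaves the remainder $(|s|^{-1}-2)\nabla^2b$.
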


\begin{proof}
Pass to a subsequence with $\NN_i(1)\to c\in[-A_n,0]$.
Monotonicity and \eqref{eq:entropy-flat} imply
$\NN_i(\tau)\to c$ for every fixed $\tau>0$.
Entropy continuity for these metric-flow limits
\cite[Theorems~2.10 and~2.14]{bamler3}, in the induced-shrinker setting
of \cite[Section~6]{liwang2024heat}, shows that the limiting Nash
entropy is constant. The rigidity theorem
\cite[Theorem~2.18]{bamler3}, applied in the real-flow convention $2g$,
then gives the asserted product decomposition and
\eqref{eq:limit-soliton}. In our normalization, the product time vector
field is $\partial_s-\tfrac12\nabla b$.
\end{proof}

\begin{lemma}\label{lem:small-defect}
In the setting of Proposition~\ref{prop:shrinking-limit}, at time $s<0$, set
\begin{equation}\label{eq:W-defect}
    W_i(u)=-s(\scal_i+2\Delta u-\tfrac12|\d u|^2)+u-2n-\NN_i(1).
\end{equation}
Thus $W_i(b_i)=w(b_i)-\NN_i(1)$, with $w$ in \eqref{eq:Perelman-w} evaluated using $g_i(s)$ and
$\tau=-s$.
For each $I\Subset(-\infty,0)$,
\begin{equation}\label{eq:W-L1}
    \sup_{s\in I}\|W_i(b_i)(s)\|_{L^1(\nu_{i,s})}\longrightarrow0,
\end{equation}
and
\begin{equation}\label{eq:W-L43}
    \|W_i(b_i)\|_{L^{4/3}(I\times X_i,\,\d\nu_{i,s}\d s)}\longrightarrow0.
\end{equation}
\end{lemma}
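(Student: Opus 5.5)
Since $w(b_i)\le0$ by \eqref{eq:Perelman-w}, write $W_i(b_i)=w(b_i)-\NN_i(1)$. The plan is to split this quantity into its negative part $w(b_i)\le0$ and the constant $-\NN_i(1)\ge0$, and to control the $L^1$ norm through the mean. By \eqref{eq:Perelman-w}, $\int w(b_i)\,\d\nu_{i,s}=\WW_i(-s)$. Hence
\[
\|W_i(b_i)(s)\|_{L^1(\nu_{i,s})}\le \int|w(b_i)|\,\d\nu_{i,s}+|\NN_i(1)|
=-\WW_i(-s)-\NN_i(1).
\]
Since $w\le0$, this bound is not sharp by itself. The better route is to note that $W_i+|\NN_i(1)|$ is a nonpositive shift and to bound the $L^1$ norm by $2\,(\text{positive part})$ plus $|\text{mean}|$. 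Concretely, $|W_i|\le -W_i+2\max(W_i,0)$ and $W_i\le -\NN_i(1)$, so $\max(W_i,0)\le |\NN_i(1)|$, which is not small. The cleaner route is therefore to show that $W_i$ is close to zero in mean \emph{and} that its positive part is small.

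First, the mean $\int W_i\,\d\nu_{i,s}=\WW_i(-s)-\NN_i(1)$ tends to zero uniformly for $-s\in I$. Indeed $\WW_i(\tau)=\frac{\d}{\d\tau}(\tau\NN_i(\tau))$ and $\WW_i$ is monotone, so for $\tau\in I$ one has $\WW_i(\tau)$ squeezed between difference quotients of $\tau\NN_i$. Since $\NN_i(\tau)\to c$ uniformly on compact sets (proof of Proposition~\ref{prop:shrinking-limit}, by monotonicity and \eqref{eq:entropy-flat}), the difference quotients over $[\tau/2,\tau]$ and $[\tau,2\tau]$ tend to $c$. Hence $\WW_i\to c$ uniformly on $I$, and also $\NN_i(1)\to c$. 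Second, the $L^1$ norm is obtained from the mean plus the fact that $w(b_i)\le0$: write $W_i=w(b_i)-\WW_i(-s)+(\WW_i(-s)-\NN_i(1))$. The last term is $o(1)$, and $w(b_i)-\WW_i(-s)$ is the deviation of a nonpositive function from its mean. Its $L^1$ norm is at most $2\int|w-\text{mean}|_+$, which does not obviously vanish. For this I would use the rigidity of the limit. By Proposition~\ref{prop:shrinking-limit}, on $\RR$ one has $w(b_\infty)\equiv c$, because the soliton equation \eqref{eq:limit-soliton} gives constancy of Perelman's $w$, and this constant equals the limiting entropy. Smooth convergence on compact subsets of $\RR$ gives $w(b_i)\to c$ locally uniformly there. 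Combined with $\nu_s(\RR_s)=1$, this gives convergence in $\nu$-measure. Uniform integrability, from the moment bounds \eqref{eq:b-moments}, \eqref{eq:integral-input} and \eqref{eq:scalar-average}, upgrades this to $L^1$ convergence uniformly in $s\in I$, possibly after using the mean identity to avoid needing control on $\Delta b_i$ pointwise. Any subsequence argument then gives \eqref{eq:W-L1}.

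For \eqref{eq:W-L43}, interpolate. The spacetime $L^2$ norm of $W_i$ on $I$ is bounded: it is controlled by $\scal$, $\Delta b_i$, $|\d b_i|^2$ and $b_i$. By \eqref{eq:integral-input}, $|\Ric|^2$, $|\nabla^2 b|^2$ and $|\d b|^4$ are integrable, and \eqref{eq:b-moments} handles $b_i$. Hölder then gives $\|W_i\|_{4/3}\le\|W_i\|_1^{1/2}\|W_i\|_2^{1/2}\to0$. The main obstacle is the $L^1$ convergence step. The mean converges easily, but turning that into control of $|w(b_i)-c|$ needs both the pointwise identification on the regular part and uniform integrability. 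Uniform integrability holds because $L^2$ bounds exist only spacetime-integrated, which suffices for the spacetime statement. For the uniform-in-$s$ statement, I would instead use the sign $w\le0$: $\int|w-c|\le\int|w-c|\mathbf 1_{K}+\int_{K^c}(|w|+|c|)$, and $\int_{K^c}|w|=-\int_{K^c}w=-\WW_i+\int_K w$, which tends to $-c+c\,\nu(K)$ and is small when $\nu(K)\approx1$.
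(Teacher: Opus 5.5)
Your final argument is correct in outline, but it takes a different route from the paper. The paper does not use the limit at this point. It feeds the small entropy drop into Bamler's Proposition~7.1 to obtain almost self-similarity. It then reads off \eqref{eq:W-L1} from the quantitative estimate (7.8) of his Proposition~7.3, transplanted to complete shrinker flows by the Li--Wang exhaustion. You argue qualitatively through the limit of Proposition~\ref{prop:shrinking-limit}, in three steps:
\begin{enumerate}
\item The means $\int w(b_i)\,\d\nu_{i,s}=\WW_i(-s)$ converge to $c$, by the squeeze between difference quotients of $\tau\NN_i(\tau)$.
\item $w(b_i)$ converges uniformly on images of compact subsets of $\RR$.
\item The Harnack sign $w\le0$ turns the exact mean identity into tail control,
\[
\int_{X_i\setminus K_i}|w(b_i)|\,\d\nu_{i,s}=-\WW_i(-s)+\int_{K_i}w(b_i)\,\d\nu_{i,s},
\]
so no fixed-time uniform integrability is needed.
\end{enumerate}
This uses only \eqref{eq:Perelman-w}, \eqref{eq:entropy-bounds} and Li--Wang compactness, all already valid on complete flows, so you avoid transplanting Bamler's estimate. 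The price is that the argument is purely qualitative and depends on the structure of the limit.

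Two steps still need to be supplied.

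\textbf{The value of the constant.} \eqref{eq:limit-soliton} makes $w(b_\infty)$ a constant $c'$ on the connected slice $\RR_s$, independent of $s$ by the product structure. But $c'=c$ is not automatic. Your sign argument and Fatou's lemma give only $c\le c'$. The case $c'>c$ is exactly escape of the negative part of $w(b_i)$ into the singular set or to infinity, which is what must be excluded. You can either cite the part of Bamler's rigidity theorem identifying the constant with the pointed Nash entropy, or use your own spacetime $L^2$ bound. That bound makes $w(b_i)$ uniformly integrable for $\d\nu_{i,s}\,\d s$ on $I$, so $\int_I\WW_i(-s)\,\d s\to c'|I|$, which forces $c'=c$.

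\textbf{Uniformity in $s$.} Your tail estimate is at a fixed time. For the supremum over $I$, take nearly maximizing $s_i\to s_\infty$. Then use compact sets $K\times[s_\infty-\epsilon,s_\infty+\epsilon]$ in $\RR\cong\RR_{-1}\times(-\infty,0)$, on which convergence is uniform and $\nu_s(K)=\nu_{-1}(K)$.

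The $L^{4/3}$ interpolation is the same as in the paper. You should delete the early false starts. Also delete the suggestion that \eqref{eq:b-moments}, \eqref{eq:integral-input} and \eqref{eq:scalar-average} give uniform integrability at a fixed time. \eqref{eq:integral-input} is only spacetime-integrated, and \eqref{eq:scalar-average} does not control $\Delta b_i$.
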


\begin{proof}
Apply \cite[Proposition~7.1]{bamler3} at scale $r=1$. The entropy bound
$\NN_i(1)\ge-A_n$ and \eqref{eq:entropy-flat} give almost selfsimilarity
with parameter tending to zero. Then
\cite[Proposition~7.3, (7.8)]{bamler3} with $\alpha=0$ gives
\eqref{eq:W-L1} on every fixed $I$. In the $2g$ convention, the
integrand in \cite[(7.8)]{bamler3} is $|W_i(b_i)|$.
These estimates extend to complete flows induced by shrinkers by the
exhaustion argument explained in \cite[Section~6]{liwang2024heat}, using
the cutoff and moment bounds recorded in
Lemmas~\ref{lem:cutoff-duality} and~\ref{lem:moments}.

Lemma~\ref{lem:moments} also bounds $W_i(b_i)$ in spacetime $L^2(I)$:
the terms $\Delta b_i$, $|\d b_i|^2$ and $\scal_i$ are controlled by
\eqref{eq:integral-input}. Since
$\|W_i(b_i)\|_{L^1(I)}\le |I|\sup_{s\in I}\|W_i(b_i)(s)\|_1\to0$, interpolation gives
\[
    \|W_i(b_i)\|_{L^{4/3}(I)}
       \le\|W_i(b_i)\|_{L^1(I)}^{1/2}\|W_i(b_i)\|_{L^2(I)}^{1/2}
       \longrightarrow0,
\]
which proves \eqref{eq:W-L43}.
\end{proof}

We next regularize the heat-kernel potentials of the sequence in
Proposition~\ref{prop:shrinking-limit}.
All spacetime integrals below use $\d\nu_{i,s}\,\d s$.
We suppress this measure in norm subscripts.
The construction adapts the heat regularization of Hallgren--Jian
\cite[Proposition~3.2]{HJ} and Fang-Li
\cite[Lemma~3.1]{FangLiSingular}.

\begin{proposition}\label{prop:strong-potential}
Assume the hypotheses of Proposition~\ref{prop:shrinking-limit}.
There are smooth functions $h_i$ on $X_i\times[-8,-1/2]$ such that
\begin{equation}\label{eq:strong-equation}
    (-s)\heat h_i=h_i-\NN_i(1)-n,
\end{equation}
\begin{equation}\label{eq:strong-gradient}
    \sup_{s\in[-8,-1/2]}
       \int_{X_i}|\d(h_i-b_i)|^2\,\d\nu_{i,s}\longrightarrow0,
\end{equation}
and
\begin{equation}\label{eq:strong-defect}
    \int_{-8}^{-1/2}\!\int_{X_i}
       \left|\Ric_{g_i}+\nabla^2h_i+\frac{g_i}{s}\right|^2
       \,\d\nu_{i,s}\,\d s\longrightarrow0.
\end{equation}
The construction also gives
\begin{equation}\label{eq:strong-L4}
    \sup_{s\in[-8,-1/2]}\|\d h_i(s)\|_{L^4(\nu_{i,s})}\le C(n).
\end{equation}
\end{proposition}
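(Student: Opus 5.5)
We construct $h_i$ by solving the linear equation \eqref{eq:strong-equation} backward-in-time compatible fashion: set $\tau=-s$ and prescribe $h_i(-8)=b_i(-8)$, then evolve forward on $[-8,-1/2]$. Writing $v_i=\tau^{-1}\bigl(h_i-\NN_i(1)-n\bigr)$-type substitutions, \eqref{eq:strong-equation} becomes $\heat h_i=\tau^{-1}(h_i-c_i)$ with $c_i=\NN_i(1)+n$, so $h_i-c_i=\tau^{-1}\cdot 8\,P^{(i)}_{-8,s}(b_i(-8)-c_i)$ up to the explicit factor solving $\frac{\d}{\d s}\phi=\phi/(-s)$, i.e.\ $h_i(s)=c_i+\frac{8}{-s}P^{(i)}_{-8,s}\bigl(b_i(-8)-c_i\bigr)$. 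Existence, smoothness and weighted Sobolev regularity then follow from Lemma~\ref{lem:approximation}, since $b_i(-8)\in W^{1,4}(\nu_{i,-8})$ by \eqref{eq:integral-input} and the moment bounds \eqref{eq:b-moments} (after possibly replacing the initial time slightly to have a good slice). The $L^4$ gradient bound \eqref{eq:strong-L4} is then immediate from \eqref{eq:gradient-contraction} with $p=4$ and the uniform $L^4$ bound on $\d b_i$ at a fixed time (from $\int|\d b|^4$ in \eqref{eq:integral-input}, choosing the initial slice by Fubini).

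The main comparison is between $h_i$ and $b_i$. The key computation is that $b_i$ almost satisfies the same equation: using $\heat b=-\Delta b+\ldots$ from the conjugate heat equation, $\partial_s b=-\Delta b+\tfrac12|\d b|^2-\scal+\frac{n}{\tau}$ (in our normalization), one gets $\tau\heat b_i-(b_i-c_i)=-W_i(b_i)+(\text{terms})$, and in fact $\tau\heat b_i - b_i + c_i = -W_i(b_i)$ exactly after rearranging with \eqref{eq:W-defect}. Hence $e_i=h_i-b_i$ satisfies $\heat e_i=\tau^{-1}e_i+\tau^{-1}W_i(b_i)$ with $e_i(-8)=0$. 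I would then run an energy estimate for $|\d e_i|^2$ against $\nu_{i,s}$: by Bochner, $\heat|\d e|^2=-|\nabla^2e|^2+2\langle\d e,\d\heat e\rangle$, and integrating by parts the forcing term $\langle \d e,\d W\rangle$ against $\nu$ moves the derivative onto $\d e$ and $\d b$, yielding $\int |W|(|\nabla^2 e|+|\d e||\d b|)$. Absorbing $|\nabla^2 e|^2$, and controlling $\int|W||\d e||\d b|$ by Hölder with $\|W\|_{4/3}$ from \eqref{eq:W-L43} and the $L^4$ bounds on $\d e$, $\d b$, gives via Lemma~\ref{lem:cutoff-duality} and Gronwall that $\sup_s\|\d e_i\|_2^2+\int\!\!\int|\nabla^2e_i|^2\to0$, proving \eqref{eq:strong-gradient}. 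This is exactly why the $L^{4/3}$ bound was prepared.

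For \eqref{eq:strong-defect}, the standard route (as in Bamler's and Hallgren--Jian's arguments) is the Perelman-type identity: for $h$ solving \eqref{eq:strong-equation}, the quantity $w(h)$-analogue $\tau(\scal+2\Delta h-\tfrac12|\d h|^2)+h-2n$ satisfies a heat-type equation whose dissipation is $2\tau|\Ric+\nabla^2h-\frac{g}{\tau}|^2$, so integrating in time against $\nu_{i,s}$ bounds $\int\!\!\int\tau|\Ric+\nabla^2h_i+g_i/s|^2$ by the variation of $\int W_i(h_i)\,\d\nu$ between the endpoints. That variation tends to zero because $\int W_i(h_i)\d\nu_{i,s}$ differs from $\int W_i(b_i)\d\nu_{i,s}$ by terms controlled by $\|\d e_i\|_2$ (after integrating $\Delta e_i$ by parts against $\nu$) and $\|e_i\|_2$ (Poincaré \eqref{eq:Poincare}, means handled via \eqref{eq:heat-duality}), both tending to zero, while the $b_i$-quantity tends to zero by \eqref{eq:W-L1}. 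Alternatively, one can directly write $\Ric+\nabla^2h+g/s=(\Ric+\nabla^2b+g/s)+\nabla^2e$ and use the limiting soliton identity \eqref{eq:limit-soliton}; but without strong convergence of $\nabla^2b_i$ the identity-based argument is safer. The main obstacle I expect is justifying this integrated Perelman identity on the noncompact flows: the cutoff errors in Lemma~\ref{lem:cutoff-duality} require integrability of $|\nabla^2h|^2$, $|\d h|^4$ and their products with $|\d b|$, which I would obtain from the representation formula, \eqref{eq:hessian-energy}, Lemma~\ref{lem:moments}, and the at-most-quadratic growth of $b_i$, working first with compactly supported approximations of the initial data.
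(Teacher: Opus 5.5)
\textbf{There is a gap in your gradient estimate.} Your $h_i$ is the paper's. Unwinding $h_i=-u_i/s+\NN_i(1)$ with $u_i=P^{(i)}_{a_i,s}Q_i(a_i)-n(s-a_i)$ gives $h_i=c_i+\frac{-a_i}{-s}P^{(i)}_{a_i,s}\bigl(b_i(a_i)-c_i\bigr)$, which is your formula. Your derivations of \eqref{eq:strong-equation} and \eqref{eq:strong-L4} are correct, and so is the equation $\heat e_i=\tau^{-1}\bigl(e_i+W_i(b_i)\bigr)$ for $e_i=h_i-b_i$. The failure is the $H^1$-level (Bochner) energy estimate you use for \eqref{eq:strong-gradient}.

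After integrating the forcing term $\tau^{-1}\langle\d e_i,\d W_i(b_i)\rangle$ by parts against $\nu_{i,s}$, you must make $\int\!\!\int|W_i(b_i)|\bigl(|\nabla^2e_i|+|\d b_i||\d e_i|\bigr)$ small. The available information is limited:
\begin{itemize}
\item $W_i(b_i)$ is bounded in spacetime $L^2$ and small only in $L^1$ and $L^{4/3}$.
\item $\nabla^2e_i$ is only bounded in $L^2$, and $\d e_i,\d b_i$ are only bounded in $L^4$.
\end{itemize}
Absorbing $|\nabla^2e_i|^2$ by Young's inequality leaves $\int\!\!\int W_i(b_i)^2$, which is $O(1)$, not $o(1)$. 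The bound $\|W\|_{4/3}\|\d e\|_4\|\d b\|_4$ is not a valid H\"older estimate, since $\frac34+\frac14+\frac14>1$; you would need $\d e_i,\d b_i\in L^8$. So Gronwall does not give $\sup_s\|\d e_i\|_2\to0$.

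Your argument for \eqref{eq:strong-defect} then also loses its input. It needs $\int W_i(h_i)\,\d\nu_{i,s}$ small at the final endpoint $s=-1/2$, and by the mean identity this requires $\|\d e_i(-1/2)\|_2\to0$ at that specific time.

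\textbf{How the paper avoids this: it reverses the order.}
\begin{itemize}
\item \emph{Test against $e_i$, not $|\d e_i|^2$.} The $L^{4/3}$ bound is meant for this $L^2$-level estimate, paired with $\|e_i\|_{4}\le C$ from heat contraction and \eqref{eq:b-moments}. It yields $\sup_s\|e_i\|_2\to0$, but only spacetime smallness $\int\!\!\int|\d e_i|^2\to0$.
\item \emph{Get the defect bound with a time cutoff.} Use $Z_i=\tau W_i(h_i)$, which satisfies $\heat Z_i=\frac12\tau^2|E_i|^2$ exactly; this is cleaner than $W_i(h_i)$, whose equation carries an extra $\tau^{-1}W_i(h_i)$ term. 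With only time-integrated control of $\int Z_i\,\d\nu_{i,s}$, kill the final boundary term with a cutoff $\theta$ vanishing at a time after $-1/2$. This means $h_i$ must be run past $-1/2$; the paper uses $[-40,-1/4]$. The result is \eqref{eq:strong-defect}.
\item \emph{Recover the sup-in-time gradient bound.} Kato's inequality for $Z_i$ gives $\sup_s\|Z_i(s)\|_1\to0$. The weighted identity
$-\frac s2\int|\d(b_i-h_i)|^2\,\d\nu_{i,s}=\int\bigl(W_i(b_i)-W_i(h_i)-b_i+h_i\bigr)\,\d\nu_{i,s}$,
together with \eqref{eq:W-L1} and $\sup_s\|e_i\|_2\to0$, then gives \eqref{eq:strong-gradient}. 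No Bochner estimate for $\d e_i$ is needed.
\end{itemize}
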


\begin{proof}
We work on the larger interval $[-40,-1/4]$. All constants below depend
only on $n$ and these fixed endpoints.

By Lemma~\ref{lem:moments}, choose $a_i\in[-40,-32]$ such that
\begin{equation}\label{eq:initial-regular-time}
    \int_{X_i}\bigl(|\d b_i(a_i)|^4
          +|\nabla^2b_i(a_i)|^2+|\Ric_{g_i(a_i)}|^2\bigr)
             \,\d\nu_{i,a_i}\le C.
\end{equation}
Define
\begin{equation}\label{eq:Q-u-e}
\begin{aligned}
    Q_i(s)&=-s\bigl(b_i(s)-\NN_i(1)\bigr),\\
    u_i(s)&=P^{(i)}_{a_i,s}Q_i(a_i)-n(s-a_i),\\
    h_i(s)&=-\frac{u_i(s)}{s}+\NN_i(1),
    \qquad e_i(s)=u_i(s)-Q_i(s).
\end{aligned}
\end{equation}
For each fixed $i$, the initial function has polynomial growth, and its
scalar heat evolution is obtained by the weighted approximation in
Lemmas~\ref{lem:weighted-heat} and~\ref{lem:approximation}.

The conjugate heat-kernel equation is
\[
    \partial_s b_i=-\Delta b_i+\tfrac12|\d b_i|^2-\scal_i-\frac ns.
\]
By \eqref{eq:W-defect}, it follows that
\begin{equation}\label{eq:e-equation}
    \heat Q_i=-n-W_i(b_i),\qquad
    \heat u_i=-n,\qquad
    \heat e_i=W_i(b_i),\qquad e_i(a_i)=0.
\end{equation}
The equation \eqref{eq:strong-equation} follows immediately.

Heat contraction and \eqref{eq:b-moments} give
\begin{equation}\label{eq:e-L4}
    \sup_{s\in[a_i,-1/4]}
       \bigl(\|u_i(s)\|_4+\|Q_i(s)\|_4+\|e_i(s)\|_4\bigr)\le C.
\end{equation}
Testing \eqref{eq:e-equation} against $e_i$ gives
\[
    \frac12\int e_i(a')^2\,\d\nu_{i,a'}
      +\tfrac12\int_{a_i}^{a'}\!\int|\d e_i|^2\,\d\nu_{i,s}\,\d s
       =\int_{a_i}^{a'}\!\int e_iW_i(b_i)\,\d\nu_{i,s}\,\d s.
\]
H\"older's inequality, \eqref{eq:e-L4} and \eqref{eq:W-L43} imply
\begin{equation}\label{eq:e-small}
    \sup_{s\in[a_i,-1/4]}\|e_i(s)\|_2^2
       +\int_{a_i}^{-1/4}\!\int|\d e_i|^2\,\d\nu_{i,s}\,\d s
       \longrightarrow0.
\end{equation}
By \eqref{eq:initial-regular-time}, gradient contraction and
\eqref{eq:hessian-energy},
\begin{equation}\label{eq:u-derivative}
    \sup_{s\in[a_i,-1/4]}\|\d u_i(s)\|_4\le C,
    \qquad
    \int_{a_i}^{-1/4}\!\int|\nabla^2u_i|^2\,\d\nu_{i,s}\,\d s\le C.
\end{equation}
This gives \eqref{eq:strong-L4}.

To estimate the error in the soliton equation, set
\begin{equation}\label{eq:Z-definition}
    Z_i=-sW_i(h_i)
       =s^2\scal_i-2s\Delta u_i-\tfrac12|\d u_i|^2+u_i+2ns,
\end{equation}
and write
\begin{equation}\label{eq:E-definition}
    E_i=\Ric_{g_i}+\nabla^2h_i+\frac{g_i}{s}.
\end{equation}
In our convention, the square identity used in \cite{HJ} reads
\begin{equation}\label{eq:Z-square}
    \heat Z_i=\tfrac12s^2|E_i|^2.
\end{equation}
Indeed, $\heat u_i=-n$ and the Bochner and scalar curvature identities yield
\begin{align*}
    \heat(s^2\scal_i)&=\tfrac12s^2|\Ric_i|^2+2s\scal_i,\\
    \heat(-2s\Delta u_i)&=-s\langle\Ric_i,\nabla^2u_i\rangle
                                   -2\Delta u_i,\\
    \heat(-\tfrac12|\d u_i|^2)&=\tfrac12|\nabla^2u_i|^2,\\
    \heat(u_i+2ns)&=n.
\end{align*}
Their sum is
\[
    \frac12\left|-s\Ric_i+\nabla^2u_i-g_i\right|^2,
\]
which proves \eqref{eq:Z-square}.

A second identity controls the weighted mean of $Z_i$:
\begin{equation}\label{eq:Z-mean}
    \int Z_i\,\d\nu_{i,s}
       =-s\int W_i(b_i)\,\d\nu_{i,s}
          +\int e_i\,\d\nu_{i,s}
          -\tfrac12\int|\d e_i|^2\,\d\nu_{i,s}.
\end{equation}
To verify it, substitute $h_i=b_i-e_i/s$ into
$-sW_i(h_i)$. The mixed terms have integral
\[
    -2s\int\bigl(\Delta e_i
                  -\tfrac12\langle\d b_i,\d e_i\rangle\bigr)\,\d\nu_{i,s}=0.
\]
This gives \eqref{eq:Z-mean} for almost every $s$.

By \eqref{eq:W-L1}, \eqref{eq:e-small} and \eqref{eq:Z-mean},
\begin{equation}\label{eq:Z-mean-small}
    \int_{a_i}^{-1/4}
       \left|\int Z_i\,\d\nu_{i,s}\right|\d s\longrightarrow0.
\end{equation}
At the initial time, $u_i(a_i)=Q_i(a_i)$, so
\begin{equation}\label{eq:Z-initial}
    Z_i(a_i)=(-a_i)W_i(b_i)(a_i),\qquad \|Z_i(a_i)\|_1\longrightarrow0.
\end{equation}

Choose a smooth nonnegative function $\theta$ of time, equal to one on
$[-40,-1/2]$ and vanishing at $-1/4$. Integrating
\eqref{eq:Z-square} against $\theta$ gives
\begin{align}\label{eq:time-cutoff-square}
    \tfrac12\int_{a_i}^{-1/4}\!\theta(s)s^2
          \int|E_i|^2\,\d\nu_{i,s}\,\d s
     =-\int Z_i(a_i)\,\d\nu_{i,a_i}
       -\int_{a_i}^{-1/4}\!\theta'(s)
             \int Z_i\,\d\nu_{i,s}\,\d s.
\end{align}
The right side tends to zero by
\eqref{eq:Z-mean-small}--\eqref{eq:Z-initial}. Hence
\begin{equation}\label{eq:E-small-long}
    \int_{a_i}^{-1/2}\!\int|E_i|^2\,\d\nu_{i,s}\,\d s
       \longrightarrow0.
\end{equation}
Kato's inequality applied to \eqref{eq:Z-square} now gives
\begin{equation}\label{eq:Z-L1-small}
    \sup_{s\in[a_i,-1/2]}\|Z_i(s)\|_1
       \le\|Z_i(a_i)\|_1
            +\tfrac12\int_{a_i}^{-1/2}\!s^2
                    \int|E_i|^2\,\d\nu_{i,s}\,\d s
       \longrightarrow0.
\end{equation}
Equation~\eqref{eq:E-small-long} proves \eqref{eq:strong-defect};
\eqref{eq:Z-L1-small} will be used for the gradient comparison.

Weighted integration
by parts gives
\begin{equation}\label{eq:gradient-comparison-identity}
    -\frac{s}{2}\int|\d(b_i-h_i)|^2\,\d\nu_{i,s}
       =\int\bigl(W_i(b_i)-W_i(h_i)-b_i+h_i\bigr)\,\d\nu_{i,s}.
\end{equation}
The right side is bounded in absolute value by
\[
    \|W_i(b_i)(s)\|_1+\frac{\|Z_i(s)\|_1+\|e_i(s)\|_1}{-s}.
\]
Equations \eqref{eq:W-L1}, \eqref{eq:e-small} and
\eqref{eq:Z-L1-small} therefore prove
\eqref{eq:strong-gradient}, initially for almost every time.
Local smoothness and Fatou's lemma show that the left side is lower
semicontinuous in time. The same bound consequently holds at every time
in $[-8,-1/2]$, including both endpoints.

To apply Lemma~\ref{lem:cutoff-duality}, observe that
\eqref{eq:integral-input}, \eqref{eq:e-L4} and
\eqref{eq:u-derivative} give
\[
    Z_i\in L^2(\d\nu_{i,s}\,\d s),\quad
    \d b_i\in L^2(\d\nu_{i,s}\,\d s),\quad
    e_i^2\in L^2(\d\nu_{i,s}\,\d s).
\]
Thus $Z_i|\d b_i|$ and $e_i^2|\d b_i|$ are in $L^1$.
The source $s^2|E_i|^2$ is integrable,
since both $\Ric_i$ and $\nabla^2u_i$ have finite spacetime $L^2$ norm.
Lemma~\ref{lem:cutoff-duality} therefore applies to $Z_i$, $|Z_i|$ and
$e_i^2$. At the initial time, the identities are first applied with a
compactly supported cutoff and then passed to the limit as the cutoffs
exhaust the manifold.

For the spatial integration by parts, $\Delta e_i$ is integrable at
almost every time by the Hessian bounds, and $\d e_i,\d b_i$ are in
$L^2$. The cutoff error is bounded by
$\|\d e_i\|_2\|\d\chi_L\|_2$ and tends to zero.
All exhaustion limits are taken for a fixed $i$ before passing to the
sequence limit. This completes the proof.
\end{proof}

\subsection{The vanishing lemma}\label{subsec:vanishing}

The following identity describes the evolution of the weighted
divergence under Hodge heat flow. It is the real one-form version of
\cite[Lemma~3.1(ii)]{HallgrenAlgebraic}; see also
\cite[Lemma~6.1]{JS26}.

\begin{lemma}\label{lem:divergence}
Let $g(s)$, $s<0$, be a smooth real Ricci flow in arbitrary dimension
with $\partial_sg=-\Ric_g$, and set
$\heat=\partial_s-\tfrac12\Delta_{g(s)}$.
Let $\beta$ solve $\partial_s\beta=-\tfrac12\Delta_{\d,g(s)}\beta$.
Suppose that
$(-s)\heat h=h-c$ for a constant $c$. Then
\begin{equation}\label{eq:divergence-identity}
    \heat((-s)\delta_h\beta)
       =s\left\langle
           \Ric+\nabla^2h+\frac{g}{s},\nabla\beta
         \right\rangle.
\end{equation}
\end{lemma}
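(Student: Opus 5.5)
The plan is a direct pointwise computation: split $(-s)\delta_h\beta=(-s)\,\delta\beta+(-s)\langle\d h,\beta\rangle$ and compute $\heat$ of each piece separately. Throughout, $\Delta_g$ denotes the rough Laplacian $\operatorname{tr}_g\nabla^2$. I will use three facts. First, the Weitzenb\"ock formula gives $\partial_s\beta=\tfrac12\Delta_g\beta-\tfrac12\Ric(\beta^\sharp,\cdot)$. Second, $\partial_sg^{ij}=\Ric^{ij}$. Third, the contracted variation of the Christoffel symbols under $\partial_sg=-\Ric$ is $g^{ij}\partial_s\Gamma^k_{ij}=-\tfrac12\bigl(2\nabla_i\Ric^{ik}-\nabla^k\scal_g\bigr)=0$, by the contracted Bianchi identity.

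\textbf{Step 1: the divergence term.} Write $\delta\beta=-g^{ij}\nabla_i\beta_j$. By the Christoffel fact, differentiating in time gives
\[
    \partial_s(\delta\beta)=-\Ric^{ij}\nabla_i\beta_j+\delta(\partial_s\beta).
\]
On functions $\Delta_\d=-\Delta_g$, and $\delta$ commutes with $\Delta_\d$. Hence $\delta(\partial_s\beta)=-\tfrac12\delta\Delta_\d\beta=\tfrac12\Delta_g\delta\beta$, and therefore
\[
    \heat(\delta\beta)=-\langle\Ric,\nabla\beta\rangle.
\]

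\textbf{Step 2: the pairing term.} Differentiate $h$ using the hypothesis, $\partial_sh=\tfrac12\Delta_gh+(h-c)/(-s)$. The Bochner commutation $\d\Delta_gh=\Delta_g\d h-\Ric(\nabla h,\cdot)$ then gives
\[
    \partial_s\d h=\tfrac12\Delta_g\d h-\tfrac12\Ric(\nabla h,\cdot)+\frac{\d h}{-s}.
\]
Next expand $\partial_s\langle\d h,\beta\rangle$, including the contribution $+\Ric(\nabla h,\beta^\sharp)$ from $\partial_sg^{-1}$. Subtract $\tfrac12\Delta_g\langle\d h,\beta\rangle$, which produces the cross term $\langle\nabla^2h,\nabla\beta\rangle$. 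The three Ricci terms carry coefficients $+1-\tfrac12-\tfrac12$ and cancel, leaving
\[
    \heat\langle\d h,\beta\rangle=-\langle\nabla^2h,\nabla\beta\rangle+\frac{\langle\d h,\beta\rangle}{-s}.
\]

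\textbf{Step 3: assembling.} The factor $(-s)$ contributes $\partial_s(-s)=-1$, so
\[
    \heat\bigl((-s)\delta_h\beta\bigr)=-\delta_h\beta+(-s)\bigl(-\langle\Ric,\nabla\beta\rangle-\langle\nabla^2h,\nabla\beta\rangle\bigr)+\langle\d h,\beta\rangle.
\]
Since $-\delta_h\beta=\operatorname{tr}\nabla\beta-\langle\d h,\beta\rangle$, the two $\langle\d h,\beta\rangle$ terms cancel. Writing $\operatorname{tr}\nabla\beta=s\langle g/s,\nabla\beta\rangle$ then gives exactly \eqref{eq:divergence-identity}.

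The main obstacle is sign and normalization bookkeeping. The flow is $\partial_sg=-\Ric$ rather than $-2\Ric$, the heat operator carries the factor $\tfrac12$, and $\Delta_\d$ is the nonnegative Hodge Laplacian, so each of these must be tracked consistently. In particular, I will verify carefully that the Ricci contributions in Step~2 cancel exactly; this cancellation is what makes the right side depend only on the soliton defect $\Ric+\nabla^2h+g/s$.
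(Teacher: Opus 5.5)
Your computation is correct and gives exactly \eqref{eq:divergence-identity}. The paper writes out no proof of its own: it cites the analogous identity in Hallgren (Lemma~3.1(ii)) and Jian--Song (Lemma~6.1), which is the same direct pointwise calculation you carry out, so your approach is essentially the paper's.
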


We now apply this identity to the one-forms arising from the weighted
projection.

\begin{proposition}\label{prop:vanishing}
Assume the setting of Proposition~\ref{prop:shrinking-limit}. Let $\beta_i$
be one-forms obtained from Lemma~\ref{lem:projection} by multiplication
by constants and parabolic rescaling. Suppose they are defined on
$[-8,-1/2]$ and satisfy
\begin{equation}\label{eq:beta-initial}
    \|\beta_i(-8)\|_2=1,\qquad
    (\delta_{b_i}\beta_i)(-8)=0.
\end{equation}
Suppose that they converge smoothly on compact subsets of the regular
spacetime for $-8<s<-1/2$, and that their limit $\beta$ is closed at time
$-1$. Then
\begin{equation}\label{eq:vanishing-conclusion}
    \|\beta_i(-1)\|_{L^2(\nu_{i,-1})}\longrightarrow0.
\end{equation}
\end{proposition}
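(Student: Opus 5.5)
The plan is to show two things. First, the limit $\beta$ is weighted coclosed at time $-1$; second, the $L^2$ mass of $\beta_i(-1)$ cannot escape the regular part. A weighted Bochner argument on the metric soliton then forces $\beta(-1)=0$, and the absence of escaping mass upgrades this to \eqref{eq:vanishing-conclusion}.

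\emph{Step 1: coclosedness.} Let $h_i$ be the functions of Proposition~\ref{prop:strong-potential}, transported to the real convention $2g_i$. After this rescaling, \eqref{eq:strong-equation} has the form $(-s)\heat h=h-c$ required in Lemma~\ref{lem:divergence}. Set $\psi_i=(-s)\delta_{h_i}\beta_i$. Then \eqref{eq:divergence-identity} gives
\[
    |\heat\psi_i|\le 8\,|E_i|\,|\nabla\beta_i| .
\]
By Cauchy--Schwarz, \eqref{eq:strong-defect} and the energy bound \eqref{eq:Hodge-energy} (whose right side is $\|\beta_i(-8)\|_2^2=1$), the spacetime $L^1(\d\nu_{i,s}\d s)$ norm of this source tends to zero.

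At the initial time, \eqref{eq:beta-initial} gives $\delta_{h_i}\beta_i=\iota_{\nabla(h_i-b_i)}\beta_i$. Its $L^1(\nu_{i,-8})$ norm is at most $\|\d(h_i-b_i)\|_2\to0$ by \eqref{eq:strong-gradient}.

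The Kato estimate \eqref{eq:L1-Kato} then gives $\sup_{s\in[-8,-1/2]}\|\psi_i(s)\|_1\to0$. The integrability hypotheses of Lemma~\ref{lem:cutoff-duality} are checked for each fixed $i$ using \eqref{eq:strong-L4}, Lemma~\ref{lem:moments} and the linear growth of $\beta_i$. Since $h_i-b_i\to0$ locally on $\RR$, passing to the smooth local limit yields $\delta_b\beta=0$ on $\RR_{-1}$. Combined with the hypothesis $\d\beta(-1)=0$, the limit form $\beta(-1)$ is closed and weighted coclosed.

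\emph{Step 2: no loss of mass.} Kato's inequality and the Bochner formula give $\heat|\beta_i|\le0$. Hypercontractivity \eqref{eq:hypercontractivity}, with $p=2$ from time $-8$ to time $-1$, then bounds $\|\beta_i(-1)\|_{L^r(\nu_{i,-1})}\le1$ for some $r>2$; the ratio $8/1$ allows, for example, $r=4$. By Bamler's structure theory for these limits, for each $\epsilon>0$ there is a compact set $K\subset\RR_{-1}$ whose image in the approximating flows carries $\nu_{i,-1}$-mass at least $1-\epsilon$. This holds because $\nu_{-1}(\RR_{-1})=1$ and the convergence of conjugate heat kernel measures is within $W_1$. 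H\"older's inequality with the $L^r$ bound makes the $L^2$ contribution off $K$ at most $\epsilon^{1-2/r}$. Smooth convergence on $K$ then gives $\limsup_i\|\beta_i(-1)\|_2\le\|\beta(-1)\|_{L^2(\nu_{-1})}+o_\epsilon(1)$. Hence it suffices to show $\beta(-1)=0$.

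\emph{Step 3: weighted Bochner vanishing.} On the regular part, \eqref{eq:limit-soliton} gives the weighted Weitzenb\"ock identity
\[
    \Delta_{\d,b}=\nabla_b^*\nabla+\Ric+\nabla^2b=\nabla_b^*\nabla+\Id .
\]
Pairing it with $\chi^2\beta$ for cutoffs $\chi$ supported in $\RR_{-1}$ gives
\[
    \int\chi^2\bigl(|\nabla\beta|^2+|\beta|^2\bigr)\d\nu
       \le C\int|\d\chi|^2|\beta|^2\,\d\nu .
\]
I expect this step to be the main obstacle, because the cutoff errors must vanish near the singular set. I would use two inputs. The first is the codimension-four smallness of the singular set: Bamler's cutoff functions with $\int|\d\chi|^2\d\nu\to0$, which is available for metric solitons. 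The second is the $L^r$ bound on $\beta(-1)$ from Step 2, with $r=4$, which passes to the limit by Fatou's lemma and lets H\"older's inequality kill the cutoff error $\int|\d\chi|^2|\beta|^2\,\d\nu$. The far region is handled by the moment bounds on $b$. If the cutoff error cannot be controlled directly at $s=-1$, the fallback is to run the same argument averaged in time over $[-1-\delta,-1]$, where the spacetime $L^2$ bound on $\nabla\beta$ is available. Once the cutoff errors vanish, $\int|\beta(-1)|^2\d\nu=0$, and Step 2 concludes the proof.
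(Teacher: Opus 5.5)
Your architecture matches the paper's. Step~1 is its divergence identity and Kato estimate for $(-s)\delta_{h_i}\beta_i$. Step~2 is its hypercontractivity bound together with a compact set in $\RR_{-1}$ carrying almost all the mass. Step~3 is its cutoff inequality \eqref{eq:Bochner-cutoff}.

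\textbf{The gap: Step~3 does not close with your exponents.} The cutoff error is $\int|\d\chi|^2|\beta|^2\,\d\nu_{-1}$. If you only know $\beta\in L^r(\nu_{-1})$, H\"older gives
\[
    \int|\d\chi|^2|\beta|^2\,\d\nu_{-1}\le\|\d\chi\|_{L^{2r/(r-2)}(\nu_{-1})}^2\,\|\beta\|_{L^r(\nu_{-1})}^2 .
\]
With your choice $r=4$, this requires $\|\d\chi_\epsilon\|_{L^4}\to0$. The input you cite, $\int|\d\chi|^2\,\d\nu\to0$, would only suffice if $\beta\in L^\infty$. The codimension-four theory does not reach the endpoint either: it gives singular-set cutoffs with $\|\d\chi_\epsilon\|_{L^p}\to0$ only for $p<4$. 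At $p=4$ it does not even give a uniform bound, which would otherwise suffice, since the supports of $\d\chi_\epsilon$ shrink to a null set. So $r=4$ is exactly the borderline case that fails. You need $2r/(r-2)<4$, that is, $r>4$.

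\textbf{The fix is already in your Step~2.} The time ratio $8$ in \eqref{eq:hypercontractivity} allows any $r\le 9$. The paper takes $r=8$, which gives $\|\beta_i(-1)\|_{L^8}\le1$ and, by Fatou, $\beta\in L^8(\nu_{-1})$. It then uses Bamler's cutoffs with $\|\d\chi_\epsilon\|_{L^{8/3}}\to0$ on bounded slice regions, converted to the weighted measure via the bounded heat density. The far region needs no moment bounds on $b$: Lipschitz cutoffs $\rho_L$ with $|\d\rho_L|\le C/L$ give error $CL^{-2}\|\beta\|_2^2$. Your time-averaged fallback does not help. The difficulty is the integrability of $\beta$ near the singular set, and closedness of the limit is assumed only at $s=-1$.

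\textbf{Smaller points.}
\begin{itemize}
\item Hypercontractivity requires polynomial spatial growth, which $\beta_i$ itself is not known to have. Apply it to the approximants with compactly supported primitives and pass to the limit by Fatou.
\item The mass statement in Step~2 comes from smooth convergence of the heat-kernel potentials on compact subsets of $\RR$, not from $W_1$ convergence alone.
\item No transport to $2g_i$ is needed in Step~1. Lemma~\ref{lem:divergence} and \eqref{eq:strong-equation} are already in the same normalization.
\end{itemize}
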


\begin{proof}
Applying \eqref{eq:Hodge-energy} from Lemma~\ref{lem:projection}
to $\beta_i$ and using the normalization \eqref{eq:beta-initial}, we obtain
\begin{equation}\label{eq:beta-energy}
    \sup_{s\in[-8,-1/2]}\|\beta_i(s)\|_2\le1,
    \qquad
    \int_{-8}^{-1/2}\!\int|\nabla\beta_i|^2\,\d\nu_{i,s}\,\d s\le1.
\end{equation}
Let $h_i$ be the potentials in Proposition~\ref{prop:strong-potential}
and let $E_i$ be given by \eqref{eq:E-definition}. Put
$V_i=(-s)\delta_{h_i}\beta_i$. At time $-8$,
\begin{equation}\label{eq:div-initial-small}
    \|V_i(-8)\|_1
       \le8\|\d(h_i-b_i)(-8)\|_2\|\beta_i(-8)\|_2\longrightarrow0.
\end{equation}
By Lemma~\ref{lem:divergence} and \eqref{eq:strong-equation},
\[
    \heat V_i=s\langle E_i,\nabla\beta_i\rangle.
\]
By H\"older's inequality, the bounds \eqref{eq:integral-input},
\eqref{eq:strong-L4}, and \eqref{eq:beta-energy} imply
$V_i,\,|V_i||\d b_i|\in L^1(\d\nu_{i,s}\,\d s)$ on $[-8,-1]$.
Thus we may apply the Kato estimate \eqref{eq:L1-Kato} from
Lemma~\ref{lem:cutoff-duality},
and then Cauchy--Schwarz in spacetime to obtain
\begin{equation}\label{eq:div-h-small}
    \|\delta_{h_i}\beta_i(-1)\|_1 \le\|V_i(-8)\|_1 +8\left(\int_{-8}^{-1}\!\int|E_i|^2 \,\d\nu_{i,s}\,\d s\right)^{1/2} \left(\int_{-8}^{-1}\!\int|\nabla\beta_i|^2 \,\d\nu_{i,s}\,\d s\right)^{1/2} \longrightarrow0.
\end{equation}
Here $V_i(-1)=\delta_{h_i}\beta_i(-1)$, and the convergence follows from
\eqref{eq:div-initial-small}, \eqref{eq:strong-defect}, and
\eqref{eq:beta-energy}.

At time $-1$, replacing $h_i$ by $b_i$ costs at most
$\|\d(h_i-b_i)(-1)\|_2\|\beta_i(-1)\|_2=o(1)$. Consequently,
\begin{equation}\label{eq:div-b-small}
    \|\delta_{b_i}\beta_i(-1)\|_1\longrightarrow0.
\end{equation}
Smooth convergence on compact subsets of $\RR_{-1}$ and the closedness of the limit $\beta$ imply
\begin{equation}\label{eq:limit-harmonic}
    \d\beta=0,\qquad\delta_b\beta=0
       \qquad\text{on }\RR_{-1}.
\end{equation}

For each fixed $i$, approximate the initial scalar primitive defining
$\beta_i$ by compactly supported smooth functions, as in the proof of
Lemma~\ref{lem:projection}, and denote the resulting one-forms by
$\beta_{i,k}$. They have polynomial spatial growth on $[-8,-1]$.
The Hodge Bochner identity and the chain rule make
$(|\beta_{i,k}|^2+\epsilon^2)^{1/2}$ a nonnegative heat subsolution.
Apply \eqref{eq:hypercontractivity} and let $\epsilon\to0$. Since
$8\ge(8-1)/(2-1)$, this gives
\[
    \|\beta_{i,k}(-1)\|_{L^8(\nu_{i,-1})}
       \le\|\beta_{i,k}(-8)\|_{L^2(\nu_{i,-8})}.
\]
Lemma~\ref{lem:approximation} gives $L^2$ convergence of the one-forms
at both endpoints. Along a subsequence there is also almost-everywhere
convergence at time $-1$, so Fatou's lemma and convergence of the
initial norms yield
\begin{equation}\label{eq:beta-L8}
    \|\beta_i(-1)\|_{L^8(\nu_{i,-1})}
       \le\|\beta_i(-8)\|_{L^2(\nu_{i,-8})}=1.
\end{equation}
The limit therefore lies in $L^8(\nu_{-1})$, and hence in
$L^2(\nu_{-1})$.

At time $-1$, \eqref{eq:limit-soliton} gives
$\Ric+\nabla^2b=g$. Thus \eqref{eq:limit-harmonic} and the weighted
Weitzenb\"ock formula yield
\begin{equation}\label{eq:weighted-Weitzenbock}
    0=(\d\delta_b+\delta_b\d)\beta =\nabla_b^*\nabla\beta+(\Ric+\nabla^2b)^\sharp\beta =\nabla_b^*\nabla\beta+\beta.
\end{equation}
Here $\nabla_b^*$ is the adjoint of $\nabla$ in $L^2(\nu_{-1})$.
For a compactly supported cutoff $\chi$ in the regular slice, testing
against $\chi^2\beta$ and using Young's inequality gives
\begin{equation}\label{eq:Bochner-cutoff}
    \frac12\int\chi^2\bigl(|\nabla\beta|^2+|\beta|^2\bigr)\,\d\nu_{-1}
       \le2\int|\d\chi|^2|\beta|^2\,\d\nu_{-1}.
\end{equation}
Inequality \eqref{eq:Bochner-cutoff} also holds for compactly supported
Lipschitz functions on $\RR_{-1}$, by smooth approximation in $W^{1,2}$
on a relatively compact neighborhood of the support.
Fix $z\in\RR_{-1}$. For $L\ge1$, choose a Lipschitz cutoff
$0\le\rho_L\le1$, equal to one on $B(z,L)$ and vanishing outside
$B(z,2L)$, with $|\d\rho_L|\le C/L$ almost everywhere.
Bamler's codimension-four estimate
\cite[Theorems~2.16(d), 2.18]{bamler3} and cutoff construction
\cite[Lemma~15.27]{bamler3} apply through the extension of Li--Wang
\cite[Section~6, in particular Theorem~6.12]{liwang2024heat}.
On the metric soliton from Proposition~\ref{prop:shrinking-limit},
these give singular-set cutoffs with locally vanishing $L^p$
gradient norm on each bounded slice region for every $1\le p<4$.
See also \cite[Proposition~8.23]{FangLiStructure} and
\cite[Lemma~3.3]{HallgrenZhang} for related cutoff formulations
on singular shrinkers.
In particular, there are smooth functions $0\le\chi_\epsilon\le1$,
vanishing near the singular set and converging locally to one on
$\RR_{-1}$, such that $\rho_L\chi_\epsilon$ has compact support
in $\RR_{-1}$. Choosing $p=8/3$ and using the bounded heat density
from \cite[Theorems~4.4, 6.12(4)]{liwang2024heat} converts the
unweighted estimate to the weighted bound, for each fixed $L$,
\[
    \|\d\chi_\epsilon\|_{L^{8/3}(B(z,2L)\cap\RR_{-1},\,\nu_{-1})}
       \longrightarrow0.
\]
Taking $\chi=\rho_L\chi_\epsilon$ in \eqref{eq:Bochner-cutoff} and
using H\"older's inequality, we obtain
\[
    \int\rho_L^2\chi_\epsilon^2
       \bigl(|\nabla\beta|^2+|\beta|^2\bigr)\,\d\nu_{-1}
       \le \frac{C}{L^2}\|\beta\|_2^2
       +C\|\beta\|_8^2
       \|\d\chi_\epsilon\|_{L^{8/3}(B(z,2L)\cap\RR_{-1},\,\nu_{-1})}^2.
\]
Letting $\epsilon\to0$ and applying Fatou's lemma gives
\[
    \int_{B(z,L)\cap\RR_{-1}}
       \bigl(|\nabla\beta|^2+|\beta|^2\bigr)\,\d\nu_{-1}
       \le\frac{C}{L^2}\|\beta\|_2^2.
\]
Letting $L\to\infty$, we conclude that $\beta=0$.

We now prove global weighted $L^2$ convergence.
Choose compact $E\Subset\RR_{-1}$ with
$\nu_{-1}(E)>1-\delta$, and let $E_i$ be its image under the convergence
maps. For large $i$, $\nu_{i,-1}(X_i\setminus E_i)<2\delta$. By
\eqref{eq:beta-L8},
\[
    \int_{X_i\setminus E_i}|\beta_i(-1)|^2\,\d\nu_{i,-1}
       \le(2\delta)^{3/4}.
\]
The integral on $E_i$ tends to zero by smooth convergence. Letting
$\delta\to0$ proves \eqref{eq:vanishing-conclusion}.
\end{proof}

\subsection{Energy improvement and iteration}\label{subsec:energy-iteration}

We work at time $-1$ of the canonical ancient flow, where
$\omega_{-1}=\omega$. We first improve the estimate for $\mathcal D_{x_0^*}$ at scales
with small entropy drop, and then iterate to prove Theorem~\ref{thm:main}.

\begin{proposition}\label{prop:improved-contraction}
Fix an integer $n\ge1$ and $\epsilon>0$. There are constants
$\eta=\eta(n,\epsilon)\in(0,1/8)$ and
$C_\epsilon=C(n,\epsilon)<\infty$ with the following property.
For every K\"ahler--Ricci shrinker, every
$x_0^*=(x_0,-1)$, and every $\tau\ge1$ satisfying
\begin{equation}\label{eq:good-scale}
    \NN_{x_0^*}(\eta\tau)-\NN_{x_0^*}(\eta^{-1}\tau)\le\eta,
\end{equation}
we have
\begin{equation}\label{eq:improved-contraction}
    \mathcal D_{x_0^*}(\tau)\le\epsilon \mathcal D_{x_0^*}(8\tau)+C_\epsilon\tau^2.
\end{equation}
\end{proposition}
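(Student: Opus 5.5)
The argument will be by contradiction and compactness, combined with the vanishing lemma (Proposition~\ref{prop:vanishing}). Since $\mathcal D$ is invariant under parabolic rescaling, we first rescale so that $\tau=1$ and the basepoint sits at time $0$. Concretely, set $g_i(s)=\tau_i^{-1}g(-1+\tau_i s)$ and $x_i^*=(x_0,0)$. Suppose the proposition fails for some $\epsilon>0$. Then there are $\eta_i\downarrow0$, $C_i\uparrow\infty$, shrinkers, basepoints and scales $\tau_i\ge1$ satisfying \eqref{eq:good-scale} with $\eta_i$, but with
\[
\mathcal D_i(1)>\epsilon\mathcal D_i(8)+C_i\tau_i^2 .
\]
In the rescaled flow the canonical potential becomes $\tau_i^{-1}F$ (up to time reparametrization). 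This multiplies $\mathcal D$ by $\tau_i^{-2}$, so after normalizing we get
\[
\tilde{\mathcal D}_i(1)>\epsilon\tilde{\mathcal D}_i(8)+C_i,
\]
where $\tilde{\mathcal D}_i$ is computed with the rescaled potential $\tilde F_i$. In particular $\tilde{\mathcal D}_i(8)\to\infty$ relative to the additive constant. Hypothesis \eqref{eq:good-scale} with $\eta_i\to0$ gives exactly \eqref{eq:entropy-flat}. Hence, after passing to a subsequence, Proposition~\ref{prop:shrinking-limit} applies: the limit is a metric soliton, and the potentials $h_i$ of Proposition~\ref{prop:strong-potential} are available.

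Next I would construct the normalized one-forms. Take $\beta_i$ to be the residual of Lemma~\ref{lem:projection} at time $-8$, evolved to $[-8,-1/2]$, and divide by $\|\beta_i(-8)\|_2=(\tilde{\mathcal D}_i(8)/8)^{1/2}$. This gives \eqref{eq:beta-initial}, using \eqref{eq:residual-coclosed}. Moreover, $\mathcal D_i(1)\le\|\beta_i(-1)\|_2^2$ up to the normalization, because the evolved gradient is admissible. So it suffices to show $\|\hat\beta_i(-1)\|_2\to0$ for the normalized forms $\hat\beta_i$: this forces $\tilde{\mathcal D}_i(1)\le o(1)\tilde{\mathcal D}_i(8)<\epsilon\tilde{\mathcal D}_i(8)$, a contradiction.

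The remaining hypotheses of Proposition~\ref{prop:vanishing} are local smooth convergence and closedness of the limit at time $-1$. By \eqref{eq:Hodge-evolution}, $\d\hat\beta_i=\ddc\tilde F_i/\|\beta_i(-8)\|_2$. The Hessian identity $\nabla^2\tilde F_i=g_i-(-t)\Ric_{g_i}$ (in rescaled form) together with \eqref{eq:Kahler-Hess} gives a pointwise bound $|\d\hat\beta_i|\le C(1+|\Ric_{g_i}|)/\|\beta_i(-8)\|_2$. On compact subsets of the regular part, curvature is locally bounded along the convergence. Since $\|\beta_i(-8)\|_2^2\gtrsim C_i\to\infty$, this tends to zero locally uniformly, so any limit is closed. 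Local smooth convergence of $\hat\beta_i$ follows from the energy bound \eqref{eq:Hodge-energy}, the linear parabolic Hodge heat equation, and local parabolic regularity on regular spacetime regions where the metrics converge smoothly; a diagonal subsequence then gives a smooth limit on $-8<s<-1/2$. Proposition~\ref{prop:vanishing} now gives the conclusion.

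I expect the main obstacle to be making this convergence step rigorous. One needs interior estimates for $\hat\beta_i$ on regular regions starting only from weighted $L^2$ bounds; the heat density is bounded below locally, so the weighted bounds become ordinary local $L^2$ bounds and standard parabolic estimates apply. One also has to check carefully that the rescaling normalization of $\tilde F$ really makes $\ddc\tilde F$ locally bounded (independently of $\tau_i$) while the denominator diverges. A further point to verify is that \eqref{eq:good-scale}, stated with $\eta\tau$ and $\eta^{-1}\tau$, covers the interval $[\epsilon_i,\epsilon_i^{-1}]$ after rescaling, as \eqref{eq:entropy-flat} requires; this holds with $\epsilon_i=\eta_i$.
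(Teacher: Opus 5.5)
Your proposal is correct and follows essentially the same route as the paper: contradiction, rescaling to unit scale with basepoint at time $0$, normalized residual one-forms from Lemma~\ref{lem:projection}, closedness of the limit from the Hessian identity, and then Proposition~\ref{prop:vanishing}. The only difference is a choice of normalization. You rescale the potential to $\tau_i^{-1}F$, so that $\mathcal D$ scales by $\tau_i^{-2}$ and $|\nabla^2\tilde F_i(-1)|\le C(1+|\Ric_{g_i}|)$ uniformly in $i$. The paper instead keeps $F$ unscaled and uses $(1+\tau_i)/\sqrt{\mathcal D_{x_i^*}(8\tau_i)}\to0$. In both versions the normalizing factor diverges because of the coarse contraction \eqref{eq:coarse-contraction}.
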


\begin{proof}
The case $\epsilon\ge1/8$ follows from
\eqref{eq:coarse-contraction}. Suppose that the result fails for some
$0<\epsilon<1/8$. Choose $\eta_i\downarrow0$ with $\eta_i<1/8$.
The negation of the proposition gives a sequence of shrinkers,
points $x_i^*=(x_i,-1)$ and scales $\tau_i\ge1$ such that
\begin{equation}\label{eq:contradiction-entropy}
    \NN_{x_i^*}(\eta_i\tau_i)-\NN_{x_i^*}(\eta_i^{-1}\tau_i)\le\eta_i,
\end{equation}
but
\begin{equation}\label{eq:contradiction-D}
    \mathcal D_{x_i^*}(\tau_i)>\epsilon \mathcal D_{x_i^*}(8\tau_i)+i\tau_i^2.
\end{equation}
Combining \eqref{eq:contradiction-D} with the coarse contraction \eqref{eq:coarse-contraction} gives
\begin{equation}\label{eq:scale-amplitude}
    \left(\frac18-\epsilon\right)\mathcal D_{x_i^*}(8\tau_i)>i\tau_i^2,
    \qquad
    \frac{1+\tau_i}{\sqrt{\mathcal D_{x_i^*}(8\tau_i)}}\longrightarrow0.
\end{equation}

Let $g^{(i)}(t)$ be the canonical ancient Ricci flow of the $i$th shrinker,
and let $F^{(i)}$ be its potential defined by \eqref{eq:canonical-potential}.
Let
\begin{equation}\label{eq:rescaled-flow}
    g_i(s)=\tau_i^{-1}g^{(i)}(-1+\tau_i s),\qquad
    F_i(s)=F^{(i)}(-1+\tau_i s).
\end{equation}
Since $t=-1$ corresponds to $s=0$, the rescaled flows are based at
$(x_i,0)$, as in Proposition~\ref{prop:shrinking-limit}, with
$\epsilon_i=\eta_i$.
Since $\partial_sF_i-\tfrac12\Delta_{g_i(s)}F_i=-n\tau_i$,
$\dc F_i$ again solves the Hodge heat equation.
At $s=-8$, let $\d\varphi_i$ be the weighted projection of
$\dc F_i(-8)$ given by Lemma~\ref{lem:projection}. The residual has
squared norm $\mathcal D_{x_i^*}(8\tau_i)/8$ with respect to $g_i(-8)$. Set
\begin{equation}\label{eq:rescaled-beta}
    \beta_i(s)=\sqrt{\frac{8}{\mathcal D_{x_i^*}(8\tau_i)}}
       \bigl(\dc F_i(s)-\d P^{(i)}_{-8,s}\varphi_i\bigr).
\end{equation}
Then
\begin{equation}\label{eq:normalized-decomposition}
    \|\beta_i(-8)\|_2=1,
    \qquad(\delta_{b_i}\beta_i)(-8)=0,
\end{equation}
and these one-forms satisfy \eqref{eq:beta-energy}.

By \eqref{eq:contradiction-entropy} and
Proposition~\ref{prop:shrinking-limit}, a subsequence of the pointed flows
converges to a metric soliton. We use the time-preserving embeddings
on its regular spacetime from Subsection~\ref{subsec:almost-selfsimilar}.
Let $Q'\Subset Q\Subset\RR$ be relatively compact spacetime cylinders. The heat density has a positive lower bound
on $Q$. Thus
\eqref{eq:beta-energy} gives a uniform unweighted spacetime $L^2$ bound
for $\beta_i$ on $Q$. The Hodge equation has smoothly converging
coefficients there. Interior parabolic estimates give uniform $C^k$
bounds on $Q'$ for every $k$. A diagonal subsequence converges smoothly
on the regular spacetime for $-8<s<-1/2$.

At time $-1$, the canonical Hessian identity becomes
\begin{equation}\label{eq:rescaled-Hess}
    \nabla^2F_i(-1)
       =\tau_i g_i(-1)
          -(1+\tau_i)\Ric_{g_i(-1)}.
\end{equation}
On each compact subset of $\RR_{-1}$, the rescaled metrics and Ricci tensors
are uniformly bounded. By \eqref{eq:scale-amplitude} and
\eqref{eq:rescaled-Hess}, the Hessian of $F_i(-1)$ divided by
$\sqrt{\mathcal D_{x_i^*}(8\tau_i)}$ tends to zero there. By
\eqref{eq:rescaled-beta}, $\d^2=0$, and \eqref{eq:Kahler-Hess},
\begin{equation}\label{eq:limit-closed}
    \d\beta_i(-1)=\sqrt{\frac{8}{\mathcal D_{x_i^*}(8\tau_i)}}
       \,\ddc F_i(-1)\longrightarrow0.
\end{equation}
The limiting one-form is closed. Proposition~\ref{prop:vanishing}
therefore gives $\|\beta_i(-1)\|_2\to0$.

By Lemma~\ref{lem:projection}, $\d P^{(i)}_{-8,-1}\varphi_i$ belongs to
$\cG_{x_i^*,\tau_i}$ under the rescaling \eqref{eq:rescaled-flow}.
Then using \eqref{eq:rescaled-beta}, we obtain
\[
    \frac{8\mathcal D_{x_i^*}(\tau_i)}{\mathcal D_{x_i^*}(8\tau_i)}
       \le\|\beta_i(-1)\|_2^2\longrightarrow0.
\]
This contradicts \eqref{eq:contradiction-D}, which bounds the same ratio
below by $8\epsilon$.
\end{proof}

We complete the proof by iterating the contraction over geometric scales.

\begin{proof}[Proof of Theorem~\ref{thm:main}]
Fix $\epsilon=1/128$ and choose $\eta,C_\epsilon$ from
Proposition~\ref{prop:improved-contraction}. Fix $x_0^*=(x_0,-1)$ and set
\[
    \tau_j=8^j,\qquad j=0,1,2,\ldots.
\]
Let $\mathcal B$ be the set of indices for which
\eqref{eq:good-scale} fails at $\tau_j$.

The intervals
\[
    I_j=[\log\tau_j-\log(\eta^{-1}),\log\tau_j+\log(\eta^{-1})]
\]
have overlap multiplicity at most
$L_\eta=\lfloor2\log(\eta^{-1})/\log8\rfloor+2$. The nonincreasing
function $u\mapsto\NN_{x_0^*}(e^u)$ has total decrease at most $A_n$, by
\eqref{eq:entropy-bounds}. Hence
\begin{equation}\label{eq:bad-count}
    \#\mathcal B\le B,
    \qquad B=\left\lceil L_\eta A_n/\eta\right\rceil.
\end{equation}

Use \eqref{eq:improved-contraction} when $j\notin\mathcal B$, and
\eqref{eq:coarse-contraction} otherwise. Thus
\begin{equation}\label{eq:recursion}
    \mathcal D_{x_0^*}(\tau_j)\le a_j\mathcal D_{x_0^*}(\tau_{j+1})+C_\epsilon\tau_j^2,
    \qquad
    a_j=\begin{cases}
       \epsilon,&j\notin\mathcal B,\\
       1/8,&j\in\mathcal B.
    \end{cases}
\end{equation}
By \eqref{eq:bad-count}, for any $j\geq 1$,
\[
    \prod_{k=0}^{j-1}a_k\le(8\epsilon)^{-B}\epsilon^j
       \le C(n)\epsilon^j.
\]
Iterating \eqref{eq:recursion} gives
\begin{equation}\label{eq:iteration-final}
    \mathcal D_{x_0^*}(1)
       \le C(n)\epsilon^m\mathcal D_{x_0^*}(8^m)
          +C(n)\sum_{j=0}^{m-1}(64\epsilon)^j.
\end{equation}
The sum is uniformly bounded. By \eqref{eq:coarse-growth},
\begin{equation}\label{eq:terminal-vanishes}
    \epsilon^m\mathcal D_{x_0^*}(8^m)
       \le2(8\epsilon)^mF(x_0,-1)
                      +2n(64\epsilon)^m\longrightarrow0.
\end{equation}
Here $x_0$ is fixed while $m\to\infty$.
We obtain
\begin{equation}\label{eq:uniform-D}
    \mathcal D_{x_0^*}(1)\le C(n)
\end{equation}
with a constant independent of $x_0$. Then
Proposition~\ref{prop:scalar-reduction} at $t_0=-1$ gives
$\scal_\omega(x_0)\le C(n)$ for every $x_0\in X$, proving
\eqref{eq:main-bound}.
\end{proof}

\section{The time-zero slice and the Fano fibration}\label{sec:time-zero-fibration}

Throughout this section, let
$(X^n,\omega,f)$ be a noncompact K\"ahler--Ricci
shrinker, and let $\pi:X\to Y$ be its polarized Fano fibration
\cite[Theorem~3.1 and Definitions~2.1, 2.5]{SunZhang}. We use the canonical flow $\omega_t=(-t)\psi_t^*\omega$
and the potential $F$ from \eqref{eq:canonical-potential}, and write
$d_t$ for the distance of the associated Riemannian metric $g_t$.
Fix a minimum point $p$ of $f$ and write $0=\pi(p)$.
By Theorem~\ref{thm:main}, $0\leq\scal_\omega\leq C_n$. Unless otherwise specified, constants in this section
may depend on the fixed shrinker.

\subsection{The canonical slice and local distance control}\label{subsec:time-zero-fibration}
We apply the spacetime construction of \cite[Section~5]{LZ26} to
$2g_t$, which satisfies the Ricci flow equation in the normalization
used there, and divide the resulting distances by $\sqrt2$.
Explicitly, for $x^*=(x,t)$ and $y^*=(y,s)$ with $s\leq t<0$, set
\[
    d^*(x^*,y^*)
       =\inf_{a\leq s}\max\left\{
          \sqrt{\frac{t-a}{2}},\,
          d_{W_1}^{g(a)}(\nu_{x,t;a},\nu_{y,s;a})\right\},
\]
where $d_{W_1}^{g(a)}$ is the $1$-Wasserstein distance on $(X,d_a)$ and
$\nu_{x,t;a}$ is the conjugate heat kernel measure, with
$\nu_{x,t;t}=\delta_x$.
Let $(Z,d_Z)$ be the metric completion of
$(X\times(-\infty,0),d^*)$. The time function extends continuously
to $\mathfrak t:Z\to(-\infty,0]$, and the added points form
$Z_0=\mathfrak t^{-1}(0)$. Conjugate heat kernel measures extend to
points of $Z_0$, and its canonical slice distance is
\[
    d_0^Z(z,z')
       =\lim_{a\uparrow0}d_{W_1}^{g(a)}(\nu_{z;a},\nu_{z';a}),
       \qquad z,z'\in Z_0.
\]
The distance $d_Z$ induces the spacetime topology on $Z_0$.
By \cite[Lemmas~5.3--5.4]{LZ26}, limits along worldlines define a
surjective map
\begin{equation}\label{eq:time-zero-map}
    \Phi:X\longrightarrow Z_0,
    \qquad \Phi(x)=\lim_{t\uparrow0}(x,t),
\end{equation}
where the limit is taken in the spacetime completion, and
\begin{equation}\label{eq:time-zero-distance}
    d_0^Z(\Phi(x),\Phi(x'))=\lim_{t\uparrow0}d_t(x,x').
\end{equation}
Put $\bar p=\Phi(p)$. The continuous comparison map
$\Phi_0:Z_0\to Y$ of \cite[Lemma~6.1]{LZ26} satisfies
$\pi=\Phi_0\circ\Phi$, where $Z_0$ initially carries the spacetime
topology. We will prove that $\Phi_0$ is a homeomorphism also for the
topology of $d_0^Z$.

The canonical identities give
\(   \partial_\tau F(x,-\tau)
       =\scal_\omega(\psi_{-\tau}(x))\in[0,C_n].
\)
Consequently, $F(\cdot,-\tau)$ converges locally uniformly to a
continuous function $F_0$, and, for $0<\tau\leq1$,
\begin{equation}\label{eq:tangent-potential-estimates}
\begin{split}
    0&\leq F(x,-\tau)-F_0(x)\leq C_n\tau,\\
    \rho^2(x)-C_n&\leq F_0(x)\leq F(x,-\tau)\leq\rho^2(x).
\end{split}
\end{equation}In particular, $F_0$ is proper on $X$.
The potential--distance comparison \cite[(5.2)]{LZ26}, with the
normalization above, gives
\begin{equation}\label{eq:time-zero-radius}
    d_0^Z(\bar p,\Phi(x))=\sqrt{2F_0(x)}.
\end{equation}

\begin{lemma}\label{lem:polynomial-metric-comparison}
For every compact set $Q\Subset X$, there are constants $C_Q<\infty$
and $N_Q\geq1$ such that
\[
    \omega_{-\tau}\leq C_Q\tau^{-N_Q}\omega
    \qquad\text{on }Q,\quad 0<\tau\leq1.
\]
\end{lemma}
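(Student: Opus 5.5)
We prove the comparison by a maximum-principle argument for the trace $\operatorname{tr}_\omega\omega_{-\tau}$ along the canonical flow, using the bounded scalar curvature from Theorem~\ref{thm:main}. Because $\omega_t=(-t)\psi_t^*\omega$, we have $\omega_{-\tau}=\tau\psi_{-\tau}^*\omega$. The estimate therefore amounts to bounding the differential of $\psi_{-\tau}$ on $Q$. Here $\psi_{-\tau}$ is the time-$\log(1/\tau)$ flow, up to normalization, of the real-holomorphic vector field $\nabla f/2$. Indeed, $\partial_t\psi_t=\nabla f\circ\psi_t/(2(-t))$, so the reparametrization $\sigma=-\log(-t)$ turns it into an autonomous flow by $-\nabla f/2$ or $+\nabla f/2$. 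As $\tau\downarrow0$, points of $Q$ are pushed outward along $\nabla f$ to distance roughly $\rho\sim\tau^{-1/2}$. The target bound $C_Q\tau^{-N_Q}$ means that the pullback metric grows at most polynomially in $\rho$ at the image point, which is exponentially in the flow time $\sigma$.

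The first step is a linearized (Gr\"onwall) estimate. For a vector $v$ at $x\in Q$, let $v(\sigma)=(\psi)_*v$. Then $\frac{\d}{\d\sigma}|v|_g^2=\nabla^2 f(v,v)=|v|^2-\Ric(v,v)$ in the complex normalization (up to factors of $2$). Hence
\[
|v(\sigma)|^2\le |v|^2\exp\Bigl(\int_0^\sigma (1+|\Ric|)\Bigr),
\]
and multiplying by $\tau=e^{-\sigma}$ cancels the $1$. Without a Ricci bound, this is where \cite[Lemma~5.4]{ZhangUniform} and \cite[Proposition~4.4]{LZ26} enter. They give a polynomial upper bound $|\Rm|\le C\rho^{k}$, or an equivalent polynomial control of the flow, along the soliton. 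Along the trajectory, $\rho^2(\psi(x))$ grows like $e^{\sigma}$ up to bounded factors, since $\frac{\d}{\d\sigma}\rho^2=|\nabla f|^2/2=\rho^2-\scal$ and $0\le\scal\le C_n$. Consequently, $\int_0^\sigma|\Ric|$ is bounded only by $\int_0^\sigma Ce^{k\sigma'/2}\d\sigma'$, which is exponential. That is too weak.

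So the Ricci bound alone is insufficient, and I would instead use the Kähler structure through the parabolic Schwarz/Chern--Lu inequality. The function $u=\log\operatorname{tr}_{\omega_t}\omega_{\rm ref}$ is not the right quantity here; the relevant one is $\operatorname{tr}_{\omega}\omega_{-\tau}$ with $\omega$ fixed. Along the flow $\partial_t\omega_t=-\Ric(\omega_t)$, with $\hat\omega=\omega$ fixed and having polynomially bounded bisectional curvature by the cited lemma on the region swept out, the standard computation gives
\[
\heat\log\operatorname{tr}_{\hat\omega}\omega_t\le C\,\operatorname{tr}_{\omega_t}\hat\omega\cdot(\text{bisectional bound of }\hat\omega).
\]
This is then combined with $\heat F=-n$ and the proper potential to localize. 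I would take $G=\log\operatorname{tr}_\omega\omega_t-AF-N\log(-t)$, or simply apply \cite[Proposition~4.4]{LZ26}, which I expect is exactly such a polynomial-in-$(-t)^{-1}$ bound on compact sets given bounded scalar curvature. The bounded scalar curvature keeps $F(\cdot,-\tau)$ comparable to $F_0$ by \eqref{eq:tangent-potential-estimates}. As a result, the localizing term is uniformly proper near $Q$, and the maximum principle yields $\operatorname{tr}_\omega\omega_{-\tau}\le C_Q\tau^{-N_Q}$.

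The main obstacle is the localization. The maximum must be taken on a region that stays compact as $\tau\to0$, which requires the barrier $AF$ to dominate the polynomial curvature growth of $\hat\omega$. My plan is to use $F$ as the barrier and to absorb the polynomial growth by choosing $N_Q$ large. If this fails, I would fall back on the pure Gr\"onwall argument with the sharper, scale-invariant Ricci bound from \cite[Lemma~5.4]{ZhangUniform} along the orbit.
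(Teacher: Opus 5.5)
There is a genuine gap: the proposal never actually obtains an upper bound for $\omega_{-\tau}$.

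\textbf{The routes you propose do not close.}

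\emph{Gr\"onwall.} This fails for the reason you give. Your fallback ``scale-invariant Ricci bound'' has no source. Only bounds on the scalar curvature are available here. As the paper uses it, \cite[Lemma~5.4]{ZhangUniform} is the trace estimate $\operatorname{tr}_\omega\Psi^*\omega_{\rm FS}\le C(1+\rho^2)^A$ for an equivariant projective embedding $\Psi$. It is not a curvature bound.

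\emph{Citing \cite[Proposition~4.4]{LZ26}.} This is not an upper bound on compact sets. It is the Schwarz-lemma lower bound $\omega_{\rm aff}\le C_Q\omega_{-\tau}$, with $\omega_{\rm aff}$ pulled back from the affine coordinates of $Y$.

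\emph{Maximum principle.} To use $\heat\log\operatorname{tr}_\omega\omega_t\le C\operatorname{tr}_{\omega_t}\omega$, you need a term $-A\operatorname{tr}_{\omega_t}\omega$ to absorb the right side. The potential $F$ supplies none, since $\heat F=-n$. You also cannot bound $\operatorname{tr}_{\omega_t}\omega$ outright. By \eqref{eq:tangent-kahler-potential} and Stokes' theorem, $\int u^*\omega_{-\tau}=\tau\int u^*\omega$ for every rational curve $u$ in a fiber of $\pi$, so $\operatorname{tr}_{\omega_{-\tau}}\omega\ge\tau^{-1}$ somewhere on it. Bounding this quantity is exactly a lower bound for $\omega_{-\tau}$, which you never address. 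Even a bound $\operatorname{tr}_{\omega_{-\tau}}\omega\le C\tau^{-B}$ with $B>1$, inserted into that inequality, would only yield $\operatorname{tr}_\omega\omega_{-\tau}\le\exp(C\tau^{1-B})$. So localization is not the main obstacle.

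\textbf{The missing idea is algebraic.} Prove a polynomial \emph{lower} bound for $\omega_{-\tau}$ against a fixed K\"ahler form, then convert it to an upper bound through the volume form.

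\emph{Lower bound.} Take $\eta=\omega_{\rm aff}+\omega_{\rm pr}$ from an equivariant embedding $(h,\Psi):X\hookrightarrow\mathbb C^L\times\mathbb P^m$, with $\omega_{\rm pr}=\Psi^*\omega_{\rm FS}$. The induced vector field on the compact space $\mathbb P^m$ changes $\omega_{\rm FS}$ by at most $c\,\omega_{\rm FS}$, so $\psi_{-\tau}^*\omega_{\rm pr}\ge\tau^c\omega_{\rm pr}$. Combine this with the trace estimate above and with $\rho^2\circ\psi_{-\tau}=\tau^{-1}F(\cdot,-\tau)\le C_Q\tau^{-1}$ on $Q$. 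This gives $\operatorname{tr}_{\omega_{-\tau}}\omega_{\rm pr}\le C_Q\tau^{-B}$, where $B=1+c+A$. Together with $\omega_{\rm aff}\le C_Q\omega_{-\tau}$, you get $\omega_{-\tau}\ge C_Q^{-1}\tau^B\eta$ on $Q$.

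\emph{Conversion to an upper bound.} Since $\scal\ge0$, the volume form satisfies $\omega_{-\tau}^n\le\omega^n$. The eigenvalues of $\omega_{-\tau}$ relative to $\eta$ are therefore at least $C_Q^{-1}\tau^B$ and have bounded product. Hence each is at most $C_Q\tau^{-B(n-1)}$. This product bound, not a maximum principle for the trace, is what keeps the estimate polynomial.
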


\begin{proof}
Choose an equivariant embedding
$(h,\Psi):X\hookrightarrow\mathbb C^L\times\mathbb P^m$ as in
\cite[Section~5.2]{ZhangUniform}, where $h$ consists of homogeneous
generators of the affine coordinate ring of $Y$, pulled back by $\pi$,
and $\Psi$ is defined by basepoint-free eigensections of a power of
$-K_X$. Write
\[
    \omega_{\rm aff}=\sqrt{-1}\sum_{j=1}^L
       \partial h_j\wedge\bar\partial\overline{h_j},
    \qquad \omega_{\rm pr}=\Psi^*\omega_{\rm FS},
    \qquad \eta=\omega_{\rm aff}+\omega_{\rm pr}.
\]By \cite[Lemma~5.4]{ZhangUniform},
\[
    \operatorname{tr}_{\omega}\omega_{\rm pr}
       \leq C(1+\rho^2)^A.
\]
The vector field $\tfrac12\nabla f$ induces a smooth vector field on
$\mathbb P^m$. Its Lie derivative of $\omega_{\rm FS}$ is bounded by
$c\omega_{\rm FS}$ in absolute value. Equivariance therefore gives
$\psi_{-\tau}^*\omega_{\rm pr}\geq\tau^c\omega_{\rm pr}$ for
$0<\tau\leq1$. Hence
\[
    \operatorname{tr}_{\omega_{-\tau}}\omega_{\rm pr} \leq\tau^{-1-c} (\operatorname{tr}_\omega\omega_{\rm pr})\circ\psi_{-\tau} \leq C\tau^{-1-c} \bigl(1+\tau^{-1}F(\cdot,-\tau)\bigr)^A \leq C_Q\tau^{-B}\qquad\text{on }Q,
\]
where $B=1+c+A$ and $c,A\geq0$. By
\cite[Proposition~4.4]{LZ26},
$\omega_{\rm aff}\leq C_Q\omega_{-\tau}$ on $Q$. Thus
$\omega_{-\tau}\geq C_Q^{-1}\tau^B\eta$ there.
Since $\scal_{\omega_t}\geq0$, volume evolution gives
$\omega_{-\tau}^n\leq\omega^n$. The eigenvalues of $\omega_{-\tau}$
relative to $\eta$ are therefore bounded below by
$C_Q^{-1}\tau^B$ and have uniformly bounded product. Each is at most
$C_Q\tau^{-B(n-1)}$. Comparing $\eta$ and $\omega$ on $Q$ proves the
claim with $N_Q=\max\{1,B(n-1)\}$.
\end{proof}

\begin{lemma}\label{lem:tangent-distance-modulus}
On every relatively compact coordinate ball, after restricting to a
smaller ball, there are constants $C<\infty$ and $\alpha>0$ such that
\begin{equation}\label{eq:tangent-distance-modulus}
    d_{-\tau}(x,x')\leq C|x-x'|^\alpha,
    \qquad 0<\tau\leq1.
\end{equation}
\end{lemma}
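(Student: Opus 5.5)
The plan is to realize $\omega_{-\tau}$ locally as $\ddc$ of a potential that is uniformly H\"older in $\tau$, and then feed this into the distance estimate of Guedj--Guenancia--Zeriahi \cite[Proposition~1.4]{GGZ}. That estimate, in the form following Yang Li \cite[Theorem~4.1]{YangLi}, bounds the Riemannian distance of a K\"ahler metric $\omega_\varphi=\sqrt{-1}\partial\bar\partial\varphi$ on a ball. The bound is by $C|x-x'|^\alpha$, and it requires only three inputs: a uniform H\"older (or even $L^\infty$ plus $L^p$-density) bound on the potential $\varphi$; an upper bound on the volume form $\omega_\varphi^n\le C\,\d x$; and a polynomial upper bound for $\omega_\varphi$ relative to the Euclidean metric. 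The last input only enters logarithmically.

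The first step is to produce the potential. On the canonical flow, the Hessian identity in \eqref{eq:F-identities} gives $\nabla^2F=g(t)-(-t)\Ric$, which in K\"ahler form reads
\[
\omega_t=\tfrac12\ddc F(\cdot,t)+(-t)\Ric(\omega_t).
\]
I will write $\Ric(\omega_t)=\Ric(\omega)-\sqrt{-1}\partial\bar\partial\log(\omega_t^n/\omega^n)$. On a coordinate ball, $\Ric(\omega)=\sqrt{-1}\partial\bar\partial\phi_0$ with $\phi_0$ smooth, so that, up to the normalization constant,
\[
\omega_{-\tau}=\sqrt{-1}\partial\bar\partial\Bigl(\tfrac12F(\cdot,-\tau)+\tau\phi_0-\tau\log\tfrac{\omega_{-\tau}^n}{\omega^n}\Bigr).
\]
The first two terms are uniformly Lipschitz on the ball, by $|\d F|^2\le 2F$ together with the bound on the metric from Lemma~\ref{lem:polynomial-metric-comparison}. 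This can be done more cleanly by using that $|\d F|_{g_{-\tau}}$ is bounded and converting with the polynomial comparison. The last term I will control using the volume bound $\omega_{-\tau}^n\le\omega^n$, which comes from $\scal\ge0$, and a lower bound $\omega_{-\tau}^n\ge C_Q\tau^{N}\omega^n$, which follows from the eigenvalue argument of Lemma~\ref{lem:polynomial-metric-comparison}. Together these give
\[
\bigl|\tau\log(\omega_{-\tau}^n/\omega^n)\bigr|\le C\tau\log(1/\tau)+C,
\]
so this term is uniformly bounded. Alternatively, I can use $\partial_\tau\log(\omega_{-\tau}^n/\omega^n)=\scal_{\omega_{-\tau}}$, which by Theorem~\ref{thm:main} and scaling is bounded by $C_n/\tau$.

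This leaves potentials $\varphi_\tau$ that are uniformly bounded, with $\sqrt{-1}\partial\bar\partial\varphi_\tau>0$ and $\omega_{\varphi_\tau}^n\le C\,\d x$. A uniform H\"older bound then follows, after shrinking the ball, from Ko\l{}odziej-type stability estimates with bounded density. Alternatively, and more directly, I can use the potential identities: $F(\cdot,-\tau)$ is uniformly Lipschitz, and the correction term $\tau\log(\cdot)$ is $C^0$-small. An $L^\infty$ bound plus plurisubharmonicity then gives a H\"older modulus via the standard estimate for bounded psh functions with $L^p$ Monge--Amp\`ere density.

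The main obstacle is verifying the hypotheses of \cite[Proposition~1.4]{GGZ} uniformly in $\tau$. The H\"older norm of the potential must not degenerate as $\tau\to0$, and the smallness of the log-volume term must survive differentiation only at the level used by the GGZ argument. Once these are established, the distance bound \eqref{eq:tangent-distance-modulus} is immediate.
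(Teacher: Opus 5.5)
Your framework is the paper's: a local potential for $\omega_{-\tau}$ with a modulus of continuity uniform in $\tau$, fed into the Yang Li/GGZ integration-by-parts argument. However, the step that carries the content, which you flag as the main obstacle, is not established, and each argument you offer for it fails.

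\textbf{The three failing steps.} First, your claim that $F(\cdot,-\tau)$ is uniformly Lipschitz on the ball does not follow. Converting $|\d F|^2_{g_{-\tau}}\le 2F$ with Lemma~\ref{lem:polynomial-metric-comparison} gives only $|\d F(\cdot,-\tau)|_{\mathrm{Euc}}\le C_Q\tau^{-N/2}$, which blows up as $\tau\to0$. A uniform Lipschitz bound would also make the limit $F_0$ Lipschitz in holomorphic coordinates, whereas in general only a H\"older modulus is available.

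Second, for $\tau\log(\omega_{-\tau}^n/\omega^n)$ you have only a $C^0$ bound. That says nothing about its oscillation on small balls, and this oscillation is exactly what enters $\int(\varphi_\tau-\varphi_\tau(z))\Delta_{\mathrm{Euc}}\chi$. Incidentally, the Yang Li argument uses only this modulus of continuity; it does not use the volume or polynomial bounds you list as inputs.

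Third, the fallback to Ko\l{}odziej-type estimates is not available locally. An $L^\infty$ bound and bounded Monge--Amp\`ere density on a ball, with no boundary control and with forms degenerating as $\tau\to0$, give no uniform H\"older modulus. For example, take $\psi_\epsilon(z_1)+\epsilon|z|^2$ on a small ball in $\mathbb C^2$, with $\psi_\epsilon$ the mollifications at scale $\sqrt\epsilon$ of the continuous subharmonic function $-1/\log|z_1|$. These have uniformly bounded potentials and densities, but their uniform limit is not H\"older.

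\textbf{The missing idea.} What is missing is a time interpolation using Theorem~\ref{thm:main}. Put $\delta=|x-x'|$ and $\alpha=1/(N+2)$.
If $\tau\ge\delta^{2\alpha}$, integrating the degenerate gradient bound gives $|F(x,-\tau)-F(x',-\tau)|\le C\delta\tau^{-N/2}\le C\delta^{2\alpha}$.
If $\tau<\delta^{2\alpha}$, move both values to time $-\delta^{2\alpha}$ at cost $2C_n\delta^{2\alpha}$, using $0\le\partial_\tau F(\cdot,-\tau)=\scal_\omega\circ\psi_{-\tau}\le C_n$.
Hence every $F(\cdot,-u)$ has a uniform $C^{2\alpha}$ modulus.

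You should then write the potential as $\tau\int_\tau^1u^{-2}F(\cdot,-u)\,\d u$, via $\omega_{-\tau}=\tau\omega+\sqrt{-1}\partial\bar\partial\bigl(\tau\int_\tau^1u^{-2}F(\cdot,-u)\,\d u\bigr)$, rather than as $F(\cdot,-\tau)$ plus a separately estimated log-volume term. This is an average of the $F(\cdot,-u)$ with total weight $1-\tau\le1$, so it inherits the modulus at once.

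Your potential is in fact this same one. Use $-\tau\log(\omega_{-\tau}^n/\omega^n)=\tau\int_\tau^1\scal_{\omega_{-u}}\,\d u$ and $\partial_uF(\cdot,-u)=u\scal_{\omega_{-u}}$. Integrating by parts cancels the $F(\cdot,-\tau)$ term and leaves $\tau(\phi_0+F(\cdot,-1))+\tau\int_\tau^1u^{-2}F(\cdot,-u)\,\d u$. Here the coefficient of $F$ must be $1$, not $\tfrac12$, since $\tfrac12\ddc=\sqrt{-1}\partial\bar\partial$. Splitting this potential into two pieces and bounding each separately is what hid the H\"older estimate.
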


\begin{proof}
Fix a slightly larger compact coordinate neighborhood $Q$.
Lemma~\ref{lem:polynomial-metric-comparison}, the inequality
$|\d F|_{g_{-\tau}}^2\leq2F$, and
\eqref{eq:tangent-potential-estimates} give, for some $N\geq1$,
\begin{equation}\label{eq:tangent-metric-growth}
    \omega_{-\tau}\leq C_Q\tau^{-N}\omega,
    \qquad |\d F(\cdot,-\tau)|_{\rm Euc}
       \leq C_Q\tau^{-N/2}\quad\text{on }Q.
\end{equation}
Set $\alpha=1/(N+2)$ and $\delta=|x-x'|$. For $\delta$ sufficiently
small, if $\tau\geq\delta^{2\alpha}$, integrating the gradient
estimate along the coordinate segment gives
\[
    |F(x,-\tau)-F(x',-\tau)|
       \leq C_Q\delta\tau^{-N/2}
       \leq C_Q\delta^{2\alpha}.
\]
If $\tau<\delta^{2\alpha}$, compare both values with time
$-\delta^{2\alpha}$ using $|\partial_\tau F|\leq C_n$ and then
apply the preceding estimate. Thus
\begin{equation}\label{eq:tangent-potential-modulus}
    |F(x,-\tau)-F(x',-\tau)|\leq C_Q\delta^{2\alpha},
    \qquad0<\tau\leq1.
\end{equation}
The soliton equation yields
$\partial_\tau(\tau^{-1}\omega_{-\tau})
=-\tau^{-2}\sqrt{-1}\partial\bar\partial F(\cdot,-\tau)$.
Integrating gives
\begin{equation}\label{eq:tangent-kahler-potential}
    \omega_{-\tau}
      =\tau\omega+\sqrt{-1}\partial\bar\partial
        \left(\tau\int_\tau^1\frac{F(\cdot,-u)}{u^2}\,\d u\right).
\end{equation}
Since $\tau\int_\tau^1u^{-2}\,\d u=1-\tau$, these forms have local
potentials $\varphi_\tau$ with uniformly bounded $C^{2\alpha}$ norms by \eqref{eq:tangent-potential-modulus}.
The local distance estimate of Guedj--Guenancia--Zeriahi
\cite[Proposition~1.4]{GGZ} applies with modulus $Cr^{2\alpha}$.
We give the integration-by-parts argument of
\cite[Theorem~4.1]{YangLi}.
For a fixed point $x$ in the smaller ball, put
$u=d_{-\tau}(x,\cdot)$. Since $|\d u|_{g_{-\tau}}\leq1$
almost everywhere,
$|\d u|_{\rm Euc}^2\leq C\operatorname{tr}_{\rm Euc}\omega_{-\tau}$.
For every Euclidean ball $B(z,2r)\Subset Q$, choose a smooth cutoff
$0\leq\chi\leq1$, equal to one on $B(z,r)$, supported in $B(z,2r)$,
with $|\Delta_{\rm Euc}\chi|\leq Cr^{-2}$. Integration by parts gives
\[
\int_{B(z,r)}|\d u|_{\rm Euc}^2\,\d V_{\rm Euc}
    \leq C\int\chi\Delta_{\rm Euc}\varphi_\tau\,\d V_{\rm Euc}
    =C\int(\varphi_\tau-\varphi_\tau(z))
       \Delta_{\rm Euc}\chi\,\d V_{\rm Euc}
      \leq Cr^{2n-2+2\alpha}.
\]
The constants are independent of $x$ and $\tau$. Morrey's inequality
therefore gives $|u(x')-u(x)|\leq C|x-x'|^\alpha$ on the smaller
ball. Since $u(x)=0$, this proves
\eqref{eq:tangent-distance-modulus}.
\end{proof}

\subsection{Collapse of the fibers and identification of the limit}\label{subsec:fiber-collapse}

\begin{proposition}\label{prop:time-zero-identification}
The map $\Phi_0:(Z_0,d_0^Z)\to Y$ is a homeomorphism. With the metric
\begin{equation}\label{eq:tangent-distance-definition}
    d_Y(y,y')
       =d_0^Z(\Phi_0^{-1}(y),\Phi_0^{-1}(y')),
\end{equation}
the space $(Y,d_Y)$ is proper and geodesic, and the convergence
to $(Y,d_Y,0)$ in \eqref{eq:intro-cone-limit} holds. Moreover,
\begin{equation}\label{eq:tangent-distance-convergence}
    d_t(x,x')\longrightarrow d_Y(\pi(x),\pi(x'))
\end{equation}
uniformly on compact subsets of $X\times X$.
\end{proposition}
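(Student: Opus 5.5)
We will follow the outline announced in the introduction: continuity of $\Phi$ for $d_0^Z$, properness of the slice, collapse of the fibers, and then identification of $Z_0$ with $Y$. The final convergence statements will come from \cite[Theorem~1.6]{LZ26}.

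\textbf{Step 1: $\Phi$ is continuous and $(Z_0,d_0^Z)$ is proper.} By \eqref{eq:time-zero-distance} and Lemma~\ref{lem:tangent-distance-modulus}, letting $\tau\downarrow0$ gives
\[
d_0^Z(\Phi(x),\Phi(x'))\le C|x-x'|^\alpha
\]
on small coordinate balls. Hence $\Phi:X\to(Z_0,d_0^Z)$ is locally H\"older continuous. By \eqref{eq:time-zero-radius}, the closed ball $\bar B(\bar p,R)$ in $(Z_0,d_0^Z)$ equals $\Phi(\{F_0\le R^2/2\})$. Since $F_0$ is proper by \eqref{eq:tangent-potential-estimates} and $\Phi$ is surjective, this ball is the continuous image of a compact set, hence compact. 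So $(Z_0,d_0^Z)$ is proper and in particular locally compact. This verifies the second hypothesis of \cite[Theorem~1.6]{LZ26}, and Theorem~\ref{thm:main} supplies the first. That theorem then yields the pointed Gromov--Hausdorff convergence $(X,r^{-2}g,p)\to(Z_0,d_0^Z,\bar p)$ and that $(Z_0,d_0^Z)$ is geodesic. The uniform convergence \eqref{eq:tangent-distance-convergence} on compact sets should follow from the pointwise limit \eqref{eq:time-zero-distance} together with equicontinuity from Lemma~\ref{lem:tangent-distance-modulus}, via Arzel\`a--Ascoli.

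\textbf{Step 2: $\Phi$ is constant on each fiber of $\pi$.} This is the main obstacle. Let $C\subset\pi^{-1}(y)$ be a rational curve, parametrized by $\gamma:\mathbb P^1\to X$, and fix $x\in C$. Set $u_\tau=d_{-\tau}(x,\cdot)\circ\gamma$. Then
\[
\int_{\mathbb P^1}|\d u_\tau|^2\le C\int_{\mathbb P^1}\gamma^*\omega_{-\tau},
\]
since $|\d u_\tau|^2$ is controlled by the trace of $\gamma^*\omega_{-\tau}$ in dimension one.

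Next we use \eqref{eq:tangent-kahler-potential}, which writes $\gamma^*\omega_{-\tau}$ as $\tau\gamma^*\omega$ plus $\sqrt{-1}\partial\bar\partial$ of a global function on $X$. On the compact curve the exact part integrates to zero, so the area is at most $\tau\int_C\omega\to0$. The local H\"older bound of Step~1 gives equicontinuity of $u_\tau$. A limit $u_0$ therefore exists, has zero Dirichlet energy, and is constant on $\mathbb P^1$; it vanishes at $\gamma^{-1}(x)$. Hence $d_0^Z(\Phi(x),\Phi(x'))=0$ for all $x'\in C$.

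The technical point is passing to the limit in the energy: we use lower semicontinuity of the Dirichlet energy under uniform convergence. Fibers are rationally chain connected by \cite[Corollary~1.4]{HaconMcKernan}, applied to the Fano fibers (which have klt singularities in the polarized Fano fibration). Chaining the curves shows that $\Phi$ is constant on each fiber.

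\textbf{Step 3: identification.} By Step~2, $\Phi$ descends to a map $\bar\Phi:Y\to Z_0$ with $\Phi=\bar\Phi\circ\pi$. Since $\pi=\Phi_0\circ\Phi$, we get $\Phi_0\circ\bar\Phi=\mathrm{id}_Y$, and surjectivity of $\Phi$ gives $\bar\Phi\circ\Phi_0=\mathrm{id}_{Z_0}$. Thus $\Phi_0$ is a bijection with inverse $\bar\Phi$.

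The map $\bar\Phi$ is continuous for $d_0^Z$ because $\pi$ is proper and surjective, hence a topological quotient map, and $\Phi$ is continuous. A continuous bijection from $Y$ is only locally compact, so we argue as follows. Since $F_0$ is proper, closed balls in $Z_0$ pull back under $\bar\Phi$ to compact subsets of $Y$. On compact sets, a continuous bijection onto a Hausdorff space is a homeomorphism; this covers all of $Y$ and makes $\Phi_0$ a homeomorphism for $d_0^Z$. Transporting the metric via \eqref{eq:tangent-distance-definition} gives a proper geodesic metric $d_Y$ inducing the analytic topology. The convergence \eqref{eq:intro-cone-limit} to $(Y,d_Y,0)$ follows from Step~1, since $\Phi_0(\bar p)=\pi(p)=0$.
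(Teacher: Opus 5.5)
Your proposal is correct and follows the paper's proof closely. The shared steps are: continuity of $\Phi$ from Lemma~\ref{lem:tangent-distance-modulus}; properness of the slice from \eqref{eq:time-zero-radius} and properness of $F_0$; convergence from \cite[Theorem~1.6]{LZ26}; collapse of the fibers via the vanishing area $\tau\int\gamma^*\omega$ of rational curves together with rational chain connectedness; and descent of $\Phi$ through the quotient map $\pi$.

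The one genuine difference is how you obtain continuity of $\Phi_0$ for $d_0^Z$. The paper cites two results from \cite{LZ26}: continuity of $\Phi_0$ for the spacetime topology (Lemma~6.1) and coincidence of the metric and spacetime topologies on $Z_0$ (Lemma~5.6). You use only the set-theoretic factorization $\pi=\Phi_0\circ\Phi$. You then note that $\bar\Phi$ maps the compact sets $\pi(\{F_0\le R^2/2\})$ bijectively and continuously onto the closed balls $\bar B(\bar p,R)$. Since the open balls $B(\bar p,R)$ cover $Z_0$, $\bar\Phi^{-1}$ is continuous. This argument is self-contained and valid. The paper's route is shorter given the citations, and it also records the agreement of the two topologies on $Z_0$. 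Replacing the paper's Poincar\'e-inequality step by lower semicontinuity of the Dirichlet energy is an equivalent variation.

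Two small corrections are needed.
\begin{enumerate}
\item Your justification for rational chain connectedness is wrong. Fibers of a polarized Fano fibration need not be klt, normal, or even irreducible. Apply \cite[Corollary~1.4]{HaconMcKernan} instead to the klt pair $(X,0)$, which is klt because $X$ is smooth, and to the projective morphism $\pi$ with $-K_X$ $\pi$-ample. Its conclusion is exactly that every fiber of $\pi$ is rationally chain connected.
\item \eqref{eq:intro-cone-limit} is asserted for every base point $p\in X$, so you should add the base-point argument. First, $\psi_t$ fixes the minimum point $p$ and is an isometry $(X,g_t,p)\to(X,(-t)g,p)$; this is needed if \cite[Theorem~1.6]{LZ26} is read as a statement about the flow. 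Second, for any other $p'$ one has $r^{-1}d_g(p,p')\to0$, so the pointed limit is unchanged.
\end{enumerate}
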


\begin{proof}
By \eqref{eq:time-zero-distance} and
\eqref{eq:tangent-distance-modulus}, $\Phi:X\rightarrow Z_0$ is continuous for $d_0^Z$.
By \eqref{eq:time-zero-radius}, the inverse image of each closed ball
centered at $\bar p$ is a compact sublevel set of $F_0$. Surjectivity
of $\Phi$ makes every such ball compact, so $(Z_0,d_0^Z)$ is proper.
By \cite[Lemma~5.6]{LZ26}, its metric and spacetime topologies
coincide. Since the slice is locally compact,
\cite[Theorem~1.6]{LZ26} gives convergence of the full family
$(X,g_t,p)$ to $(Z_0,d_0^Z,\bar p)$ in the pointed
Gromov--Hausdorff sense. In particular, the limit is geodesic.

We next show that $\Phi$ is constant on each fiber of $\pi$.
The morphism $\pi$ is projective with connected fibers, $X$ is smooth,
and $-K_X$ is relatively ample. Thus every fiber is rationally chain
connected by \cite[Corollary~1.4]{HaconMcKernan}.
It suffices to prove that $\Phi\circ u$ is constant for every
holomorphic map $u:\mathbb P^1\to\pi^{-1}(y)$.
Fix $z_0\in\mathbb P^1$ and set
\[
    u_\tau(z)=d_{-\tau}(u(z_0),u(z)).
\]
By \eqref{eq:tangent-distance-modulus} and compactness of the image
of $u$, these functions are uniformly equicontinuous.
Since distance is $1$-Lipschitz, \eqref{eq:tangent-kahler-potential}
and Stokes' theorem give
\begin{equation}\label{eq:curve-vanishing-energy}
    \int_{\mathbb P^1}|\nabla u_\tau|^2\,\d V
       \leq2\int_{\mathbb P^1}u^*\omega_{-\tau}
       =2\tau\int_{\mathbb P^1}u^*\omega\longrightarrow0,
\end{equation}
where the gradient and volume form on $\mathbb P^1$ are taken with
respect to a fixed Fubini--Study metric. Poincar\'e's inequality and
equicontinuity imply that the oscillation of $u_\tau$ tends to zero.
Since $u_\tau(z_0)=0$, \eqref{eq:time-zero-distance} proves the claim.

Thus $\Phi$ descends to a map $\iota:Y\to Z_0$ with
$\Phi=\iota\circ\pi$. The identity $\pi=\Phi_0\circ\Phi$ and the
surjectivity of $\pi$ and $\Phi$ show that $\iota$ and $\Phi_0$ are
inverse maps. The proper surjection $\pi$ is a quotient map, so
$\iota$ is continuous. Continuity of $\Phi_0$ follows from
\cite[Lemma~6.1]{LZ26} and the equality of the metric and spacetime
topologies. This proves the homeomorphism assertion.

The convergence in \eqref{eq:tangent-distance-convergence} is
pointwise by \eqref{eq:time-zero-distance} and uniform on compact
sets by \eqref{eq:tangent-distance-modulus}. Finally, $\psi_t$ fixes
$p$ and is an isometry from $(X,g_t,p)$ to $(X,(-t)g,p)$.
Taking $-t=r^{-2}$ gives this convergence for the chosen minimum
point $p$. For any other fixed base point $p'$, the distance
$r^{-1}d_g(p,p')$ tends to zero, so the pointed limit is unchanged.
\end{proof}

\begin{corollary}\label{cor:uniform-fiber-diameter}
There is a constant $C_n$ depending only on $n$ such that
\[
    \operatorname{diam}_{(X,g)}\pi^{-1}(y)
       \leq C_n\qquad\text{for every }y\in Y.
\]
The diameter is measured using the ambient distance of $(X,g)$.
\end{corollary}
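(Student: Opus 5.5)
The plan is to exploit self-similarity: $\psi_t$ preserves fibers, since $\pi$ is equivariant for the flow of $\nabla f$ and the holomorphic torus action. We then bound the fiber diameter through the limit $d_Y$, where fibers collapse to points. The dimensional constant should come from the scalar curvature bound alone, via the potential. The natural mechanism is to compare fiber points $x,x'$ at time $t=-\tau$ and let $\tau\to0$. The key identity is \eqref{eq:tangent-potential-estimates}: $0\le F(\cdot,-\tau)-F_0\le C_n\tau$, with $C_n$ from Theorem~\ref{thm:main}.

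\textbf{Step 1.} Show $F_0$ is constant on each fiber. Since $\Phi$ is constant on fibers (Proposition~\ref{prop:time-zero-identification}), \eqref{eq:time-zero-radius} gives $F_0(x)=F_0(x')$ whenever $\pi(x)=\pi(x')$.

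\textbf{Step 2.} Turn this into a bound on $g=g_{-1}$-distances via the soliton structure. On a fiber, $\rho^2(x)=F(x,-1)$, and $|F(x,-1)-F_0(x)|\le C_n$. Hence $\rho^2$ varies by at most $2C_n$ along any fiber. The fibers are compact complex subvarieties. For $y\ne0$, the fiber over $y$ is mapped by $\psi_t$ to fibers over $D_\lambda y$, with $\rho^2\to\infty$ as $\lambda\to\infty$.

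\textbf{Step 3 (main step).} Bound the $g$-diameter of a fiber $V=\pi^{-1}(y)$ using a curve-energy argument like \eqref{eq:curve-vanishing-energy}, but quantitatively at time $-1$. Since $V$ is rationally chain connected, it suffices to bound the $g$-diameter of rational curves $u(\mathbb P^1)\subset V$. The number of curves needed in a chain must also be bounded; this is the delicate point.

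First try to avoid chains. By \eqref{eq:tangent-kahler-potential} at $\tau\to0$, on the fiber $\omega=\lim_{\tau\to0}\bigl(\omega_{-\tau}/\tau-\sqrt{-1}\partial\bar\partial\int_\tau^1 F(\cdot,-u)u^{-2}\d u\bigr)$. Since $\omega_{-\tau}|_V$ is exact modulo $\tau\omega$, one should obtain on $V$ the relation $\omega|_V=\sqrt{-1}\partial\bar\partial\Theta$, with $\Theta=\int_0^1(F(\cdot,-u)-F_0)u^{-2}\d u$ restricted to $V$ plus a pluriharmonic (hence constant) term. This uses $F_0|_V$ constant. Then $0\le\Theta\le C_n$, since $(F(\cdot,-u)-F_0)/u^2\le C_n/u$ is only log-integrable. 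So use instead $\Theta=\int_0^1\bigl(\partial_\tau F(\cdot,-\tau)\bigr)\tau^{-1}\d\tau$ rewritten via integration by parts, giving an oscillation bound $\operatorname{osc}_V\Theta\le C_n$. I expect this boundedness to be the main obstacle, and would seek it from $\scal\le C_n$ through $\partial_\tau F=\scal\circ\psi_{-\tau}$.

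\textbf{Step 4.} Given a bounded Kähler potential $\Theta$ on $V$ with $\omega|_V=\sqrt{-1}\partial\bar\partial\Theta$, use the Yang Li/GGZ Morrey argument of Lemma~\ref{lem:tangent-distance-modulus} along curves. For any holomorphic disk or rational curve $u$ in $V$, $\int|\nabla d_g(x,u(\cdot))|^2\le 2\int u^*\omega$ is controlled by $\operatorname{osc}\Theta$ on cutoff domains. The resulting intrinsic-to-ambient estimate gives a dimensional diameter bound for each curve. For the chain length, use $\mathbb C^*$-equivariance to move to fibers over a compact link $\{y:d_Y(0,y)=1\}\cup\{0\}$. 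There, compactness and upper semicontinuity give a uniform chain count for the fixed shrinker. Because $\psi$ scales $g$ and $\rho^2$ together, the bound along the ray depends only on the bound near the link.

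The fallback for dimensional dependence is a contradiction/compactness argument over all shrinkers using Theorem~\ref{thm:main} and \eqref{eq:automatic-entropy}. This would be needed if the direct route only gives shrinker-dependent constants.
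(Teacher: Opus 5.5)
There is a genuine gap, starting in Step~3. The identity at its heart, $\omega|_V=\sqrt{-1}\partial\bar\partial\Theta$ with $\Theta$ a continuous function on the fiber $V$, is false whenever $\dim V>0$. Such a $V$ contains compact curves $C$, and $\int_C\omega>0$ while $\int_C\sqrt{-1}\partial\bar\partial\Theta=0$. Concretely, after dividing \eqref{eq:tangent-kahler-potential} by $\tau$, the term $\tau^{-1}\omega_{-\tau}|_V$ satisfies $\int_C\tau^{-1}\omega_{-\tau}=\int_C\omega$ for every $\tau$, as in \eqref{eq:curve-vanishing-energy}. So it never becomes negligible. The logarithmic divergence of $\int_0^1(F-F_0)u^{-2}\,\d u$ that you noticed is this cohomological obstruction, and no integration by parts removes it.

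Step~4 would fail even with such a potential. A Dirichlet-energy bound on $\mathbb P^1$ does not control oscillation, because real dimension two is the borderline case. With only a scalar curvature bound, there is also no monotonicity formula that turns an area bound for a holomorphic curve into a diameter bound. In Proposition~\ref{prop:time-zero-identification} the energy argument works only qualitatively: the energy tends to zero, and equicontinuity comes from the shrinker-dependent modulus of Lemma~\ref{lem:tangent-distance-modulus}.

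Your scaling reduction is also wrong. The map $\psi_{-r^{-2}}$ is an isometry from $(X,r^2g_{-r^{-2}})$ onto $(X,g)$ carrying $\pi^{-1}(y)$ onto $\pi^{-1}(D_ry)$. Hence $\operatorname{diam}_g\pi^{-1}(D_ry)=r\operatorname{diam}_{d_{-r^{-2}}}\pi^{-1}(y)$. Going out along a ray therefore requires the quantitative collapse rate $\operatorname{diam}_{d_{-\tau}}\pi^{-1}(y)\le C\sqrt\tau$, which a bound near the link does not give. Your fallback compactness argument never says which limit would control fiber diameters.

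That rate is the missing ingredient. It is the distance-distortion estimate of \cite[Lemma~5.4(2)]{LZ26}: $d_t(x,x')\le d_0^Z(\Phi(x),\Phi(x'))+C\sqrt{-t}$. Here $C$ depends only on the Type~I scalar curvature constant and the entropy lower bound, so $C=C_n$ by Theorem~\ref{thm:main} and \eqref{eq:automatic-entropy}. In effect this is heat-kernel concentration. When $\Phi(x)=\Phi(x')$, the time-zero kernels $\nu_{\Phi(x);t}$ and $\nu_{\Phi(x');t}$ coincide. Each lies within $W_1$-distance $C_n\sqrt{-t}$ of the Dirac mass at its worldline point, by the moment bound used in Lemma~\ref{lem:tangent-diffusion}.

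The paper's proof is just this estimate at $t=-1$, where $g_{-1}=g$, applied to two points of one fiber. Proposition~\ref{prop:time-zero-identification} gives $\Phi(x)=\Phi(x')$, hence $d_g(x,x')\le C_n$. Rational chain connectedness enters only qualitatively, to collapse fibers under $\Phi$.

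Your Steps~1--2 are correct but point the wrong way. An oscillation bound for $\rho^2$ on a fiber controls differences of the $1$-Lipschitz function $\sqrt{2F}$. That bounds distances only from below, not the diameter from above.
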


\begin{proof}
Since $g_{-1}=g$, \cite[Lemma~5.4(2)]{LZ26}, in our normalization,
gives at time $-1$
\[
    d_g(x,x')\leq d_0^Z(\Phi(x),\Phi(x'))+C_n.
\]
The constant depends only on $n$ by Theorem~\ref{thm:main} and
\eqref{eq:automatic-entropy}. The first term vanishes when
$\pi(x)=\pi(x')$, by Proposition~\ref{prop:time-zero-identification}.
\end{proof}

\section{Metric cone structure on the tangent space at infinity}\label{sec:metric-cone}

We retain the notation of Section~\ref{sec:time-zero-fibration} and
identify $Y$ with $(Z_0,d_0^Z)$. The function $F_0$ descends to $Y$, and
\eqref{eq:time-zero-radius} becomes
\begin{equation}\label{eq:tangent-radius}
    2F_0(y)=d_Y(0,y)^2.
\end{equation}
For $r>0$, let $D_r$ be the map induced on $Y$ by
$\psi_{-r^{-2}}$. We use the same notation for this map on $X$.
Self-similarity and \eqref{eq:tangent-distance-convergence} give
\begin{equation}\label{eq:tangent-homothety}
    d_Y(D_r y,D_r y')=r d_Y(y,y'),
    \qquad F_0(D_r y)=r^2F_0(y).
\end{equation}
The generator of $s\mapsto D_{e^s}$ on $X$ is
$\nabla^{g_t}F(\cdot,t)=\nabla^g f$. Integrating
$|\d F|_{g_t}^2\le2F$ along its orbits and letting $t\uparrow0$
gives an upper bound for their lengths. The reverse triangle inequality
and \eqref{eq:tangent-radius} give the opposite bound, so
\begin{equation}\label{eq:tangent-radial-distance}
    d_Y(D_r y,D_{r'}y)
       =|r-r'|\sqrt{2F_0(y)}.
\end{equation}

\subsection{The entropy evolution inequality}\label{subsec:entropy-evi}

For $0<\tau\leq1$, consider the probability measure
\[
    \d m_\tau=(2\pi\tau)^{-n}
       e^{-\tau^{-1}F(\cdot,-\tau)-\muent(g)}\,\d V_{g_{-\tau}}.
\]
Write $\mathcal P_2(X,d_{-\tau})$ for the probability measures with
finite second moment, and $W_{2,-\tau}$ for their quadratic
Wasserstein distance; see \cite[Definitions~6.1 and~6.4]{Villani2009}.
Define the scaled relative entropy
\[
    \mathrm{Ent}_\tau(\sigma)
       =\tau\int_X\log\frac{\d\sigma}{\d m_\tau}\,\d\sigma,
       \qquad \sigma\in\mathcal P_2(X,d_{-\tau}),
\]
with value $+\infty$ when $\sigma$ is not absolutely continuous with
respect to $m_\tau$ or has infinite relative entropy.
For fixed $\tau$ and $\varphi\in C_c^\infty(X)$, define
$P_s^\tau\varphi=u(\cdot,s)$, where $u$ is the unique bounded
solution on $X\times[0,\infty)$ of
\[
    \partial_su=\tau\Delta_{g_{-\tau}}u
       -\langle\nabla^{g_{-\tau}}F(\cdot,-\tau),\nabla u\rangle
       =:L_\tau u,\qquad u(\cdot,0)=\varphi.
\]
The operator $L_\tau/\tau$ is the weighted Laplacian with respect to $m_\tau$.
On probability measures, $P_s^\tau$ is defined by
\[
    \int_X\varphi\,\d(P_s^\tau\sigma)
       =\int_XP_s^\tau\varphi\,\d\sigma,
       \qquad\varphi\in C_c^\infty(X),\quad s\geq0.
\]

\begin{lemma}\label{lem:tangent-entropy-evi}
The metric measure space $(X,d_{-\tau},m_\tau)$ is
$\mathrm{RCD}(\tau^{-1},\infty)$. For
$\sigma_0,\sigma\in\mathcal P_2(X,d_{-\tau})$ with finite entropy,
\begin{equation}\label{eq:tangent-entropy-evi}
    \frac12\frac{\d}{\d s}
       W_{2,-\tau}^2(P_s^\tau\sigma_0,\sigma)
       +\frac12W_{2,-\tau}^2(P_s^\tau\sigma_0,\sigma)
    \leq\mathrm{Ent}_\tau(\sigma)
       -\mathrm{Ent}_\tau(P_s^\tau\sigma_0)
\end{equation}
for almost every $s>0$.
Moreover, for all $\sigma,\sigma'\in\mathcal P_2(X,d_{-\tau})$,
\begin{equation}\label{eq:tangent-W2-contraction}
    W_{2,-\tau}(P_s^\tau\sigma,P_s^\tau\sigma')
       \leq e^{-s}W_{2,-\tau}(\sigma,\sigma'),
       \qquad s\geq0.
\end{equation}
\end{lemma}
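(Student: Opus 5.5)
The plan is to recognize $(X,d_{-\tau},m_\tau)$ as a smooth, complete weighted Riemannian manifold satisfying a Bakry--\'Emery lower bound, and then to invoke the standard gradient-flow theory. Take the weighted Ricci identity at $t=-\tau$, which comes from the Hessian identity in \eqref{eq:F-identities}:
\[
    \Ric(g_{-\tau})+\nabla^2\bigl(F(\cdot,-\tau)/\tau\bigr)=\tau^{-1}g_{-\tau}.
\]
The reference measure is $m_\tau=c\,e^{-V}\d V_{g_{-\tau}}$ with $V=\tau^{-1}F(\cdot,-\tau)$. So the Bakry--\'Emery tensor $\Ric+\nabla^2V$ equals $\tau^{-1}g_{-\tau}$ exactly, and we obtain $\mathrm{CD}(\tau^{-1},\infty)$.

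A few setup facts need checking. The metric $g_{-\tau}$ is complete, being a scaled pullback of $g$. The measure $m_\tau$ is a probability measure with finite second moments: its total mass is $1$ by \eqref{eq:entropy}, since $F(\cdot,-\tau)=\tau\psi^*\rho^2$ and $\d V_{g_{-\tau}}=\tau^n\psi^*\d V_g$, so the normalization is exactly the one in \eqref{eq:entropy}. Gaussian decay comes from the quadratic growth of $F$. A smooth complete weighted manifold with $\Ric_V\ge K$ is $\mathrm{RCD}(K,\infty)$, because infinitesimal Hilbertianity is automatic for Riemannian metrics; this is standard (Ambrosio--Gigli--Savar\'e and Sturm). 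I expect this part to be routine.

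Next comes the identification of $P_s^\tau$. The key point is that the time is rescaled. With the definition $\mathrm{Ent}_\tau=\tau\,\mathrm{Ent}_{m_\tau}$, the $W_2$-gradient flow of $\mathrm{Ent}_\tau$ is the heat flow of the Cheeger energy for $m_\tau$ run at speed $\tau$. Its generator is $\tau\Delta_{m_\tau}$, where the weighted Laplacian is $\Delta_{g}-\langle\nabla V,\nabla\cdot\rangle$ with the paper's convention $\Delta=\tfrac12\Delta_g$ tracked carefully. This generator should coincide with $L_\tau$; the factor conventions must be checked here, since $L_\tau/\tau$ is asserted to be the weighted Laplacian.

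The functional $\tau\mathrm{Ent}_{m_\tau}$ is $(\tau\cdot\tau^{-1})=1$-convex along $W_2$-geodesics. The EVI$_1$ characterization of the RCD heat flow from AGS then gives \eqref{eq:tangent-entropy-evi} with constant $\tfrac12$, and the $W_2$-contraction \eqref{eq:tangent-W2-contraction} with rate $e^{-s}$ for all $\sigma,\sigma'\in\mathcal P_2$, extended by density from finite-entropy measures.

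The main obstacle is showing that the semigroup $P_s^\tau$, defined via the unique bounded solution of the PDE (with unbounded drift $\nabla F$), agrees with the abstract RCD heat flow. I would argue as follows. The RCD heat flow is a Markov semigroup that is symmetric in $L^2(m_\tau)$, and on smooth $\varphi$ it solves $\partial_s u=L_\tau u$. By Bakry--\'Emery, its gradient satisfies $|\nabla P_s\varphi|\le e^{-s}P_s|\nabla\varphi|$, so it gives a bounded solution. Uniqueness of bounded solutions then follows from a maximum principle, using the Lyapunov function $F$: we have $L_\tau F=n\tau-\tau\scal\cdot\tau-|\d F|^2\le n\tau$ up to constants, i.e. $L_\tau F\le C$, which is enough for a Khasminskii-type maximum principle. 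Once the two semigroups are identified, the dual action on measures matches and the lemma follows.
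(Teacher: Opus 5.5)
Your proposal is correct and follows the paper's proof. The weighted Ricci identity gives $\mathrm{RCD}(\tau^{-1},\infty)$ via Sturm and AGS, and the EVI and $e^{-s}$ contraction come from the AGS heat flow after the time change $t=\tau s$, which is equivalent to using $1$-convexity of $\tau\,\mathrm{Ent}_{m_\tau}$. Your extra step, identifying the bounded-solution semigroup $P_s^\tau$ with the RCD heat flow via the Lyapunov function $F$ (in fact $L_\tau F=2n\tau-2F$), is a detail the paper leaves implicit.
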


\begin{proof}
The metric $g_{-\tau}$ is complete. The quadratic growth of the
soliton potential gives finite second moment for $m_\tau$, which
has full support. The soliton equation gives the weighted Ricci
identity
\[
    \Ric(g_{-\tau})+
       \nabla^2\bigl(F(\cdot,-\tau)/\tau\bigr)
       =\tau^{-1}g_{-\tau}.
\]
Hence $(X,d_{-\tau},m_\tau)$ is
$\mathrm{RCD}(\tau^{-1},\infty)$; see
\cite[Theorem~4.9]{Sturm2006} and
\cite[Remark~3.2 and Theorem~5.1(ii)]{AGS}.

By \cite[Theorem~5.1(iii) and~(5.1)]{AGS}, the weighted heat flow
with generator $L_\tau/\tau$ is the $\mathrm{EVI}_{1/\tau}$ gradient
flow of relative entropy with respect to $m_\tau$.
Replacing its time parameter by $\tau s$ gives $P_s^\tau$.
Multiplying the EVI by $\tau$ therefore gives
\eqref{eq:tangent-entropy-evi}.
Finally, the contraction estimate \cite[(6.1)]{AGS}, applied at
time $\tau s$ with curvature bound $\tau^{-1}$, gives
\eqref{eq:tangent-W2-contraction} for all
$\sigma,\sigma'\in\mathcal P_2(X,d_{-\tau})$.
\end{proof}

\begin{lemma}\label{lem:tangent-entropy-recovery}
There is a constant $C_n$, depending only on $n$, such that
for $0<\tau\leq1$ and $\sigma\in\mathcal P_2(X,d_{-\tau})$,
\begin{equation}\label{eq:tangent-entropy-lower}
    \mathrm{Ent}_\tau(\sigma)
       \geq(1-\tau)\int_X F(\cdot,-\tau)\,\d\sigma
          -C_n\tau(1+|\log\tau|).
\end{equation}
For each $x\in X$, fix a coordinate chart centered at $x$ and,
for sufficiently small $\tau>0$, let
\[
    \d\delta_{x;\tau}
       =\frac{\mathbf 1_{B_{\rm Euc}(x,\tau)}}
          {\Vol_{g_{-\tau}}(B_{\rm Euc}(x,\tau))}
          \,\d V_{g_{-\tau}}.
\]
Then
\begin{equation}\label{eq:tangent-entropy-dirac}
    W_{2,-\tau}(\delta_{x;\tau},\delta_x)\longrightarrow0,
    \qquad\mathrm{Ent}_\tau(\delta_{x;\tau})\longrightarrow F_0(x)
    \quad\text{as }\tau\downarrow0.
\end{equation}
\end{lemma}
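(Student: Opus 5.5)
Both assertions come from writing $\mathrm{Ent}_\tau$ against the unweighted volume. For $\sigma=\rho\,\d V_{g_{-\tau}}$ the definition of $m_\tau$ gives
\[
    \mathrm{Ent}_\tau(\sigma)
       =\tau\int_X\rho\log\rho\,\d V_{g_{-\tau}}
        +\int_X F(\cdot,-\tau)\,\d\sigma
        +\tau\bigl(n\log(2\pi\tau)+\muent(g)\bigr).
\]
The last term is $O(\tau(1+|\log\tau|))$ with a dimensional constant, because $-A_n\le\muent(g)\le0$ by \eqref{eq:automatic-entropy} and \eqref{eq:entropy-bounds}. So both parts reduce to controlling the Boltzmann term $\tau\int\rho\log\rho$.

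\textbf{Lower bound \eqref{eq:tangent-entropy-lower}.} I would spend a fraction $\tau$ of the potential term to absorb the negative part of $\rho\log\rho$, via the Gibbs variational inequality against the reference measure $e^{-F(\cdot,-\tau)}\d V_{g_{-\tau}}$:
\[
    \int\rho\log\rho\,\d V_{g_{-\tau}}+\int F(\cdot,-\tau)\,\d\sigma
       \ge-\log\int_X e^{-F(\cdot,-\tau)}\,\d V_{g_{-\tau}}.
\]
Multiplying by $\tau$ leaves $(1-\tau)\int F\,\d\sigma$ plus $\tau\log$ of the partition function. To evaluate that, pull back by $\psi_{-\tau}$. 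Since $F(\cdot,-\tau)=\tau\psi_{-\tau}^*\rho^2$ and $\d V_{g_{-\tau}}=\tau^n\psi_{-\tau}^*\d V_g$,
\[
    \int_X e^{-F(\cdot,-\tau)}\,\d V_{g_{-\tau}}=\tau^n\int_X e^{-\tau\rho^2}\,\d V_g .
\]
For $\tau\le1$ this is at least $\tau^n\int e^{-\rho^2}\d V_g=\tau^n(2\pi)^n e^{\muent}\ge c_n\tau^n$, using \eqref{eq:entropy} and the entropy lower bound. Hence $\tau\log(\cdot)\ge -C_n\tau(1+|\log\tau|)$, which proves \eqref{eq:tangent-entropy-lower}. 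If $\sigma$ is not absolutely continuous the left side is $+\infty$ and there is nothing to prove.

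\textbf{Approximate Dirac masses \eqref{eq:tangent-entropy-dirac}.} First, $\delta_{x;\tau}$ is supported in $B_{\rm Euc}(x,\tau)$. Lemma~\ref{lem:tangent-distance-modulus} gives $d_{-\tau}\le C\tau^\alpha$ there, so $W_{2,-\tau}(\delta_{x;\tau},\delta_x)\le C\tau^\alpha\to0$. Second, for a normalized uniform measure on a set $B$ the Boltzmann term is $\tau\int\rho\log\rho=-\tau\log\Vol_{g_{-\tau}}(B)$. Lemma~\ref{lem:polynomial-metric-comparison} gives $\omega_{-\tau}\le C\tau^{-N}\omega$, which yields $\Vol_{g_{-\tau}}(B)\le C\tau^{2n-nN}$. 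Conversely, \eqref{eq:tangent-kahler-potential} shows $\omega_{-\tau}\ge\tau\omega$: the potential term $\tau\int_\tau^1F(\cdot,-u)u^{-2}\,\d u$ has $\sqrt{-1}\partial\bar\partial$ equal to $\tau\int_\tau^1 u^{-2}\,\omega_{-u}\cdot(\text{stuff})$, so I would redo this via $\partial_\tau(\tau^{-1}\omega_{-\tau})\le0$, i.e.\ $\ddc F\ge0$. If that positivity fails, I would instead use $\omega_{-\tau}^n=e^{-\int \scal}\omega^n\ge e^{-C_n}\tau^{n}\omega^n$, which follows from volume evolution with $0\le\scal_{\omega_{-u}}\le C_n/u$ and only needs Theorem~\ref{thm:main}. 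Either way $\Vol_{g_{-\tau}}(B)\ge c\,\tau^{C}$. So $|\log\Vol|=O(|\log\tau|)$ and $\tau\int\rho\log\rho\to0$. The $\muent$ and $\log(2\pi\tau)$ terms also vanish after multiplying by $\tau$. Finally, $\int F(\cdot,-\tau)\,\d\delta_{x;\tau}\to F_0(x)$ by \eqref{eq:tangent-potential-estimates} and the modulus \eqref{eq:tangent-potential-modulus}, since $F(\cdot,-\tau)$ is uniformly within $C\tau$ of $F_0$ and $F_0$ is continuous.

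\textbf{Main obstacle.} The step I expect to be delicate is the two-sided polynomial control of $\Vol_{g_{-\tau}}(B_{\rm Euc}(x,\tau))$, especially the lower bound. The volume-form comparison via the scalar curvature bound is what I would try first, because it is clean and dimensional. I also need to confirm that $\delta_{x;\tau}\in\mathcal P_2(X,d_{-\tau})$, which is immediate since it has compact support.
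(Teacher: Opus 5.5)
The lower bound \eqref{eq:tangent-entropy-lower} is not proved as written, because your partition-function estimate goes in the wrong direction. Put $Z_\tau=\int_X e^{-F(\cdot,-\tau)}\,\d V_{g_{-\tau}}$. Then the Gibbs inequality and your formula for $\mathrm{Ent}_\tau$ give
\[
    \mathrm{Ent}_\tau(\sigma)\geq(1-\tau)\int_X F(\cdot,-\tau)\,\d\sigma
       -\tau\log Z_\tau+\tau\bigl(n\log(2\pi\tau)+\muent(g)\bigr).
\]
The partition function enters with a minus sign, so you need an \emph{upper} bound $Z_\tau\leq C_n\tau^{-C_n}$. You prove instead the lower bound $Z_\tau\geq\tau^n(2\pi)^ne^{\muent(g)}$. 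It is true but useless here: a large reference mass is exactly what would let a spread-out $\sigma$ make $\tau\int\rho\log\rho$ very negative while keeping $\int F\,\d\sigma$ moderate.

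The missing upper bound is where Theorem~\ref{thm:main} enters. By \eqref{eq:tangent-potential-estimates}, $F(\cdot,-\tau)\geq F_0\geq\rho^2-C_n$ pointwise on $X$. Also, $\scal\geq0$ gives $\omega_{-\tau}^n\leq\omega^n$. Hence $Z_\tau\leq e^{C_n}\int_Xe^{-\rho^2}\,\d V_g=(2\pi)^ne^{\muent(g)+C_n}$, and the last two terms above are at least $\tau(n\log\tau-C_n)$. This is the paper's argument. Alternatively, from your pullback identity $Z_\tau=\tau^n\int_Xe^{-\tau\rho^2}\,\d V_g$ you could use Euclidean volume growth of the sublevel sets of $\rho^2$. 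In that case you must check that the growth constant depends only on $n$.

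The approximate-Dirac part is essentially correct. Bounding $W_{2,-\tau}(\delta_{x;\tau},\delta_x)\leq C\tau^\alpha$ directly by Lemma~\ref{lem:tangent-distance-modulus} is a legitimate and slightly more direct substitute for the paper's route. The paper instead goes through \cite[Lemma~5.4(2)]{LZ26} and continuity of $\Phi$, which itself rests on that lemma.

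For the volume lower bound, drop your first attempt. The form $\sqrt{-1}\partial\bar\partial F(\cdot,-\tau)=\tau\psi_{-\tau}^*(\omega-\Ric(\omega))$ has no sign in general; for instance, $f$ is nonconstant on a compact non-Einstein soliton, so it cannot be plurisubharmonic. Thus $\omega_{-\tau}\geq\tau\omega$ is not available. Your fallback is the right one and is what the paper uses. However, integrating $\scal_{\omega_{-u}}\leq C_n/u$ from $\tau$ to $1$ gives $\omega_{-\tau}^n\geq\tau^{C_n}\omega^n$, not $e^{-C_n}\tau^n\omega^n$. Only a polynomial lower bound on $\Vol_{g_{-\tau}}(B_{\rm Euc}(x,\tau))$ is needed, so the conclusion is unaffected. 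Finally, the upper volume bound follows more simply from $\omega_{-\tau}^n\leq\omega^n$ than from Lemma~\ref{lem:polynomial-metric-comparison}.
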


\begin{proof}
The scalar curvature bound and volume evolution give
\begin{equation}\label{eq:tangent-volume-bounds}
    \tau^{C_n}\omega^n\leq\omega_{-\tau}^n\leq\omega^n,
    \qquad0<\tau\leq1.
\end{equation}
Together with \eqref{eq:tangent-potential-estimates}, this yields
\[
    Z_\tau
       :=\int_X e^{-F(\cdot,-\tau)}\,\d V_{g_{-\tau}}
       \leq(2\pi)^n e^{\muent(g)+C_n}<\infty.
\]
Let $\widehat m_\tau=Z_\tau^{-1}
e^{-F(\cdot,-\tau)}\,\d V_{g_{-\tau}}$. For a measure of finite
entropy, changing the reference probability measure gives
\[
\begin{aligned}
    \mathrm{Ent}_\tau(\sigma)
       &=\tau\int_X\log\frac{\d\sigma}{\d\widehat m_\tau}
            \,\d\sigma
         +(1-\tau)\int_X F(\cdot,-\tau)\,\d\sigma
       +\tau\log\frac{(2\pi\tau)^n e^{\muent(g)}}{Z_\tau}\\
       &\geq(1-\tau)\int_X F(\cdot,-\tau)\,\d\sigma
          -C_n\tau(1+|\log\tau|),
\end{aligned}
\]
since relative entropy between probability measures is nonnegative
and
\(
    \log\frac{(2\pi\tau)^n e^{\muent(g)}}{Z_\tau}
       \geq n\log\tau-C_n.
\)
This proves \eqref{eq:tangent-entropy-lower}.

By \cite[Lemma~5.4(2)]{LZ26} and the continuity of
$\Phi:X\to(Z_0,d_0^Z)$ established in
Proposition~\ref{prop:time-zero-identification},
\[
    W_{2,-\tau}(\delta_{x;\tau},\delta_x)
       \leq\sup_{z\in B_{\rm Euc}(x,\tau)}d_{-\tau}(x,z)
       \leq\sup_{z\in B_{\rm Euc}(x,\tau)}
          d_0^Z(\Phi(x),\Phi(z))+C_n\sqrt\tau\longrightarrow0.
\]
Smoothness of the reference volume form and
\eqref{eq:tangent-volume-bounds} imply
\[
    C_x^{-1}\tau^{2n+C_n}
       \leq\Vol_{g_{-\tau}}(B_{\rm Euc}(x,\tau))
       \leq C_x\tau^{2n}.
\]
The entropy of the normalized volume measure is
\[
    \mathrm{Ent}_\tau(\delta_{x;\tau})
       =\frac{\int_{B_{\rm Euc}(x,\tau)}F(\cdot,-\tau)\,\d V_{g_{-\tau}}}
              {\Vol_{g_{-\tau}}(B_{\rm Euc}(x,\tau))}
          +\tau\log\frac{(2\pi\tau)^n e^{\muent(g)}}
                            {\Vol_{g_{-\tau}}(B_{\rm Euc}(x,\tau))}.
\]
The logarithmic term tends to zero. By
\eqref{eq:tangent-potential-estimates} and the continuity of $F_0$,
\[
    \sup_{z\in B_{\rm Euc}(x,\tau)}|F(z,-\tau)-F_0(x)|
       \leq C_n\tau+\sup_{z\in B_{\rm Euc}(x,\tau)}|F_0(z)-F_0(x)|
       \longrightarrow0.
\]
Thus the average in the first term tends to $F_0(x)$, proving
\eqref{eq:tangent-entropy-dirac}.
\end{proof}

\begin{lemma}\label{lem:tangent-diffusion}
For $x\in X$ and $s>0$, the semigroup satisfies
\begin{equation}\label{eq:tangent-diffusion-heat}
    P_s^\tau\delta_x
       =\nu_{D_{e^{-s}}x,-\tau e^{-2s};-\tau}.
\end{equation}
For the measures $\delta_{x;\tau}$ in
Lemma~\ref{lem:tangent-entropy-recovery}, as $\tau\downarrow0$,
\begin{equation}\label{eq:tangent-diffusion-concentration}
    W_{2,-\tau}(P_s^\tau\delta_{x;\tau},
       \delta_{D_{e^{-s}}x})\longrightarrow0
\end{equation}
and
\begin{equation}\label{eq:tangent-potential-mean}
    \int_X F(\cdot,-\tau)\,\d(P_s^\tau\delta_{x;\tau})
       \longrightarrow e^{-2s}F_0(x),
\end{equation}
uniformly on every bounded interval of $s\geq0$.
\end{lemma}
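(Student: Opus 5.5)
The plan is to prove \eqref{eq:tangent-diffusion-heat} by a self-similarity computation, and then to derive the two limits from it, using the $W_2$-contraction \eqref{eq:tangent-W2-contraction} for \eqref{eq:tangent-diffusion-concentration} and the exact moment identity \eqref{eq:F-moment} for \eqref{eq:tangent-potential-mean}.

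\emph{Step 1: the semigroup identity.} Fix $\varphi\in C_c^\infty(X)$ and let $u(\cdot,t)=P_{-\tau,t}\varphi$ be the heat evolution along the canonical flow for $t\in[-\tau,0)$. Put $t(s)=-\tau e^{-2s}$ and
\[
  v(x,s)=u\bigl(D_{e^{-s}}x,\,t(s)\bigr).
\]
Along the canonical flow, $g_t$ and $g_{-\tau}$ are related by the diffeomorphisms $\psi$ up to the scale factor $|t|/\tau$. The generator of $s\mapsto D_{e^s}$ is $\nabla^{g_t}F(\cdot,t)=\nabla^g f$. From these facts one computes
\[
  \partial_sv=2\tau e^{-2s}\,(\Delta_{g_t}u)\circ D_{e^{-s}}-\langle\nabla^{g_{-\tau}}F(\cdot,-\tau),\nabla v\rangle,
\]
and rescaling the Laplacian converts the first term into $\tau\Delta_{g_{-\tau}}v$, so that $\partial_sv=L_\tau v$. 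I will check the factor of $2$ against the conventions $\Delta=\tfrac12\Delta_g$ and $\partial_tg=-\Ric$. Since $u$ is bounded by the maximum principle, $v$ is a bounded solution with $v(\cdot,0)=\varphi$. Uniqueness then gives
\[
  P_s^\tau\varphi(x)=\int\varphi\,\d\nu_{D_{e^{-s}}x,\,t(s);\,-\tau},
\]
and dualizing yields \eqref{eq:tangent-diffusion-heat}.

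\emph{Step 2: the potential mean.} By linearity and Step 1,
\[
  \int F(\cdot,-\tau)\,\d(P^\tau_s\delta_{x;\tau})
  =\int\!\!\int F(\cdot,-\tau)\,\d\nu_{D_{e^{-s}}z,\,t(s);\,-\tau}\;\d\delta_{x;\tau}(z).
\]
The inner integral is controlled by \eqref{eq:F-moment}: it equals $F(D_{e^{-s}}z,t(s))+n\tau(1-e^{-2s})$. By \eqref{eq:tangent-potential-estimates}, $|F(\cdot,t)-F_0|\le C_n|t|$, and $F_0(D_{e^{-s}}z)=e^{-2s}F_0(z)$ by \eqref{eq:tangent-homothety}. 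Since $F_0$ is continuous and $\delta_{x;\tau}$ is supported in $B_{\rm Euc}(x,\tau)$, the average tends to $e^{-2s}F_0(x)$, uniformly for $s$ in a bounded interval. This proves \eqref{eq:tangent-potential-mean}.

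\emph{Step 3: concentration.} By \eqref{eq:tangent-W2-contraction} and Lemma~\ref{lem:tangent-entropy-recovery}, $W_{2,-\tau}(P_s^\tau\delta_{x;\tau},P_s^\tau\delta_x)\le W_{2,-\tau}(\delta_{x;\tau},\delta_x)\to0$. It therefore suffices to show that
\[
  W_{2,-\tau}\bigl(\nu_{y,t(s);-\tau},\delta_y\bigr)\to0,\qquad y=D_{e^{-s}}x,
\]
uniformly in $s$. The point $y$ ranges over a compact set, and the time gap is $\tau(1-e^{-2s})\le\tau$. I would split this quantity in two pieces.
\begin{enumerate}
\item Take an $H_n$-center $z$ of $\nu_{y,t(s);-\tau}$. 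The Li--Wang variance bound \cite{liwang2024heat}, together with Gaussian concentration to pass from $W_1$/variance control to $W_2$, gives $W_{2,-\tau}(\nu,\delta_z)\le C\sqrt\tau$.
\item It then remains to show $d_{-\tau}(z,y)\to0$. For this I would use \cite[Lemma~5.4]{LZ26} in the normalization above, as in the proof of Lemma~\ref{lem:tangent-entropy-recovery}. It compares $d_{-\tau}$ with $d_0^Z$ up to $C_n\sqrt\tau$, and the spacetime points $(y,t(s))$ and $(y,-\tau)$ both converge in $Z$ to $\Phi(y)$, uniformly for $y$ in a compact set, by continuity of $\Phi$ (Proposition~\ref{prop:time-zero-identification}).
\end{enumerate}

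The main obstacle is Step 3(b). The required control of the $H_n$-center's distance at time $-\tau$, measured in $d_{-\tau}$ and uniformly as $\tau\to0$, will probably need the $d^*$ definition, in the form $d_{W_1}^{g(-\tau)}(\nu_{y,t;-\tau},\nu_{y,-\tau;-\tau})\lesssim d^*$-distance, together with the worldline H\"older modulus from Lemma~\ref{lem:tangent-distance-modulus}.
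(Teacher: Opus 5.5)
Steps 1 and 2 are sound, but Step 3(b) is a genuine gap. You flag it yourself, and the tools you propose do not close it.

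\textbf{Why the proposed tools fail.} The $H_n$-center bound in (a) concentrates $\nu_{y,t(s);-\tau}$ around its own center, and it needs no curvature hypothesis. The whole content of \eqref{eq:tangent-diffusion-concentration} is that this center stays within $o(1)$ of the worldline point $y=D_{e^{-s}}x$. That is exactly where the scalar curvature bound must enter.
\begin{itemize}
\item The $d^*$ definition only bounds $d^*$ from above by a Wasserstein distance. The reverse comparison \eqref{curve:comparison} requires the comparison time to lie a definite amount before both base times. This is vacuous when one base point is $(y,-\tau)$ and the comparison time is $-\tau$.
\item Lemma~\ref{lem:tangent-distance-modulus} is a spatial H\"older bound in a coordinate chart, not a modulus along worldlines. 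It says nothing about the center $z$ unless you already know $z$ is coordinate-close to $y$.
\item Reducing via \cite[Lemma~5.4(2)]{LZ26} to $d_0^Z(\Phi(y),\Phi(z))\to0$ only moves the problem. Continuity of $\Phi$ at $y$ does not control the moving point $z$. You would need $d_Z((z,-\tau),\Phi(z))\le C\sqrt\tau$ uniformly in $z$, plus an upgrade from $d_Z$ to $d_0^Z$ near $\Phi(y)$. In the paper that worldline estimate \eqref{eq:completion-worldlines} is itself derived from the moment bound below.
\end{itemize}

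\textbf{The missing idea.} Apply the Type~I bound directly. By Theorem~\ref{thm:main}, $\scal_{\omega_r}\le C_n/|r|\le C_n/(t-r)$ for $r<t<0$. So \cite[Proposition~4.4]{LW26}, applied to $2g_t$ together with \eqref{eq:automatic-entropy}, gives $\int_X d_r(q,\cdot)^2\,\d\nu_{q,t;r}\le C_n(t-r)$ for every $q\in X$. Take $q=D_{e^{-s}}x$, $r=-\tau$, $t=-\tau e^{-2s}$ and use \eqref{eq:tangent-diffusion-heat}. This yields $W_{2,-\tau}^2(P_s^\tau\delta_x,\delta_{D_{e^{-s}}x})\le C_n\tau(1-e^{-2s})$. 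That is concentration at the worldline point itself, with no $H$-center needed. Your contraction step then finishes \eqref{eq:tangent-diffusion-concentration}, uniformly in $s$.

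\textbf{Steps 1 and 2.} Step 1 is the paper's computation run in the opposite direction. With $\Delta=\Delta_\omega$ the factor of $2$ works out, and $D_{e^{-s}}$ preserves $\nabla^g f$.

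Step 2 is a genuinely different and cleaner route. The paper derives \eqref{eq:tangent-potential-mean} from the concentration \eqref{eq:tangent-diffusion-concentration}, the $1$-Lipschitz property of $\sqrt{2F(\cdot,-\tau)}$, and Minkowski's inequality. You instead use the exact identity \eqref{eq:F-moment} with time gap $\tau(1-e^{-2s})$, together with \eqref{eq:tangent-potential-estimates} and $F_0\circ D_{e^{-s}}=e^{-2s}F_0$ on $X$. This decouples the potential mean from Step 3. It also gives an explicit error: $O(\tau)$ plus the oscillation of $F_0$ on $B_{\rm Euc}(x,\tau)$, uniformly in $s\ge0$. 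The mixture formula for the unbounded integrand $F\ge0$ follows by monotone convergence.
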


\begin{proof}
Fix a smooth compactly supported function $\phi$ and let
\[
    t=t(s)=-\tau e^{-2s},\qquad
    u(t(s),z)=(P_s^\tau\phi)(D_{e^s}z).
\]
Since $D_{e^s}$ is generated by
$\nabla^g f=\nabla^{g_{-\tau}}F(\cdot,-\tau)$, the chain rule gives
\begin{align*}
    \frac{\d}{\d s}u(t(s),z)
       &=\bigl(L_\tau P_s^\tau\phi
          +\langle\nabla F,\nabla P_s^\tau\phi\rangle_{g_{-\tau}}\bigr)(D_{e^s}z)\\
       &=\tau(\Delta_{g_{-\tau}}P_s^\tau\phi)(D_{e^s}z)
        =\tau e^{-2s}\Delta_{g_{t(s)}}u(t(s),z).
\end{align*}
Here we used $D_{e^s}^*g_{-\tau}=e^{2s}g_{t(s)}$.
Dividing by $2\tau e^{-2s}$ yields
$\partial_tu=\Delta_{\omega_t}u$,
with initial value $u(-\tau,\cdot)=\phi$. The heat kernel
representation therefore gives
\[
    P_s^\tau\phi(x)
       =u(-\tau e^{-2s},D_{e^{-s}}x)
       =\int_X\phi\,\d\nu_{D_{e^{-s}}x,-\tau e^{-2s};-\tau}.
\]
By duality, this proves \eqref{eq:tangent-diffusion-heat}.

For $r<t<0$, Theorem~\ref{thm:main} gives
$\scal_{\omega_r}\leq C_n/|r|\leq C_n/(t-r)$.
By \cite[Definition~4.1 and Proposition~4.4]{LW26}, applied to
$2g_t$ and using \eqref{eq:automatic-entropy},
\[
    \int_X d_r(q,z)^2\,\d\nu_{q,t;r}(z)\leq C_n(t-r),
    \qquad q\in X.
\]
Applying this estimate with $q=D_{e^{-s}}x$, $r=-\tau$, and
$t=-\tau e^{-2s}$ and using
\eqref{eq:tangent-diffusion-heat}, we obtain
\[
    W_{2,-\tau}^2(P_s^\tau\delta_x,\delta_{D_{e^{-s}}x})
       \leq C_n\tau(1-e^{-2s}).
\]
By the triangle inequality, \eqref{eq:tangent-W2-contraction}, and
the preceding estimate,
\[
\begin{aligned}
    W_{2,-\tau}(P_s^\tau\delta_{x;\tau},\delta_{D_{e^{-s}}x})
       &\leq W_{2,-\tau}(P_s^\tau\delta_{x;\tau},P_s^\tau\delta_x)
          +W_{2,-\tau}(P_s^\tau\delta_x,\delta_{D_{e^{-s}}x})\\
       &\leq e^{-s}W_{2,-\tau}(\delta_{x;\tau},\delta_x)
          +C_n\sqrt{\tau(1-e^{-2s})}.
\end{aligned}
\]
Together with \eqref{eq:tangent-entropy-dirac}, this proves
\eqref{eq:tangent-diffusion-concentration}, including $s=0$.

Finally, $\sqrt{2F(\cdot,-\tau)}$ is $1$-Lipschitz for $d_{-\tau}$.
Minkowski's inequality gives, for $\sigma\in\mathcal P_2(X,d_{-\tau})$,
\begin{equation}\label{eq:integral and W2}
   \left|\left(\int_X2F(\cdot,-\tau)\,\d\sigma\right)^{1/2}
       -\sqrt{2F(z,-\tau)}\right|
       \leq W_{2,-\tau}(\sigma,\delta_z).
\end{equation}
By \eqref{eq:tangent-potential-estimates} and
\eqref{eq:tangent-homothety},
\[
    0\leq F(D_{e^{-s}}x,-\tau)-e^{-2s}F_0(x)\leq C_n\tau.
\] We take $\sigma=P_s^\tau\delta_{x;\tau}$ and $z=D_{e^{-s}}x$.
Then
\eqref{eq:tangent-potential-mean} follows from \eqref{eq:tangent-diffusion-concentration} and \eqref{eq:integral and W2}.
\end{proof}

\begin{proposition}[Radial evolution inequality]\label{prop:tangent-radial-evi}
For every $y,y'\in Y$ and  for almost every $s\in\mathbb R$,
\begin{equation}\label{eq:tangent-radial-evi}
    \frac12\frac{\d}{\d s}d_Y(D_{e^{-s}}y,y')^2
       +\frac12d_Y(D_{e^{-s}}y,y')^2
       \leq F_0(y')-e^{-2s}F_0(y).
\end{equation}
\end{proposition}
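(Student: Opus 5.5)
Let me write a plan: pass to the limit $\tau\to0$ in the EVI of Lemma~\ref{lem:tangent-entropy-evi}, integrated in time to avoid differentiating limits.

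Fix $x,x'\in X$ with $\pi(x)=y$, $\pi(x')=y'$. Take $\sigma_0=\delta_{x;\tau}$ and $\sigma=\delta_{x';\tau}$ from Lemma~\ref{lem:tangent-entropy-recovery}; both have finite entropy. Since the EVI \eqref{eq:tangent-entropy-evi} holds for a.e.\ $s$ and $s\mapsto W_{2,-\tau}^2(P_s^\tau\sigma_0,\sigma)$ is locally absolutely continuous, multiply by $e^{s}$ and integrate over $[s_1,s_2]\subset[0,\infty)$:
\[
\tfrac12 e^{s_2}W_{2,-\tau}^2(P_{s_2}^\tau\sigma_0,\sigma)-\tfrac12 e^{s_1}W_{2,-\tau}^2(P_{s_1}^\tau\sigma_0,\sigma)
\le\int_{s_1}^{s_2}e^{s}\bigl(\mathrm{Ent}_\tau(\sigma)-\mathrm{Ent}_\tau(P_s^\tau\sigma_0)\bigr)\,\d s .
\]
Now let $\tau\downarrow0$ term by term. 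By \eqref{eq:tangent-diffusion-concentration} and \eqref{eq:tangent-entropy-dirac}, $W_{2,-\tau}(P_s^\tau\sigma_0,\sigma)$ differs from $d_{-\tau}(D_{e^{-s}}x,x')$ by $o(1)$, uniformly for $s$ in bounded sets. By \eqref{eq:tangent-distance-convergence} and self-similarity (note $D_{e^{-s}}x$ stays in a compact set), this tends to $d_Y(D_{e^{-s}}y,y')$. On the right, $\mathrm{Ent}_\tau(\sigma)\to F_0(x')=F_0(y')$ by \eqref{eq:tangent-entropy-dirac}. For the remaining term I only need a lower bound, which \eqref{eq:tangent-entropy-lower} supplies:
\[
\mathrm{Ent}_\tau(P_s^\tau\sigma_0)\ge(1-\tau)\int F(\cdot,-\tau)\,\d(P_s^\tau\sigma_0)-C_n\tau(1+|\log\tau|).
\]
By \eqref{eq:tangent-potential-mean} the integral converges to $e^{-2s}F_0(y)$ uniformly on bounded $s$-intervals. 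So the $\liminf$ of this entropy is at least $e^{-2s}F_0(y)$, with uniform error, and dominated convergence over $[s_1,s_2]$ applies.

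Setting $\phi(s)=d_Y(D_{e^{-s}}y,y')^2$, this yields for $0\le s_1<s_2$
\[
\tfrac12 e^{s_2}\phi(s_2)-\tfrac12 e^{s_1}\phi(s_1)\le\int_{s_1}^{s_2}e^{s}\bigl(F_0(y')-e^{-2s}F_0(y)\bigr)\,\d s.
\]
To cover all $s\in\mathbb R$, not just $s\ge0$, replace $y$ by $D_{e^{-s_0}}y$ for arbitrary $s_0$. Then $D_{e^{-s}}D_{e^{-s_0}}=D_{e^{-(s+s_0)}}$ and $F_0(D_{e^{-s_0}}y)=e^{-2s_0}F_0(y)$ by \eqref{eq:tangent-homothety}, so the inequality holds on every interval. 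The function $\phi$ is locally Lipschitz in $s$, by \eqref{eq:tangent-radial-distance} and the triangle inequality, hence differentiable a.e. Dividing by $s_2-s_1$ and letting $s_2\downarrow s_1$ at points of differentiability gives $\tfrac12(\phi'+\phi)\le F_0(y')-e^{-2s}F_0(y)$, which is \eqref{eq:tangent-radial-evi}.

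The main obstacle is the uniformity needed to exchange $\tau\to0$ with the $s$-integral, together with the identification of $W_{2,-\tau}$ with $d_{-\tau}$ between the concentrated points. The first is handled by the uniform statements in Lemma~\ref{lem:tangent-diffusion}. For the second I would use the triangle inequality for $W_{2,-\tau}$ against the Dirac masses, plus uniform convergence of $d_{-\tau}$ on compact sets.
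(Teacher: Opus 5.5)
Your proposal is correct and follows the paper's proof essentially step for step. The paper likewise applies the EVI to $\delta_{x;\tau}$ and $\delta_{x';\tau}$, multiplies by $e^s$ and integrates, and bounds the entropy of $P_s^\tau\delta_{x;\tau}$ from below via \eqref{eq:tangent-entropy-lower}. It then passes to the limit $\tau\downarrow0$ using Lemmas~\ref{lem:tangent-entropy-recovery}--\ref{lem:tangent-diffusion} and \eqref{eq:tangent-distance-convergence}, extends to all of $\mathbb R$ by the homothety shift, and differentiates using the local Lipschitz bound from \eqref{eq:tangent-radial-distance}.
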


\begin{proof}
First fix $0\leq a<a'$. Choose $x,x'\in X$ with $\pi(x)=y$ and $\pi(x')=y'$.
Apply Lemma~\ref{lem:tangent-entropy-evi} with initial measure
$\delta_{x;\tau}$ and comparison measure $\delta_{x';\tau}$.
Multiply \eqref{eq:tangent-entropy-evi} by $e^s$, integrate from
$a$ to $a'$, and use \eqref{eq:tangent-entropy-lower} to obtain
\[
\begin{aligned}
    \frac12\left[e^sW_{2,-\tau}^2
       (P_s^\tau\delta_{x;\tau},\delta_{x';\tau})\right]_a^{a'}
    \leq\int_a^{a'} e^s\bigg(&\mathrm{Ent}_\tau(\delta_{x';\tau})
       -(1-\tau)\int_XF(\cdot,-\tau)\,\d(P_s^\tau\delta_{x;\tau})\\
       &+C_n\tau(1+|\log\tau|)\bigg)\,\d s.
\end{aligned}
\]
By the triangle inequality for $W_{2,-\tau}$,
\[
    |W_{2,-\tau}(P_s^\tau\delta_{x;\tau},\delta_{x';\tau})
       -d_{-\tau}(D_{e^{-s}}x,x')|
       \leq W_{2,-\tau}(P_s^\tau\delta_{x;\tau},\delta_{D_{e^{-s}}x})
          +W_{2,-\tau}(\delta_{x';\tau},\delta_{x'}).
\]
The right-hand side tends to zero uniformly on $[a,a']$ by
Lemmas~\ref{lem:tangent-entropy-recovery} and~\ref{lem:tangent-diffusion}.
The curve $D_{e^{-s}}x$, $s\in[a,a']$, lies in a fixed compact set.
Thus \eqref{eq:tangent-distance-convergence} shows that the squared
Wasserstein distance converges uniformly to
$d_Y(D_{e^{-s}}y,y')^2$.
On the right-hand side of the integrated inequality,
\eqref{eq:tangent-entropy-dirac} gives convergence of the comparison
entropy, and \eqref{eq:tangent-potential-mean} gives uniform
convergence of the potential integral. Passing to the limit yields
\begin{equation}\label{eq--integral inequality}
\frac12\left[e^s d_Y(D_{e^{-s}}y,y')^2\right]_a^{a'}
       \leq\int_a^{a'} e^s\bigl(F_0(y')-e^{-2s}F_0(y)\bigr)\,\d s. 
\end{equation}
For any $b\in\mathbb R$, replace $y$ by $D_{e^{-b}}y$ and
shift the time parameter by $b$. The group property and
$F_0(D_{e^{-b}}y)=e^{-2b}F_0(y)$ shows that \eqref{eq--integral inequality} holds on every bounded real interval.
By the radial distance identity \eqref{eq:tangent-radial-distance},
$s\mapsto d_Y(D_{e^{-s}}y,y')^2$ is locally Lipschitz.
Differentiation therefore proves the almost-everywhere assertion.
\end{proof}

\subsection{Proof of Theorem~\ref{thm:canonical-cone}}
Proposition~\ref{prop:time-zero-identification} gives the proper
geodesic limit and its identification with $Y$. It remains to prove
that the limit is a metric cone.

Fix $y,y'\in Y$. By \eqref{eq:tangent-radial-distance},
$s\mapsto d_Y(D_{e^s}y,y')^2$ is locally Lipschitz on $\mathbb R$.
Apply \eqref{eq:tangent-radial-evi} to $(y,y')$ at time $-s$ and to $(y',y)$
at time $s$. By the homothety property, the two inequalities yield
\[
    \frac{\d}{\d s}\left(e^{-s}d_Y(D_{e^s}y,y')^2\right)
       =2e^sF_0(y)-2e^{-s}F_0(y')
\]
for almost every $s\in\mathbb R$. Integrating from $0$ to $s$
gives, for every $s\in\mathbb R$,
\begin{equation}\label{eq:tangent-cosine-identity}
    d_Y(D_{e^s}y,y')^2
       =2e^{2s}F_0(y)+2F_0(y')
          +e^s\bigl(d_Y(y,y')^2-2F_0(y)-2F_0(y')\bigr).
\end{equation}

Let $\Sigma=\{y\in Y:2F_0(y)=1\}$. This set is compact by
properness. For $y,y'\in\Sigma$, define
\[
    \theta(y,y')=\arccos\left(1-\tfrac12d_Y(y,y')^2\right).
\]
Since $d_Y(y,y')\le2$, this is well defined with values in
$[0,\pi]$. Equations~\eqref{eq:tangent-homothety} and
\eqref{eq:tangent-cosine-identity} imply
\begin{equation}\label{eq:tangent-cone-distance}
    d_Y(D_r y,D_{r'}y')^2
       =r^2+(r')^2-2rr'\cos\theta(y,y').
\end{equation}
Every $z\in Y\setminus\{0\}$ has the unique representation
$z=D_r y$, where $r=\sqrt{2F_0(z)}>0$ and $y=D_{1/r}z\in\Sigma$.
Set $D_0y=0$. The map $(y,r)\mapsto D_r y$ from
$\Sigma\times[0,\infty)$ onto $Y$ identifies only the points at radius zero.
By \eqref{eq:tangent-radius}, \eqref{eq:tangent-cone-distance} holds
also when either radius is zero. Thus \cite[Corollary~2.3]{LaschosMielke}
shows that $\theta$ is a metric on $\Sigma$. Its topology agrees with
that induced by $d_Y$, and the radial map identifies $Y$ with the
metric cone over $(\Sigma,\theta)$.

Since $(Y,d_Y)$ is geodesic,
\cite[Corollary~I.5.11]{BridsonHaefliger} implies that any two points
of $\Sigma$ at $\theta$-distance less than $\pi$ are joined by a
minimizing $\theta$-geodesic. The complement of the proper analytic
subset $\pi^{-1}(0)$ in $X$ is connected, so its image
$Y\setminus\{0\}\cong(0,\infty)\times\Sigma$ is connected.
Thus $\Sigma$ is connected. Let $d_\Sigma$ be the intrinsic length
metric associated with $\theta$. Connectedness and the preceding
local geodesic property show that $d_\Sigma$ is finite. Moreover,
$d_\Sigma=\theta$ whenever $\theta<\pi$, so $d_\Sigma$ induces the
same topology. Hence $(\Sigma,d_\Sigma)$ is a compact length space,
and
\[
    \theta(y,y')=\min\{d_\Sigma(y,y'),\pi\}.
\]
Consequently, \eqref{eq:tangent-cone-distance} reads
\[
    d_Y(D_r y,D_{r'}y')^2
       =r^2+(r')^2-2rr'\cos\bigl(\min\{d_\Sigma(y,y'),\pi\}\bigr),
\]
which identifies $Y$ with $C(\Sigma,d_\Sigma)$.\qed

Transporting the normal affine variety structure of $Y$ by $\Phi_0$
gives $Z_0$ a polarized affine cone structure.
Since $\pi$ is proper with connected fibers and $Y$ is normal,
$\pi_*\mathcal O_X=\mathcal O_Y$.
Using $\pi=\Phi_0\circ\Phi$ and the homeomorphism $\Phi_0$ then gives
$\mathcal O_{Z_0}=\Phi_*\mathcal O_X$. The comparison maps are
equivariant, and the action generated by $J\nabla f$ preserves the
slice distances. Thus $\Phi$ is the polarized Fano fibration. This
proves \cite[Conjecture~6.7]{LZ26} and the metric-cone assertion of
\cite[Conjecture~6.8]{LZ26}.

The entropy argument also applies to real Ricci shrinkers.

\begin{theorem}\label{thm:real-shrinker-cone}
Let $(M^m,g,f)$ be a Ricci shrinker with
bounded scalar curvature, and let $(Z_0,d_0^Z,\bar p)$ be the
time-zero slice of its canonical Ricci flow, with its canonical vertex
$\bar p$. Assume that $(Z_0,d_0^Z)$ is locally compact. Then, for
every $p\in M$,
\[
    (M,r^{-2}g,p)\longrightarrow(Z_0,d_0^Z,\bar p)
       \qquad\text{as }r\to\infty
\]
in the pointed Gromov--Hausdorff sense, and $(Z_0,d_0^Z)$ is a
proper geodesic metric cone over a compact metric space, with a
single point included as a degenerate cone.
\end{theorem}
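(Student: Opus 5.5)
The plan is to rerun Sections~\ref{sec:time-zero-fibration} and~\ref{sec:metric-cone} for a real shrinker, using only the two hypotheses in place of the K\"ahler input. We work with the canonical flow $g_t=(-t)\psi_t^*g$, its potential $F(x,t)=(-t)\psi_t^*(f+\text{const})$, and the identities of Lemma~\ref{lem:canonical} in real form.

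\textbf{Step 1: potential and convergence.} The scalar curvature bound gives $0\le\partial_\tau F(\cdot,-\tau)\le C$. Hence $F(\cdot,-\tau)$ decreases locally uniformly to a proper continuous $F_0$, and \eqref{eq:tangent-potential-estimates} holds. The hypotheses of \cite[Theorem~1.6]{LZ26} are exactly bounded scalar curvature and local compactness of $(Z_0,d_0^Z)$. That theorem therefore gives pointed Gromov--Hausdorff convergence of $(M,g_t,p)$ to $(Z_0,d_0^Z,\bar p)$, with $\bar p=\Phi(p)$ for a minimum point $p$ of $f$. Since $\psi_t$ is an isometry $(M,g_t,p)\to(M,(-t)g,p)$, setting $-t=r^{-2}$ gives the rescaled convergence. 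Changing the base point costs $r^{-1}d_g(p,p')\to0$. Local compactness together with the length-space property of the limit makes $Z_0$ proper and geodesic.

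\textbf{Step 2: structure on $Z_0$.} We use the worldline map $\Phi$ and \eqref{eq:time-zero-distance}. Then $F_0$ descends to $Z_0$, and \cite[(5.2)]{LZ26} gives $d_0^Z(\bar p,\Phi(x))^2=2F_0(x)$. The flows $\psi_{-r^{-2}}$ descend to homotheties $D_r$ satisfying \eqref{eq:tangent-homothety}, and we obtain the radial identity \eqref{eq:tangent-radial-distance} exactly as before.

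Here the K\"ahler setting used Lemma~\ref{lem:tangent-distance-modulus} to get uniform convergence $d_t\to d_0^Z\circ(\Phi\times\Phi)$ on compacts and continuity of $\Phi$. I would replace this by working directly on $Z_0$: approximate $z\in Z_0$ by points $x_i$ with $\Phi(x_i)\to z$, which is possible since $\Phi$ is surjective. I would also use \cite[Lemma~5.4(2)]{LZ26}, namely $d_{-\tau}(x,x')\le d_0^Z(\Phi x,\Phi x')+C\sqrt\tau$, together with the reverse inequality obtained from monotone $W_1$ comparison.

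\textbf{Step 3: entropy inequality.} The weighted identity $\Ric(g_{-\tau})+\nabla^2(F/\tau)=g/(2\tau)$ in real normalization gives RCD$(c/\tau,\infty)$. So Lemma~\ref{lem:tangent-entropy-evi} holds verbatim, as do the entropy lower bound and Lemma~\ref{lem:tangent-diffusion}. For the latter, the $W_2$ variance bound of \cite[Proposition~4.4]{LW26} needs only bounded scalar curvature and an entropy lower bound; the entropy of a fixed shrinker is finite.

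The delicate point is the approximating measures $\delta_{x;\tau}$. We need $W_{2,-\tau}(\delta_{x;\tau},\delta_x)\to0$. This requires that small Euclidean balls have small $d_{-\tau}$ diameter, which earlier came from H\"older control. In the real case I would instead use $\delta_{x;\tau}:=\nu_{x,-\tau\lambda;-\tau}$ with $\lambda\uparrow1$, so that the diffusion time is $\tau(1-\lambda)$. By the same variance bound its $W_2$-distance to $\delta_x$ is $O(\sqrt{\tau(1-\lambda)})$. Its entropy can be computed from Perelman's Nash entropy, giving $\tau\mathcal N$ plus the averaged $F$ term, which tends to $F_0(x)$ as $\tau\to0$. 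This choice is also semigroup-compatible via \eqref{eq:tangent-diffusion-heat}.

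\textbf{Step 4: cone structure.} Passing to the limit gives Proposition~\ref{prop:tangent-radial-evi} on $Z_0$, and symmetrizing gives \eqref{eq:tangent-cosine-identity}. We set $\Sigma=\{2F_0=1\}$, which is compact by properness. The conclusion follows from \cite[Corollary~2.3]{LaschosMielke} and the same length-metric argument. If $\Sigma$ is disconnected, we still obtain a cone over a compact metric space with angles truncated at $\pi$. If $F_0\equiv0$, that is, if $M$ is compact, $Z_0$ is a point, which is the degenerate case.

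The main obstacle is Step 3: the entropy convergence $\mathrm{Ent}_\tau(\delta_{x;\tau})\to F_0(x)$ without a H\"older modulus. I would first try the heat-kernel-measure approximation. The Nash entropy lower bound $\mathcal N\ge\muent$ controls the logarithmic term by $O(\tau)$, and the potential average is handled by \eqref{eq:F-moment}.
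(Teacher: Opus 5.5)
Your proposal is correct, but at the key step it takes a different route from the paper. The paper rebuilds nothing. From Lemma~5.6 and Proposition~5.7, (5.10), of \cite{LZ26} it imports continuity of $\Phi:M\to(Z_0,d_0^Z)$, properness and the geodesic property of the slice, convergence of the blow-downs, and locally uniform convergence $d_t(x,x')\to d_0^Z(\Phi(x),\Phi(x'))$. This is where local compactness enters. It then runs Lemmas~\ref{lem:tangent-entropy-evi}--\ref{lem:tangent-diffusion} and Proposition~\ref{prop:tangent-radial-evi} verbatim with the Euclidean-ball measures $\delta_{x;\tau}$; showing that these approximate $\delta_x$ uses exactly that continuity.

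You instead replace $\delta_{x;\tau}$ by $\nu_{x,-\lambda\tau;-\tau}$, and this works, in fact more cleanly:
\begin{itemize}
\item The variance bound gives $W_{2,-\tau}(\nu_{x,-\lambda\tau;-\tau},\delta_x)\le C\sqrt{(1-\lambda)\tau}$.
\item In the normalization of Section~\ref{sec:metric-cone}, the entropy is exactly
\[
\mathrm{Ent}_\tau(\nu_{x,-\lambda\tau;-\tau})=\tau\bigl(\muent(g)-\mathcal N_{(x,-\lambda\tau)}((1-\lambda)\tau)-n-n\log(1-\lambda)\bigr)+F(x,-\lambda\tau)+n(1-\lambda)\tau .
\]
This tends to $F_0(x)$ because $\muent(g)\le\mathcal N\le0$.
\item Self-similarity, $D_r^*g_t=r^2g_{tr^{-2}}$, together with kernel reproduction gives $P_s^\tau\nu_{x,-\lambda\tau;-\tau}=\nu_{D_{e^{-s}}x,-\lambda\tau e^{-2s};-\tau}$. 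So the concentration and \eqref{eq:tangent-potential-mean} become explicit, with $\int F(\cdot,-\tau)\,\d(P_s^\tau\nu_{x,-\lambda\tau;-\tau})=e^{-2s}F(x,-\lambda\tau)+n\tau(1-\lambda e^{-2s})$.
\end{itemize}
A fixed $\lambda\in(0,1)$ suffices; there is no need for $\lambda\uparrow1$.

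Your Step~2 as written does not deliver what you claim, but you do not need it. Monotonicity of $W_1$ along conjugate heat flows bounds $d_{W_1}^{g(a)}(\nu_{\Phi(x);a},\nu_{\Phi(x');a})$ by $d_{-\tau}(x,x')+C\sqrt\tau$ only for $a\le-\tau$. But $d_0^Z$ is the limit, hence the supremum, as $a\uparrow0$. So you get neither $d_0^Z(\Phi x,\Phi x')\le d_{-\tau}(x,x')+C\sqrt\tau$ nor locally uniform convergence this way. Also, surjectivity of $\Phi$ already makes the approximation $\Phi(x_i)\to z$ unnecessary.

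What saves you is the structure of the limit step. After multiplying \eqref{eq:tangent-entropy-evi} by $e^s$ and integrating over $[a,a']$, the Wasserstein distance appears only at $s=a$ and $s=a'$. With your approximants, the triangle inequality reduces it at those two times to $d_{-\tau}(D_{e^{-s}}x,x')$, and the pointwise limit \eqref{eq:time-zero-distance} suffices. The homothety and radial identities likewise need only pointwise limits.

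Comparing the two routes: yours shows that the EVI argument needs only pointwise worldline convergence, the Gaussian variance bound, and the Nash-entropy lower bound. Local compactness is then used only for the convergence theorem and for properness, hence compactness of $\Sigma$. The paper's route buys verbatim reuse of the K\"ahler lemmas, at the cost of relying on the continuity and uniform convergence from \cite{LZ26}.

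One further point on properness: Hopf--Rinow requires completeness of $(Z_0,d_0^Z)$, not just local compactness and the length property. You should take completeness from \cite[Theorem~1.6]{LZ26} or Proposition~5.7 there.
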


\begin{proof}
If $M$ is compact, then the blow-downs converge to a point. By \cite[Lemma~5.4]{LZ26},
the worldline map $\Phi:M\to Z_0$ is surjective and
\[
    d_0^Z(\Phi(x),\Phi(x'))
       =\lim_{t\uparrow0}d_t(x,x')=0
       \qquad\text{for all }x,x'\in M.
\]
Thus $Z_0$ is a single point, giving the degenerate cone case.
We may therefore assume that $M$ is noncompact.
By \cite[Lemma~5.6 and Proposition~5.7]{LZ26}, the worldline map
$\Phi:M\to(Z_0,d_0^Z)$ is continuous and surjective, the slice is
proper and geodesic, and the full family of blow-downs converges to
it. 

The scalar curvature bound gives \eqref{eq:tangent-potential-estimates},
and the potential--distance comparison \cite[(5.2)]{LZ26} and
self-similarity give
\eqref{eq:tangent-radius}--\eqref{eq:tangent-radial-distance} on
$Z_0$, with $D_r$ induced by $\psi_{-r^{-2}}$.
Lemmas~\ref{lem:tangent-entropy-evi}--\ref{lem:tangent-diffusion}
apply in the Riemannian setting: their proofs use the soliton identities,
volume evolution, continuity of $\Phi$ and $F_0$, the distance estimate
\cite[Lemma~5.4(2)]{LZ26}, and the heat kernel estimates
\cite[Proposition~4.4]{LW26}.
The constants may now depend on $m$, the scalar curvature bound,
and the entropy of the fixed shrinker.

The proof of \cite[Proposition~5.7, (5.10)]{LZ26} gives uniform
convergence of $d_t(x,x')$ to $d_0^Z(\Phi(x),\Phi(x'))$ on compact
subsets of $M\times M$. Thus the limit argument in
Proposition~\ref{prop:tangent-radial-evi} also applies on $Z_0$.
The derivation of \eqref{eq:tangent-cone-distance} then applies unchanged,
and \cite[Corollary~2.3]{LaschosMielke} identifies $(Z_0,d_0^Z)$ with
the metric cone over its compact unit link.
\end{proof}

The link in Theorem~\ref{thm:real-shrinker-cone} need not be connected.
For example, the round shrinking cylinder $S^3\times\mathbb R$ has
tangent cone $\mathbb R$, whose unit link consists of two points
at angular distance $\pi$.

\section{Metric completion of the regular part}\label{sec:metric-completion}

We prove Theorem~\ref{thm:completion}. Throughout this section,
$(X^n,\omega,f)$ is a K\"ahler--Ricci shrinker with maximal volume
growth. We retain the notation of
Subsection~\ref{subsec:intro-metric-completion} and
Sections~\ref{sec:heat}--\ref{sec:metric-cone}; in particular,
$g_0$ is the smooth limiting metric on $Y^\circ$ and
$X^\circ=\pi^{-1}(Y^\circ)$. Under the identification of
Proposition~\ref{prop:time-zero-identification}, we have $d_Y=d_0^Z$.

\subsection{Bi-Lipschitz comparison of the two distances}\label{subsec:completion-comparison}

\begin{lemma}\label{lem:intrinsic-completion}
The intrinsic distance $d_{g_0}$ on $Y^\circ$ extends to a proper metric
on $Y$, still denoted by $d_{g_0}$, with the analytic topology. Moreover,
$d_Y\leq d_{g_0}$, and $(Y,d_{g_0})$ is the metric completion of
$(Y^\circ,d_{g_0})$.
\end{lemma}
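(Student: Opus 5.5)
The plan is to derive everything from a two-sided comparison between $d_{g_0}$ and $d_Y$ on $Y^\circ$, in which local Hölder control near singular points substitutes for a bi-Lipschitz bound. The inequality $d_Y\le d_{g_0}$ on $Y^\circ$ is the one already noted in Subsection~\ref{subsec:intro-metric-completion}. Given a smooth curve $\gamma$ in $Y^\circ$, lift it to $X^\circ$ through $\pi$. The canonical flow converges smoothly to $g_0$ on a compact neighborhood of $\gamma$, so $L_{g_t}(\gamma)\to L_{g_0}(\gamma)$. Hence $d_t(x,x')\le L_{g_t}(\gamma)$, and \eqref{eq:tangent-distance-convergence} gives $d_Y(\pi x,\pi x')\le L_{g_0}(\gamma)$. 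Taking the infimum over $\gamma$ yields $d_Y\le d_{g_0}$ on $Y^\circ$.

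The main step is to show that $d_{g_0}$ is locally uniformly continuous with respect to the analytic topology near every point of $Y\setminus Y^\circ$. Without this, the completion could be larger than $Y$. I would use the Kähler potential representation \eqref{eq:tangent-kahler-potential}: letting $\tau\to0$, the locally uniform convergence of $F(\cdot,-\tau)$ to $F_0$ gives $\omega_0=\sqrt{-1}\partial\bar\partial\varphi_0$ on $X$ in the sense of currents, with
\[
\varphi_0=\int_0^1F(\cdot,-u)u^{-2}\,\d u-\ldots
\]
More precisely, $\varphi_0$ is the locally uniform limit of the local potentials $\varphi_\tau$, which have uniform $C^{2\alpha}$ bounds by \eqref{eq:tangent-potential-modulus}. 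On $Y^\circ$ this current is $\pi^*g_0$. Near a point $y_0\notin Y^\circ$, embed a neighborhood of $Y$ into $\mathbb C^L$ using the homogeneous generators $h$ from the proof of Lemma~\ref{lem:polynomial-metric-comparison}. Then, as in \cite[Proposition~1.4]{GGZ}, I would run the Yang Li integration-by-parts argument of Lemma~\ref{lem:tangent-distance-modulus} for $u=d_{g_0}(y,\cdot)$ on $Y^\circ$. The idea is to pull back to a resolution, namely $X$ itself, which is birational by \cite[Proposition~7.1]{LZ26}, and work in coordinate balls there.

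The Dirichlet energy of $u$ on $B_{\rm Euc}(z,r)\cap X^\circ$ is bounded by $\int\chi\,\Delta_{\rm Euc}\varphi_0\le Cr^{2n-2+2\alpha}$. The set $X\setminus X^\circ$ is a proper analytic subset, so it is removable for $W^{1,2}$ functions and Morrey's inequality still applies. This gives $d_{g_0}(\pi x,\pi x')\le C|x-x'|^\alpha$ for $x,x'\in X^\circ$ in a coordinate ball. Since $\pi$ is proper, this yields uniform continuity of $d_{g_0}$ on compact subsets of $Y$, with respect to the analytic topology.

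I expect this step to be the main obstacle. The difficulty is justifying the Morrey argument across the exceptional set, where $u$ is only defined off an analytic subset. I would handle it by exhausting $X^\circ$ with cutoffs whose capacity goes to zero, which is possible since the set has real codimension at least two. Integrability of $|\d u|^2_{\rm Euc}\le C\operatorname{tr}\omega_0$ across the set follows from the current bound.

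Given the local Hölder bound, $d_{g_0}$ extends continuously to $Y\times Y$, and the extension is a pseudometric with $d_Y\le d_{g_0}$. It is a genuine metric because $d_Y$ is a metric, and its topology is finer than that of $d_Y$, which is the analytic topology by Proposition~\ref{prop:time-zero-identification}. The Hölder bound shows that the identity map $(Y,\text{analytic})\to(Y,d_{g_0})$ is continuous, so the two topologies agree. Properness follows from the inclusion $\{d_{g_0}(0,\cdot)\le R\}\subset\{d_Y(0,\cdot)\le R\}$: the latter set is compact, so the former is closed in a compact set and hence compact. Finally, $Y^\circ$ is dense in $Y$ because its complement is a proper analytic subset. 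A proper metric space is complete, so $(Y,d_{g_0})$ is the metric completion of $(Y^\circ,d_{g_0})$.
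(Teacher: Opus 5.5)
\textbf{Gap: descent from $X$ to $Y$.} Your outline has a genuine gap at the step where you pass from the H\"older bound on $X$ to continuity on $Y$.

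The Yang Li/Morrey argument does give $d_{g_0}(\pi x,\pi x')\le C|x-x'|^\alpha$ for $x,x'\in X^\circ$. You should truncate $u$ at a level $L$, as the paper does, so that it lies in $W^{1,2}_{\mathrm{loc}}$ and the constants are independent of the base point and of $L$. This yields a continuous pseudometric $\bar d$ on $X$ extending $(x,x')\mapsto d_{g_0}(\pi x,\pi x')$.

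Properness of $\pi$ does not turn this into uniform continuity on $Y$. Take two sequences $y_k,y_k'\in Y^\circ$ converging to the same $y\in Y\setminus Y^\circ$. Their lifts subconverge to points $x,x'\in\pi^{-1}(y)$, which may be different points of a positive-dimensional fiber. Then $d_{g_0}(y_k,y_k')\to\bar d(x,x')$, and your H\"older bound does not force this to vanish. The inequality $d_Y\le d_{g_0}$ does not help either, since it bounds $d_{g_0}$ from below.

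So you must still show that $\bar d(x,x')=0$ whenever $\pi(x)=\pi(x')$. Without this, the completion of $(Y^\circ,d_{g_0})$ could a priori be a space dominating $Y$, for instance $X$ modulo a coarser relation. Both the continuous extension to $Y\times Y$ and your comparison of topologies depend on this step.

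\textbf{Missing idea: fiber collapse for $d_{g_0}$.} What is needed is an analogue, for $d_{g_0}$, of the fiber-collapse argument for $d_t$ in Proposition~\ref{prop:time-zero-identification}. The paper does it as follows.

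For $u=\min\{d_{g_0}(a,\cdot),L\}$, local mollification and Jensen's inequality give
$\sqrt{-1}\partial u_\epsilon\wedge\bar\partial u_\epsilon\le C\sqrt{-1}\partial\bar\partial(F_0)_\epsilon$.
Restrict this to a holomorphic disk in a fiber and integrate against a cutoff. The right side tends to zero, because $F_0$ is constant on fibers: this follows from \eqref{eq:time-zero-radius} together with the fiber-constancy of $\Phi$. Hence the locally uniform limit $u$ is constant on the disk, and by connectedness on the whole fiber. Since this holds for every $a$ and $L$, $\bar d$ is constant on fibers in both variables and descends to a continuous function on $Y\times Y$.

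Once this step is inserted, the rest of your outline goes through as in the paper: positivity from $d_Y\le d_{g_0}$, equality of topologies, properness, and density of $Y^\circ$.
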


\begin{proof}
By \eqref{eq:tangent-kahler-potential} and
\eqref{eq:tangent-potential-estimates}, $\omega_{-\tau}$ converges
as a positive current to
$\omega_0=\sqrt{-1}\partial\bar\partial F_0$, whose restriction to
$X^\circ$ is a smooth K\"ahler form.
The potential $F_0$ is locally H\"older continuous by
\eqref{eq:tangent-potential-modulus}. Then the following proof proceed similar to Lemma \ref{lem:tangent-distance-modulus} and Proposition \ref{prop:time-zero-identification}.

We first extend the intrinsic distance continuously to $X$.
For $a\in Y^\circ$ and $L>0$, pull
$u=\min\{d_{g_0}(a,\cdot),L\}$ back to $X^\circ$. Since $u$ is $1$-Lipschitz with respect to $g_0$,
\[
    \sqrt{-1}\partial u\wedge\bar\partial u
       \leq C\omega_0=C\sqrt{-1}\partial\bar\partial F_0.
\]
Since $X\setminus X^\circ$ has zero  2-capacity and $\omega_0$ has finite local mass, $u$ extends to
$W^{1,2}_{\mathrm{loc}}(X)$ with the same inequality of currents.
By \eqref{eq:tangent-potential-modulus}, $F_0$ is locally
$C^{2\alpha}$, where $\alpha$ is the exponent in
Lemma~\ref{lem:tangent-distance-modulus}. The argument in
its proof gives
\[
    |u(x)-u(y)|\leq C_K|x-y|^\alpha,\qquad x,y\in K,
\]
on compact subsets $K$ of sufficiently small coordinate balls,
with $C_K$ independent of $a$ and $L$.
 Let $L\to\infty$ and 
this gives the  bound for $d_{g_0}(\pi(x),\pi(y))$x.
The triangle inequality therefore gives a unique finite continuous
extension of $(x,y)\mapsto d_{g_0}(\pi(x),\pi(y))$ from
$X^\circ\times X^\circ$ to a pseudometric on $X$.

We next show that this pseudometric descends to $Y$.
For the functions $u$ above, convolution in a coordinate chart
and Jensen's inequality give
\[
    \sqrt{-1}\partial u_\epsilon\wedge\bar\partial u_\epsilon
       \leq C\sqrt{-1}\partial\bar\partial(F_0)_\epsilon.
\]
Pull back to a holomorphic disk in a fiber. The right-hand side
tends to zero because $F_0$ is constant on the fiber by
\eqref{eq:time-zero-radius}. Thus the
locally uniform limit $u$ is constant on the disk, and continuity
and connectedness give constancy on the whole fiber. Since this
holds for every $a$ and $L$, the extended pseudometric is constant
on fibers in both variables. Properness of $\pi$ makes the descended
pseudometric $d_{g_0}$ continuous on $Y\times Y$.

Finally, smooth convergence on $Y^\circ$ and
\eqref{eq:tangent-distance-convergence} give $d_Y\leq d_{g_0}$ there,
hence on all of $Y$ by continuity. Thus $d_{g_0}$ is a metric.
By Proposition~\ref{prop:time-zero-identification}, $(Y,d_Y)$ is proper
and induces the analytic topology. The inequality and continuity
of $d_{g_0}$ give the same topology for $d_{g_0}$.
Every closed $d_{g_0}$-ball is a closed subset
of a compact $d_Y$-ball, so $d_{g_0}$ is proper and complete.
Density of $Y^\circ$ now gives the claimed metric completion.
\end{proof}

\begin{lemma}\label{lem:trace-estimate}
For every $p\in Y$ and $\tau>0$,
\begin{equation}\label{eq:trace-estimate}
    \int_0^\tau\int_{X^\circ}
       \operatorname{tr}_{\omega_{-s}}\omega_0\,\d\nu_{p;-s}\,\d s\leq n\tau.
\end{equation}
\end{lemma}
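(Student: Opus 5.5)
The plan is to read $\operatorname{tr}_{\omega_{-s}}\omega_0$ as a Laplacian of the time-independent function $F_0$, and then compare $F_0$ with the canonical potential $F$ through heat duality. Since $\omega_0=\sqrt{-1}\partial\bar\partial F_0$ and $F_0$ is plurisubharmonic, we have $\Delta_{\omega_t}F_0=\operatorname{tr}_{\omega_t}\sqrt{-1}\partial\bar\partial F_0$ in the sense of currents. This is a nonnegative measure on $X$. On $X^\circ$ it equals the smooth function $\operatorname{tr}_{\omega_t}\omega_0$, because $F_0$ is smooth there by the smooth convergence of the canonical flow. The part carried by $X\setminus X^\circ$ is also nonnegative. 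So it suffices to bound $\int_{-\tau}^{0}\int_X\Delta_{\omega_t}F_0\,\d\nu_{p;t}\,\d t$ from above by $n\tau$. Here $\nu_{p;t}$ is the conjugate heat kernel measure based at the time-zero point $p\in Y\cong Z_0$.

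\textbf{Step 1: a duality identity.} Treat $F_0$ as a static function on spacetime, so that $\heat F_0=-\Delta_{\omega_t}F_0$. For $-\tau\le a<a'<0$, heat duality should give
\[
    \int_XF_0\,\d\nu_{p;a}-\int_XF_0\,\d\nu_{p;a'}
      =\int_a^{a'}\!\int_X\Delta_{\omega_t}F_0\,\d\nu_{p;t}\,\d t .
\]
First I would prove this for base points $(x,t_0)$ with $t_0<0$ and then pass to $p\in Z_0$ using the extension of conjugate heat kernel measures to $Z_0$. To justify it for nonsmooth $F_0$, I plan to mollify $F_0$ in local charts with a partition of unity. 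The convolutions of a psh function are psh, so Jensen's inequality gives nonnegative approximations to $\sqrt{-1}\partial\bar\partial F_0$. I would apply Lemma~\ref{lem:cutoff-duality} to the mollified functions times the cutoffs $\chi_L$. Quadratic growth of $F_0$ from \eqref{eq:tangent-potential-estimates}, together with the moment bounds of Lemma~\ref{lem:moments}, controls the cutoff errors. Since the measures are nonnegative, Fatou's lemma on $X^\circ$ passes to the limit and yields the inequality ``$\ge$'' in the direction we need. I expect this regularization, especially on the singular set and near $t=0$, to be the main technical obstacle.

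\textbf{Step 2: the endpoint terms.} By Lemma~\ref{lem:canonical}, $\heat F=-n$, and heat duality gives $\int_XF(\cdot,-\tau)\,\d\nu_{p;-\tau}=F_0(p)+n\tau$. This is \eqref{eq:F-moment} with $F(p,0)$ interpreted as $F_0(p)=\tfrac12 d_Y(0,p)^2$, obtained by letting the base time tend to $0$. Since $F_0\le F(\cdot,-\tau)$, the initial term satisfies $\int F_0\,\d\nu_{p;-\tau}\le F_0(p)+n\tau$. For the terminal term I want $\int F_0\,\d\nu_{p;a'}\to F_0(p)$ as $a'\uparrow0$. For this I would use that $\sqrt{2F_0}$ is $1$-Lipschitz for $d_{a'}$ up to $O(\sqrt{|a'|})$, that $\nu_{p;a'}$ concentrates near $p$ in $W_1$ with respect to $d_Y$ as $a'\to0$ (this is the definition of the extension to $Z_0$), and the uniform convergence \eqref{eq:tangent-distance-convergence}.

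\textbf{Step 3: conclusion.} Combining Steps~1 and~2 and letting $a'\uparrow0$ gives
\[
    \int_0^\tau\!\int_{X^\circ}\operatorname{tr}_{\omega_{-s}}\omega_0\,\d\nu_{p;-s}\,\d s
      \le F_0(p)+n\tau-F_0(p)=n\tau,
\]
which is \eqref{eq:trace-estimate}.
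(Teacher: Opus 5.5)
Your argument is correct in outline, and it uses the same mechanism as the paper. On $X^\circ$, $\operatorname{tr}_{\omega_t}\omega_0$ is the restriction of the nonnegative measure $\Delta_{\omega_t}F_0$, and heat duality against $F$, with $\heat F=-n$ and $F_0\le F$, gives the bound $n\tau$.

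The implementation differs. The paper works with the single bounded nonnegative function $U=F(\cdot,-s)-F_0+n(\tau-s)$, whose backward heat operator is $-\operatorname{tr}_{\omega_{-s}}\omega_0$. It then:
\begin{itemize}
\item approximates the trace from below by smooth $h_k$ compactly supported in $X^\circ\times(0,\tau)$;
\item compares Dirichlet solutions on bounded domains with $U$, which needs only $F_0\in W^{1,2}_{\mathrm{loc}}$ and the sign of the singular part;
\item evaluates at the smooth point $(x,-\delta)$, where $U\le C_n\delta+n(\tau-\delta)$.
\end{itemize}
The limit $\delta\to0$ then uses only local smooth convergence of kernels on the support of $h_k$.

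This localization avoids three costs that your global identity must pay.
\begin{itemize}
\item \emph{Integrability at spatial infinity.} Your duality identity needs cutoff control at infinity, which the paper never requires.
\item \emph{Regularization of $F_0$ across $X\setminus X^\circ$.} This set need not be compact, since it is invariant under the homotheties. Gluing chart mollifications by a partition of unity $\{\chi_j\}$ does not preserve plurisubharmonicity. Fatou therefore applies only to the nonnegative part $\sum_j\chi_j\operatorname{tr}_{\omega_t}\sqrt{-1}\partial\bar\partial(F_0*\kappa_{\epsilon_j})$. You must show the remaining terms vanish in $L^1(\d\nu_t\,\d t)$. These terms pair $\partial\chi_j$ with $\partial(F_0*\kappa_{\epsilon_j}-F_0)$, or involve $(F_0*\kappa_{\epsilon_j}-F_0)\Delta_{\omega_t}\chi_j$. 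Showing they vanish uses $F_0\in W^{1,2}_{\mathrm{loc}}$ and chart-dependent scales $\epsilon_j$.
\item \emph{The behaviour of $\nu_{p;a'}$ as $a'\uparrow0$.} $W_1$-concentration is not the definition of the extension to $Z_0$; it follows from \eqref{eq:worldline-moment}. By itself it controls $\int\sqrt{2F_0}\,\d\nu_{p;a'}$, not $\int F_0\,\d\nu_{p;a'}$. Jensen's inequality then gives $\liminf_{a'\uparrow0}\int F_0\,\d\nu_{p;a'}\ge F_0(p)$, which is the only direction you need.
\end{itemize}

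The cleanest fix is to run your Steps 1--3 at a smooth base point $(x,t_0)$ with $x\in\pi^{-1}(p)$, and let $a'\uparrow t_0$. This gives exactly the paper's bound $F(x,t_0)-F_0(x)+n(\tau+t_0)$. The limit $t_0\uparrow0$ then follows from lower semicontinuity of the left side.
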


\begin{proof}
Choose $x\in\pi^{-1}(p)$ and $0<\delta<\tau$.
By Lemma~\ref{lem:canonical} and Theorem~\ref{thm:main}, on
$\delta\leq s\leq\tau$ the bounded nonnegative function
$F(\cdot,-s)-F_0+n(\tau-s)$ satisfies
\[
    (\partial_s+\Delta_{\omega_{-s}})
       \bigl(F(\cdot,-s)-F_0+n(\tau-s)\bigr)
       =-\operatorname{tr}_{\omega_{-s}}(\sqrt{-1}\partial\bar\partial F_0)
\]
in distributions. Choose nonnegative smooth functions $h_k$, compactly
supported in $X^\circ\times(0,\tau)$, increasing pointwise to
$\operatorname{tr}_{\omega_{-s}}\omega_0$. Fix $k$, take $\delta$
smaller than the time coordinates of its support, and choose a
smooth relatively compact domain $\Omega$ containing $x$ and the
spatial projection of this support. Solve
\[
    (\partial_s+\Delta_{\omega_{-s}})H_k=-h_k
\]
with zero data on the parabolic boundary of
$\Omega\times[\delta,\tau]$, using $\tau-s$ as forward time.
Then $H_k\leq F(\cdot,-s)-F_0+n(\tau-s)$.
Since $F_0$ is locally bounded and plurisubharmonic, it is
locally $W^{1,2}$. The negative part of the
difference is therefore an admissible energy test. Its squared
$L^2$ norm is nonincreasing in forward time; the singular part of
the positive trace current has the required sign.

The scalar curvature bound gives
\[
    F(x,-\delta)-F_0(x)
       =\int_0^\delta\scal_\omega(\psi_{-s}(x))\,\d s
       \leq C_n\delta.
\]
Keeping $h_k$ fixed, Dirichlet exhaustion and evaluation at
$(x,\delta)$ therefore yield
\[
    \int_\delta^\tau\int_X h_k(y,s)\,
       \d\nu_{(x,-\delta);-s}(y)\,\d s
       \leq C_n\delta+n(\tau-\delta).
\]
Let $\delta\to0$. Smooth convergence of the kernels on the fixed
compact support of $h_k$ gives
\[
    \int_0^\tau\int_X h_k(y,s)\,\d\nu_{p;-s}(y)\,\d s
       \leq n\tau.
\]
Letting $k\to\infty$ and applying monotone convergence proves
\eqref{eq:trace-estimate}.
\end{proof}

\begin{lemma}\label{lem:intrinsic-moment}
There is a constant $C$ such that, for every $p\in Y$ and $\tau>0$,
\begin{equation}\label{eq:intrinsic-moment}
    \int_X d_{g_0}(p,\pi(x))^2\,\d\nu_{p;-\tau}(x)\leq C\tau.
\end{equation}
\end{lemma}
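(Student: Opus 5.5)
We bound $d_{g_0}(p,\pi(x))^2$ pointwise by a quantity whose $\nu_{p;-\tau}$-average is controlled by Lemma~\ref{lem:trace-estimate} and the estimates already in hand. Here $\nu_{p;-\tau}$ is the conjugate heat kernel measure based at the time-zero point $p\in Y\cong Z_0$. We treat $\tau\le1$ and $\tau>1$ separately; for the large case we use self-similarity.

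\emph{Step 1: reduction to the case $d_Y$.} We first show that $d_{g_0}$ and $d_Y$ are comparable up to an additive error, or at least that $d_{g_0}(p,\cdot)^2$ is dominated by $C\,d_Y(p,\cdot)^2$ plus a term with good $\nu$-average. By \cite[Proposition~4.4]{LW26}, which was already used in Lemma~\ref{lem:tangent-diffusion} together with Theorem~\ref{thm:main} and \eqref{eq:automatic-entropy}, the $d_{-\tau}$-second moment of $\nu_{p;-\tau}$ about a center is at most $C_n\tau$. Passing $d_{-s}$ to $d_Y$ at time zero costs at most $C\sqrt{s}$, by \cite[Lemma~5.4(2)]{LZ26}. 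Hence $\int d_Y(p,\pi(x))^2\,\d\nu_{p;-\tau}\le C\tau$. It therefore suffices to control the gap between $d_{g_0}$ and $d_Y$ in the averaged sense.

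\emph{Step 2: an energy/curve argument with the trace estimate.} For $x$ in the support, consider the radial curve and the time-dependent function $u=d_{g_0}(p,\cdot)$, truncated and pulled back to $X$. The function $u$ is $1$-Lipschitz for $g_0$ on $X^\circ$, so $|\d u|^2_{g_{-s}}\le\operatorname{tr}_{\omega_{-s}}\omega_0$ almost everywhere, and $X\setminus X^\circ$ is negligible for $\nu$. Next apply the heat-duality identity \eqref{eq:integrated-heat-duality} to $B=u^2$ along the backward flow based at $p$, more precisely at nearby points $(x_k,t_k)\to p$ followed by a limit. The quantity $\frac{\d}{\d s}\int u^2\,\d\nu$ is governed by $\int 2|\d u|^2$ plus $\int 2u\,\heat u$. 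The gradient term integrates to at most $2n\tau$ by \eqref{eq:trace-estimate}. The term $u\,\heat u$ is the problem, since $\Delta_{\omega_{-s}}u$ has no a priori sign. To remove it, I would replace $u^2$ by a function with a one-sided Laplacian bound. The natural candidate is $\Psi=C F_0$ corrected by $u$. Indeed, $\sqrt{-1}\partial u\wedge\bar\partial u\le C\sqrt{-1}\partial\bar\partial F_0$ (from Lemma~\ref{lem:intrinsic-completion}), which suggests comparing $u^2$ with $u(p)^2$ plus multiples of $F_0-F_0(p)$ plus linear terms. More robustly, I would use the Poincar\'e inequality \eqref{eq:Poincare} in its $\tau$-integrated form. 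First, $\int |u-\bar u_s|^2\,\d\nu_{-s}\le s\int|\d u|^2\,\d\nu_{-s}$. Averaging over $s\in[\tau,2\tau]$ and using \eqref{eq:trace-estimate} gives $\int|u-\bar u_s|^2\,\d\nu_{-s}\le C\tau$ for some $s\in[\tau,2\tau]$, and then for all $s$ by monotonicity arguments. It remains to bound the mean $\bar u_s$. Since $d_{g_0}\ge d_Y$ and both are continuous, the mean is at most $\int d_{g_0}(p,\cdot)\,\d\nu$. We control it by Chebyshev: a $\nu$-substantial set lies within $d_Y$-distance $C\sqrt\tau$ of $p$ (Step 1), and on that set $u$ is close to its mean in $L^2$. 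What we actually need is a point at $d_{g_0}$-distance $\lesssim\sqrt\tau$ from $p$ carrying mass, and this is where the homothety enters.

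\emph{Step 3: scaling.} Here we use $D_r$. The metric $g_0$ is homogeneous, $D_r^*g_0=r^2g_0$, since it is the limit of a self-similar flow. The measures also transform correctly, $D_r$ pushing $\nu_{p;-\tau}$ to $\nu_{D_rp;-r^2\tau}$. The inequality \eqref{eq:intrinsic-moment} is therefore scale invariant, and it suffices to prove it at $\tau=1$ uniformly for $p$ in the compact set $\{2F_0\le1\}$ (the case $p=0$ being the limit). At $\tau=1$, the $L^2$ oscillation bound of Step 2 combines with uniform continuity of $d_{g_0}$ on compact sets (Lemma~\ref{lem:intrinsic-completion}) and Gaussian tails of $\nu$. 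Near $p$ the function $u$ is small. Far away, $u\le C(1+d_Y)$ after scaling by homogeneity, because $d_{g_0}$ and $d_Y$ are both cone metrics with the same vertex and compact link. This last comparability at large distance gives the tail bound. The main obstacle is Step 2: justifying the energy identity for the non-smooth $u$ across $X\setminus X^\circ$ with the singular base point. I would handle it as in Lemma~\ref{lem:trace-estimate}, using smooth approximations supported in $X^\circ$, shifted base points $(x,-\delta)$, and $\delta\to0$.
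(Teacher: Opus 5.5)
\textbf{There is a genuine gap, and it is the one you flag yourself.} Nothing in your argument controls the mean $\bar u_s=\int u\,\d\nu_{p;-s}$ at scale $\sqrt s$.

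Your Poincar\'e step combined with \eqref{eq:trace-estimate} is the right start, and the paper uses it too. But it only bounds the $L^2$ oscillation of $u=\min\{d_{g_0}(p,\cdot),L\}$ about $\bar u_s$. To conclude you need $|\bar u_s-u(p)|\le C\sqrt s$, and Step~3 does not provide this.

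Homogeneity under $D_r$ lets you normalize one of two things, but not both at once:
\begin{itemize}
\item $\tau=1$, with $p$ then ranging over all of the noncompact $Y$; or
\item $p$ onto the link $\{2F_0=1\}$, with $\tau$ then ranging over $(0,\infty)$.
\end{itemize}
The pairs $(p,1)$ with $2F_0(p)\le1$ correspond only to $d_Y(0,p)\le\sqrt\tau$. The remaining regime, $\sqrt\tau\ll d_Y(0,p)$, consists of small scales at points away from the vertex, in particular at singular points of $Y\setminus Y^\circ$. That regime is the entire content of the lemma, and your tools are too weak there:
\begin{itemize}
\item $d_{g_0}\le C(1+d_Y)$, from homogeneity and compactness of the link, is only a large-scale bound, since $d_{g_0}/d_Y$ is of the form $0/0$ near the diagonal.
\item The H\"older continuity from Lemma~\ref{lem:intrinsic-completion} yields at best $C\tau^{\alpha}$.
\item A small-scale bound $d_{g_0}\le Cd_Y$ is Lemma~\ref{lem:comparison}, which the paper deduces \emph{from} this lemma.
\end{itemize}
Also, the variance of a fixed function is not monotone in $s$, so ``for all $s$ by monotonicity'' needs a different justification.

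\textbf{The missing idea is a dyadic telescoping of the means, driven by a two-sided kernel comparison.}

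First establish the kernel comparison. The Harnack inequality gives $\partial_s(\sqrt s\,b_s)\le C_n/(2\sqrt s)$. The entropy and heat-kernel upper bounds give $b_s\ge-A_n$, hence $b_s\le b_t+C$ for $t\le s\le 4t$. Also $\d V_{g_{-s}}$ increases with $s$ because $\scal\ge0$. Together these give $\nu_{p;-s}\ge c\,\nu_{p;-t}$ for $0<t\le s\le4t$.

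Next set $r_j=2^{-j}r$ and use \eqref{eq:trace-estimate} to pick $s_j\in[r_j/2,r_j]$ with $\int\operatorname{tr}_{\omega_{-s_j}}\omega_0\,\d\nu_{p;-s_j}\le2n$. Since $s_{j+1}\le s_j\le4s_{j+1}$, Jensen, the kernel comparison and Poincar\'e give
\[
|\bar u_{s_{j+1}}-\bar u_{s_j}|^2\le c^{-1}\int|u-\bar u_{s_j}|^2\,\d\nu_{p;-s_j}\le Cr_j .
\]
This is a geometric series in $\sqrt{r_j}$ summing to $C\sqrt r$.

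Only qualitative information is then needed to identify the limit of the means. The worldline moment bound from \cite[Proposition~4.4]{LW26}, tightness via the proper function $F_0$, and \eqref{eq:tangent-distance-convergence} give $\pi_*\nu_{p;-s}\rightharpoonup\delta_p$. Since $u$ is bounded and continuous in the analytic topology, $\bar u_s\to u(p)$. Hence $\int|u-u(p)|^2\,\d\nu_{p;-s_0}\le Cr$.

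Taking $r=2\tau$, so that $s_0\in[\tau,2\tau]$, one more application of the kernel comparison passes to $\nu_{p;-\tau}$. The constant is independent of $p$ and $L$, so letting $L\to\infty$ gives the lemma. No scaling reduction, and no quantitative version of your Step~1, is needed.
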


\begin{proof}
Fix $p$ and write $\nu_s=\nu_{p;-s}$ and
$\d\nu_s=(2\pi s)^{-n}e^{-b_s}\d V_{g_{-s}}$.
The heat kernel equation and differential Harnack inequality
\cite[Theorem~21]{liwang2020heat1} give
\[
    \partial_s b_s=\Delta b_s-\tfrac12|\d b_s|^2+\scal-n/s,
    \qquad s(\scal+2\Delta b_s-\tfrac12|\d b_s|^2)+b_s-2n\leq0.
\]
Consequently
$\partial_s(\sqrt{s}\,b_s)\leq C_n/(2\sqrt{s})$.
The entropy bound \cite[Theorem~0.1]{ZhangUniform} and the heat kernel
upper bound \cite[Theorem~4.4]{liwang2024heat} give $b_s\geq-A_n$.
Integrating the differential inequality at a fixed point gives that for $t\leq s\leq4t$
$$b_s\leq\sqrt{t/s}\,b_t+C_n(1-\sqrt{t/s})\leq b_t+A_n+C_n.$$
Since $\d V_{g_{-s}}$ increases with $s$,
we obtain for a fixed $c>0$,
\begin{equation}\label{eq:kernel-domination}
    \nu_s\geq c\nu_t,\qquad 0<t\leq s\leq4t.
\end{equation}

Let $u$ be a bounded $1$-Lipschitz function on $(Y,d_{g_0})$.
We use the same symbol for its pullback to $X$ and write
$\nu_s(u)=\int_Xu\,\d\nu_s$ for its mean.
On $X^\circ$, diagonalizing the two Hermitian forms gives
$|\d u|_{g_{-s}}^2\leq\operatorname{tr}_{\omega_{-s}}\omega_0$.
The same weak inequality holds almost
everywhere on $X$. At every finite-energy time, complete-metric
cutoffs and local smoothing put $u$ in the weighted Sobolev domain
of the heat kernel Poincar\'e inequality
\cite[Theorem~3.17]{liwang2024heat}:
\begin{equation}\label{eq:poincare}
    \int|u-\nu_s(u)|^2\,\d\nu_s
       \leq s\int|\d u|_{g_{-s}}^2\,\d\nu_s.
\end{equation}
Let $r_j=2^{-j}r$. By \eqref{eq:trace-estimate}, we can choose
$s_j\in[r_j/2,r_j]$ such that
\[
    \int_{X^\circ}\operatorname{tr}_{\omega_{-s_j}}\omega_0\,\d\nu_{s_j}\leq2n.
\]
Equations~\eqref{eq:kernel-domination}--\eqref{eq:poincare} imply
\[
    |\nu_{s_{j+1}}(u)-\nu_{s_j}(u)|^2
       \leq c^{-1}\int|u-\nu_{s_j}(u)|^2\,\d\nu_{s_j}
       \leq C r_j.
\]
The means therefore converge, and their distance from the limit is
at most $C\sqrt{r_j}$.

 We show that the limit of $\nu_s(u)$ has to be $u(p)$. For a lift $x$ of $p$, the bound from
Theorem~\ref{thm:main},
$\scal(x,a)\leq C_n/|a|\leq C_n/(t-a)$, $a<t<0$, and
\cite[Proposition~4.4]{LW26} give,
\begin{equation}\label{eq:worldline-moment}
    \int_X d_{-s}(x,y)^2\,\d\nu_s(y)\leq Cs.
\end{equation}
By Lemma~\ref{lem:canonical}, $\sqrt{2F(\cdot,-s)}$ is
$1$-Lipschitz. Together with \eqref{eq:tangent-potential-estimates},
this gives
\[
    \int_X F_0\,\d\nu_s\leq2F_0(x)+Cs.
\]
The sets $K_A=\{F_0\leq A\}$ are compact, and
\(
\nu_s(X\setminus K_A)\leq\frac{2F_0(x)+Cs}{A},
\)
so the measures are tight for $0<s\leq1$. For fixed $A$,
\eqref{eq:tangent-distance-convergence} gives
\[
    \sup_{y\in K_A}
       \bigl|d_{-s}(x,y)-d_Y(p,\pi(y))\bigr|\longrightarrow0.
\]
Hence, for every $\epsilon>0$ and sufficiently small $s$,
\eqref{eq:worldline-moment} implies
\[
    (\pi_*\nu_s)\bigl(Y\setminus B_{d_Y}(p,\epsilon)\bigr)
       \leq\frac{2F_0(x)+Cs}{A}+\frac{4Cs}{\epsilon^2}.
\]
First let $s\to0$ and then $A\to\infty$. We obtain
$\pi_*\nu_s\rightharpoonup\delta_p$ in the analytic topology.
By Lemma~\ref{lem:intrinsic-completion}, $u$ is bounded and continuous
in this topology. Therefore
$\nu_s(u)=\int_Yu\,\d(\pi_*\nu_s)\to u(p)$.

We have proved
$\int|u-u(p)|^2\d\nu_{s_j}\leq Cr_j$.
Given $\tau>0$, start with $r=2\tau$. Since
$s_0\in[\tau,2\tau]$, \eqref{eq:kernel-domination} gives
\[
    \int|u-u(p)|^2\,\d\nu_\tau\leq C\tau.
\]
The constant is independent of $p$ and of $\|u\|_\infty$.
Apply this to $u=\min\{d_{g_0}(p,\cdot),L\}$ and let $L\to\infty$.
\end{proof}

\begin{lemma}\label{lem:comparison}
There is a constant $C$ such that, on $Y\times Y$,
\begin{equation}\label{eq:distance-comparison}
    d_Z\leq d_Y\leq d_{g_0}\leq C d_Z.
\end{equation}
\end{lemma}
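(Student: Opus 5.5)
The plan is to prove the three inequalities separately; only the last one needs real work.

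\emph{First inequality, $d_Z\le d_Y$.} I will use the definition of $d^*$ and pass to the completion. For $z,z'\in Z_0$ and any $a<0$, approximate by points $(x,t),(x',t)$ with $t\uparrow0$. The definition gives
\[
d_Z(z,z')\le\max\bigl\{\sqrt{-a/2},\,d_{W_1}^{g(a)}(\nu_{z;a},\nu_{z';a})\bigr\}.
\]
Letting $a\uparrow0$ and using the formula for $d_0^Z$ gives $d_Z\le d_0^Z=d_Y$.

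\emph{Second inequality, $d_Y\le d_{g_0}$.} This is Lemma~\ref{lem:intrinsic-completion}.

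\emph{Third inequality, $d_{g_0}\le C\,d_Z$.} Fix $p,q\in Y$ and put $D=d_{g_0}(p,q)$ and $r=d_Z(p,q)$. By the definition of $d^*$, up to an arbitrarily small error there is a time $a$ with $-a\le 2r^2$ and
\[
d_{W_1}^{g(a)}(\nu_{p;a},\nu_{q;a})\le r.
\]
I will bound this Wasserstein distance from below by Kantorovich duality, using a test function built from $u=\min\{d_{g_0}(p,\cdot),2D\}$, pulled back to $X$. The difficulty is that $u$ is not Lipschitz for $g_{-\tau}$. So I regularize it by scalar heat flow from an earlier time. Let $\sigma=Kr^2$ with $K$ large, and set
\[
w=P_{a-\sigma,a}u.
\]
The reproduction property of conjugate heat kernels gives $\int w\,\d\nu_{p;a}=\int u\,\d\nu_{p;a-\sigma}$, and likewise for $q$. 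By Lemma~\ref{lem:intrinsic-moment} and Jensen, the first mean is at most $C\sqrt{\sigma-a}\le C'\sqrt K\,r$. Applying the same lemma at $q$, together with the triangle inequality for $d_{g_0}$, the second mean is at least $D-C'\sqrt K\,r$. Hence
\[
D-2C'\sqrt K\,r\le\bigl|\textstyle\int w\,\d\nu_{q;a}-\int w\,\d\nu_{p;a}\bigr|\le\|\d w\|_{L^\infty(g(a))}\,r.
\]

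\emph{Main obstacle: the sup-gradient bound for $w$.} I need a bound of the form
\[
\|\d w\|_{L^\infty(g(a))}\le C\,\mathrm{osc}(u)/\sqrt\sigma\le 2CD/(\sqrt K\,r).
\]
This is a Hamilton-type gradient estimate for bounded heat solutions on the flow. I will derive it from the heat-kernel gradient estimate of \cite[Lemma~3.2]{liwang2024heat}. The inputs are the scalar curvature bound $\scal_{\omega_t}\le C_n/|t|$ from Theorem~\ref{thm:main} and the entropy bound \eqref{eq:automatic-entropy}. The delicate point is making this uniform even though $a-\sigma$ may be very close to $0$ in absolute terms. Only scale-invariant quantities should enter, since $\sigma\ge-a/2\cdot K$ is comparable to the backward time $|a-\sigma|$. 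If \cite[Lemma~3.2]{liwang2024heat} does not give the $L^\infty$ form directly, I would instead use a Bochner subsolution argument for $(\sigma-a+s)|\d P u|^2+P(u^2)$, with the maximum principle \cite[Theorem~6]{liwang2020heat1}.

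Granting this bound, the display becomes
\[
D\bigl(1-2C/\sqrt K\bigr)\le 2C'\sqrt K\,r.
\]
Fixing $K$ large gives $D\le C\,r$, which is the claimed comparison \eqref{eq:distance-comparison}.
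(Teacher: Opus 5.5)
Your proposal is correct. It uses the same two ingredients as the paper: the scale-invariant gradient estimate for bounded heat solutions, and the moment bound of Lemma~\ref{lem:intrinsic-moment}. Only the final assembly differs. Your treatment of the first two inequalities is fine.

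\textbf{Comparison with the paper.} The paper fixes $a'=-4r^2$ and $a=-8r^2$, so that \eqref{eq:small-spacetime-W1} applies directly. It then applies the gradient estimate to \emph{all} Borel data $0\le u\le1$. This shows that $\nu_{p;a}$ and $\nu_{q;a}$ are within total variation $1/\sqrt{8\pi}$, hence share a common part of mass at least $1/2$. It finishes by integrating the squared triangle inequality for $d_{g_0}$ against this common part and using second moments. You instead test Kantorovich duality against the single function $w=P_{a-\sigma,a}\min\{d_{g_0}(p,\cdot),2D\}$. You use only first moments (via Jensen) and absorb the gradient term through the free parameter $K$. Your route avoids extending the estimate to Borel data and the overlap step. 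The paper's route yields a statement about the kernels alone (comparable conjugate kernels overlap substantially), independent of $d_{g_0}$, and needs no parameter tuning.

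\textbf{The step you flag as the main obstacle is not one.} The estimate \cite[Theorem~3.15]{liwang2024heat}, applied to $2g$, is exactly what the paper uses. It gives $\operatorname{Lip}_{g(a)}(P_{a-\sigma,a}v)\le(2\pi\sigma)^{-1/2}$ for every $0\le v\le1$. This bound uses neither the scalar curvature bound nor the entropy bound, and only the elapsed time $\sigma$ enters, so it does not matter how close $a-\sigma$ is to $0$. Applied to $v=u/(2D)$, it gives your bound with $C=(2\pi)^{-1/2}$, and then $K=3$ already suffices.

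\textbf{A slip in your fallback Bochner argument.} The subsolution should be $(s-a+\sigma)|\d U|^2+U^2$ with $U=P_{a-\sigma,s}u$, not with $P(u^2)$. The identity $\heat U^2=-|\d U|^2$ cancels the term produced by the time weight, whereas $\heat P(u^2)=0$ does not. Comparing with the solution $P(u^2)$ afterwards gives $(s-a+\sigma)|\d U|^2\le P(u^2)-U^2$.

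\textbf{A simplification for choosing $a$.} You extract a time $a$ with $-a\lesssim 2r^2$ and $W_a\le r$ from the definition of $d^*$ on the completion. That requires an approximation argument together with the monotonicity of $W_a$ in $a$. You can instead take $a=-4r^2$ and invoke \eqref{eq:small-spacetime-W1} directly.
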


\begin{proof}
The first inequality follows from the spacetime-distance construction in
Subsection~\ref{subsec:time-zero-fibration} and
\eqref{eq:tangent-distance-convergence}; the second is
Lemma~\ref{lem:intrinsic-completion}. The spacetime-distance comparison
\cite[Proposition~6.20]{FangLiStructure} also gives, for $a'<0$,
\begin{equation}\label{eq:small-spacetime-W1}
    d_{W_1}^{g(a')}(\nu_{p;a'},\nu_{q;a'})\leq d_Z(p,q),
    \qquad p,q\in Y,\quad d_Z(p,q)<\sqrt{-a'/2}.
\end{equation}

Let $r=d_Z(p,q)>0$, $a'=-4r^2$, and $a=-8r^2$.
For $0\leq u\leq1$, the gradient estimate
\cite[Theorem~3.15]{liwang2024heat}, applied to $2g$, gives
\[
    \operatorname{Lip}_{g(a')}(P_{a,a'}u)
       \leq\frac1{\sqrt{2\pi(a'-a)}}.
\]
The estimate extends from smooth to bounded Borel data by heat-kernel
approximation. Kernel reproduction, Kantorovich--Rubinstein duality,
and \eqref{eq:small-spacetime-W1} therefore imply
\[
\left|\int_X u\,\d(\nu_{p;a}-\nu_{q;a})\right|
       =\left|\int_XP_{a,a'}u\,\d(\nu_{p;a'}-\nu_{q;a'})\right|
       \leq\frac{d_{W_1}^{g(a')}(\nu_{p;a'},\nu_{q;a'})}
                     {\sqrt{2\pi(a'-a)}}
        \leq\frac1{\sqrt{8\pi}}.
\]
Thus the measures have a common part $\eta\leq\nu_{p;a},\nu_{q;a}$
with $\eta(X)\geq1/2$.
Integrating the squared triangle inequality for $d_{g_0}$ against
$\eta$ and using \eqref{eq:intrinsic-moment} gives
\[
    \tfrac12d_{g_0}(p,q)^2
       \leq2\int_X d_{g_0}(p,\pi(x))^2\,\d\nu_{p;a}(x)
          +2\int_X d_{g_0}(q,\pi(x))^2\,\d\nu_{q;a}(x)
       \leq Cr^2.
\]
This proves the last inequality. 
\end{proof}

\subsection{Tangent lines and splitting}\label{subsec:completion-model}

Let $(Z,d_Z,\mathfrak t)$ be the spacetime completion from
Subsection~\ref{subsec:time-zero-fibration}.
Its time-zero slice is identified with $Y$, with canonical slice
distance $d_Y$ and conjugate heat kernel measures $\nu_{p;t}$
based at $p\in Y$.
Fix $p\in Y$ and a sequence of  scales
$r_i\downarrow0$. A \emph{tangent flow at $p$ along $(r_i)$}
is a pointed parabolic metric space obtained as a subsequential
pointed Gromov--Hausdorff limit
\begin{equation}\label{eq:model-spacetime-tangent}
    (Z,r_i^{-1}d_Z,p,r_i^{-2}\mathfrak t)
       \xrightarrow{\mathrm{pGH}}(Z',d_{Z'},z',\mathfrak t'),
       \qquad \mathfrak t'(z')=0,
\end{equation}
including convergence of the time functions. This is
\cite[Definition~7.8]{FangLiStructure}, applied to the completed flow. On negative
times the rescaled metrics and distinguished conjugate heat flow are
\begin{equation}\label{eq:model-unshifted-rescaling}
    g_{i,t}=r_i^{-2}g_{r_i^2t},\qquad
    \omega_{i,t}=r_i^{-2}\omega_{r_i^2t},\qquad
    \nu^i_{p;t}=\nu_{p;r_i^2t},\qquad t<0.
\end{equation}

Theorem~\ref{thm:main} allows us to apply the spacetime
framework to the complete shrinker flow by \cite[Section~5]{LZ26}.
The regular part of the limit is a Ricci flow spacetime, and
\cite[Theorem~5.2]{FangLiStructure} gives smooth convergence of the
metrics and time vector fields through time-preserving embeddings
on compact regular regions. The associated backward metric flow
embeds isometrically into $Z'$ by
\cite[Theorem~4.27]{FangLiStructure}. Its conjugate heat kernel
measures arise as limits of the rescaled measures, while their
densities converge smoothly on the earlier regular spacetime by
\cite[Theorem~5.20]{FangLiStructure}. The parallel complex structures
also converge smoothly after passing to a subsequence.

Write $\NN_p(0)=\lim_{\tau\downarrow0}\NN_p(\tau)\in[-A_n,0]$.
Entropy convergence \cite[Lemma~7.2]{FangLiStructure} and scaling give,
for every $\tau>0$,
\begin{equation}\label{eq:model-constant-entropy}
    \NN_{z'}(\tau)
       =\lim_i\NN_p(r_i^2\tau)=\NN_p(0).
\end{equation}
Thus $Z'$ is a Ricci shrinker space in the sense of
\cite[Definition~7.16]{FangLiStructure}, and
\cite[Proposition~7.9]{FangLiStructure} gives the shrinking soliton
equation on its regular negative-time part, with connected regular
slices $\RR'_t$. Smooth convergence of the parallel complex
structures makes this regular part K\"ahler. Denote its metric by
$g_{\infty,t}$ and its K\"ahler form by $\omega_{\infty,t}$. If
\[
    \d\nu_{z';t}
       =(2\pi|t|)^{-n}e^{-b_t}\,\d V_{g_{\infty,t}},
\]
then
\[
    \Ric(\omega_{t}^{Z'})+
       \sqrt{-1}\partial\bar\partial b_t
       =|t|^{-1}\omega_{\infty,t}.
\]
 Following \cite[Definition~6.1]{FangLiStructure}, we write
$d_t^{Z'}$ for the canonical slice distance on $Z'_t$.
Write $\RR'$ for the regular spacetime. By
\cite[Corollary~7.10]{FangLiStructure}, the restriction of $d_t^{Z'}$
to $\RR'_t$ agrees with its intrinsic Riemannian distance for every $t<0$.
The earlier kernel measures give full mass to $\RR'_t$ and have
finite second moments. In particular, regular $H$-centers exist,
with $H=H(n)$ fixed; see
\cite[Proposition~5.24 and Definition~5.25]{FangLiStructure}.

For points $p,q\in Y=Z_0$, put
\[
    W_a(p,q)=d_{W_1}^{g(a)}(\nu_{p;a},\nu_{q;a}),\qquad a<0.
\]
 The definition of the spacetime distance and
\eqref{eq:small-spacetime-W1} give
\begin{equation}\label{curve:comparison}
\begin{split}
    d_Z(p,q)&\leq\max\left\{\sqrt{-a/2},W_a(p,q)\right\},\\
   W_a(p,q)&\leq d_Z(p,q)
       \quad\text{if }d_Z(p,q)<\sqrt{-a/2}.
\end{split}
\end{equation}
These inequalities also hold in a tangent flow, with $d_{Z'}$,
$d_t^{Z'}$, and its own kernels in place of the original distances
and kernels; see \cite[Proposition~6.20]{FangLiStructure}.

For a Lipschitz curve $c:[a,a']\to(Y,d)$, its metric speed is
\[
    |c'|_d(t):=\lim_{h\to0}\frac{d(c(t+h),c(t))}{|h|},
\]
whenever the limit exists. By \cite[Theorem~1.1.2]{AGSGradientFlows},
this limit exists for almost every $t\in(a,a')$. The length formula
\cite[Theorem~2.7.6]{BBI} gives
\[
    \operatorname{Length}_d(c)=\int_a^{a'}|c'|_d(t)\,\d t.
\]

\begin{lemma}\label{curve:lengths}
For every $d_Z$-rectifiable curve $c$ in $Y$,
\[
    \operatorname{Length}_{d_Z}(c)=\operatorname{Length}_{d_Y}(c).
\]
If $\gamma$ is a unit-speed $d_Y$-geodesic, then for almost every
interior parameter $t$,
\begin{equation}\label{curve:differentiation}
    d_Z(\gamma(t+h),\gamma(t+k))
       =|h-k|+o(|h|+|h-k|).
\end{equation}
At each such parameter, every tangent flow at $\gamma(t)$ contains
a $d_{Z'}$-isometric line through its basepoint in the time-zero slice,
obtained as a limit of the rescaled curve.
\end{lemma}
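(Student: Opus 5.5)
Here is the plan. For the length equality, I will compare the two metrics at small scales using \eqref{curve:comparison} and Lemma~\ref{lem:comparison}. The inequality $d_Z\le d_Y$ gives $\operatorname{Length}_{d_Z}\le\operatorname{Length}_{d_Y}$ at once, and Lemma~\ref{lem:comparison} shows that $d_Z$- and $d_Y$-rectifiability coincide.

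For the reverse inequality, fix $q,q'\in Y$ with $d_Z(q,q')=r$ small and choose $a=-Kr^2$ with $K$ large, so that $r<\sqrt{-a/2}$. Then $W_a(q,q')\le r$ by \eqref{curve:comparison}. Next I compare $W_a$ with $d_Y$ at the same small scale. By \cite[Lemma~5.4(2)]{LZ26}, in the normalization of Section~\ref{sec:time-zero-fibration}, the kernel $\nu_{q;a}$ is concentrated within distance $C\sqrt{-a}$ of the worldline point. Together with \eqref{eq:tangent-distance-convergence} and the monotonicity of $W_1$ distances along the flow, this should give
\[
    d_Y(q,q')\le W_a(q,q')+C\sqrt{-a}.
\]
That bound is too crude to be used directly, since the error $C\sqrt{K}\,r$ is comparable to $r$. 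So I will use it only along partitions: subdivide $c$ into pieces and sum.

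The real mechanism is the metric-derivative argument. At a.e.\ parameter the metric speeds $|c'|_{d_Z}$ and $|c'|_{d_Y}$ exist, and the claim is that they agree. To prove this, I blow up at $c(t)$ along $r_i=h_i\to0$. The rescaled time-zero slices converge, and on a tangent flow, which is a shrinker space, \cite[Proposition~6.20]{FangLiStructure} and \cite[Corollary~7.10]{FangLiStructure} give that $d_{Z'}$ restricted to the time-zero slice agrees with the canonical slice distance $d_0^{Z'}$. The latter is the limit of the rescaled $d_Y$ by the convergence of $d_t$ and of the $W_1$-distances.

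So the ratio $d_Z/d_Y$ between $c(t)$ and $c(t+h)$ tends to $1$. The main obstacle is justifying this identification of the limit of $r_i^{-1}d_Y$ with the slice distance of the tangent flow, including uniformity. I would first try to derive it from self-similarity of the tangent flow: its time-zero slice is a cone, and by \cite[Definition~6.1]{FangLiStructure} the $W_1$ distances $d^{Z'}_{W_1}(\nu_{z;a},\nu_{z';a})$ converge to $d_0^{Z'}$ as $a\uparrow0$. I would then pass to the limit in $W_a$ via the convergence of kernels under \cite[Theorem~5.20]{FangLiStructure}.

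For \eqref{curve:differentiation}, take $\gamma$ a unit-speed $d_Y$-geodesic, so that $d_Y(\gamma(t+h),\gamma(t+k))=|h-k|$. By the length equality, $\gamma$ has $d_Z$-speed $1$ almost everywhere. At a Lebesgue point of the speed (a density point of the parameters where the difference quotients converge), the estimate $d_Z\le d_Y=|h-k|$ combined with lower semicontinuity of length gives the two-sided bound $|h-k|-o(|h|+|h-k|)$.

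More precisely, I would apply the Lebesgue differentiation theorem to $\operatorname{Length}_{d_Z}$ restricted to subintervals and use the Lipschitz property $d_Z\le d_Y$. Finally, the rescalings $\gamma_i(u)=\gamma(t+r_iu)$ are $1$-Lipschitz in $r_i^{-1}d_Z$ by this argument, so Arzel\`a--Ascoli along the pGH convergence gives a limit curve through $z'$ in the time-zero slice satisfying $d_{Z'}(\gamma_\infty(u),\gamma_\infty(v))=|u-v|$ by \eqref{curve:differentiation}; this is the required isometric line.
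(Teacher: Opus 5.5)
There is a genuine gap in your proof of the length equality.

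\emph{The first attempt.} With $a=-Kr^2$ tied to the scale, the error $C\sqrt{K}\,r$ is proportional to $r$. Summing over any partition therefore leaves an error of order $C\sqrt{K}\operatorname{Length}(c)$, as you yourself observe, so this cannot close.

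\emph{The blow-up mechanism.} It rests on the claim that, on a tangent flow, $d_{Z'}$ restricted to the time-zero slice coincides with $d_0^{Z'}$. Neither cited result gives this. \cite[Proposition~6.20]{FangLiStructure} yields only the one-sided local bound \eqref{curve:comparison}. \cite[Corollary~7.10]{FangLiStructure} concerns intrinsic distances on regular slices at negative times. The claim is in fact false in general:
\begin{itemize}
\item By self-similarity, the tangent flow at the vertex $0\in Y$ is the canonical flow itself. Your claim would then give $d_Z=d_Y$ on all of $Y$.
\item The map $a\mapsto W_a(p,q)$ is nondecreasing. Equality on a time interval forces a unit parallel gradient field, hence a line splitting; this is the transport and Bochner argument in the proof of Lemma~\ref{curve:orbit}.
\item So for a shrinker that splits no line, \eqref{curve:comparison} with $a=-D^2$ and $D=d_Y(p,q)>0$ gives
\[
    d_Z(p,q)\le\max\bigl\{D/\sqrt2,\,W_{-D^2}(p,q)\bigr\}<D .
\]
\end{itemize}
The interchange of limits you flag as the main obstacle is therefore not the real issue: the identity you are aiming for does not hold.

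\emph{The missing idea.} It is elementary and needs no blow-up: fix $a<0$ \emph{before} partitioning, and sum $W_a$ rather than $d_Y$. For a subarc $c|_{[u,v]}$, choose a partition $u=t_0<\dots<t_N=v$ with $d_Z(c(t_j),c(t_{j+1}))<\sqrt{-a/2}$. Then \eqref{curve:comparison} and the triangle inequality for the Wasserstein distance give
\[
    W_a(c(u),c(v))\le\sum_jW_a(c(t_j),c(t_{j+1}))
       \le\sum_jd_Z(c(t_j),c(t_{j+1}))
       \le\operatorname{Length}_{d_Z}(c|_{[u,v]}).
\]
The right side is independent of $a$. Letting $a\uparrow0$ gives $d_Y(c(u),c(v))\le\operatorname{Length}_{d_Z}(c|_{[u,v]})$, and summing over partitions gives $\operatorname{Length}_{d_Y}(c)\le\operatorname{Length}_{d_Z}(c)$.

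\emph{The differentiation step.} Your argument for \eqref{curve:differentiation} also needs repair. Knowing $|\gamma'|_{d_Z}=1$ almost everywhere, together with $d_Z\le d_Y$, gives the upper bound. It gives the lower bound only when one endpoint is $\gamma(t)$, that is, $k=0$. For pairs with $|h-k|\ll|h|$, lower semicontinuity of length cannot bound the chord $d_Z(\gamma(t+h),\gamma(t+k))$ from below, since length dominates distance and not the reverse. What is needed is Kirchheim's metric differentiation theorem, as the paper cites. Alternatively, use its proof via Lebesgue points of the derivatives of $s\mapsto d_Z(\gamma(s),y_j)$ for a countable dense set $\{y_j\}$.

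Your final Arzel\`a--Ascoli step producing the line is fine.
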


\begin{proof}
Fix $a<0$. Subdivide any subarc $c|_{[u,v]}$ so that successive
$d_Z$-distances are less than $\sqrt{-a/2}$. By
\eqref{curve:comparison} and the triangle inequality,
\[
   W_a(c(u),c(v))
       \leq\sum_j d_Z(c(t_j),c(t_{j+1}))
       \leq\operatorname{Length}_{d_Z}(c|_{[u,v]}).
\]
Letting $a\uparrow0$ and then summing over a partition of $c$ gives
$\operatorname{Length}_{d_Y}(c)\leq\operatorname{Length}_{d_Z}(c)$.
The reverse inequality follows from $d_Z\leq d_Y$.

Applied to every subarc of $\gamma$, this equality and the length
formula above give $|\gamma'|_{d_Z}(t)=1$ almost
everywhere. Kirchheim's metric
differentiation theorem \cite[Theorem~2]{Kirchheim}, in the form
\cite[Theorem~3.6]{CR}, gives \eqref{curve:differentiation}.
For fixed $s,s'$, substitute $h=r_i s$ and $k=r_i s'$ into that
identity. The rescaled distances tend to $|s-s'|$. Properness of
the tangent flow and a diagonal subsequence therefore give the
asserted line; convergence of the time functions places it at
time zero.
\end{proof}

\begin{lemma}\label{curve:orbit}
Suppose $\ell:\mathbb R\to Z'_0$ is $d_{Z'}$-isometric and
$\ell(0)=z'$. The regular negative-time flow splits off a flat complex
line. Its translations $T_\zeta$, $\zeta\in\mathbb C$, are holomorphic
on the regular part, extend to $Z'_{(-\infty,0]}$, and satisfy
\begin{equation}\label{curve:orbit-identity}
    \ell(s)=T_s z',\qquad 0\leq s\leq1.
\end{equation}
They preserve $d_{Z'}$, the canonical slice distances, and the
conjugate kernels. On each regular negative-time slice, the real
translation field has unit norm.
\end{lemma}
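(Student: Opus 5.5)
The plan is to use the splitting theorem for Ricci shrinker spaces in the framework of Fang--Li. The tangent flow $Z'$ is a Ricci shrinker space with entropy constant $\NN_p(0)$, and the line $\ell$ lies in its time-zero slice. First I would transfer the line to negative times. For each $t<0$, the conjugate kernels $\nu_{\ell(s);t}$ are heat kernels based at points of the time-zero slice, and \eqref{curve:comparison} in $Z'$ controls their $W_1$ distances in terms of $d_{Z'}$. Using self-similarity of the shrinker space, I would show that the $H$-centers of $\nu_{\ell(s);t}$ form an almost-line in $Z'_t$. Concretely,
\[
  d_{W_1}^{Z'_t}(\nu_{\ell(s);t},\nu_{\ell(s');t})\;\ge\;|s-s'|-C\sqrt{|t|},
\]
with the reverse bound $\le|s-s'|$ coming from monotonicity of $W_1$ distances. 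Rescaling by dilation of the soliton, with $|t|$ fixed and $|s-s'|\to\infty$, gives an honest line in the metric soliton at time $t$. The splitting theorem for metric solitons (Fang--Li \cite{FangLiStructure}, following Bamler \cite[Theorem~15.50 / splitting]{bamler3}) then gives an isometric splitting $Z'_{<0}\cong Z''\times\mathbb R$. The splitting is realized on the regular part by a parallel vector field $V$ with $|V|=1$, $\nabla V=0$, $\Ric(V)=0$ and $\nabla_V b_t=$ const. In fact $\nabla_V b=0$, since $\ell$ passes through the basepoint and the kernels are based at $z'$.

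Next comes the complex structure. Because $J$ is parallel on the regular part, $JV$ is also parallel of unit norm and orthogonal to $V$. Both $V$ and $JV$ are Killing, and they preserve $b_t$. This holds for $JV$ because $\nabla^2 b$ is $J$-invariant by the Kähler soliton equation, and $\nabla_V b=0$ gives $\nabla^2b(JV,\cdot)=J$-rotation of $\nabla^2 b(V,\cdot)=0$, together with the soliton identity relating $\nabla b$ and $|t|^{-1}$ times position. Hence $V$ and $JV$ span a flat parallel complex line, and their flows $T_\zeta$ are holomorphic isometries of each regular slice. They commute with the time vector field, since they are parallel and fix $b$. By \cite[Corollary~7.10]{FangLiStructure}, slice distances on regular slices are intrinsic, so these isometries preserve $d_t^{Z'}$ on $\RR'_t$. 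Density of the regular part and full kernel mass then let them extend to isometries of each completed slice. They preserve conjugate kernels because kernels are determined by the regular flow. Consequently they preserve $d_{Z'}$ through its definition via $W_1$ of kernels, and they extend to $Z'_0$ by completion.

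Finally, \eqref{curve:orbit-identity} follows from orientation and uniqueness. I would choose the sign of $V$ so that $T_s z'$ moves in the direction of $\ell$. At time zero, $d_0^{Z'}$ is the limit of $W_1$ distances of kernels. The kernels of $\ell(s)$ and of $T_sz'$ are then both obtained as translates along the $\mathbb R$ factor by $s$, since the splitting was constructed from the kernels of $\ell$ itself. It follows that their time-zero distance vanishes, so $\ell(s)=T_sz'$.

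The main obstacle is the first step: producing an honest line in a negative-time slice, in a form to which the splitting theorem of \cite{FangLiStructure} applies, starting from a line that lives only in the time-zero slice. This is where I would spend the effort, mainly on the lower bound for $W_1$ distances uniformly in $s$ via $H$-centers and variance bounds.
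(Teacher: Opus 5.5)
The key step, producing the splitting, does not work as proposed, and each of the other two steps also has a gap.

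\textbf{The splitting.} The $H$-centers of $\nu_{\ell(s);t}$ only form a quasi-line in $Z'_t$ with additive error of order $\sqrt{H|t|}$. Letting $|s-s'|\to\infty$ at fixed $t$ does not remove an additive error. Self-similarity cannot remove it either: it identifies $(Z'_t,d_t^{Z'})$ with $(Z'_{-1},\sqrt{|t|}\,d_{-1}^{Z'})$, so after normalizing to time $-1$ the error is the same constant at every time. Blowing down would land you in a cone at infinity of the slice, not in $Z'_t$. Even an honest line in a slice would not suffice. You would need a Cheeger--Gromoll-type theorem for slices of metric solitons, which is not available, since $\Ric$ need not be nonnegative. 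The splitting criterion that is available, \cite[Proposition~8.2 and Remark~8.3]{FangLiStructure}, requires a function with unit parallel gradient.

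The missing idea is how to build that function from transport data along $\ell$. First, subdivision and \eqref{curve:comparison} give the \emph{exact} identity $d_{W_1}^{Z'_t}(\ell(s),\ell(s'))=|s-s'|$ for every $t<0$ and all $s,s'$. Pairs with $|s-s'|>\sqrt{-t/2}$ are handled directly, and shorter pairs through a third, distant point $s''$. The upper bound genuinely needs subdivision: monotonicity only bounds these distances by $d_0^{Z'}$, and $\ell$ is isometric only for $d_{Z'}$. Then take a $1$-Lipschitz Kantorovich potential $v$ for $(\nu_{z';-2},\nu_{\ell(1);-2})$ and set $u(t)=P_{-2,t}v$. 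Kernel reproduction gives $\int u(-1)\,\d(\nu_{\ell(1);-1}-\nu_{z';-1})=1=d_{W_1}^{Z'_{-1}}(\ell(1),z')$. This is impossible if $|\nabla u(-1)|<1$ everywhere on $\RR'_{-1}$: an optimal coupling charges regular pairs off the diagonal, and slice distances are intrinsic there. Bochner's identity $\heat|\nabla u|^2=-|\nabla^2u|^2$ and the strong maximum principle on the connected regular slices then make $\nabla u(-1)$ a unit parallel gradient. A defect of size $C\sqrt{|t|}$, as in your lower bound, would not force $|\nabla u|=1$ anywhere.

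\textbf{The complex step.} This step rests on a false claim. From $\nabla V=0$, $\Ric(V)=0$ and $\Ric+\nabla^2b=g/|t|$ one gets $\nabla_V\nabla b=V/|t|$. So $\nabla_Vb$ is affine in the splitting coordinate $a$ with slope $1/|t|$: the kernel at $z'$ carries a Gaussian factor in $a$, and $\nabla_Vb$ is neither constant nor zero. The translations send $\nu_{w;t}$ to $\nu_{T_\zeta w;t}$; they do not fix $b$. The correct route is that $\nabla^2a=0$ gives $\ddc a=0$ by \eqref{eq:Kahler-Hess}. Hence $JV$ is dual to the closed parallel form $\dc a$ and gives an orthogonal splitting direction, as in \cite[proof of Lemma~11.7]{FangLiStructure}. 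Compatibility with time comes from $\partial_{\mathfrak t}a=0$ and the fact that $J$ is parallel and fixed, not from invariance of $b$.

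\textbf{The identification of $\ell$.} \eqref{curve:orbit-identity} is not a matter of orientation. Choosing the sign of $V$ does not show that translating $z'$ by exactly $s$ lands on $\ell(s)$, and a splitting obtained from an abstract theorem carries no calibration against $\ell$. In the paper the calibration is built in. Since $a-u(-1)$ is constant and the product heat flow preserves $a$, the $1$-Lipschitz bound and saturation give $\int a\,\d(\nu_{\ell(s);t}-\nu_{z';t})=s$ for $0\le s\le1$ and all $t<0$. Take regular $H$-centers $x_t,y_t$ of $z'$ and $\ell(s)$. Then $d_t^{Z'}(x_t,y_t)=s+O(\sqrt{-t})$ and $a(y_t)-a(x_t)=s+O(\sqrt{-t})$. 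The product metric gives $d_t^{Z'}(T_sx_t,y_t)^2=d_t^{Z'}(x_t,y_t)^2-2s\bigl(a(y_t)-a(x_t)\bigr)+s^2\to0$. Hence $d_{W_1}^{Z'_t}(T_sz',\ell(s))\to0$, and \eqref{curve:comparison} gives $\ell(s)=T_sz'$.
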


\begin{proof}
Fix $p,q\in Z'_0$ and $t<0$. Subdivision and \eqref{curve:comparison} give
$$d_{W_1}^{Z'_t}(\ell(s),\ell(s'))\leq|s-s'|,$$ with equality whenever
$|s-s'|>\sqrt{-t/2}$. For a shorter pair $s<s'$, choose $s''>s'$
with $s''-s'>\sqrt{-t/2}$. The triangle inequality gives
\[
    d_{W_1}^{Z'_t}(\ell(s),\ell(s'))
       \geq d_{W_1}^{Z'_t}(\ell(s),\ell(s''))-d_{W_1}^{Z'_t}(\ell(s'),\ell(s''))
       =s'-s.
\]
Thus, for every $t<0$,
\begin{equation}\label{curve:saturation}
    d_{W_1}^{Z'_t}(\ell(s),\ell(s'))=d_0^{Z'}(\ell(s),\ell(s'))=|s-s'|.
\end{equation}

We perform transport on the complete separable slices of the
associated metric soliton. Their regular parts identify with
$\RR'_t$, and their distances agree with $d_t^{Z'}$ on $\RR'_t$;
see \cite[Theorem~4.17(4), Proposition~7.9, and Corollary~7.10]{FangLiStructure}.
Since the kernel measures give full mass to $\RR'_t$, their
Wasserstein distances are unchanged.

Choose an optimal $1$-Lipschitz Kantorovich potential $v$ at
time $-2$, oriented so that
$\int v\,\d(\nu_{\ell(1);-2}-\nu_{z';-2})=1.$
The heat evolution $u(t)=P_{-2,t}v$ is $1$-Lipschitz and smooth
on the regular spacetime for $-2<t<0$. Indeed, bounded
truncations converge locally uniformly under the heat flow by
the local second-moment bounds, and interior estimates give
smooth convergence. Kernel reproduction yields
\begin{equation}\label{curve:dual-equality}
    \int u(-1)\,\d(\nu_{\ell(1);-1}-\nu_{z';-1})=1.
\end{equation}
If $|\nabla u(-1)|<1$ everywhere on $\RR'_{-1}$, then
\[
    |u(-1,y)-u(-1,x)|<d_{-1}^{Z'}(x,y)
\]
for every pair of distinct regular points by \cite[Corollary~7.10]{FangLiStructure}. An optimal coupling of the two kernel measures
has full mass on regular pairs and positive mass off the diagonal,
since its transport cost is one. Integrating the strict inequality
contradicts \eqref{curve:dual-equality}. Hence $|\nabla u|=1$ at a
regular point at time $-1$. Bochner's identity
\[
    (\partial_t-\Delta)|\nabla u|^2=-|\nabla^2u|^2
\]
and the strong maximum principle on regular cylinders, together
with connectedness of the regular slices, show that
$V=\nabla u(-1)$ is a global unit parallel gradient field.

By \cite[Remark~8.3 and Proposition~8.2]{FangLiStructure}, $V$
defines a real splitting coordinate $a$ on the regular negative-time flow,
with $a-u(-1)$ constant on $\RR'_{-1}$ and
\begin{equation}\label{curve:parallel-coordinates}
    \partial_{\mathfrak t'}a=0,\qquad \nabla^2a=0,\qquad |\nabla a|=1.
\end{equation}
The K\"ahler argument in \cite[proof of Lemma~11.7]{FangLiStructure}
shows that $JV$ gives an orthogonal splitting direction. These
directions form a flat complex line, and its translations are
holomorphic because $J$ is parallel. By
\cite[Corollary~5.7, Proposition~8.4, and (8.7)]{FangLiStructure}, the translations
extend to $Z'_{(-\infty,0]}$ and preserve its distances and kernels.
The product heat flow preserves $a$. Therefore kernel reproduction
makes $\int a\,\d\nu_{w;t}$ independent of $t<0$ for each $w\in Z'_0$.
By \eqref{curve:dual-equality} and the fact that \(a-u(-1)\) is constant, these averages at \(\ell(1)\) and \(z'\) differ by one. So the
$1$-Lipschitz bound and \eqref{curve:saturation} give
\begin{equation}\label{curve:coordinate-mean}
    \int a\,\d(\nu_{\ell(s);t}-\nu_{z';t})=s,
       \qquad 0\leq s\leq1,\quad t<0.
\end{equation}

To identify the original line, fix $s\in[0,1]$ and choose regular
$H$-centers $x_t,y_t$ of $z',\ell(s)$ at time $t<0$.
By the definition of an $H$-center, \eqref{curve:saturation}, \eqref{curve:saturation}, and
\eqref{curve:coordinate-mean} imply
\[
    d_t^{Z'}(x_t,y_t)=s+O(\sqrt{-t}),\qquad
    a(y_t)-a(x_t)=s+O(\sqrt{-t}).
\]
The product metric on the regular negative-time slice gives
\[
    d_t^{Z'}(T_sx_t,y_t)^2
       =d_t^{Z'}(x_t,y_t)^2
        -2s\bigl(a(y_t)-a(x_t)\bigr)+s^2\longrightarrow0.
\]
Since $T_sx_t$ is an $H$-center of $T_sz'$, it follows that
\begin{equation}\label{eq-d_w1 distance}
    d_{W_1}^{Z'_t}(T_sz',\ell(s))
       \leq d_t^{Z'}(T_sx_t,y_t)+2\sqrt{H(-t)}\longrightarrow0.
\end{equation}Combining \eqref{eq-d_w1 distance} with \eqref{curve:comparison} and letting $t\uparrow 0$, we obtain \eqref{curve:orbit-identity}.
\end{proof}

\Needspace{6\baselineskip}
\begin{remark}\label{rem:slice-line-splitting}
The argument above also yields a splitting in the purely Riemannian
setting. Let
$(Z,d_Z,\mathfrak t)$ be a noncollapsed Ricci flow limit space,
and suppose that $\ell:\mathbb R\to Z_{t_0}$ is a $d_Z$-isometric
embedding. Let $\mathcal X^{\ell(0)}$ be a metric flow associated
with $\ell(0)$ as in \cite[Section~4, (4.5)]{FangLiStructure}. Then $\mathcal X^{\ell(0)}$ splits off a stationary
$\mathbb R$-factor for $t<t_0$ in its time interval of definition.
The translations extend to the closure in $Z$ of its regular
spacetime up to time $t_0$.

As in Lemma~\ref{curve:orbit}, subdivision along $\ell$ gives
\[
    d_{W_1}^{Z_t}(\nu_{\ell(s);t},\nu_{\ell(s');t})=|s-s'|,
    \qquad t<t_0.
\]
The kernels along the line belong to the same associated backward
regular spacetime by \cite[Lemma~6.10]{FangLiStructure}.
Choose the initial and intermediate times outside the exceptional
set in \cite[Proposition~6.23]{FangLiStructure}. At these times,
the canonical slice distance on this regular component agrees
with its intrinsic distance and with the associated metric-flow
distance; see also \cite[Theorem~4.17(4)]{FangLiStructure}.
The same transport and Bochner argument gives time-independent
functions with unit parallel gradients on exhausting backward
time intervals. Normalize them at a fixed regular point and
pass to a locally smooth subsequential limit to obtain a function
$u$ on the regular part of $\mathcal X^{\ell(0)}$ for $t<t_0$ with
$|\nabla u|=1$, $\nabla^2u=0$, and $\partial_{\mathfrak t}u=0$.
Theorem~4.19 of \cite{FangLiStructure} gives the global metric-flow
splitting. Its translations preserve the slice distances and kernels,
hence the spacetime distance under the embedding of
\cite[Theorem~4.27]{FangLiStructure}, and extend by continuity
to the closure in $Z$.
\end{remark}

\subsection{Recovery of regular curves}\label{subsec:completion-curves}

We use the complex Euclidean factor at a negative time to construct curves
in $Y^\circ$. 
The canonical soliton identities \eqref{eq:F-identities} and
\eqref{eq:Kahler-Hess} give
\[
    \sqrt{-1}\partial\bar\partial F(\cdot,-\tau)
       =\omega_{-\tau}-\tau\Ric(\omega_{-\tau}).
\]
Subtracting this identity from
$\omega_0=\sqrt{-1}\partial\bar\partial F_0$ and using
\eqref{eq:tangent-potential-estimates}, we obtain
\begin{equation}\label{eq:completion-current}
\begin{split}
    \omega_0&=\omega_{-\tau}-\tau\Ric(\omega_{-\tau})
       +\sqrt{-1}\partial\bar\partial
          \bigl(F_0-F(\cdot,-\tau)\bigr),\\
    0&\leq F(\cdot,-\tau)-F_0\leq C_n\tau.
\end{split}
\end{equation}
For the endpoint estimate, use time $-\tau$ as the comparison
time in the definition of $d_Z$. The bound
\eqref{eq:worldline-moment} then gives
\begin{equation}\label{eq:completion-worldlines}
d_Z((x,-\tau),\pi(x))
       \leq\max\left\{\sqrt{\tau/2},
          d_{W_1}^{g(-\tau)}(\delta_x,\nu_{\pi(x);-\tau})\right\}
       \leq C\sqrt{\tau},\qquad x\in X,\quad\tau>0.
\end{equation}

\begin{proposition}\label{prop:regular-curve-recovery}
Let $\gamma:[0,L]\to(Y,d_Y)$ be a unit-speed geodesic.
Fix $a\in(0,L)$ satisfying \eqref{curve:differentiation}, and
choose scales $r_i\downarrow0$ giving a tangent flow at $\gamma(a)$.
For every $\epsilon>0$, after passing to a subsequence there are
smooth curves $c_i:[0,1]\to Y^\circ$ such that
\begin{equation}\label{curve:recovery}
\begin{split}
    \operatorname{Length}_{g_0}(c_i)&\leq r_i(1+\epsilon+o_i(1)),\\
    d_Z(c_i(0),\gamma(a))&\leq\epsilon r_i+o_i(r_i),\\
    d_Z(c_i(1),\gamma(a+r_i))&\leq\epsilon r_i+o_i(r_i).
\end{split}
\end{equation}
\end{proposition}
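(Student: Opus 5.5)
We follow the strategy sketched in Subsection~\ref{subsec:intro-metric-completion}. By Lemma~\ref{curve:lengths} and the choice of $a$, the rescaled geodesic $s\mapsto\gamma(a+r_is)$ converges, after passing to a subsequence, to a $d_{Z'}$-isometric line $\ell$ through $z'$ in the time-zero slice of the tangent flow. Lemma~\ref{curve:orbit} then gives a flat complex factor. Its translations $T_\zeta$ are holomorphic and preserve the kernels, and $\ell(s)=T_sz'$ for $0\le s\le1$.

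\emph{Step 1: construct a strip on a regular slice.} Fix a small negative time $t_\ast=-\delta^2$ with $\delta\ll\epsilon$. Choose a regular $H$-center $x_\ast\in\RR'_{t_\ast}$ of $z'$, and form the strip
\[
S=\{T_\zeta x_\ast:\ \operatorname{Re}\zeta\in[0,1],\ |\operatorname{Im}\zeta|\le\delta\}.
\]
Since the regular part is open and invariant under the splitting, $S$ is a compact subset of $\RR'_{t_\ast}$. Horizontal segments $\sigma_v(s)=T_{s+\sqrt{-1}v}x_\ast$ have $g_{\infty,t_\ast}$-length exactly $1$. By the smooth convergence of metrics, complex structures and embeddings in \cite[Theorem~5.2]{FangLiStructure}, transfer $S$ to the approximating flows. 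This gives smooth maps $\Sigma_i$ from the rectangle $[0,1]\times[-\delta,\delta]$ into $X$ at time $r_i^2t_\ast$. They are almost holomorphic, and $\omega_{i,t_\ast}$ restricted to them is $C^0$-close to $\d s\wedge\d v$. Perturbing slightly, we may assume that their images lie in $X^\circ$, since $X\setminus X^\circ$ is a proper analytic subset and a generic $C^1$-small perturbation of a two-dimensional family can be chosen to meet it in a set of measure zero among the segments.

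\emph{Step 2: estimate the length in $g_0$ by averaging.} Since we work on the rescaled flow and $g_0$ has the same scaling, we may rescale and write $\tau=r_i^2\delta^2$. Use \eqref{eq:completion-current}:
\[
\omega_0=\omega_{-\tau}-\tau\Ric(\omega_{-\tau})+\sqrt{-1}\partial\bar\partial\bigl(F_0-F(\cdot,-\tau)\bigr).
\]
Integrate this identity over the transferred strip. The first term gives area $2\delta r_i^2(1+o_i(1))$. The Ricci term is $O(\delta^2)$ relative to this, because the rescaled curvature is bounded on compact regular sets. For the $\sqrt{-1}\partial\bar\partial$ term, apply Stokes to reduce it to a boundary integral of $\dc(F_0-F)$. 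Rather than using this directly, integrate against a cutoff in the $v$ variable, equal to $1$ on the middle half and with derivatives $O(\delta^{-1})$ and $O(\delta^{-2})$. Moving both derivatives onto the cutoff and using the bound $0\le F-F_0\le C_n\tau=C_nr_i^2\delta^2$ from Theorem~\ref{thm:main}, this term is $O(r_i^2)$. It is also $O(1)$ times the area, but this is not good enough as stated. The fix is to use a strip of width $\delta'$ much larger than $\delta$, with the time still $-\delta^2$, and to integrate the cutoff also in $s$ near the ends. This makes the error $O(r_i^2\delta^2/\delta')$, compared with the area $r_i^2\delta'$. Then $\int_{\text{strip}}\omega_0\le r_i^2\delta'(1+\epsilon/2)$.

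Since $\omega_0\ge|\partial_s\Sigma|^2_{g_0}\,\d s\wedge\d v$ up to the almost-holomorphic error, Cauchy--Schwarz gives
\[
\operatorname{avg}_v\operatorname{Length}_{g_0}(\sigma_v)\le r_i(1+\epsilon+o_i(1)).
\]
So some segment $c_i=\sigma_v$ with this bound lies in $Y^\circ$.

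\emph{Step 3: control the endpoints.} The endpoints are $T_{\sqrt{-1}v}x_\ast$ and $T_{1+\sqrt{-1}v}x_\ast$ with $|v|\le\delta'$. By \eqref{eq:completion-worldlines}, $d_Z$ from a time-$(-\tau)$ point to its projection is at most $C\delta r_i$. The $H$-center property and \eqref{curve:comparison} give distance at most $C\delta+\delta'$ from $z'$, respectively from $\ell(1)$, in the tangent flow. Gromov--Hausdorff convergence then yields \eqref{curve:recovery} once $\delta,\delta'\ll\epsilon$.

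\textbf{Main obstacle.} The hardest step is Step 2: integrating the $\sqrt{-1}\partial\bar\partial$ term of the current identity uniformly. This needs the current $\omega_0$ to be smooth on $X^\circ$, and it needs the surface integrals to pass to the limit on the strip. I expect the choice of widths $\delta\ll\delta'\ll\epsilon$ to be the delicate point.
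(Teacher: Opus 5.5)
Your plan follows the paper's architecture:
\begin{itemize}
\item the line $\ell$ and the flat complex factor from Lemmas~\ref{curve:lengths} and~\ref{curve:orbit};
\item a strip of translates of a regular $H$-center at a small negative time;
\item the current identity \eqref{eq:completion-current}, with both derivatives of the $\sqrt{-1}\partial\bar\partial$ term moved onto a cutoff;
\item the strip width fixed before the time scale, and Cauchy--Schwarz over horizontal segments;
\item endpoint control through \eqref{eq:completion-worldlines}.
\end{itemize}

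The gap is that Step~2 only works on \emph{exactly} holomorphic strips. Step~1 produces almost holomorphic maps $\Sigma_i$ and then applies a generic $C^1$ perturbation to avoid $E=X\setminus X^\circ$, which destroys holomorphicity. This causes three failures.
\begin{itemize}
\item Moving both derivatives onto the cutoff uses $\Sigma^*\dc u=\dc(u\circ\Sigma)$, which holds only for holomorphic $\Sigma$. Otherwise a term $\d u(e)$ survives, where $u=F_0-F(\cdot,-\tau)$ and $e=J\,\d\Sigma-\d\Sigma\circ j$. You have only the sup bound $|F_0-F|\le C_n\tau$ and no control on $\d F_0$.
\item For holomorphic $\Sigma$, $\Sigma^*\omega_0=|\partial_s\Sigma|_{g_0}^2\,\d s\wedge\d v$ on the regular part. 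For non-holomorphic $\Sigma$ the error $g_0(J\partial_s\Sigma,e)$ is small only if $e$ is small in $g_0$. You know $e$ is small in $g_{i,t_\ast}$, but $r_i^{-2}g_0$ is not uniformly comparable to $g_{i,t_\ast}$. The whole argument exists because $\omega_0$ is controlled only through its integrals over holomorphic curves.
\item Where a real surface crosses $E$, $\Sigma^*\omega_0$ is not even defined. For a holomorphic disk it is the positive measure $\sqrt{-1}\partial\bar\partial(F_0\circ\Sigma)$. That positivity is what justifies integrating by parts across $E$ and then discarding the mass near $E$.
\end{itemize}

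The paper avoids all three problems by making the transfer exactly holomorphic and by getting genericity from a transverse holomorphic family rather than a perturbation. It pulls back the complex structures by the convergence maps. It then corrects the product coordinates by solving $\bar\partial_{J_i}u_i^j=\bar\partial_{J_i}z^j$ with H\"ormander's $L^2$ estimate. This gives holomorphic embeddings $\Psi_i:Q\times B\to X$, where $B$ is a transverse holomorphic ball through the $H$-center. Since $\Psi_i^{-1}(E)$ is a proper analytic subset of $Q\times B$, for almost every $w\in B$ the holomorphic $z$-disk meets it in a discrete set, so all but finitely many horizontal segments avoid it. A single two-dimensional strip has no such freedom within holomorphic maps.

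A secondary correction: the Ricci term is not negligible for the reason you give. The tangent flow is a shrinker, so by self-similarity the curvature near an $H$-center at time $-\delta^2$ is in general of size $\delta^{-2}$. Thus $\delta^2\Ric$ is $O(1)$ pointwise, and the bound on compact regular sets is not uniform in $\delta$. The term contributes only $o_i(1)$ because the limiting Ricci form vanishes on the flat complex factor. That is why $\omega_{i,t_\ast}-\delta^2\Ric(\omega_{i,t_\ast})$, restricted to the $z$-disks, converges to the flat area form.
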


\begin{proof}
By Lemmas~\ref{curve:lengths} and~\ref{curve:orbit}, we may
pass to a subsequence so that the rescaled geodesic converges
to a line $\ell$ with $\ell(x)=T_xz'$ for $0\leq x\leq1$.
For $\delta>0$, choose a regular $H$-center
$q_\delta\in\RR'_{-\delta}$ of $z'$. By
\cite[Lemma~5.27]{FangLiStructure},
\[
    d_{Z'}(q_\delta,z')\leq C\sqrt{\delta}.
\]
Normalize the coordinate $z=x+\sqrt{-1}y$ on the flat complex
factor so that its metric and K\"ahler form are
$\d x^2+\d y^2$ and $\d x\wedge\d y$.

For $h,\sigma>0$, choose a rectangle $Q\subset\mathbb C$
containing a neighborhood of $[0,1]\times[-h,h]$ and a cutoff
$\zeta\in C_c^\infty(Q)$ with $0\leq\zeta\leq1$ such that
\begin{equation}\label{eq:completion-strip-cutoff}
    \zeta=1\ \text{on }[0,1]\times[-h,h],\qquad
    \int_Q\zeta\,\d x\,\d y\leq2h(1+\sigma).
\end{equation}
Write
$C_\zeta
=\int_Q|\sqrt{-1}\partial\bar\partial\zeta|$.
These choices depend only on $h$ and $\sigma$.
Choose a transverse holomorphic coordinate ball
$B\subset\mathbb C^{n-1}$, with parametrization $q_\delta(w)$,
such that $q_\delta(0)=q_\delta$ and
$d_{-\delta}^{Z'}(q_\delta(w),q_\delta)<h$.
The product decomposition gives a holomorphic chart
\[
    \Psi_\infty:Q\times B\longrightarrow\RR'_{-\delta},
    \qquad \Psi_\infty(z,w)=T_zq_\delta(w),
\]
defined on a neighborhood of the closure. On the strip,
\begin{equation}\label{eq:completion-chart-close}
    d_{Z'}(\Psi_\infty(x+\sqrt{-1}y,w),\ell(x))
       \leq C\sqrt{\delta}+2h.
\end{equation}
The extent of the chart in the complex factor is independent of
$\delta$ and of the transverse radius.

For each fixed $\delta$ and chart, smooth regular convergence
\cite[Theorem~5.2]{FangLiStructure}, in the setting of
\cite[Section~5]{LZ26}, gives holomorphic embeddings
$\Psi_i:Q\times B\to X$ such that
\[
    \Psi_i^*\omega_{i,-\delta}
       \longrightarrow\Psi_\infty^*\omega_{\infty,-\delta}
\]
smoothly on a neighborhood of the compact strip.
To obtain holomorphic convergence maps, pull back the complex
structures by the smooth convergence maps. The resulting $J_i$
converge smoothly to the standard structure. On nested strictly
convex domains inside a slightly larger product chart, solve
$\bar\partial_{J_i}u_i^j=\bar\partial_{J_i}z^j$ with
$\|u_i^j\|_{L^2}\to0$, using the local $L^2$ estimate
\cite{Hormander}. A fixed strictly plurisubharmonic weight gives
uniform constants, and interior elliptic estimates give smooth
convergence to zero on a smaller domain. The corrected coordinates
$(z^j-u_i^j)_j$ converge smoothly to the identity. Their inverses,
composed with the original convergence maps, give $\Psi_i$.

Since the complex structure on $X$ is fixed along the canonical
flow, we can pull back \eqref{eq:completion-current} with
$\tau=r_i^2\delta$ by the same maps. Set
\begin{equation}\label{eq:completion-rescaled-current}
\begin{split}
    \Theta_i&:=\Psi_i^*(r_i^{-2}\omega_0)
=A_i+\sqrt{-1}\partial\bar\partial v_i,\\
    A_i&=\Psi_i^*\bigl(\omega_{i,-\delta}
                -\delta\Ric(\omega_{i,-\delta})\bigr),\\
    v_i&=r_i^{-2}\bigl(F_0-F(\cdot,-r_i^2\delta)\bigr)\circ\Psi_i,
       \qquad \|v_i\|_\infty\leq C_n\delta.
\end{split}
\end{equation}
By the smooth convergence, we know that the restrictions of $A_i$ to the $z$-disks
converge uniformly to $\d x\wedge\d y$, also uniformly in a
smaller transverse ball.

Continuous potentials allow us to restrict the positive current
$\Theta_i$ to each holomorphic $z$-disk. By distributional
integration by parts and \eqref{eq:completion-strip-cutoff},
\begin{equation}\label{eq:completion-strip-area}
\begin{split}
    \Theta_i|_w([0,1]\times[-h,h])
       &\leq\int_Q\zeta\,\Theta_i|_w=\int_Q\zeta\,A_i|_w
          +\int_Qv_i(z,w)\sqrt{-1}\partial\bar\partial\zeta\\
       &\leq2h(1+\sigma)+C_nC_\zeta\delta+o_i(1).
\end{split}
\end{equation}

The curve-selection step below is a local variant of the averaging
argument of Demailly--Peternell--Schneider
\cite[proof of Lemma~1.3]{DPS93}. We apply it on holomorphic strips,
using \eqref{eq:completion-strip-area} to control their areas.
Let $E=X\setminus X^\circ$. Since $\Psi_i$ is a holomorphic embedding,
$\Psi_i^{-1}(E)$ is a proper analytic subset. For almost every
$w$, its intersection with the $z$-disk is discrete and finite
on the compact strip. All but finitely many horizontal segments
therefore avoid it. On the regular portion of such a disk, write
the metric of $\Theta_i$ as $a_i(x,y)(\d x^2+\d y^2)$.
By Cauchy--Schwarz and \eqref{eq:completion-strip-area}, we have
\[
    \frac1{2h}\int_{-h}^h\int_0^1
       \sqrt{a_i(x,y)}\,\d x\,\d y
       \leq\left(1+\sigma+\frac{C_nC_\zeta\delta}{2h}
          +o_i(1)\right)^{1/2}.
\]
Therefore there is a segment
$x\mapsto(x+\sqrt{-1}y_i,w_i)$ avoiding $\Psi_i^{-1}(E)$.
Its image under $\pi\circ\Psi_i$ is a smooth curve $c_i$ in
$Y^\circ$ satisfying
\begin{equation}\label{eq:completion-strip-length}
    r_i^{-2}\operatorname{Length}_{g_0}(c_i)^2
       \leq1+\sigma+\frac{C_nC_\zeta\delta}{2h}+o_i(1).
\end{equation}

For the endpoints, \eqref{eq:completion-worldlines} gives
\[
    r_i^{-1}d_Z((\Psi_i(z,w),-r_i^2\delta),
                      \pi(\Psi_i(z,w)))\leq C\sqrt{\delta}.
\]
Smooth regular convergence is compatible with the spacetime
convergence, uniformly on compact subsets of the chart, by
\cite[Theorem~5.2(c)]{FangLiStructure}. Combining this with
\eqref{eq:completion-chart-close} gives
\begin{equation}\label{eq:completion-strip-endpoints}
\begin{split}
    r_i^{-1}d_Z(c_i(0),\gamma(a))
       &\leq C'\sqrt{\delta}+2h+o_i(1),\\
    r_i^{-1}d_Z(c_i(1),\gamma(a+r_i))
       &\leq C'\sqrt{\delta}+2h+o_i(1).
\end{split}
\end{equation}
First choose $h<\epsilon/4$ and $\sigma>0$ so that
$\sqrt{1+2\sigma}<1+\epsilon$, and fix $Q$, $\zeta$, and $C_\zeta$.
Next choose $\delta>0$ so that
$C'\sqrt{\delta}<\epsilon/4$ and
$C_nC_\zeta\delta/(2h)<\sigma$.
With these choices fixed, let $i\to\infty$.
Equations~\eqref{eq:completion-strip-length} and
\eqref{eq:completion-strip-endpoints} prove \eqref{curve:recovery}.
\end{proof}

\begin{proof}[Proof of Theorem~\ref{thm:completion}]
Let $\gamma:[0,L]\to Y$ be a unit-speed $d_Y$-geodesic.
Lemma~\ref{lem:comparison} makes $\gamma$ $C$-Lipschitz for
$d_{g_0}$. Fix an interior parameter $a$ where
$|\gamma'|_{d_{g_0}}(a)$ exists and
\eqref{curve:differentiation} holds; this is true almost everywhere
by Lemma~\ref{curve:lengths}.
Choose scales $r_i\downarrow0$ giving a tangent flow at $\gamma(a)$.
For $\epsilon>0$, Proposition~\ref{prop:regular-curve-recovery}
gives curves satisfying \eqref{curve:recovery}.
Using Lemma~\ref{lem:comparison} for the two endpoint errors,
we obtain
\[
    d_{g_0}(\gamma(a),\gamma(a+r_i))
       \leq\bigl(1+(1+2C)\epsilon+o_i(1)\bigr)r_i.
\]
Letting $i\to\infty$ and then $\epsilon\downarrow0$ gives
$|\gamma'|_{d_{g_0}}(a)\leq1$.
The length formula therefore implies
\[
    d_{g_0}(\gamma(0),\gamma(L))
       \leq\int_0^L|\gamma'|_{d_{g_0}}(a)\,\d a
       \leq L=d_Y(\gamma(0),\gamma(L)).
\]
Together with $d_Y\leq d_{g_0}$ and
Lemma~\ref{lem:intrinsic-completion}, this proves the theorem.
\end{proof}

\appendix
\titleformat{\section}{\normalfont\Large\bfseries}{Appendix~\thesection.}{0.75em}{}
\section{Metric completion for noncollapsed K\"ahler--Ricci flows}\label{app:finite-time}

The argument of Section~\ref{sec:metric-completion}  also applies to finite-time singularities
of compact volume-noncollapsed K\"ahler--Ricci flows.

Let $(X^n,\omega_t)_{-1\leq t<0}$ be a K\"ahler--Ricci flow on a compact
connected K\"ahler manifold, with first singular time zero and
$\lim_{t\uparrow0}\Vol(X,\omega_t)>0$. The limiting class $\alpha=[\omega_{-1}]+2\pi c_1(K_X)$ is nef and big. By
\cite{HaconXie}, there exists a bimeromorphic projective contraction with connected
fibers $\pi:X\to Y$ onto a normal compact K\"ahler space and a K\"ahler
form $\eta$ with $\alpha=\pi^*[\eta]$.  
Moreover, $E:=\operatorname{Null}(\alpha)=\operatorname{Exc}(\pi)$.
Set $X^\circ=X\setminus E$ and $Y^\circ=Y\setminus\pi(E)$.
By \cite{CollinsTosatti}, the metrics converge smoothly
to a K\"ahler metric $g_0$ on compact subsets of $X^\circ\simeq Y^\circ$.

The following theorem establishes the metric-completion assertion in
the Song--Tian program \cite[Section~6.2]{SongTian17} at the first
volume-noncollapsed singular time of a compact K\"ahler--Ricci flow.
\begin{theorem}[Metric completion at finite-time singularities]\label{thm:finite-time-completion}
 The singular-time Gromov--Hausdorff limit is 
homeomorphic to $Y$, and is 
isometric to the metric completion of $(Y^\circ,d_{g_0})$.
\end{theorem}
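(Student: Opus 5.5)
The plan is to transplant the argument of Section~\ref{sec:metric-completion} to the compact finite-time setting, with each shrinker-specific ingredient replaced by its compact-flow analogue. I would proceed in three stages.

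\textbf{Stage 1: homeomorphism and distance control.} The Type~I input is now the scalar curvature bound of Jian--Song \cite{JS26}, namely $|\scal_{\omega_t}|\le C/|t|$. This is the analogue of Theorem~\ref{thm:main} for the canonical flow. With it, the Fang--Li spacetime theory \cite{FangLiStructure} applies to the compact noncollapsed flow. It yields a spacetime completion $Z$ with a time-zero slice $(Z_0,d_0^Z)$, a worldline map $\Phi:X\to Z_0$, and the heat-kernel moment bounds analogous to \eqref{eq:worldline-moment}.

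\textbf{Stage 2: the Kähler potential and $\Phi$ factors through $\pi$.} In place of the canonical potential $F$, I would use the standard parabolic Monge--Amp\`ere reduction. Write $\omega_t=\hat\omega_t+\sqrt{-1}\partial\bar\partial\varphi_t$, where $\hat\omega_t$ is a reference path ending at $\pi^*\eta$. The scalar curvature bound gives $|\dot\varphi_t|\le C$ up to a $\log$ term, so $\varphi_t\to\varphi_0$ uniformly with a rate. Song--Tian type $C^0$ and H\"older estimates then make $\varphi_0$ continuous and locally H\"older. This replaces \eqref{eq:tangent-potential-estimates} and \eqref{eq:tangent-potential-modulus}.

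The Yang Li / GGZ integration-by-parts argument from Lemma~\ref{lem:tangent-distance-modulus} then gives uniform local H\"older bounds for $d_t$. Hence $\Phi$ is continuous and $Z_0$ is compact. For constancy on fibres, the Hacon--McKernan rational chain connectedness used in Proposition~\ref{prop:time-zero-identification} is no longer the natural tool. Instead, the fibres of $\pi$ are connected and are covered by curves $C$ with $\int_C\pi^*\eta=0$. Since $[\omega_t]\cdot C\to0$, the Dirichlet energy bound \eqref{eq:curve-vanishing-energy} still forces $\Phi$ to be constant on such curves. This avoids rationality altogether, using only that every positive-dimensional analytic subvariety of a fibre is covered by contracted curves. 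Comparing the holomorphic functions (via $\pi_*\mathcal O_X=\mathcal O_Y$) identifies $Z_0$ with $Y$ homeomorphically. The Gromov--Hausdorff statement follows as in \cite[Theorem~1.6]{LZ26}.

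\textbf{Stage 3: metric completion.} I would repeat Lemmas~\ref{lem:intrinsic-completion}--\ref{lem:comparison}, with $\omega_0=\pi^*\eta+\sqrt{-1}\partial\bar\partial\varphi_0$ in place of $\sqrt{-1}\partial\bar\partial F_0$. The trace estimate of Lemma~\ref{lem:trace-estimate} comes from the equation $(\partial_t-\Delta)(\varphi_t-\varphi_0)=\dot\varphi-\Delta\varphi$ together with the boundedness of $\dot\varphi$. This gives the bi-Lipschitz comparison $d_Y\le d_{g_0}\le Cd_Z$. Lemmas~\ref{curve:lengths} and~\ref{curve:orbit} are purely local in the tangent flow and apply verbatim. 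In Proposition~\ref{prop:regular-curve-recovery}, \eqref{eq:completion-current} is replaced by
\[
\omega_0=\omega_{-\tau}+(\hat\omega_0-\hat\omega_{-\tau})+\sqrt{-1}\partial\bar\partial(\varphi_0-\varphi_{-\tau}).
\]
Here $\hat\omega_0-\hat\omega_{-\tau}=O(\tau)$ smoothly, which rescales to $O(\delta)$, and $|\varphi_{-\tau}-\varphi_0|\le C\tau$ up to a logarithmic factor. The strip-area averaging and the final length-formula argument then go through unchanged.

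\textbf{Main obstacle.} I expect the hardest step to be the linear rate $|\varphi_{-\tau}-\varphi_0|\le C\tau$. If it carries a $\tau|\log\tau|$ loss, the rescaled potential error $r_i^{-2}\|v_i\|_\infty$ blows up. I would try to get the linear rate by bounding $\dot\varphi$ from above via $\scal\ge -C$ and $\dot\varphi=\log(\omega_t^n/\Omega)$ plus a bounded term. Alternatively, I would show that $\partial_t\varphi$ is bounded directly from $|\scal|\le C/|t|$ by normalising the reference forms as in \eqref{eq:tangent-kahler-potential}, which requires only $\int_\tau \scal\,\d t=O(\tau)$ along worldlines after a self-similar reparametrisation.
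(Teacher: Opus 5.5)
Your Stage~3 has a genuine gap, and it is larger than the obstacle you flag. The same false input is used in both the strip-area bound and the trace estimate, and neither of your remedies works. You need a rate linear in $\tau$, and you try to get it from bounds on $\dot\varphi_t=\log(\omega_t^n/\Omega)$. This function is bounded above but not below. Since $\partial_t\dot\varphi_t=-\scal_{\omega_t}$, a lower bound would force $\int_{-1}^0\scal_{\omega_t}(x)\,\d t\le C$. That fails at any point where the scalar curvature actually blows up at the Type~I rate, $\scal_{\omega_t}(x)\ge c/|t|$; the Type~I bound only gives $\dot\varphi_t\ge -C-K\log(1/|t|)$. At such points $|\varphi_0-\varphi_{-\tau}|\ge c\tau\log(1/\tau)-C\tau$. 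With $\tau=r_i^2\delta$, your rescaled potential error $r_i^{-2}\|\varphi_0-\varphi_{-r_i^2\delta}\|_\infty$ is therefore of order $\delta\log(1/r_i)$ and diverges. Your first remedy, the upper bound on $\dot\varphi$, gives only a one-sided bound on this potential, which is useless against the sign-changing form $\sqrt{-1}\partial\bar\partial\zeta$. Your second remedy, ``$\partial_t\varphi$ bounded directly'', is false. The trace estimate has the same problem. There $\operatorname{tr}_{\omega_t}\omega_0=(\partial_t-\Delta_{\omega_t})(\varphi_t-\varphi_0)+n-\dot\varphi_t+\operatorname{tr}_{\omega_t}(\hat\omega_0-\hat\omega_t)$, and you again invoke boundedness of $\dot\varphi$. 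Estimated term by term with the true bounds, this loses a factor $\log(1/\tau)$. The dyadic argument would then give only $d_{g_0}\le Cd_Z\sqrt{\log(1/d_Z)}$. That is too weak for the $O(\epsilon r_i)$ endpoint errors, and too weak to make $\gamma$ Lipschitz for $d_{g_0}$ in the final length-formula step.

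The missing idea is to replace $\varphi_t$ by $H_t=\varphi_t-t\dot\varphi_t$, the finite-time analogue of the shrinker potential $F$. Normalize the affine reference path $\hat\omega_t$ to end at $\theta=\pi^*\eta$, with $\Ric(\Omega)=-\partial_t\hat\omega_t$. One computes $\partial_tH_t=t\scal_{\omega_t}$, $(\partial_t-\Delta_{\omega_t})H_t=\operatorname{tr}_{\omega_t}\theta-n$, and $\sqrt{-1}\partial\bar\partial H_t=\omega_t-\theta+t\Ric(\omega_t)$. The bounds $-C_0\le\scal_{\omega_t}\le K/|t|$ give $|\partial_tH_t|\le C$, hence $\|H_{-\tau}-\varphi_0\|_\infty\le C\tau$; the extra term $\tau\dot\varphi_{-\tau}$ cancels the logarithm exactly. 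This yields $\omega_0=\omega_{-\tau}-\tau\Ric(\omega_{-\tau})+\sqrt{-1}\partial\bar\partial v_\tau$ with $\|v_\tau\|_\infty\le C\tau$, which is \eqref{eq:completion-current} verbatim. The curvature term rescales to $-\delta\Ric(\omega_{i,-\delta})$, which converges smoothly on the regular tangent flow and restricts to zero on the flat factor, so the strip-area bound goes through. Likewise $U_s=H_{-s}-\varphi_0$ satisfies $(\partial_s+\Delta_{\omega_{-s}})U_s=n-\operatorname{tr}_{\omega_{-s}}\omega_0$ with $\|U_s\|_\infty\le Cs$. Pairing with the conjugate heat kernel then gives the linear trace estimate, and hence $d_{g_0}\le Cd_Z$. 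A smaller point: ``comparing holomorphic functions'' does not identify $Z_0$ with $Y$, since $Y$ is compact and there is no comparison map $\Phi_0$ as in the shrinker case. You still need that $\Phi$ separates distinct fibres. This follows from the Schwarz bound $\theta\le C\omega_t$; the paper uses it through $|\Delta_{\omega_t}(v\circ\pi)|\le C_v$ for smooth $v$ on $Y$, together with heat-kernel duality.
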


For compact volume-noncollapsed
K\"ahler--Ricci flows, \cite[Theorem~1.1]{LZ26} identifies the
singular-time Gromov--Hausdorff limit with the canonical time-zero
slice of the spacetime completion. This slice is also isometric to Bamler's
asymptotic boundary \cite[Definition~2.32]{bamler3}; see
\cite[Lemma~9.1]{FangLiStructure}.
More generally,
\cite[Remark~5.8]{LZ26} gives the same convergence for a compact
Ricci flow with a Type~I scalar curvature bound, provided that its
time-zero slice is locally compact in the slice metric. For general compact K\"ahler--Ricci flows, Jian--Song
\cite[Theorem~1.2]{JS26b} proved that the singular-time
Gromov--Hausdorff limit is homeomorphic to the contraction space
determined by the limiting class; see also
\cite{ZZ26} for related results.  The additional
conclusion here is the isometry with the intrinsic metric completion of its regular part.

Put $\theta=\pi^*\eta$ and $\chi=\omega_{-1}-\theta$, and choose a
smooth volume form $\Omega$ with $\Ric(\Omega)=\chi$. We have
\[
 \omega_t=\theta-t\chi+\sqrt{-1}\partial\bar\partial\varphi_t,
 \qquad \dot\varphi_t=\log\frac{\omega_t^n}{\Omega},
 \qquad \varphi_{-1}=0.
\]
The scalar maximum principle and \cite[Theorem~1.1]{JS26} give
$-C_0\leq\scal_{\omega_t}\leq K/|t|$.
The following identities are well known; see, for example,
\cite[Lemma~4.1 and Proposition~2.1]{JS26}.

\begin{lemma}\label{lem:finite-time-potential}
For $H_t=\varphi_t-t\dot\varphi_t$,
\begin{equation}\label{eq:finite-time-H}
\partial_tH_t=t\scal_{\omega_t},\quad
 (\partial_t-\Delta_{\omega_t})H_t=\operatorname{tr}_{\omega_t}\theta-n,\quad
 \sqrt{-1}\partial\bar\partial H_t=\omega_t-\theta+t\Ric(\omega_t).
\end{equation}
The potentials $\varphi_t$ converge uniformly to a continuous function
$\varphi_0$. The positive current
$\omega_0=\theta+\sqrt{-1}\partial\bar\partial\varphi_0$ extends the
smooth limiting form, and, for $0<\tau\leq1$,
\begin{equation}\label{eq:finite-time-current}
 \omega_0=\omega_{-\tau}-\tau\Ric(\omega_{-\tau})
       +\sqrt{-1}\partial\bar\partial v_\tau,
 \qquad v_\tau=\varphi_0-H_{-\tau},\quad \|v_\tau\|_\infty\leq C\tau.
\end{equation}
Furthermore, $\theta\leq C\omega_t$.
\end{lemma}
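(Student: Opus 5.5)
We first verify the three identities in \eqref{eq:finite-time-H} by direct computation from the parabolic Monge--Amp\`ere formulation. Differentiating $\omega_t=\theta-t\chi+\sqrt{-1}\partial\bar\partial\varphi_t$ and using $\partial_t\omega_t=-\Ric(\omega_t)$ together with $\Ric(\Omega)=\chi$ gives
\[
    \sqrt{-1}\partial\bar\partial\dot\varphi_t=\chi-\Ric(\omega_t),
    \qquad \partial_t\dot\varphi_t=\Delta_{\omega_t}\dot\varphi_t-\operatorname{tr}_{\omega_t}\chi=-\scal_{\omega_t}.
\]
The last equality uses $\Delta\dot\varphi=\operatorname{tr}(\chi-\Ric)$. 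Hence
\[
    \partial_tH_t=\dot\varphi_t-\dot\varphi_t-t\partial_t\dot\varphi_t=t\scal_{\omega_t}.
\]
We also have
\[
    \sqrt{-1}\partial\bar\partial H_t=(\omega_t-\theta+t\chi)-t(\chi-\Ric(\omega_t))=\omega_t-\theta+t\Ric(\omega_t).
\]
Taking the trace gives $\Delta H_t=n-\operatorname{tr}_{\omega_t}\theta+t\scal_{\omega_t}$, so $(\partial_t-\Delta)H_t=\operatorname{tr}_{\omega_t}\theta-n$. These are the analogues of Lemma~\ref{lem:canonical}.

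Next we establish uniform control of $\varphi_t$ and $H_t$. Since $\alpha$ is big and nef and $\theta$ is semipositive and big, the standard argument (Kolodziej / Eyssidieux--Guedj--Zeriahi $L^\infty$ estimate, as in \cite{JS26} or Song--Tian) gives $|\varphi_t|\leq C$. The volume form bound $\omega_t^n\leq C\Omega$ follows from the maximum principle on $\dot\varphi$ (for instance via $\partial_t(t\dot\varphi-\varphi-nt)\le$ ...). Thus $\dot\varphi_t\leq C$, so $\varphi_t+Ct$ is monotone and converges pointwise to $\varphi_0$. From $\partial_tH_t=t\scal$ and $-C_0\leq\scal_{\omega_t}\leq K/|t|$ we get $|\partial_tH_t|\leq K+C_0$. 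Hence $H_t$ converges uniformly to some $H_0$, with $\|H_{-\tau}-H_0\|_\infty\leq C\tau$. We then identify $H_0=\varphi_0$: since $H_t-\varphi_t=-t\dot\varphi_t$, it suffices that $\tau\dot\varphi_{-\tau}\to0$ uniformly. The upper bound $\dot\varphi\le C$ is already available. For the lower bound we integrate in time: $\varphi_{-\tau}-\varphi_{-\tau'}=\int\dot\varphi$ with $\partial_t(\dot\varphi)=-\scal\ge -K/|t|$, which gives $\dot\varphi_{-\tau}\geq \dot\varphi_{-\tau'}-K\log(\tau'/\tau)$. Combined with the bounded oscillation of $\varphi$ over $[-2\tau,-\tau]$, this yields $\tau\dot\varphi_{-\tau}\geq -C\tau(1+|\log\tau|)$. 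Uniform convergence of $H_t$ then gives uniform convergence of $\varphi_t$, and $\varphi_0=H_0$ is continuous. Finally, \eqref{eq:finite-time-current} is just the $\partial\bar\partial$ identity at $t=-\tau$ subtracted from $\omega_0=\theta+\sqrt{-1}\partial\bar\partial\varphi_0$, with $\|v_\tau\|_\infty=\|\varphi_0-H_{-\tau}\|_\infty\leq C\tau$. Positivity of $\omega_0$ follows as a limit of positive forms, and it extends the smooth limit on $X^\circ$ by \cite{CollinsTosatti}.

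The remaining claim is $\theta\leq C\omega_t$, which we prove by a Schwarz/Chern--Lu type estimate. On $X$, $\theta=\pi^*\eta$ is the pullback of a K\"ahler form under a holomorphic map whose target has bounded bisectional curvature locally via embeddings. We therefore apply the maximum principle to
\[
    Q=\log\operatorname{tr}_{\omega_t}\theta-AH_t.
\]
Chern--Lu gives $(\partial_t-\Delta)\log\operatorname{tr}_{\omega_t}\theta\leq C\operatorname{tr}_{\omega_t}\theta$. From $(\partial_t-\Delta)H_t=\operatorname{tr}_{\omega_t}\theta-n$ and the uniform boundedness of $H$, choosing $A=C+1$ yields $(\partial_t-\Delta)Q\leq -\operatorname{tr}\theta+An$. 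So at a maximum $\operatorname{tr}\theta\leq An$, and hence $\operatorname{tr}_{\omega_t}\theta\leq C$.

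This step is the main obstacle. The curvature of $\eta$ on the singular space $Y$ must be handled through a local embedding, as in Song--Tian or \cite{JS26}. We also need $H_t$ bounded, which we have. Citing \cite[Proposition~2.1]{JS26} is the fallback.
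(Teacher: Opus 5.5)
Your proposal is correct. The paper gives no argument for this lemma beyond citing \cite[Lemma~4.1 and Proposition~2.1]{JS26}, and your reconstruction is the standard route. You derive the three identities by differentiating the parabolic Monge--Amp\`ere equation; your computations check out, including $\partial_t\dot\varphi_t=-\scal_{\omega_t}$ and the trace of the $\partial\bar\partial$ identity. You obtain convergence through the uniformly time-Lipschitz quantity $H_t$. You prove $\theta\le C\omega_t$ by the parabolic Schwarz lemma applied to $\log\operatorname{tr}_{\omega_t}\theta-AH_t$, handling the curvature of $\eta$ chart by chart through local embeddings of $Y$.

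Three intermediate steps need repair, although none changes the conclusion.

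\begin{itemize}
\item \emph{The lower bound on $\dot\varphi_{-\tau}$.} The appeal to ``bounded oscillation of $\varphi$ over $[-2\tau,-\tau]$'' only gives $\tau\dot\varphi_{-\tau}\ge\varphi_{-\tau}-\varphi_{-2\tau}-K\tau\log 2\ge -C$, which does not tend to zero. Making it $o(1)$ would require the uniform convergence you are trying to prove. The bound you state follows instead by taking $\tau'=1$ in your own inequality: $\dot\varphi_{-\tau}\ge\dot\varphi_{-1}-K|\log\tau|$, where $\dot\varphi_{-1}=\log(\omega_{-1}^n/\Omega)$ is bounded.
\item \emph{The hint for the volume bound.} The quantity $t\dot\varphi-\varphi-nt$ equals $-H_t-nt$. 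Its maximum principle gives a lower bound on $H_t$, hence on $\dot\varphi$, not the bound $\omega_t^n\le C\Omega$.
\item \emph{The monotone quantity.} It is $\varphi_t-Ct$, not $\varphi_t+Ct$.
\end{itemize}

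All of this, together with the Kolodziej/EGZ $L^\infty$ bound, is in fact unnecessary. The relation $\partial_t\dot\varphi_t=-\scal_{\omega_t}$ and the bounds $-C_0\le\scal_{\omega_t}\le K/|t|$ give $|\dot\varphi_{-\tau}|\le C(1+|\log\tau|)$ directly. Hence
\[
\|\varphi_{-\tau}-H_{-\tau}\|_\infty\le C\tau(1+|\log\tau|),\qquad
\|H_{-\tau}-H_{-\tau'}\|_\infty\le (K+C_0)|\tau-\tau'|.
\]
From the scalar curvature bounds alone, this yields four things:
\begin{itemize}
\item uniform convergence of $\varphi_t$;
\item continuity of $\varphi_0=H_0$;
\item $\|v_\tau\|_\infty\le C\tau$;
\item the uniform bound on $H_t$ that your Schwarz-lemma step requires.
\end{itemize}
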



We use the spacetime completion $(Z,d_Z,\mathfrak t)$  introduced in \cite{FangLiStructure}.
By \cite[Theorem~1.1]{LZ26}, the canonical slice $(Z_0,d_0^Z)$ is the
compact geodesic Gromov--Hausdorff limit and contains $X^\circ$ as its
regular part. Its topology agrees with the spacetime topology, since
$d_Z\leq d_0^Z$ and $(Z_0,d_0^Z)$ is compact.
The Type~I bound and \cite[Remark~5.8 and the proofs of Lemmas~5.3--5.4]{LZ26}
give a continuous surjective worldline map $\Phi:X\to Z_0$ with
\begin{equation}\label{eq:finite-time-worldlines}
 d_Z((x,-\tau),\Phi(x))\leq C\sqrt{\tau},\qquad x\in X,
 \quad 0<\tau\leq\tau_0.
\end{equation}

The following proposition was proved by Jian--Song
\cite{JS26b}.
For the reader's convenience, we include a proof using the method
of Proposition~\ref{prop:time-zero-identification} in the shrinker case.

\begin{proposition}\label{prop:finite-time-fibers}
For $x,y\in X$, $\Phi(x)=\Phi(y)$ if and only if $\pi(x)=\pi(y)$.
Consequently, $\Phi$ induces a homeomorphism $Y\simeq Z_0$. Moreover the metric completion of $(Y^\circ, d_{g_0})$ is homeomorphic to $Y$. 
\end{proposition}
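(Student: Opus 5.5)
The plan is to prove the two implications separately, then deduce the homeomorphisms by compactness. Both directions rest on the uniform convergence $d_{-\tau}(x,y)\to d_0^Z(\Phi(x),\Phi(y))$ from \cite[Theorem~1.1 and Lemma~5.4]{LZ26}, which holds under the Type~I bound.

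\emph{Step 1: $\Phi(x)\neq\Phi(y)$ forces $\pi(x)\neq\pi(y)$.} This direction is easy. Lemma~\ref{lem:finite-time-potential} gives $\theta=\pi^*\eta\leq C\omega_t$. Hence $\pi:(X,g_t)\to(Y,\eta)$ is $C$-Lipschitz uniformly in $t$, where $Y$ carries the distance induced by $\eta$ on the normal space, as a length metric through local embeddings. Therefore
\[
d_\eta(\pi(x),\pi(y))\leq C\,d_{-\tau}(x,y)\longrightarrow C\,d_0^Z(\Phi(x),\Phi(y)).
\]
In particular $\Phi(x)=\Phi(y)$ implies $\pi(x)=\pi(y)$.

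\emph{Step 2: $\Phi$ is constant on fibers of $\pi$.} I follow Proposition~\ref{prop:time-zero-identification}, but the rational-curve input is replaced by the following fact: a compact connected analytic fiber $\pi^{-1}(y)$ is chained by irreducible compact curves. So it suffices to take the normalization $u:\Sigma\to\pi^{-1}(y)$ of such a curve, with $\Sigma$ a compact Riemann surface. Cohomologically $[\omega_{-\tau}]=\alpha+\tau[\chi]$ with $\alpha=\pi^*[\eta]$, and $\pi^*[\eta]\cdot u_*[\Sigma]=0$ because $\pi\circ u$ is constant. Hence
\[
\int_\Sigma u^*\omega_{-\tau}=\tau\int_\Sigma u^*\chi\to0 .
\]
As in \eqref{eq:curve-vanishing-energy}, the Dirichlet energy of $u_\tau=d_{-\tau}(u(z_0),u(\cdot))$ tends to zero. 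Poincar\'e on $\Sigma$, together with equicontinuity, then gives vanishing oscillation, and the uniform convergence above shows that $\Phi\circ u$ is constant.

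The equicontinuity is the main obstacle, since there is no shrinker self-similarity here. I would get a uniform local H\"older modulus for $d_{-\tau}$ as in Lemma~\ref{lem:tangent-distance-modulus}. Write $\omega_{-\tau}=\theta+\tau\chi+\sqrt{-1}\partial\bar\partial\varphi_{-\tau}$, with $\theta$ and $\chi$ smooth. It then suffices to have uniformly H\"older local potentials, i.e.\ a uniform H\"older bound on $\varphi_{-\tau}$. My first attempt is Ko\l odziej-type H\"older stability. Indeed $\scal\geq-C_0$ gives $\omega_t^n\leq C\Omega$, so the Monge--Amp\`ere densities are bounded, and the reference class $\theta+\tau\chi$ is semipositive and big uniformly. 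If this fails, I would instead adapt the time-splitting argument of Lemma~\ref{lem:tangent-distance-modulus} to $H_t$: use $|\partial_tH_t|=|t\scal|\leq K$, polynomial metric bounds away from time zero, and the identity $\sqrt{-1}\partial\bar\partial H_t=\omega_t-\theta+t\Ric(\omega_t)$. With the modulus in hand, the Yang Li/GGZ Morrey argument gives $d_{-\tau}(x,x')\leq C|x-x'|^\alpha$. Chaining the curves through the connected fiber finishes Step~2.

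\emph{Step 3: the homeomorphisms.} Steps 1 and 2 show that $\Phi$ descends to a bijection $\iota:Y\to Z_0$ with $\Phi=\iota\circ\pi$. Since $\pi$ is a proper surjection it is a quotient map, so $\iota$ is continuous; as $Y$ is compact and $Z_0$ is Hausdorff, $\iota$ is a homeomorphism. For the metric completion I repeat Lemma~\ref{lem:intrinsic-completion} using the current $\omega_0=\theta+\sqrt{-1}\partial\bar\partial\varphi_0$:
\begin{itemize}
\item $\varphi_0$ is locally H\"older, as the limit from Step 2.
\item Capacity-zero extension and the Morrey estimate extend $d_{g_0}$ continuously to a pseudometric on $X$.
\item Regularization on holomorphic disks in fibers, where $\omega_0$ restricts to zero, shows that this pseudometric is constant on fibers and hence descends to $Y$.
\end{itemize}
Finally, $\theta\leq C\omega_0$ on $Y^\circ$ gives $d_\eta\leq C d_{g_0}$, so the descended pseudometric is a genuine metric. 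It is continuous on the compact space $Y$ and hence induces its topology. Since $Y^\circ$ is dense, $(Y,d_{g_0})$ is the metric completion of $(Y^\circ,d_{g_0})$ and is homeomorphic to $Y$.
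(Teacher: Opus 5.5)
Your proof of the two implications and of the homeomorphism works, but not for the reason you give in Step~2. The forward implication follows the paper's outline: chains of compact curves in the fibers, $\int_\Sigma u^*\omega_{-\tau}=\tau\int_\Sigma u^*\omega_{-1}$, vanishing Dirichlet energy, and Poincar\'e. The curve chains need the fibers to be projective, which holds because $\pi$ is projective (cf.\ \cite[Lemma~2.33]{HaconXie}); arbitrary compact analytic spaces need not contain any curves.

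The ``main obstacle'' you identify is not one, because equicontinuity is never needed. If $u_\tau\to u_0$ uniformly, then $\|u_\tau-\bar u_\tau\|_{L^2(\Sigma)}\to0$ already forces the continuous function $u_0$ to be constant. Your own opening premise, uniform convergence of $d_{-\tau}$ to $d_0^Z\circ(\Phi\times\Phi)$, gives exactly this. The paper avoids even that premise by testing with $1$-Lipschitz functions $f$ on $(Z_0,d_Z)$, extended to $Z$. Since $d_Z\le d_{-\tau}$ on a time slice, $f(\cdot,-\tau)$ is $1$-Lipschitz for $g_{-\tau}$. Moreover, \eqref{eq:finite-time-worldlines} gives $|f(x,-\tau)-f(\Phi(x))|\le C\sqrt\tau$ uniformly in $x$.

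This matters, because the H\"older route you sketch is not available as stated. Ko\l odziej-type H\"older estimates are global for a fixed K\"ahler class, with constants depending on that class. For big classes they are known only on compact subsets of the ample locus $X\setminus E$. They give no modulus that stays uniform as $\theta+\tau\chi$ degenerates to $\theta$. The $H_t$ variant would need an analogue of Lemma~\ref{lem:polynomial-metric-comparison} and a gradient bound for $H_t$, and you supply neither.

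For the converse you take a genuinely different route from the paper. You use the first-order bound $\theta\le C\omega_t$ to make $\pi$ uniformly Lipschitz and then pass to the limit in $d_{-\tau}$. This is valid. It is cleanest with smooth ambient functions $v$ on $Y$, for which $|\d(v\circ\pi)|_{g_t}\le C_v$, rather than with a length metric on the singular space. It does need $d_{-\tau}(x,y)\to0$ whenever $\Phi(x)=\Phi(y)$. That is the Type~I moment bound $d_{W_1}^{g(-\tau)}(\delta_x,\nu_{\Phi(x);-\tau})\le C\sqrt\tau$; \eqref{eq:finite-time-worldlines} alone controls $d_Z$, not $d_{-\tau}$. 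The paper instead uses the second-order bound $|\Delta_{\omega_t}(v\circ\pi)|\le C_v$ together with heat-kernel duality. That needs only convergence of the kernels to $\nu_{\Phi(x);-\tau}$, not any distance convergence.

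In Step~3, the H\"older modulus of $\varphi_0$ is genuinely used, in the Morrey argument of Lemma~\ref{lem:intrinsic-completion}. So ``as the limit from Step~2'' is exactly where the unsupported estimate enters. You have two options:
\begin{enumerate}
\item Carry out the Lemma~\ref{lem:tangent-distance-modulus}-type argument for $H_t$, which is what the paper invokes.
\item Bypass potential regularity altogether. The comparison $d_Z\le d_Y\le d_{g_0}\le Cd_Z$ on $Y^\circ$ from Lemma~\ref{lem:finite-time-moments} makes $(Y^\circ,d_{g_0})$ bi-Lipschitz to $(Y^\circ,d_Y)$, whose completion is $Y$.
\end{enumerate}
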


\begin{proof}
The proof is similar to that of
Proposition~\ref{prop:time-zero-identification} in the shrinker case.
We record the necessary modifications. By
\cite[Lemma~2.33]{HaconXie}, the fibers of $\pi$ are connected by chains
of compact irreducible curves. For the normalization
$\iota:\tilde C\to X$ of a contracted curve,
\[
 \int_{\tilde C}\iota^*\omega_t
       =(-t)\int_{\tilde C}\iota^*\omega_{-1}\longrightarrow0.
\]
The same Dirichlet-energy and Poincar\'e argument, applied to
$1$-Lipschitz functions on $(Z_0,d_Z)$ extended to $Z$, shows that
$\Phi$ is constant on each fiber; uniform convergence follows from
\eqref{eq:finite-time-worldlines}.

For the converse, take a smooth real function $v$ on $Y$, in the
sense of ambient embeddings, and set $w=v\circ\pi$.
The Schwarz estimate in Lemma~\ref{lem:finite-time-potential} gives
$|\Delta_{\omega_t}w|\leq C_v$. Heat-kernel duality and convergence
to the terminal kernels \cite[Theorem~5.20]{FangLiStructure} yield
\begin{equation}\label{eq:finite-time-base-functions}
 \left|w(x)-\int_Xw\,\d\nu_{\Phi(x);-\tau}\right|\leq C_v\tau.
\end{equation}
If $\Phi(x)=\Phi(y)$, letting $\tau\downarrow0$ gives
$v(\pi(x))=v(\pi(y))$. Such functions separate points of $Y$, so
$\pi(x)=\pi(y)$. The induced continuous bijection $Y\to Z_0$
is a homeomorphism by compactness. The final conclusion follows by the same argument as in Lemma \ref{lem:tangent-distance-modulus} and Lemma \ref{lem:intrinsic-completion}.
\end{proof}

Identify $Y$ with $Z_0$ and write $d_Y=d_0^Z$.
On $Y^\circ$, smooth convergence gives $d_Z\leq d_Y\leq d_{g_0}$.

\begin{lemma}\label{lem:finite-time-moments}
For sufficiently small $\tau_0>0$ and $0<\tau\leq\tau_0$,
\begin{equation}\label{eq:finite-time-estimates}
\begin{gathered}
 \int_0^\tau\int_{X^\circ}\operatorname{tr}_{\omega_{-s}}\omega_0
        \,\d\nu_{p;-s}\,\d s\leq C\tau,\qquad p\in Y,\\
 \int_{X^\circ}d_{g_0}(p,\pi(x))^2\,\d\nu_{p;-\tau}(x)\leq C\tau,
       \qquad p\in Y^\circ.
\end{gathered}
\end{equation}
We have 
\begin{equation}
    d_Z\leq d_Y\leq d_{g_0}\leq Cd_Z.
\end{equation}
\end{lemma}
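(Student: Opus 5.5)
The plan is to transplant Lemmas~\ref{lem:trace-estimate}, \ref{lem:intrinsic-moment} and~\ref{lem:comparison} to the finite-time setting. Lemma~\ref{lem:finite-time-potential} plays the role of the canonical potential $F$, the function $H_{-s}$ replaces $F(\cdot,-s)$, and the Type~I bound $\scal_{\omega_t}\leq K/|t|$ replaces Theorem~\ref{thm:main}. The only new feature is the inhomogeneous term $\operatorname{tr}_{\omega_t}\theta$ together with the lower bound $\scal\geq-C_0$. Since $\theta\leq C\omega_t$, this term is bounded, which is why the constants become $C$ rather than $n$.

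\textbf{Trace estimate.} Fix $x\in\Phi^{-1}(p)$. On $[\delta,\tau]$, in backward time $s$, consider
\[
    B(\cdot,s)=-H_{-s}-\varphi_0+C_1(\tau-s).
\]
By \eqref{eq:finite-time-current} it is bounded by $C\tau$. Combining \eqref{eq:finite-time-H} with $\sqrt{-1}\partial\bar\partial\varphi_0=\omega_0-\theta$, it satisfies
\[
    (\partial_s+\Delta_{\omega_{-s}})B
       =-\operatorname{tr}_{\omega_{-s}}\omega_0+\bigl(n-C_1\bigr)
\]
in distributions. Choosing $C_1\geq n$ gives a supersolution inequality. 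I then run the same Dirichlet-exhaustion comparison with smooth $h_k\uparrow\operatorname{tr}_{\omega_{-s}}\omega_0$. One sign point needs care: the singular part of the positive current $\omega_0$ must enter with the favorable sign. This is exactly as in Lemma~\ref{lem:trace-estimate}, since $\varphi_0$ is bounded and $\theta$-psh. The endpoint term is bounded by $|H_{-\delta}-\varphi_0|\leq C\delta$, because $\partial_tH=t\scal$ and $|t\scal|\leq K+C_0|t|$. Letting $\delta\to0$, the kernels converge to $\nu_{p;-s}$ by \cite[Theorem~5.20]{FangLiStructure}, and this gives $C\tau$.

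\textbf{Moment bound.} I copy Lemma~\ref{lem:intrinsic-moment}. The kernel domination $\nu_s\geq c\nu_t$ for $t\leq s\leq4t$ comes from the Harnack inequality, Perelman's entropy lower bound on a compact flow, and the lower scalar bound. The volume form is no longer monotone, but it changes only by a bounded factor over $[t,4t]$, because $|\partial_s\log\omega^n|\leq K/s+C_0$ integrates to a constant on a dyadic interval. The Poincar\'e inequality then gives Cauchy sequences of means along $s_j$. Their limit is identified with $u(p)$ as before, using \eqref{eq:finite-time-worldlines}, Gaussian concentration from the Type~I bound, and the continuity of $d_{g_0}$ on $Y$ from Proposition~\ref{prop:finite-time-fibers}. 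Tightness is automatic here because $X$ is compact.

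\textbf{Comparison of distances.} This is verbatim Lemma~\ref{lem:comparison}. I use the common part of $\nu_{p;a}$ and $\nu_{q;a}$ obtained from the Lipschitz gradient estimate for the heat flow on a compact flow, then the moment bound. The moment bound holds for $p\in Y^\circ$, which is dense, and extends to all of $Y$ by continuity. The restriction $d_Z<\sqrt{\tau_0}$ is harmless by compactness, since for larger $d_Z$ the inequality follows from the bounded diameter of $d_{g_0}$.

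\textbf{Main obstacle.} I expect the hardest step to be the distributional comparison in the trace estimate near $E$, where $\omega_0$ is only a current. As in the shrinker case, I would first handle this by the $W^{1,2}$ energy test using the bounded $\theta$-psh potential $\varphi_0$.
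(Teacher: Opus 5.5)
Your proposal is correct and is essentially the paper's argument: you transplant Lemmas~\ref{lem:trace-estimate}, \ref{lem:intrinsic-moment} and~\ref{lem:comparison} with $H_{-s}$ in place of $F(\cdot,-s)$. The paper's route differs in two minor ways. It gets the trace estimate more directly, by pairing $U_s=H_{-s}-\varphi_0$ with the terminal kernel on compact $X$, which gives an exact identity without Dirichlet exhaustion. It also identifies the limit of the means through \eqref{eq:finite-time-base-functions} rather than through worldline concentration.

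Two small corrections are needed in the trace estimate. First, your $B$ should read $H_{-s}-\varphi_0+C_1(\tau-s)$, since the sign of $H_{-s}$ is a typo. The $\operatorname{tr}_{\omega_{-s}}\theta$ terms then cancel exactly, so $C_1=n$ suffices and the constant $C$ comes only from $\|v_s\|_\infty\le Cs$, not from $\theta\le C\omega_t$. Second, this function has no sign, so you must add $C\tau$ to it before comparing with the $H_k$, which vanish on the parabolic boundary.
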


\begin{proof}
We follow the proofs of Lemmas~\ref{lem:intrinsic-moment}
and~\ref{lem:comparison}, recording the modifications for the finite-time flow.
Set $U_s=H_{-s}-\varphi_0$. By Lemma~\ref{lem:finite-time-potential},
$\|U_s\|_\infty\leq Cs$ and
\[
 (\partial_s+\Delta_{\omega_{-s}})U_s
       =n-\operatorname{tr}_{\omega_{-s}}\omega_0.
\]
Pairing with the terminal kernel and integrating by parts on compact $X$ gives
\[
 \int_\delta^\tau\int_X\operatorname{tr}_{\omega_{-s}}\omega_0
       \,\d\nu_{p;-s}\,\d s
 =n(\tau-\delta)-\int_XU_\tau\,\d\nu_{p;-\tau}
       +\int_XU_\delta\,\d\nu_{p;-\delta}.
\]
Here the trace is taken in the sense of currents. Letting
$\delta\downarrow0$ and using $\|U_s\|_\infty\leq Cs$ gives
the first estimate in \eqref{eq:finite-time-estimates}.

The heat-kernel upper bound \cite[Theorem~7.1]{BamlerHeat}
and the entropy lower bound give a uniform lower bound for $b_s$, where
$\d\nu_{p;-s}=(2\pi s)^{-n}e^{-b_s}\d V_{g_{-s}}$.
The differential Harnack argument in Lemma~\ref{lem:intrinsic-moment}
therefore applies with $\scal_{\omega_{-s}}\leq K/s$.
Then combining it with the scalar curvature lower bound, we obtain there is a uniform $c>0$,
\begin{equation}\label{eq:finite-time-kernel-comparison}
 \nu_{p;-s}\geq c\nu_{p;-t},\qquad
 0<t\leq s\leq4t,\quad s\leq\tau_0.
\end{equation}
Moreover, \eqref{eq:finite-time-base-functions} gives
$\pi_*\nu_{p;-s}\rightharpoonup\delta_p$.
The dyadic argument of Lemma~\ref{lem:intrinsic-moment}, using the
trace estimate and \eqref{eq:finite-time-kernel-comparison}, now proves
the second estimate in \eqref{eq:finite-time-estimates}.
The same argument as in
Lemma~\ref{lem:comparison} gives for $p,q\in Y^\circ$, $$d_{g_0}(p,q)\leq Cd_Z(p,q).$$ Together with $d_Z\leq d_Y\leq d_{g_0}$, this provides an alternative proof of the identification of the metric completion of $(Y^\circ,d_{g_0})$ with $Y$.
\end{proof}

\begin{proof}[Proof of Theorem~\ref{thm:finite-time-completion}]
By Proposition~\ref{prop:finite-time-fibers} and
Lemma~\ref{lem:finite-time-moments}, it suffices to prove $d_{g_0}\leq d_Y$. The tangent-flow structure and smooth regular
convergence in \cite{FangLiStructure} allow us to apply Lemmas~\ref{curve:lengths} and~\ref{curve:orbit} as in the shrinker case. Their regular soliton structure,
intrinsic-distance equality on regular slices, and smooth regular
convergence are as described in Subsection~\ref{subsec:completion-model}.

For
$\omega_{i,t}=r_i^{-2}\omega_{r_i^2t}$, \eqref{eq:finite-time-current} gives
\begin{equation}\label{eq:finite-time-rescaled-current}
 r_i^{-2}\omega_0=\omega_{i,-\delta}
       -\delta\Ric(\omega_{i,-\delta})
       +\sqrt{-1}\partial\bar\partial(r_i^{-2}v_{r_i^2\delta}),
 \qquad \|r_i^{-2}v_{r_i^2\delta}\|_\infty\leq C\delta.
\end{equation}
This is the current estimate used in Proposition~\ref{prop:regular-curve-recovery};
it gives the same holomorphic-strip area and length bounds, while
\eqref{eq:finite-time-worldlines} controls the endpoints. The
metric-differentiation argument in the proof of Theorem~\ref{thm:completion}
then yields $d_{g_0}\leq d_Y$. 
\end{proof}

\begingroup
\small
\setlength{\labelsep}{0.35em}

\endgroup

\vspace{10pt}

Yu Li, Institute of Geometry and Physics, University of Science and
Technology of China, No.~96 Jinzhai Road, Hefei, Anhui Province, 230026,
China; Hefei National Laboratory, No.~5099 West Wangjiang Road, Hefei,
Anhui Province, 230088, China; E-mail: \href{mailto:yuli21@ustc.edu.cn}{yuli21@ustc.edu.cn}.

\vspace{6pt}

Junsheng Zhang, Courant Institute of Mathematical Sciences, New York
University, 251 Mercer St, New York, NY 10012;
E-mail: \href{mailto:jz7561@nyu.edu}{jz7561@nyu.edu}.

\end{document}